\def\ga{\mathfrak{a}}
\def\gb{\mathfrak{b}}
\def\gc{\mathfrak{c}}
\def\ge{\mathfrak{e}}
\def\gg{\mathfrak{g}}
\def\gh{\mathfrak{h}}
\def\gk{\mathfrak{k}}
\def\gl{\mathfrak{l}}
\def\gm{\mathfrak{m}}
\def\gn{\mathfrak{n}}
\def\go{\mathfrak{o}}
\def\gq{\mathfrak{q}}
\def\gr{\mathfrak{r}}
\def\gs{\mathfrak{s}}
\def\gt{\mathfrak{t}}
\def\gu{\mathfrak{u}}
\def\gv{\mathfrak{v}}
\def\gw{\mathfrak{w}}
\def\gz{\mathfrak{z}}
\def\ggg{> \hskip -5 pt >}
\def\C{\mathbb{C}}
\def\H{\mathbb{H}}
\def\R{\mathbb{R}}
\def\cC{\mathcal{C}}
\def\cF{\mathcal{F}}
\def\cH{\mathcal{H}}
\def\cO{\mathcal{O}}
\def\cU{\mathcal{U}}
\def\Ad{{\rm Ad}}
\def\ad{{\rm ad}\,}
\def\Pf{{\rm Pf}}
\def\Ind{{\rm Ind\,}}
\def\tr{{\rm trace\,}}
\def\Det{\rm Det}
\newtheorem{theorem}[equation]{Theorem}
\newtheorem{lemma}[equation]{Lemma}
\newtheorem{corollary}[equation]{Corollary}
\newtheorem{proposition}[equation]{Proposition}
\newtheorem{definition}[equation]{Definition}
\newtheorem{remark}[equation]{Remark}
\newtheorem{construction}[equation]{Construction}
\def\sideremark#1{\ifvmode\leavevmode\fi\vadjust{\vbox to0pt{\vss
 \hbox to 0pt{\hskip\hsize\hskip1em
\vbox{\hsize2cm\tiny\raggedright\pretolerance10000 
 \noindent #1\hfill}\hss}\vbox to8pt{\vfil}\vss}}} 
\title{Stepwise Square Integrability for Nilradicals of Parabolic Subgroups\\
	and Maximal Amenable Subgroups}
\author{Joseph A. Wolf\footnote{Research partially supported by the Simons
Foundation and by the Dickson Emeriti Professorship.\newline
}}
\date{}
\begin{document}

\maketitle

\abstract{In a series of recent papers (\cite{W2012}, \cite{W2013}, 
\cite{W2014}, \cite{W2015}) we extended the notion of
square integrability, for representations of nilpotent Lie groups, to that
of stepwise square integrability.  There we discussed a number of applications
based on the fact that nilradicals of minimal parabolic subgroups of
real reductive Lie groups are stepwise square integrable.  In Part I we prove 
stepwise square integrability for nilradicals of arbitrary parabolic 
subgroups of real reductive Lie groups.  This is technically more delicate
than the case of minimal parabolics.  We further discuss
applications to Plancherel formulae and Fourier inversion formulae for
maximal exponential solvable subgroups of parabolics and maximal amenable
subgroups of real reductive Lie groups.  Finally, in Part II,
we extend a number of those results to (infinite dimensional) direct limit 
parabolics.  These extensions involve an infinite dimensional version of the
Peter-Weyl Theorem, construction of a direct limit Schwartz space, and
realization of that Schwartz space as a dense subspace of the corresponding
$L^2$ space.
}
\smallskip

{\footnotesize
{\textrm
\tableofcontents
}
}
\newpage

\centerline{\Large \bf Part I: Finite Dimensional Theory}

\section{Stepwise Square Integrable Representations}
\label{sec1}
\setcounter{equation}{0}
There is a very precise theory of square integrable representations of
nilpotent Lie groups due to Moore and the author \cite{MW1973}.  It is 
based on the Kirillov's general representation theory \cite{K1962} for 
nilpotent Lie groups, in which he introduced coadjoint orbit theory to
the subject.  When a nilpotent Lie group has square integrable
representations its representation theory, Plancherel and Fourier inversion 
formulae, and other aspects of real analysis, become explicit and transparent.
\medskip

Somewhat later it turned out that many familiar nilpotent
Lie groups have foliations, in fact semidirect product towers composed of
subgroups that have square integrable representations.  These include
nilradicals of minimal parabolic subgroups, e.g. the group of strictly 
upper triangular real or complex matrices.  All the analytic benefits of
square integrability carry over to stepwise square integrable nilpotent
Lie groups.
\medskip

In order to indicate our results here we must recall the notions of square 
integrability and stepwise square integrability in sufficient detail
to carry them over to nilradicals of arbitrary parabolic subgroups of
real reductive Lie groups.
\medskip

A connected simply connected Lie group $N$
with center $Z$ is called {\em square integrable}, or is said to
{\em have square integrable representations}, if it has unitary
representations $\pi$ whose coefficients $f_{u,v}(x) = 
\langle u, \pi(x)v\rangle$ satisfy $|f_{u,v}| \in L^2(N/Z)$.  
C.C. Moore and the author worked out the structure and representation
theory of these groups \cite{MW1973}.  If $N$ has one 
such square integrable representation then there is a certain polynomial
function $\Pf(\lambda)$ on the linear dual space $\gz^*$ of the Lie algebra of
$Z$ that is key to harmonic analysis on $N$.  Here $\Pf(\lambda)$ is the
Pfaffian of the antisymmetric bilinear form on $\gn / \gz$ given by
$b_\lambda(x,y) = \lambda([x,y])$.  The square integrable
representations of $N$ are the $\pi_\lambda$ (corresponding to coadjoint
orbits $\Ad^*(N)\lambda$) where $\lambda \in \gz^*$ with $\Pf(\lambda) \ne 0$,
Plancherel  almost irreducible unitary representations of $N$ are square
integrable, and, up to an explicit constant, 
$|\Pf(\lambda)|$ is the Plancherel density on the unitary
dual $\widehat{N}$ at $\pi_\lambda$.  Concretely,
\medskip

\begin{theorem}{\rm \cite{MW1973}}\label{sqint-plancherel}
Let $N$ be a connected simply connected nilpotent Lie group that has
square integrable representations.  Let $Z$ be its center and $\gv$
a vector space complement to $\gz$ in $\gn$, so
$\gv^* = \{\gamma \in \gn^* \mid \gamma|_\gz = 0\}$.
If $f$ is a Schwartz class
function $N \to \C$ and $x \in N$ then
\begin{equation}\label{inversion}
f(x) = c\int_{\gz^*} \Theta_{\pi_\lambda}(r_xf) |\Pf(\lambda)|d\lambda
\end{equation}
where $c = d!2^d$ with $2d = \dim \gn/\gz$\,, $r_xf$ is the
right translate $(r_xf)(y) = f(yx)$, and $\Theta$ is the distribution
character
\begin{equation}\label{def-dist-char}
\Theta_{\pi_\lambda}(f) = c^{-1}|\Pf(\lambda)|^{-1}\int_{\cO(\lambda)}
        \widehat{f_1}(\xi)d\nu_\lambda(\xi) \text{ for } f \in \cC(N).
\end{equation}
Here $f_1$ is the lift
$f_1(\xi) = f(\exp(\xi))$ of $f$ from $N$ to $\gn$, 
$\widehat{f_1}$ is its classical Fourier transform,
$\cO(\lambda)$ is the coadjoint orbit $\Ad^*(N)\lambda = \gv^* + \lambda$,
and $d\nu_\lambda$ is the translate of normalized Lebesgue measure from
$\gv^*$ to $\Ad^*(N)\lambda$.  
\end{theorem}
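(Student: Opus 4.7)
The plan is to derive both the character formula \eqref{def-dist-char} and the inversion formula \eqref{inversion} from Kirillov's coadjoint orbit method for nilpotent Lie groups, exploiting the rigid structure forced on $N$ by the hypothesis that it admits square integrable representations.

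First I would analyze the coadjoint orbits that contribute. A basic input is that whenever $\lambda \in \gz^*$ satisfies $\Pf(\lambda) \ne 0$, the coadjoint orbit $\cO(\lambda) = \Ad^*(N)\lambda$ coincides with the affine subspace $\lambda + \gv^*$ of $\gn^*$, has maximal possible dimension $2d = \dim(\gn/\gz)$, and the orbit map $N/Z \to \cO(\lambda)$ is an $N$-equivariant diffeomorphism. The complement in $\gn^*$ of the union of these orbits is the preimage of the proper algebraic subvariety $\{\Pf = 0\} \subset \gz^*$ under the restriction map $\gn^* \to \gz^*$, hence is Lebesgue null, so these orbits are the only ones needed for Plancherel. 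This step uses the Moore--Wolf structure of square integrable nilpotent groups.

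Second I would invoke Kirillov's character formula: for any coadjoint orbit $\cO$ of a connected simply connected nilpotent Lie group, the attached irreducible unitary representation $\pi_\cO$ has distribution character
\begin{equation*}
\Theta_{\pi_\cO}(f) = \int_{\cO} \widehat{f_1}(\xi)\, d\beta_\cO(\xi),
\end{equation*}
where $d\beta_\cO = \omega_\cO^d/d!$ is Liouville measure for the Kirillov--Kostant--Souriau symplectic form $\omega_\cO$. For $\cO(\lambda) \simeq \gv^*$ with $\Pf(\lambda) \ne 0$, the form $\omega_{\cO(\lambda)}$ is the translate of the constant 2-form on $\gv^*$ associated with $b_\lambda$. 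The identity $\omega_{\cO(\lambda)}^d/d! = 2^d |\Pf(\lambda)|\, d\nu_\lambda$, which is just the relation between the Pfaffian and the top exterior power of a nondegenerate antisymmetric 2-form, then converts Liouville measure into $c^{-1}|\Pf(\lambda)|\, d\nu_\lambda$ with $c = d!\,2^d$, producing \eqref{def-dist-char}.

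Third I would deduce the inversion formula from the Kirillov--Plancherel theorem. Fourier inversion on $\gn$ gives $f(e) = \int_{\gn^*} \widehat{f_1}(\xi)\, d\xi$, and reorganizing via the fibration $\gn^* \to \gz^*$, $\lambda + \xi \mapsto \lambda$, with generic fibers the orbits $\cO(\lambda)$ carrying $d\nu_\lambda$, yields under the decomposition $d\xi = d\nu_\lambda\, d\lambda$
\begin{equation*}
f(e) = \int_{\gz^*}\!\!\int_{\cO(\lambda)} \widehat{f_1}(\xi)\, d\nu_\lambda(\xi)\, d\lambda = c\int_{\gz^*} \Theta_{\pi_\lambda}(f)\, |\Pf(\lambda)|\, d\lambda,
\end{equation*}
where converting $d\nu_\lambda$ back to $d\beta_{\cO(\lambda)}$ inserts the factor $c|\Pf(\lambda)|$ and produces $\Theta_{\pi_\lambda}$. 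Replacing $f$ by $r_x f$ and using $(r_x f)(e) = f(x)$ gives \eqref{inversion}. The main obstacle is the careful bookkeeping of normalizations: the Lebesgue measure on $\gn^*$ used in Fourier inversion, the Liouville measure on each coadjoint orbit used in Kirillov's character formula, the normalized Lebesgue measure $d\nu_\lambda$, and the Haar measure on $\gz^*$ dual to Haar measure on $Z$ must all be coordinated so that the Pfaffian appears with exponent $+1$ in the Plancherel density and exponent $-1$ in the character denominator, with the overall constant $c = d!\,2^d$ coming out correctly. The rest follows cleanly from Kirillov's orbit method together with the explicit description $\cO(\lambda) = \lambda + \gv^*$.
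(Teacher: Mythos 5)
The paper does not prove Theorem~\ref{sqint-plancherel}; it is cited verbatim from \cite{MW1973}, so there is no internal proof to compare against. Your strategy --- invoke Kirillov's character formula $\Theta_{\pi_\cO}(f)=\int_\cO\widehat{f_1}\,d\beta_\cO$, identify the contributing orbits as the flat orbits $\lambda+\gv^*$ with $\Pf(\lambda)\ne 0$, and then obtain \eqref{inversion} by fibering Euclidean Fourier inversion on $\gn^*$ over $\gz^*$ --- is a legitimate and standard modern route to this formula. It is, however, a different route from the one Moore and Wolf actually take: they do not rely on Kirillov's character formula as a black box, but instead prove the orthogonality relation $\|f_{u,v}\|^2_{L^2(N/Z)}=\|u\|^2\|v\|^2/|\Pf(\lambda)|$ directly (this is exactly what the paper later calls \eqref{frob1}), identify $|\Pf(\lambda)|$ as the formal degree, and deduce Plancherel from the central decomposition $L^2(N)=\int_{\gz^*}L^2(N/Z,\zeta_\lambda)\,d\lambda$. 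Your approach gets the character formula \eqref{def-dist-char} essentially for free and then sees \eqref{inversion} as tautological; theirs gets \eqref{frob1} by an explicit computation, which has the advantage of extending inductively (as the paper exploits in proving Theorem~\ref{plancherel-general}).

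There is one concrete error in the step you yourself flagged as delicate, and it is not merely a harmless normalization. You assert $\omega_{\cO(\lambda)}^d/d!=2^d|\Pf(\lambda)|\,d\nu_\lambda$, and say this ``converts Liouville measure into $c^{-1}|\Pf(\lambda)|\,d\nu_\lambda$'' with $c=d!\,2^d$. These two formulas are already inconsistent with each other (one would need $2^d=(d!\,2^d)^{-1}$), and more importantly both have the Pfaffian with the wrong exponent: the target \eqref{def-dist-char} requires $d\beta_{\cO(\lambda)}=c^{-1}|\Pf(\lambda)|^{-1}\,d\nu_\lambda$, with $|\Pf(\lambda)|$ in the \emph{denominator}. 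The reason is that $d\nu_\lambda$ lives on $\gv^*$ (the orbit), not on $\gn/\gz$. The evaluation map $\iota_\lambda:\gn/\gz\to T_\lambda\cO(\lambda)=\gv^*$, $X\mapsto \ad^*(X)\lambda$, is precisely the map $b_\lambda:\gv\to\gv^*$; pulling $\omega_\lambda$ back under $\iota_\lambda$ gives $b_\lambda$, so the form $\omega_\lambda$ written in the dual basis on $\gv^*$ has matrix $\pm b_\lambda^{-1}$ and Pfaffian $\pm1/\Pf(b_\lambda)$. Hence Liouville measure is $\pm|\Pf(\lambda)|^{-1}$ times Lebesgue on $\gv^*$, which is what produces $|\Pf(\lambda)|^{-1}$ in \eqref{def-dist-char} and makes it cancel against the density $|\Pf(\lambda)|$ in \eqref{inversion} to leave bare Fourier inversion on $\gn^*$. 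You should redo this bookkeeping with the Pfaffian inverted; the remainder of the argument then goes through.
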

\medskip

\noindent More generally, we will consider the situation where
\begin{equation}\label{setup}
\begin{aligned}
N = &L_1L_2\dots L_{m-1}L_m \text{ where }\\
 &\text{(a) each factor $L_r$ has unitary representations with coefficients in 
$L^2(L_r/Z_r)$,} \\
 &\text{(b) each } N_r := L_1L_2\dots L_r \text{ is a normal subgroup of } N
   \text{ with } N_r = N_{r-1}\rtimes L_r \text{ semidirect,}\\
 &\text{(c) if $r \geqq s$ then $[\gl_r,\gz_s] = 0$}
\end{aligned}
\end{equation}
The conditions of (\ref{setup}) are sufficient to construct the representations
of interest to us here,  but not sufficient to compute the Pfaffian that is
the Plancherel density.  For that, in the past we used the
{\em strong computability condition}
\begin{equation}\label{c-strong} 
\begin{aligned}
&\text{Decompose }\gl_r = \gz_r + \gv_r \text{ and } \gn = \gs + \gv
	\text{ (vector space direct) where }
 	\gs = \oplus\, \gz_r \text{ and } \gv = \oplus\, \gv_r; 
	\text{ then}  \\
&\text{\phantom{XXX(d)} }[\gl_r,\gl_s] \subset \gv_s \text{ for } r > s.
\end{aligned}
\end{equation} 
The problem is that the strong computability condition 
(\ref{c-strong}) can fail for some non-minimal real
parabolics, but we will see that, for the Plancherel density, we only need
the {\em weak computability condition}
\begin{equation}\label{c-weak}
\begin{aligned}
&\text{Decompose }\gl_r = \gl_r' \oplus \gl_r''\text{, direct sum of ideals,
	where } \gl_r'' \subset \gz_r \text{ and } \gv_r \subset \gl_r'; 
	\text{ then \phantom{XXXXXXXXXX}}  \\
&\text{\phantom{XXX(e)} }[\gl_r,\gl_s] \subset \gl_s'' + \gv_s 
	\text{ for } r > s.
\end{aligned}
\end{equation} 
where we retain $\gl_r = \gz_r + \gv_r \text{ and } \gn = \gs + \gv$.
\medskip

In the setting of (\ref{setup}), (\ref{c-strong}) and
(\ref{c-weak})  it is useful to denote
\begin{equation}\label{c-d}
\begin{aligned}
&\text{(a) }d_r = \tfrac{1}{2}\dim(\gl_r/\gz_r) \text{ so }
        \tfrac{1}{2} \dim(\gn/\gs) = d_1 + \dots + d_m\,,
        \text{ and } c = 2^{d_1 + \dots + d_m} d_1! d_2! \dots d_m!\\
&\text{(b) }b_{\lambda_r}: (x,y) \mapsto \lambda_r([x,y]) 
        \text{ viewed as a bilinear form on } \gl_r/\gz_r \\
&\text{(c) }S = Z_1Z_2\dots Z_m = Z_1 \times \dots \times Z_m \text{ where } Z_r
        \text{ is the center of } L_r \\
&\text{(d) }P: \text{ polynomial } P(\lambda) = \Pf(b_{\lambda_1})
        \Pf(b_{\lambda_2})\dots \Pf(b_{\lambda_m}) \text{ on } \gs^* \\
&\text{(e) }\gt^* = \{\lambda \in \gs^* \mid P(\lambda) \ne 0\} \\
&\text{(f) } \pi_\lambda \in \widehat{N} \text{ where } \lambda \in \gt^*: 
    \text{ irreducible unitary representation of } N = L_1L_2\dots L_m
        \text{ as follows.} 
\end{aligned}
\end{equation}

\begin{construction}\label{construction}
{\rm \cite{W2013} Given $\lambda \in \gt^*$, in other words 
$\lambda = \lambda_1 + \dots + \lambda_m$ where $\lambda_r \in \gz_r$ with
each $\Pf(b_{\lambda_r}) \ne 0$, we construct 
$\pi_\lambda \in \widehat{N}$ by recursion on $m$.  If $m = 1$ then
$\pi_\lambda$ is a square integrable representation of $N = L_1$\,.  Now
assume $m > 1$.  Then we have the irreducible unitary representation 
$\pi_{\lambda_1 + \dots + \lambda_{m-1}}$ of $L_1L_2\dots L_{m-1}$\,.
and (\ref{setup}(c)) shows that $L_m$ stabilizes the unitary equivalence 
class of $\pi_{\lambda_1 + \dots + \lambda_{m-1}}$.  Since $L_m$ is
topologically contractible the Mackey obstruction vanishes and 
$\pi_{\lambda_1 + \dots + \lambda_{m-1}}$ extends to an irreducible unitary
representation $\pi^\dagger_{\lambda_1 + \dots + \lambda_{m-1}}$ on $N$ on the
same Hilbert space.  View the square integrable representation 
$\pi_{\lambda_m}$ of $L_m$ as a representation of $N$ whose kernel contains
$L_1L_2\dots L_{m-1}$\,.  Then we define
$\pi_\lambda = \pi^\dagger_{\lambda_1 + \dots + \lambda_{m-1}} \widehat\otimes
\pi_{\lambda_m}$\,.}
\hfill$\diamondsuit$
\end{construction}

\begin{definition}\label{stepwise2}
{\rm The representations $\pi_\lambda$ of (\ref{c-d}(f)), constructed
just above, are the
{\it stepwise square integrable} representations of $N$ relative to
the decomposition (\ref{setup}).  If $N$ has stepwise square integrable
representations relative to (\ref{setup}) we will say that
$N$ is {\it stepwise square integrable}.}\hfill $\diamondsuit$
\end{definition}

\begin{remark}\label{2-steps}
{\rm Construction \ref{construction} of the stepwise square integrable
representations $\pi_\lambda$ uses (\ref{setup}(c)),
$[\gl_r,\gz_s] = 0$ for $r > s$, so that $L_r$ stabilizes the unitary
equivalence class of $\pi_{\lambda_1 + \dots + \lambda_{r-1}}$.  The
condition (\ref{c-strong}), $[\gl_r,\gl_s] \subset \gv$ for $r > s$, enters the
picture in proving that the polynomial $P$ of (\ref{c-d}(d)) is the
Pfaffian $\Pf = \Pf_\gn$ of $b_\lambda$ on $\gn/\gs$. However we don't need 
that, and the weaker (\ref{c-weak}) is sufficient to show that $P$
is the Plancherel density.  See Theorem \ref{plancherel-general} below.}
\hfill$\diamondsuit$
\end{remark}

\begin{lemma}\label{concentrated} {\rm \cite{W2013}}
Assume that $N$ has stepwise square integrable representations.
Then Plancherel measure is concentrated on the set
$\{\pi_\lambda \mid \lambda \in \gt^*\}$ of all stepwise square
integrable representations.
\end{lemma}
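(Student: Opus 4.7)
The plan is to proceed by induction on the number of steps $m$ in the decomposition $N = L_1 L_2 \cdots L_m$ of (\ref{setup}). The base case $m = 1$ is exactly the Moore--Wolf theorem (Theorem \ref{sqint-plancherel}, following \cite{MW1973}): $N = L_1$ has Plancherel density $c|\Pf(\lambda)|$ concentrated on $\gt^* = \{\lambda \in \gz^* \mid \Pf(\lambda) \ne 0\}$, so almost every irreducible unitary representation in the sense of Plancherel is square integrable.

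For the inductive step, write $N = N_{m-1} \rtimes L_m$ and apply the Mackey machine to this normal subgroup decomposition. By the inductive hypothesis, Plancherel measure on $\widehat{N_{m-1}}$ is concentrated on the stepwise square integrable representations $\pi_{\lambda_1 + \cdots + \lambda_{m-1}}$ produced by Construction \ref{construction}. Hypothesis (\ref{setup}(c)) gives $[\gl_m, \gz_r] = 0$ for $r < m$, so $L_m$ acts trivially on each $Z_r$ and therefore fixes all the central characters, and hence (by the coadjoint-orbit realization exploited in Construction \ref{construction}) it fixes the unitary equivalence class of each such $\pi_{\lambda_1 + \cdots + \lambda_{m-1}}$. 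Topological contractibility of $L_m$ kills the Mackey obstruction, so $\pi_{\lambda_1 + \cdots + \lambda_{m-1}}$ extends to $\pi^\dagger_{\lambda_1 + \cdots + \lambda_{m-1}}$ on $N$, and every irreducible unitary representation of $N$ lying above it has the form $\pi^\dagger_{\lambda_1 + \cdots + \lambda_{m-1}} \widehat\otimes\, \sigma$ with $\sigma \in \widehat{L_m}$.

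Now I would invoke the Mackey--Kleppner--Lipsman Plancherel disintegration for a regular semidirect product: Plancherel measure on $\widehat{N}$ is a fibered integral of Plancherel measure on $\widehat{L_m}$ against Plancherel measure on $\widehat{N_{m-1}}$. By Moore--Wolf, Plancherel measure on $\widehat{L_m}$ is concentrated on the square integrable $\pi_{\lambda_m}$ with $\Pf(b_{\lambda_m}) \ne 0$. Combining this with the inductive hypothesis, Plancherel measure on $\widehat{N}$ is concentrated on the representations
$$ \pi^\dagger_{\lambda_1 + \cdots + \lambda_{m-1}} \widehat\otimes\, \pi_{\lambda_m} \;=\; \pi_\lambda, \qquad \lambda = \lambda_1 + \cdots + \lambda_m \in \gt^*, $$
which is the desired conclusion.

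The main technical obstacle I anticipate is verifying the hypotheses of the Mackey--Kleppner--Lipsman Plancherel formula in this setting, namely regularity of the semidirect product (existence of a conull measurable cross-section for the $L_m$-action on $\widehat{N_{m-1}}$) and the correct matching of Haar-measure normalizations that make the fibered integral valid. The first is essentially automatic here because (\ref{setup}(c)) forces $L_m$ to fix each stepwise square integrable $\pi_{\lambda_1 + \cdots + \lambda_{m-1}}$ pointwise, so generic $L_m$-orbits on the relevant part of $\widehat{N_{m-1}}$ are singletons. The second is bookkeeping that ultimately accounts for the constant $c$ of (\ref{c-d}(a)) appearing in the full Plancherel formula, but is immaterial for the support statement claimed in the lemma.
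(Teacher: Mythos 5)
Your proof is correct and follows the same inductive Mackey--Kleppner--Lipsman route that underlies the cited result in \cite{W2013}: induction on the number $m$ of layers, Moore--Wolf concentration in the base case and on the top layer $L_m$, and the observation that condition (\ref{setup}(c)) makes $L_m$ act trivially on the quasi-center $S_{m-1}$ and hence fix each $\pi_{\lambda_1+\cdots+\lambda_{m-1}}$, so that the $L_m$-orbits on the generic part of $\widehat{N_{m-1}}$ are singletons and the Plancherel disintegration factors. The only place you gloss slightly is in justifying that fixing the central character implies fixing the unitary equivalence class (which needs that a stepwise square integrable representation of $N_{m-1}$ is determined by its restriction to $S_{m-1}$), but that is exactly how Construction \ref{construction} is set up, so there is no real gap.
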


Theorem \ref{sqint-plancherel} extends to the stepwise square
integrable setting, as follows.

\begin{theorem}\label{plancherel-general}
Let $N$ be a connected simply connected nilpotent Lie group that
satisfies {\rm (\ref{setup})} and {\rm (\ref{c-weak})}.  Then Plancherel 
measure for $N$ is
concentrated on $\{\pi_\lambda \mid \lambda \in \gt^*\}$.
If $\lambda \in \gt^*$, and if $u$ and $v$ belong to the
representation space $\cH_{\pi_\lambda}$ of $\pi_\lambda$,  then
the coefficient $f_{u,v}(x) = \langle u, \pi_\nu(x)v\rangle$
satisfies
\begin{equation}\label{frob1}
||f_{u,v}||^2_{L^2(N / S)} = \frac{||u||^2||v||^2}{|P(\lambda)|}\,.
\end{equation}
The distribution character $\Theta_{\pi_\lambda}$ of $\pi_{\lambda}$ satisfies
\begin{equation}\label{frob2}
\Theta_{\pi_\lambda}(f) = c^{-1}|P(\lambda)|^{-1}\int_{\cO(\lambda)}
        \widehat{f_1}(\xi)d\nu_\lambda(\xi) \text{ for } f \in \cC(N)
\end{equation}
where $\cC(N)$ is the Schwartz space, $f_1$ is the lift 
$f_1(\xi) = f(\exp(\xi))$, $\widehat{f_1}$ is its classical Fourier transform,
$\cO(\lambda)$ is the coadjoint orbit $\Ad^*(N)\lambda = \gv^* + \lambda$,
and $d\nu_\lambda$ is the translate of normalized Lebesgue measure from
$\gv^*$ to $\Ad^*(N)\lambda$.  The Plancherel formula on $N$ is
\begin{equation}\label{frob3}
f(x) = c\int_{\gt^*} \Theta_{\pi_\lambda}(r_xf) |P(\lambda)|d\lambda
        \text{ for } f \in \cC(N).
\end{equation}
\end{theorem}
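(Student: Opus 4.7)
The concentration claim is immediate from Lemma~\ref{concentrated}, so the plan reduces to establishing the three formulae (\ref{frob1}), (\ref{frob2}), (\ref{frob3}). I would proceed by induction on the number $m$ of strata in the decomposition $N = L_1\cdots L_m$, with base case $m=1$ supplied by Theorem~\ref{sqint-plancherel}. For the inductive step I write $N = N_{m-1}\rtimes L_m$ and observe that $S = S_{m-1}\times Z_m$ is abelian: the direct product follows from the semidirect structure, and the commutation $[S_{m-1}, Z_m] = 1$ comes from (\ref{setup}(c)) applied with $r=m$. The exponential diffeomorphism then identifies $N/S$ with $\gv = \gv_1\oplus\cdots\oplus\gv_m$ and factors the invariant measure as a product $dn'\,dl$ on $(N_{m-1}/S_{m-1})\times(L_m/Z_m)$.

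For (\ref{frob1}), Construction~\ref{construction} presents $\pi_\lambda = \pi_{\lambda'}^\dagger\widehat\otimes\pi_{\lambda_m}$ on $\cH'\otimes\cH_m$, where $\lambda' = \lambda_1+\cdots+\lambda_{m-1}$. For product vectors $u = u'\otimes u_m$ and $v = v'\otimes v_m$ the coefficient factors as
\begin{equation*}
f_{u,v}(n'l) = \langle u',\,\pi_{\lambda'}^\dagger(n'l)v'\rangle \cdot \langle u_m,\,\pi_{\lambda_m}(l)v_m\rangle.
\end{equation*}
Using the product measure, I would rewrite $\pi_{\lambda'}^\dagger(n'l)v' = \pi_{\lambda'}(n')\bigl(\pi_{\lambda'}^\dagger(l)v'\bigr)$, invoke unitarity of $\pi_{\lambda'}^\dagger(l)$ to replace this by a vector of the same norm as $v'$, and apply the inductive hypothesis to the $N_{m-1}/S_{m-1}$-integral and the case $m=1$ to the $L_m/Z_m$-integral. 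The two factors combine to $\|u\|^2\|v\|^2/|P(\lambda)|$ because $P(\lambda) = P_{m-1}(\lambda')\cdot\Pf(b_{\lambda_m})$; polarization then extends (\ref{frob1}) to arbitrary $u,v\in\cH_{\pi_\lambda}$.

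For (\ref{frob2}) and (\ref{frob3}) I would mirror the classical argument of \cite{MW1973, W2013}. Using (\ref{frob1}) and the tensor decomposition of $\pi_\lambda$, decompose $\Theta_{\pi_\lambda}(f)$ as an iterated orbital integral built from the characters of $\pi_{\lambda'}^\dagger$ and $\pi_{\lambda_m}$, and reassemble via Fubini into a single orbital integral over $\cO(\lambda) = \lambda+\gv^*$ with normalization $c^{-1}|P(\lambda)|^{-1}$; this is (\ref{frob2}). The identity (\ref{frob3}) then follows by applying the abstract Plancherel theorem to $f\in\cC(N)$, using Lemma~\ref{concentrated} to restrict the integral to $\gt^*$ and (\ref{frob1}) to identify the Plancherel density there as $c\,|P(\lambda)|$.

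The main obstacle is the step to (\ref{frob2}) under the weaker hypothesis (\ref{c-weak}) in place of the strong computability condition (\ref{c-strong}). Under (\ref{c-strong}) one has $P(\lambda) = \Pf_\gn(b_\lambda)$ and the orbital-integral computation follows the template of \cite{MW1973} verbatim. Under (\ref{c-weak}) alone, however, the off-diagonal $\gl_s''$-contributions to $b_\lambda$ on $\gv$ need not vanish, so $\Pf_\gn(b_\lambda)$ can strictly exceed $P(\lambda)$. The delicate point is to verify that the coadjoint orbit is nevertheless still the affine subspace $\cO(\lambda) = \lambda+\gv^*$ and that the correct normalizer for its orbital integral is $|P(\lambda)|^{-1}$ rather than $|\Pf_\gn(b_\lambda)|^{-1}$ — equivalently, that the extra bracket terms $[\gl_r,\gl_s]\subset \gl_s''+\gv_s$ permitted by (\ref{c-weak}) are absorbed into the central character of $\pi_\lambda$ along the $\gl_s''$-directions and are therefore invisible to the orbital-integral normalization.
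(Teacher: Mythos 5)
Your derivation of (\ref{frob1}) is exactly the paper's: the same induction on $m$, the same factoring of the integral over $(N_{m-1}/S_{m-1})\times(L_m/Z_m)$, and the same use of unitarity of $\pi^\dagger_{\lambda_1+\dots+\lambda_{m-1}}(y)$ (so that $\|v_y\|=\|v\|$) to invoke the inductive hypothesis on the inner integral. For (\ref{frob2}) and (\ref{frob3}) the paper, like you, falls back on the argument of \cite{W2013}, but it does so by simply asserting that (\ref{frob1}) was the \emph{only} step requiring modification under (\ref{c-weak}); it does not re-examine the orbital-integral normalization. The ``delicate point'' you flag is exactly the matter of Remark \ref{2-steps}: under (\ref{c-weak}) one indeed loses $P=\Pf_{\gn}$, but one never needed that identity --- what (\ref{frob2}) and (\ref{frob3}) require is that $|P(\lambda)|$ be the Plancherel density, i.e.\ the formal degree of $\pi_\lambda$, which is precisely what your (already-established) (\ref{frob1}) supplies, independently of $\Pf_{\gn}$. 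Once the formal degree is pinned to $|P(\lambda)|$, the characters and inversion formula are fixed by the abstract Plancherel theorem together with the orbit picture, and the computation in \cite{W2013} carries over verbatim; so your proposal is correct and matches the paper, with the residual worry dissolved by your own proof of (\ref{frob1}).
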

\medskip

Theorem \ref{plancherel-general} is proved in \cite{W2013} for groups
$N$ that satisfy (\ref{setup}) together with (\ref{c-strong}).  We will
need it for (\ref{setup}) together with the somewhat less restrictive
(\ref{c-weak}).  The only point where the argument needs a slight
modification is in the proof of (\ref{frob1}).  
The action of $L_m$ on $\gl_1 + \dots + \gl_{m-1}$ 
is unipotent, so there is an $L_m$-invariant measure preserving decomposition
$N_m/S_m = \left ( L_1/Z_1 \right ) \times \dots \times 
\left ( N_m/Z_m \right )$.  The case $m = 1$ is the property 
$|f_{u,v}|^2_{L^2(L_1/Z_1)} = \frac{||u||^2||v||^2}{|\Pf(\lambda)|} < \infty$ 
of coefficients
of square integrable representations.  By induction on $m$,
$|f_{u,v}|^2_{L^2(N_{m-1}/S_{m-1}))} = 
\frac{||u||^2||v||^2}{|\Pf(\lambda_1)\dots\Pf(\lambda_{m-1})|}$ for 
$N_{m-1}$\,.  Let $\pi^\dagger$
be the extension of $\pi \in \widehat{N_{m-1}}$ to $N_m$\,.  Let 
$u, v \in \cH_{\pi_{\lambda_1+\dots\lambda_{m-1}}}$ and write $v_y$ for 
$\pi^\dagger_{\lambda_1+\dots + \lambda_{m-1}}(y)v$.  Let 
$u',v'\in\cH_{\pi_{\lambda_m}}$\,. 
$$
\begin{aligned}
||f_{u\otimes u', v\otimes v'}&||^2_{L^2(N/S)} =
        \int_{N/S}
          |\langle u,\pi^\dagger_{\lambda_1+\dots\lambda_{m-1}}(xy)v\rangle|^2
          |\langle u', \pi_{\lambda_m}(y)v'\rangle|^2
          d(xyS_m) \\
&= \int_{L_m/Z_m} |\langle u', \pi_{\lambda_m}(y)v'\rangle|^2
   \left ( \int_{N_{m-1}/S_{m-1}}
        |\langle u,\pi^\dagger_{\lambda_1+\dots\lambda_{m-1}}(xy)v\rangle|^2
        d(xS_{m-1}) \right ) d(yZ_m) \\
&= \int_{L_m/Z_m} |\langle u', \pi_{\lambda_m}(y)v'\rangle|^2
   \left ( \int_{N_{m-1}/S_{m-1}}
        |\langle u, \pi^\dagger_{\lambda_1+\dots\lambda_{m-1}}(x)v_y\rangle|^2
        d(xS_{m-1}) \right ) d(yZ_m) \\
&= \int_{L_m/Z_m} |\langle u', \pi_{\lambda_m}(y)v'\rangle|^2
   \left ( \int_{N_{m-1}/S_{m-1}}
        |\langle u, \pi_{\lambda_1+\dots\lambda_{m-1}}(x)v_y\rangle|^2
        d(xS_{m-1}) \right ) d(yZ_m) \\
&= \tfrac{||u||^2||v_y||^2}{|\Pf(\lambda_1)\dots\Pf(\lambda_{m-1})|}
	\int_{N_m/Z_m} |\langle u', \pi_{\lambda_m}(y)v'\rangle|^2d(yZ_m) \\ 
&= \tfrac{||u||^2||v||^2}{|\Pf(\lambda_1)\dots\Pf(\lambda_{m-1})|}
	\int_{N_m/Z_m} |\langle u', \pi_{\lambda_m}(y)v'\rangle|^2d(yZ_m) 
= \frac{||u\otimes u'||^2 ||v \otimes v'||^2}
	{|\Pf(\lambda_1)\dots\Pf(\lambda_m)|} < \infty.
\end{aligned}
$$
Thus Theorem \ref{plancherel-general} is valid as stated.
\medskip

The first goal of this note is to show that if $N$ is the nilradical of a 
parabolic subgroup $Q$ of a real reductive Lie group, then $N$ is stepwise
square integrable, specifically that it satisfies (\ref{setup}) and
(\ref{c-weak}), so that Theorem \ref{plancherel-general} applies to it.
That is Theorem \ref{gen-setup}.  The second goal is to examine applications
to Fourier analysis on the parabolic $Q$ and several important subgroups, such 
as the maximal split solvable subgroups and the maximal amenable subgroup
of $Q$.  The third goal is to extend all these results to direct limit 
parabolics in a certain class of infinite dimensional real reductive Lie 
groups.
\medskip

In Section \ref{sec2} we recall the restricted root machinery used
in \cite{W2013} to show that nilradicals of minimal parabolics are
stepwise square integrable.  In Section \ref{sec3} we make a first
approximation to refine that machinery to apply it to general parabolics.
That is enough to see that those parabolics satisfy (\ref{setup}), and 
to construct their stepwise square integrable representations.  But 
it not quite enough to compute the Plancherel density.  Then in 
Section \ref{sec4} we introduce an appropriate modification of the 
earlier stepwise square integrable machinery.  We prove (\ref{c-weak}) in
general and use the result to compute the Plancherel density and verify the
estimates and inversion formula of Theorem \ref{plancherel-general} 
for arbitrary parabolic
subgroups of real reductive Lie groups.  The main result is
Theorem \ref{gen-setup}.
\medskip

In Section \ref{sec5} we apply Theorem \ref{gen-setup} to obtain
explicit Plancherel and Fourier inversion formulae for the maximal
exponential solvable subgroups $AN$ in real parabolic
subgroups $Q = MAN$, following the lines of the minimal parabolic 
case studied in \cite{W2014}.  The key point here is computation of
the Dixmier-Puk\'anszky operator $D$ for
the group $AN$.  Recall that $D$ is a pseudo-differential operator
that compensates lack of unimodularity in $AN$.
\medskip

There are technical obstacles to extending our results to non-minimal
parabolics $Q = MAN$, many involving the orbit types for noncompact 
reductive groups $M$, but in Section \ref{sec6} we do carry out the extension
to the maximal amenable subgroups $(M \cap K)AN$.  This covers all the
maximal amenable subgroups of $G$ that satisfy a certain technical condition
\cite{M1979}.
\medskip

That ends Part I: Finite Dimensional Theory.  We go on to Part II: Infinite
Dimensional Theory.
\medskip

In Section \ref{sec7} we discuss infinite dimensional direct limits of nilpotent
Lie groups and the setup for studying direct limits of stepwise square integrable
representations.  Then in Section \ref{sec8} we introduce the machinery of 
propagation, which will allow us to deal with nilradicals of direct limit
parabolics.
\medskip

In Section \ref{sec9} we apply this machinery to an $L^2$ space for the
direct limit nilradicals.  This $L^2$ space is formed using the formal degree
inherent in stepwise square integrable representations, and it is not immediate
that its elements are functions.  But we also introduce a limit Schwartz space,
based on matrix coefficients of $C^\infty$ vectors for stepwise square
integrable representations.  It is a well defined $LF$ (limit of Fr\' echet) space,
sitting naturally in the $L^2$ space, and we can view that $L^2$ space as its
Hilbert space completion.  That is Proposition \ref{dense-embedding}.  We
follow it with a fairly explicit Fourier Inversion Formula, 
Theorem \ref{limit-inversion}.
\medskip

In Section \ref{sec10} we work out the corresponding results for the maximal
exponential locally solvable subgroup $AN$ of the direct limit parabolic
$Q = MAN$.  We have to be careful about the Schwartz space and the lack of a 
Dixmier-Puk\'anszky operator in the limit, but the results of Section \ref{sec9}
to extend from $N$ to $AN$.  See Proposition \ref{dense-embedding-an} and
Theorem \ref{limit-inversion-an}.  In Section \ref{sec11} we develop similar
results for the maximal lim-compact subgroup $U$ of $M$, carefully avoiding 
the analytic complications that would result from certain classes of Type II
and Type III representations.  
\medskip

In Section \ref{sec12} we fit the results of Sections \ref{sec9} and \ref{sec11}
together for an analysis of the $L^2$ space, the Schwartz space, and the Fourier
Inversion formula, for the limit group $UN$ in the parabolic $Q = MAN$.  Finally,
in Section \ref{sec13}, we combine the results of Sections \ref{sec10} and
\ref{sec12} for the corresponding results on the maximal amenable subgroup
$UAN$ of the limit parabolic $Q$.  See Proposition \ref{dense-embedding-uan}
and Theorem \ref{limit-inversion-uan}.

\section{Specialization to Minimal Parabolics} \label{sec2}
\setcounter{equation}{0}
In order to prove our result for nilradicals of arbitrary parabolics we
need to study the construction that gives the decomposition 
$N = L_1L_2\dots L_m$ of \ref{setup} and the form of the Pfaffian polynomials 
for the individual the square integrable layers $L_r$\,.
\medskip

Let $G$ be a connected real reductive Lie group, $G = KAN$ an Iwasawa
decompsition, and $Q = MAN$ the corresponding minimal parabolic subgroup.
Complete $\ga$ to a Cartan subalgebra $\gh$ of $\gg$.
Then $\gh = \gt + \ga$ with $\gt = \gh \cap \gk$.  Now we have root systems
\begin{itemize}
\item $\Delta(\gg_\C,\gh_\C)$: roots of $\gg_\C$ relative to $\gh_\C$
(ordinary roots),

\item $\Delta(\gg,\ga)$: roots of $\gg$ relative to $\ga$ (restricted roots),

\item $\Delta_0(\gg,\ga) = \{\alpha \in \Delta(\gg,\ga) \mid 
        2\alpha \notin \Delta(\gg,\ga)\}$ (nonmultipliable restricted roots).
\end{itemize}
The choice of $\gn$ is the same as the choice of a positive restricted
root systen $\Delta^+(\gg,\ga)$.  Define
\begin{equation}\label{cascade}
\begin{aligned}
&\beta_1 \in \Delta^+(\gg,\ga) \text{ is a maximal positive restricted root
and }\\
& \beta_{r+1} \in \Delta^+(\gg,\ga) \text{ is a maximum among the roots of }
\Delta^+(\gg,\ga) \text{ orthogonal to all } \beta_i \text{ with } i \leqq r
\end{aligned}
\end{equation}
The resulting roots (we usually say {\em root} for {\em restricted root})
$\beta_r$\,, $1 \leqq r \leqq m$, are mutually strongly 
orthogonal, in particular mutually 
orthogonal, and each $\beta_r \in \Delta_0(\gg,\ga)$.  
For $1\leqq r \leqq m$ define
\begin{equation}\label{layers}
\begin{aligned}
&\Delta^+_1 = \{\alpha \in \Delta^+(\gg,\ga) \mid \beta_1 - \alpha \in \Delta^+(\gg,\ga)\} 
\text{ and }\\
&\Delta^+_{r+1} = \{\alpha \in \Delta^+(\gg,\ga) \setminus (\Delta^+_1 \cup \dots \cup \Delta^+_r)
        \mid \beta_{r+1} - \alpha \in \Delta^+(\gg,\ga)\}.
\end{aligned}
\end{equation}
We know 
\cite[Lemma 6.1]{W2013} that if $\alpha \in \Delta^+(\gg,\ga)$ then either
$\alpha \in \{\beta_1, \dots , \beta_m\}$
or $\alpha$ belongs to exactly one of the sets $\Delta^+_r$\,.  
\medskip

The layers are are the
\begin{equation}\label{def-l}
\gl_r = \gg_{\beta_r} + {\sum}_{\Delta^+_r}\, \gg_\alpha 
\text{ for } 1\leqq r\leqq m
\end{equation}
Denote
\begin{equation}
s_{\beta_r} \text{ is the Weyl group reflection in } \beta_r
\text{ and } \sigma_r: \Delta(\gg,\ga) \to \Delta(\gg,\ga) \text{ by }
\sigma_r(\alpha) = -s_{\beta_r}(\alpha).
\end{equation}
Then $\sigma_r$ leaves $\beta_r$ fixed and preserves $\Delta^+_r$.
Further, if $\alpha, \alpha' \in \Delta^+_r$ then $\alpha + \alpha'$
is a (restricted) root if and only if $\alpha' = \sigma_r(\alpha)$, and in 
that case $\alpha + \alpha' = \beta_r$\,.
\medskip

From this it follows \cite[Theorem 6.11]{W2013} that $N = L_1L_2\dots L_m$
satisfies (\ref{setup}) and (\ref{c-strong}), so it has stepwise square 
integrable representations.
Further \cite[Lemma 6.4]{W2013} the $L_r$ are Heisenberg groups in the sense 
that if $\lambda_r \in \gz_r^*$ with $\Pf_{\gl_r}(\lambda_r) \ne 0$ then
$\gl_r/\ker\lambda_r$ is an ordinary Heisenberg group of dimension
$\dim\gv_r + 1$.
\medskip

\section{Intersection with an Arbitrary Real Parabolic}
\label{sec3}
\setcounter{equation}{0}
Every parabolic subgroup of $G$ is conjugate to a parabolic that 
contains the minimal parabolic $Q = MAN$.  Let $\Psi$ denote
the set of simple roots for the positive system $\Delta^+(\gg,\ga)$.  Then
the parabolic subgroups of $G$ that contain $Q$ are in one to one
correspondence with the subsets $\Phi \subset \Psi$, say $Q_\Phi
\leftrightarrow \Phi$, as follows.  Denote $\Psi = \{\psi_i\}$ and set
\begin{equation}\label{para-roots} 
\begin{aligned}
\Phi^{red} &= \left \{\alpha = {\sum}_{\psi_i \in \Psi} 
	n_i\psi_i \in \Delta(\gg,\ga) \mid
	n_i = 0 \text{ whenever } \psi_i \notin \Phi \right \} \\
\Phi^{nil} &= \left \{\alpha = {\sum}_{\psi_i \in \Psi} 
        n_i\psi_i \in \Delta^+(\gg,\ga) \mid n_i > 0 \text{ for some }
	\psi_i \notin \Phi \right \}.
\end{aligned}
\end{equation}
Then, on the Lie algebra level, $\gq_\Phi = \gm_\Phi + \ga_\Phi + \gn_\Phi$
where
\begin{equation}\label{para-pieces}
\begin{aligned}
&\ga_\Phi = \{ \xi \in \ga \mid \psi(\xi) = 0 \text{ for all } \psi \in
	\Phi \} = \Phi^\perp\,, \\
&\gm_\Phi + \ga_\Phi \text{ is the centralizer of } \ga_\Phi \text{ in }
	\gg \text{, so } \gm_\Phi \text{ has root system } \Phi^{red},
	\text{ and }\\
&\gn_\Phi = {\sum}_{\alpha \in \Phi^{nil}} \gg_\alpha\,, \text{ nilradical
	of } \gq_\Phi\,, \text{ sum of the positive } \ga_\Phi\text{-root
	spaces.}
\end{aligned}
\end{equation}
Since $\gn = \sum_r \gl_r$, as given in (\ref{def-l}) we have
\begin{equation}\label{intersec-1}
\gn_{\Phi} = {\sum}_r (\gn_\Phi \cap \gl_r) = 
  {\sum}_r \left ( (\gg_{\beta_r} \cap \gn_\Phi) 
	+ {\sum}_{\Delta_r^+} (\gg_\alpha \cap \gn_\Phi) \right ).
\end{equation}
As $\ad(\gm)$ is irreducible on each restricted root space, 
if $\alpha \in \{\beta_r\} \cup \Delta_r^+$ then
$\gg_\alpha \cap \gn_\Phi$ is $0$ or all of $\gg_\alpha$\,.
\medskip

\begin{lemma}\label{inter-center}
Suppose $\gg_{\beta_r} \cap \gn_\Phi = 0$. 
Then $\gl_r \cap \gn_\Phi = 0$.
\end{lemma}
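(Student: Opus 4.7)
The plan is to use the decomposition $\gl_r = \gg_{\beta_r} + \sum_{\alpha \in \Delta^+_r} \gg_\alpha$ from (\ref{def-l}) together with the dichotomy, noted just before the lemma, that each intersection $\gg_\alpha \cap \gn_\Phi$ is either $0$ or all of $\gg_\alpha$. Thus it suffices to prove that if $\gg_{\beta_r} \cap \gn_\Phi = 0$, then $\gg_\alpha \cap \gn_\Phi = 0$ for every $\alpha \in \Delta^+_r$. The whole argument reduces to a simple bookkeeping with expansions of positive restricted roots into simple roots $\psi_i \in \Psi$.

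First I would translate the hypothesis using (\ref{para-roots}) and (\ref{para-pieces}): $\gg_{\beta_r} \subset \gn_\Phi$ iff $\beta_r \in \Phi^{nil}$, so the assumption $\gg_{\beta_r} \cap \gn_\Phi = 0$ means $\beta_r \notin \Phi^{nil}$. Since $\beta_r$ is positive, this forces $\beta_r \in \Phi^{red}$; equivalently, writing $\beta_r = \sum_i n_i \psi_i$, one has $n_i = 0$ for every $\psi_i \notin \Phi$.

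Next, for $\alpha \in \Delta^+_r$, the definition (\ref{layers}) gives $\beta_r - \alpha \in \Delta^+(\gg,\ga)$, and the discussion after (\ref{def-l}) identifies $\beta_r - \alpha = \sigma_r(\alpha)$. So $\alpha$ and $\beta_r - \alpha$ are both positive roots summing to $\beta_r$. Expanding $\alpha = \sum_i a_i \psi_i$ and $\beta_r - \alpha = \sum_i b_i \psi_i$ with all $a_i, b_i \geqq 0$, one gets $a_i + b_i = n_i$ for every $i$. For $\psi_i \notin \Phi$ we have $n_i = 0$, whence $a_i = 0$. Therefore $\alpha \in \Phi^{red}$, so $\alpha \notin \Phi^{nil}$, and $\gg_\alpha \cap \gn_\Phi = 0$ as required.

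I do not expect any real obstacle; the proof is essentially a one-line combinatorial observation about positive-root expansions, once one invokes the already-established fact that $\beta_r - \alpha$ is again a positive root for $\alpha \in \Delta^+_r$ and the all-or-nothing property of $\gg_\alpha \cap \gn_\Phi$. The only minor point to be careful about is that $\beta_r$ is nonmultipliable (it lies in $\Delta_0(\gg,\ga)$), but this is not actually needed for the argument since the simple-root expansion already has non-negative integer coefficients.
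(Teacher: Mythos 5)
Your proof is correct and takes essentially the same route as the paper: both reduce to observing that for $\alpha \in \Delta^+_r$ the positivity of $\beta_r - \alpha$ forces each simple-root coefficient of $\alpha$ to be bounded by the corresponding coefficient of $\beta_r$, so that $\beta_r \in \Phi^{red}$ forces $\alpha \in \Phi^{red}$ as well. The invocation of $\sigma_r$ is a harmless extra; positivity of $\beta_r - \alpha$ from (\ref{layers}) is all that is used, exactly as in the paper.
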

\begin{proof} Since $\gg_{\beta_r} \cap \gn_\Phi = 0$, the root $\beta_r$
has form $\sum_{\psi \in \Phi} n_\psi\psi$ with each $n_\psi \geqq 0$ and
$n_\psi = 0$ for $\psi \notin \Phi$.  If $\alpha \in \Delta_r^+$ it has form 
$\sum_{\psi \in \Psi} \ell_\psi\psi$ with $0 \leqq \ell_\psi \leqq n_\psi$ for
each $\psi \in \Psi$.  In particular $\ell_\psi = 0$ for 
$\psi \notin \Phi$.  Now every root space of $\gl_r$ is contained in
$\gm_\Psi$\,.  In particular $\gl_r \cap \gn_\Phi = 0$.  \hfill
\end{proof}

\begin{remark}{\rm We can define a partial order on $\{\beta_i\}$ by:
$\beta_{i+1} \succ \beta_i$ when the set of positive roots of which
$\beta_{i+1}$ is a maximum is contained in the corresponding set for 
$\beta_i$\,.  This is only a consideration when one further disconnects the 
Dynkin diagram by deleting a node at which $-\beta_i$ attaches, which
doesn't happen for type $A$.  If $\beta_s \succ \beta_r$ in this partial
order, and $\gg_{\beta_r} \cap \gn_\Phi = 0$, then 
$\gg_{\beta_s} \cap \gn_\Phi = 0$ as well, so $\gl_s \cap \gn_\Phi = 0$.}
\hfill $\diamondsuit$
\end{remark}

\begin{lemma}\label{inter-compl}
Suppose $\gg_{\beta_r} \cap \gn_\Phi \ne 0$.  Define $J_r \subset \Delta_r^+$
by $\gl_r \cap \gn_\Phi = \gg_{\beta_r} + \sum_{J_r} \gg_\alpha$\,.  Decompose
$J_r = J'_r \cup J''_r$ $($disjoint$)$ where 
  $J'_r = \{\alpha \in J_r \mid \sigma_r\alpha \in J_r\}$ and
  $J''_r = \{\alpha \in J_r \mid \sigma_r\alpha \notin J_r\}$.
Then $\gg_{\beta_r} + \sum_{J''_r} \gg_\alpha$ belongs to a single 
	$\ga_\Phi$-root space in $\gn_\Phi$\,,  i.e.
	$\alpha|_{\ga_\Phi} = \beta_r|_{\ga_\Phi}$\,, for every 
	$\alpha \in J''_r$\,.
\end{lemma}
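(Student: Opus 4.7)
The plan is to use the basic identity $\alpha + \sigma_r\alpha = \beta_r$ for $\alpha \in \Delta_r^+$ (recorded in the excerpt just above Lemma \ref{inter-center}) together with the disjoint partition
\[
\Delta^+(\gg,\ga) \;=\; \bigl(\Phi^{red}\cap \Delta^+(\gg,\ga)\bigr)\;\sqcup\;\Phi^{nil},
\]
which is immediate from \eqref{para-roots}: a positive root either has all coefficients $n_i = 0$ for $\psi_i\notin\Phi$ (putting it in $\Phi^{red}$) or has some such $n_i>0$ (putting it in $\Phi^{nil}$). Given these two ingredients, the argument is purely combinatorial and the statement unwinds in a couple of lines.

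First I would fix $\alpha\in J''_r$ and consider $\sigma_r\alpha$. Since $\sigma_r$ preserves $\Delta_r^+$, the element $\sigma_r\alpha$ is a positive root in $\Delta_r^+$. By the definition of $J''_r$ we have $\sigma_r\alpha\notin J_r = \Delta_r^+\cap\Phi^{nil}$, so $\sigma_r\alpha\notin\Phi^{nil}$; the partition above then forces $\sigma_r\alpha\in\Phi^{red}$.

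Next, every element of $\Phi^{red}$ is a $\Z$-linear combination of simple roots $\psi_i\in\Phi$, and each such $\psi_i$ vanishes on $\ga_\Phi=\Phi^\perp$ by the description of $\ga_\Phi$ in \eqref{para-pieces}. Hence $\sigma_r\alpha\bigl|_{\ga_\Phi}=0$. Restricting the identity $\alpha + \sigma_r\alpha = \beta_r$ to $\ga_\Phi$ then yields $\alpha\bigl|_{\ga_\Phi} = \beta_r\bigl|_{\ga_\Phi}$, which is exactly the claim; the inclusion $\gg_{\beta_r}\subset\gn_\Phi$ needed to write the displayed sum as an $\ga_\Phi$-root space is the running hypothesis $\gg_{\beta_r}\cap\gn_\Phi\ne 0$.

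There is no real obstacle: the only point one has to be sure about is that the positive restricted roots split cleanly as $(\Phi^{red}\cap\Delta^+)\sqcup\Phi^{nil}$ and that $\sigma_r$ really does preserve $\Delta_r^+$ — both are already in place in the excerpt. No case analysis on whether $\sigma_r\alpha$ can equal $\alpha$ is needed, since in that degenerate situation one would have $\sigma_r\alpha\in J_r$ iff $\alpha\in J_r$, automatically placing $\alpha$ in $J'_r$ rather than $J''_r$.
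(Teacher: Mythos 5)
Your proof is correct and follows essentially the same path as the paper's: you observe that for $\alpha\in J''_r$ the root $\sigma_r\alpha$ lies in $\Delta_r^+$ but outside $\Phi^{nil}$, hence in $\Phi^{red}$, and then use $\alpha+\sigma_r\alpha=\beta_r$ to conclude $\alpha|_{\ga_\Phi}=\beta_r|_{\ga_\Phi}$. The paper phrases this in terms of coefficients along $\Psi\setminus\Phi$ rather than saying $\sigma_r\alpha|_{\ga_\Phi}=0$ outright, but the arguments are the same.
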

\begin{proof}  Two restricted roots $\alpha = \sum_{\Psi} n_i\psi_i$
and $\alpha' = \sum_{\Psi} \ell_i\psi_i$ have the same restriction to
$\ga_\Phi$ if and only if $n_i = \ell_i$ for all $\psi_i \notin \Phi$.
Now suppose $\alpha \in J''_r$ and $\alpha' = \sigma_r\alpha$\,.  
Then $n_i > 0$ for some $\psi_i \notin \Phi$
but $\ell_i = 0$ for all $\psi_i \notin \Phi$.  Thus $\alpha$ and
$\beta_r = \alpha + \sigma_r\alpha$ have the same $\psi_i$-coefficient 
$n_i = n_i + \ell_i$ for
every $\psi_i \notin \Phi$.  In other words the corresponding restricted
root spaces are contained in the same $\ga_\Phi$-root space. \hfill
\end{proof}

\begin{lemma} \label{semidirect}
Suppose $\gl_r \cap \gn_\Phi \ne 0$.  Then the algebra
$\gl_r \cap \gn_\Phi$ has center 
$\gg_{\beta_r} + \sum_{J''_r} \gg_\alpha$\,, and
$\gl_r \cap \gn_\Phi = (\gg_{\beta_r} + \sum_{J''_r} \gg_\alpha)
+ (\sum_{J'_r} \gg_\alpha))$.
Further, $\gl_r\cap \gn_\Phi
= \left ({\sum}_{J''_r} \gg_\alpha\right ) \oplus
\left ( \gg_{\beta_r} + \left ({\sum}_{J'_r} \gg_\alpha \right )\right )$
direct sum of ideals.
\end{lemma}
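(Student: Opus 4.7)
The plan is to exploit the two-step nilpotent, Heisenberg-like structure of $\gl_r$ recalled at the end of Section \ref{sec2}. The key bracket facts are: (i) $\gg_{\beta_r}$ is central in $\gl_r$, since $\beta_r + \alpha$ is never a restricted root for $\alpha \in \Delta_r^+$; and (ii) for $\alpha, \alpha' \in \Delta_r^+$, the bracket $[\gg_\alpha,\gg_{\alpha'}]$ vanishes unless $\alpha' = \sigma_r\alpha$, in which case it equals $\gg_{\beta_r}$. Because $\sigma_r$ preserves $\Delta_r^+$, every $\alpha \in \Delta_r^+$ has its partner $\sigma_r\alpha \in \Delta_r^+$, so the center of $\gl_r$ is exactly $\gg_{\beta_r}$. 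Since $\gl_r \cap \gn_\Phi = \gg_{\beta_r} + \sum_{J_r}\gg_\alpha$ is a subalgebra, all of these bracket relations transfer directly to it.

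For the first assertion, $\gg_{\beta_r}$ is automatically in the center of the subalgebra. A root space $\gg_\alpha$ with $\alpha \in J_r$ centralizes $\gl_r \cap \gn_\Phi$ if and only if its unique bracket partner $\gg_{\sigma_r\alpha}$ is missing from the subalgebra, i.e. $\sigma_r\alpha \notin J_r$, i.e. $\alpha \in J''_r$. This immediately gives the center as $\gg_{\beta_r} + \sum_{J''_r} \gg_\alpha$. The vector space equality $\gl_r \cap \gn_\Phi = \bigl(\gg_{\beta_r} + \sum_{J''_r}\gg_\alpha\bigr) + \sum_{J'_r}\gg_\alpha$ is then just a rewriting using $J_r = J'_r \sqcup J''_r$.

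For the direct-sum-of-ideals statement, put $\gi'' = \sum_{J''_r}\gg_\alpha$ and $\gi' = \gg_{\beta_r} + \sum_{J'_r}\gg_\alpha$. The space $\gi''$ lies in the center of $\gl_r \cap \gn_\Phi$ by the previous paragraph, hence is an ideal and $[\gi',\gi''] = 0$. For $\gi'$, the only nonzero brackets among its generators pair some $\gg_\alpha$ with $\alpha \in J'_r$ against $\gg_{\sigma_r\alpha}$; by the very definition of $J'_r$ we have $\sigma_r\alpha \in J'_r$, and the bracket lands in $\gg_{\beta_r} \subset \gi'$. Hence $\gi'$ is a subalgebra, and combined with $[\gi',\gi''] = 0$ it is an ideal. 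Linear independence of distinct restricted-root spaces gives $\gi' \cap \gi'' = 0$, completing the proof.

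There is no real obstacle here; the only point requiring care is bookkeeping to confirm that $\sigma_r$ leaves $J'_r$ stable (immediate from its definition as $\{\alpha \in J_r : \sigma_r\alpha \in J_r\}$ together with $\sigma_r^2 = 1$), so that the ideal $\gi'$ is genuinely closed under the Heisenberg-type bracket inherited from $\gl_r$.
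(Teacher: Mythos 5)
Your proof is correct and fills in exactly the details the paper leaves implicit when it says the lemma is ``immediate'' from Lemmas \ref{inter-center} and \ref{inter-compl}: you read off the center and the ideal decomposition directly from the Heisenberg bracket structure of $\gl_r$ recalled at the end of Section \ref{sec2} (namely that $\gg_{\beta_r}$ is central and that $[\gg_\alpha,\gg_{\alpha'}]\ne 0$ for $\alpha,\alpha'\in\Delta_r^+$ exactly when $\alpha'=\sigma_r\alpha$, with value in $\gg_{\beta_r}$). This is the same underlying argument the paper intends, just written out explicitly, including the small but necessary observation that $J'_r$ is $\sigma_r$-stable.
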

\begin{proof}
This is immediate from the statements and proofs of 
Lemmas \ref{inter-center} and \ref{inter-compl}.  \hfill
\end{proof}

Following the cascade construction (\ref{cascade}) it will be 
convenient to define sets of simple restricted roots
\begin{equation}\label{cascade-simple}
\Psi_1 = \Psi \text{ and } \Psi_{s+1} = 
        \{\psi \in \Psi \mid  \langle \psi,\beta_i\rangle = 0 
        \text{ for } 1 \leqq i \leqq s\}.
\end{equation}
Note that $\Psi_r$ is the simple root system for
$\{\alpha \in \Delta^+(\gg,\ga) \mid \alpha \perp \beta_i 
\text{ for } i < r\}$.
\medskip

\begin{lemma}\label{part-c}
If $r > s$ then 
$[\gl_r \cap \gn_\Phi\,,\, \gg_{\beta_s} + {\sum}_{J''_s} \gg_\alpha] = 0$.
\end{lemma}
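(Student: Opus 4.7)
The plan is to split the bracket into the two summands of the center of $\gl_s \cap \gn_\Phi$ identified in Lemma \ref{semidirect}, and to handle each separately. The first summand $\gg_{\beta_s}$ is exactly the center $\gz_s$ of the Heisenberg layer $\gl_s$, so the already-established condition (\ref{setup}(c)) for the minimal parabolic gives $[\gl_r, \gg_{\beta_s}] = 0$ whenever $r \geqq s$, and therefore $[\gl_r \cap \gn_\Phi, \gg_{\beta_s}] = 0$ in particular.

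For the second summand $\sum_{J''_s} \gg_\alpha$ I would argue root by root. Fix a restricted root $\gamma$ appearing in $\gl_r \cap \gn_\Phi$, so $\gamma \in \Phi^{nil}$, and fix $\alpha \in J''_s$. Since $[\gg_\gamma, \gg_\alpha] \subset \gg_{\gamma+\alpha}$, it suffices to show that $\gamma+\alpha$ is not a restricted root. The key input is extracted from the proof of Lemma \ref{inter-compl}: $\alpha \in J''_s$ gives $\alpha|_{\ga_\Phi} = \beta_s|_{\ga_\Phi}$, hence $\sigma_s\alpha|_{\ga_\Phi} = 0$, so $\sigma_s\alpha = \beta_s - \alpha$ is a positive root lying in $\gm_\Phi$ (equivalently, in $\Phi^{red}$).

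Now assume toward a contradiction that $\gamma+\alpha \in \Delta$. The strong computability condition (\ref{c-strong}), which is available from Section \ref{sec2} for the full minimal-parabolic $\gn = \sum \gl_r$, gives $[\gl_r,\gl_s] \subset \gv_s$ for $r > s$, forcing $\gamma+\alpha \in \Delta^+_s$. Then $\sigma_s(\gamma+\alpha) = \beta_s - (\gamma+\alpha) = \sigma_s\alpha - \gamma$ has to be a positive root. But $\sigma_s\alpha$ has zero coefficient on every simple root outside $\Phi$, while $\gamma \in \Phi^{nil}$ has a strictly positive coefficient on some simple root $\psi \notin \Phi$. So $\sigma_s\alpha - \gamma$ has a strictly negative coefficient on $\psi$, ruling out its positivity and producing the contradiction. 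Hence $\gamma+\alpha \notin \Delta$ and $[\gg_\gamma, \gg_\alpha] = 0$, which combined with the first step finishes the proof.

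I do not foresee a substantive obstacle. The subtle point is keeping straight the two hypotheses that jointly produce the contradiction in the last step: the assumption $\gamma \in \gl_r \cap \gn_\Phi$ supplies the simple root $\psi \notin \Phi$ at which $\gamma$ has strictly positive coefficient, whereas the strong computability condition on the ambient $\gn$ is what forces $\gamma+\alpha$ into $\Delta^+_s$ and hence pushes the offending element $\sigma_s\alpha - \gamma$ into $\Delta^+$; without either ingredient the coefficient comparison collapses.
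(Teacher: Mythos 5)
Your proof is correct and, up to a cosmetic reparametrization, is the paper's argument: the paper compares the $\psi_{i_0}$-coefficient of $\gamma+\alpha$ directly against that of $\beta_s$ (using that $[\gl_r,\gl_s]\subset\gl_s$ and that the simple-root coefficients of $\alpha$ agree with those of $\beta_s$ off $\Phi$), whereas you first apply $\sigma_s$ and check that $\sigma_s\alpha - \gamma$ acquires a negative coefficient; since $\sigma_s(\gamma+\alpha) = \beta_s - (\gamma+\alpha)$, these are the same inequality. Your explicit treatment of the $\gg_{\beta_s}$ summand via the already-established $[\gl_r,\gz_s]=0$ is a small completeness improvement over the paper's proof, which addresses only the $\alpha\in J''_s$ piece and absorbs the central part by reference.
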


\begin{proof} Suppose that $\alpha \in J''_s$\,.  Express
$\alpha$ and $\sigma_s\alpha$ as sums of simple roots, say
$\alpha  = \sum n_i\psi_i$ and $\sigma_s\alpha = \sum \ell_i\psi_i$\,.
Then, $\ell_i = 0$ for all $\psi_i \in \Psi_s \cap \Phi^{nil}$
and $\beta_s = \sum(n_i+\ell_i)\psi_i$\,.  In other words the coefficient
of $\psi_i$ is the same for $\alpha$ and $\beta_s$ whenever
$\psi_i \in \Psi_s \cap \Phi^{nil}$.  Now let
$\gamma \in (\{\beta_r\} \cup \Delta_r^+) \cap \Phi^{nil}$
where $r > s$, and express $\gamma = \sum c_i\psi_i$\,.  Then
$c_{i_0} > 0$ for some $\beta_{i_0} \in (\Psi_r \cap \Phi^{nil})$.  Note
$\Psi_r \subset \Psi_s$\,, so $c_{i_0} > 0$ for some
$\beta_{i_0} \in (\Psi_s \cap \Phi^{nil})$\,.  Also, $[\gl_r,\gl_s] \subset
\gl_s$ because $r > s$.  If $\gamma + \alpha$ is a root then its
$\psi_{i_0}$-coefficient is greater than that of $\beta_s$\,, which is
impossible.  Thus $\gamma + \alpha$ is not a root.  The lemma follows.
\hfill
\end{proof}
\medskip

We look at a particular sort of linear functional on 
$\sum_r \bigl ( \gg_{\beta_s} + {\sum}_{J''_s} \gg_\alpha \bigr )$.
 Choose
$\lambda_r \in \gg_{\beta_r}^*$ such that $b_{\lambda_r}$ is nondegenerate
on $\sum_r \sum_{J'_r} \gg_\alpha$\,.  
Set $\lambda = \sum \lambda_r$\,.  
We know that (\ref{setup}(c)) holds for the nilradical of the
minimal parabolic $\gq$ that contains $\gq_\Phi$\,. In view of Lemma
\ref{part-c} it follows that
$b_\lambda(\gl_r,\gl_s) = \lambda([\gl_r,\gl_s] = 0$ for $r > s$.
For this particular type of $\lambda$, the bilinear form $b_\lambda$
has kernel $\sum_r \bigl ( \gg_{\beta_s} + 
{\sum}_{J''_s} \gg_\alpha \bigr )$ and is nondegenerate on 
$\sum_r \sum_{J'_r}\gg_\alpha$\,.
\medskip

At this point, the decomposition
$N_\Phi = (L_1\cap N_\Phi)(L_2\cap N_\Phi)\dots (L_m\cap N_\Phi)$
satisfies the first two conditions of (\ref{setup}): 
$$
\begin{aligned}
 &\text{{\rm (a)} each factor $L_r\cap N_\Phi$ has unitary representations 
with coefficients in $L^2((L_r\cap N_\Phi)/(center))$, and} \\
 &\text{{\rm (b)} each } N_r\cap N_\Phi := 
	(L_1\cap N_\Phi)\dots (L_r\cap N_\Phi)
        \text{ is a normal subgroup of } N_\Phi\\
	&\phantom{XXXXXXXXXXXXXX}
   \text{ with } N_r\cap N_\Phi = (N_{r-1}\cap N_\Phi)\rtimes (L_r\cap N_\Phi)
	 \text{ semidirect.}
\end{aligned}
$$
With Lemma \ref{part-c} this is enough to carry out 
Construction \ref{construction}
of our representations $\pi_\lambda$ of $N_\Phi$\,.  However it is not
enough for (\ref{setup}(c)) and (\ref{c-weak}).  For that we will group
the $L_r\cap N_\Phi$ in such a way that (\ref{c-weak}) is immediate and 
(\ref{setup}(c)) follows from Lemma \ref{part-c}.
This will be done in the next section.

\section{Extension to Arbitrary Parabolic Nilradicals}
\label{sec4}
\setcounter{equation}{0}
In this section we address (\ref{setup}(c)) and (\ref{c-weak}), 
completing the proof that
$N_\Phi$ has a decomposition that leads to stepwise square integrable
representations.  
\medskip

We start with some combinatorics.  Denote sets of indices as follows.
$q_1$ is the first index of (\ref{setup}) (usually $1$) such that 
$\beta_{q_1}|_{\ga_\Phi} \ne 0$;  define 
$$
I_1 = \{i \mid \beta_i|_{\ga_\Phi} = \beta_{q_1}|_{\ga_\Phi}\}.
$$
Then $q_2$ is the first index of (\ref{setup}) such that
$q_2 \notin I_1$ and $\beta_{q_2}|_{\ga_\Phi} \ne 0$; define
$$
I_2 = \{i \mid \beta_i|_{\ga_\Phi} = \beta_{q_2}|_{\ga_\Phi}\}.
$$
Continuing, $q_k$ is the first index of (\ref{setup}) such that
$q_k \notin (I_1\cup\dots\cup I_{k -1})$ and $\beta_{q_k}|_{\ga_\Phi} \ne 0$;
define
$$
I_k = \{i \mid \beta_i|_{\ga_\Phi} = \beta_{q_k}|_{\ga_\Phi}\}
$$
as long as possible.  Write $\ell$ for the last index $k$ that leads to
a nonempty set $I_k$\,.  Then, in terms of the index set of
(\ref{setup}), $I_1 \cup \dots \cup I_\ell$ consists of all the
indices $i$ for which $\beta_i|_{\ga_\Phi} \ne 0$.
\medskip

For $1 \leqq j \leqq \ell$ define
\begin{equation}\label{big-summands}
\gl_{\Phi,j} = {\sum}_{i \in I_j} (\gl_i\cap \gn_\Phi) =
	\Bigl ( {\sum}_{i \in I_j} \gl_i\Bigr ) \cap \gn_\Phi
	\text{ and } \gl^{\dagger}_{\Phi,j} = 
	{\sum}_{k \geqq j} \gl_{\Phi,k}\,.
\end{equation}

\begin{lemma}\label{some-brackets}
If $k \geqq j$ then $[\gl_{\Phi,k} , \gl_{\Phi,j}] \subset \gl_{\Phi,j}$\,.
For each index $j$,
$\gl_{\Phi,j}$ and $\gl^{\dagger}_{\Phi,j}$ are subalgebras of $\gn_\Phi$ and
$\gl_{\Phi,j}$ is an ideal in $\gl^{\dagger}_{\Phi,j}$\,.
\end{lemma}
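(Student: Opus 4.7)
The plan is to reduce everything to the bracket claim $[\gl_{\Phi,k}, \gl_{\Phi,j}] \subset \gl_{\Phi,j}$ for $k \geq j$, since this condition immediately yields (taking $k = j$) that $\gl_{\Phi,j}$ is closed under brackets, that $\gl^{\dagger}_{\Phi,j}$ is closed under brackets (for $k, k' \geq j$ we have $[\gl_{\Phi,k}, \gl_{\Phi,k'}] \subset \gl_{\Phi, \min(k,k')} \subset \gl^{\dagger}_{\Phi, j}$), and that $\gl_{\Phi,j}$ is an ideal in $\gl^{\dagger}_{\Phi,j}$. So only the bracket claim requires genuine work.

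For the bracket claim I will work one pair of root spaces at a time. Fix $X_\alpha \in \gg_\alpha \subset \gl_i \cap \gn_\Phi$ with $i \in I_k$ and $Y_\beta \in \gg_\beta \subset \gl_{i'} \cap \gn_\Phi$ with $i' \in I_j$, where $k \geq j$; note that $[X_\alpha, Y_\beta] \in \gg_{\alpha+\beta}$ vanishes unless $\alpha+\beta$ is a root, in which case $\alpha+\beta \in \Phi^{nil}$ automatically (the sum of two roots in $\Phi^{nil}$ inherits positive coefficients on simple roots outside $\Phi$). I then split on the cascade indices. When $i > i'$, the strong computability condition \eqref{c-strong} for the minimal-parabolic nilradical gives $[\gl_i, \gl_{i'}] \subset \gv_{i'}$, putting the bracket into $\gl_{i'} \cap \gn_\Phi \subset \gl_{\Phi,j}$. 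When $i = i'$, disjointness of the $I_p$'s forces $k = j$, and the bracket sits in $\gl_i \cap \gn_\Phi \subset \gl_{\Phi,j}$. When $i < i'$, strong computability again places the bracket in $\gv_i \cap \gn_\Phi \subset \gl_i \cap \gn_\Phi \subset \gl_{\Phi,k}$; if $k = j$ this is immediately in $\gl_{\Phi,j}$, while if $k > j$ the root-space decomposition makes $\gl_{\Phi,k} \cap \gl_{\Phi,j} = 0$, so the conclusion becomes the assertion that the bracket actually vanishes.

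The technical heart of the proof, and the step I expect to be the main obstacle, is exactly this vanishing for $i < i'$ with $k > j$. My plan is to combine Lemma~\ref{part-c} with the center/complement decomposition of Lemma~\ref{semidirect}. Applying Lemma~\ref{part-c} with $r = i'$ and $s = i$ (so $r > s$) kills the bracket against the central summand $\gg_{\beta_i} + \sum_{J''_i} \gg_\alpha$ of $\gl_i \cap \gn_\Phi$, so I only need to rule out nonzero brackets against the non-central piece $\sum_{J'_i} \gg_\alpha$. An element $\alpha \in J'_i$ requires both $\alpha$ and $\sigma_i\alpha = \beta_i - \alpha$ to lie in $\Phi^{nil}$; in particular $\beta_i|_{\ga_\Phi} = \beta_{q_k}|_{\ga_\Phi}$ would split as a sum of two nonzero $\ga_\Phi$-weights realized by roots in $\Delta^+_i$. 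Using the $\Psi_r$-subsystem framework \eqref{cascade-simple}, the strong orthogonality of cascade roots, and the hypothesis that some $i' > i$ still satisfies $\beta_{i'}|_{\ga_\Phi} = \beta_{q_j}|_{\ga_\Phi}$ with $j < k$, one shows by a coefficient-matching argument (in the spirit of the proof of Lemma~\ref{part-c}) that any such $\alpha + \beta$ would carry a $\psi$-coefficient exceeding that of $\beta_i$ for some $\psi \in \Psi_i \setminus \Phi$, which is impossible. This contradiction forces the bracket to vanish and completes the bracket claim; the subalgebra and ideal assertions then follow as noted at the outset.
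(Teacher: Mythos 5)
There is a genuine gap, and also a missed simplification.

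Your bracket-claim reduction and the first two cases ($i > i'$ via the strong computability condition (\ref{c-strong}), $i = i'$ via $[\gl_i,\gl_i]=\gg_{\beta_i}$) are fine.  The problem is exactly where you say the ``technical heart'' lies.  You identify the case $i \in I_k$, $i' \in I_j$, $k > j$, $i < i'$ and claim the bracket vanishes there, but the ``coefficient-matching argument'' you gesture at is never carried out: it is asserted that $\alpha+\beta$ would carry a $\Psi\setminus\Phi$-coefficient exceeding that of $\beta_i$, but no bound is produced from the hypotheses, and the step from ``$\alpha \in J'_i$ splits $\beta_i|_{\ga_\Phi}$ as a sum of two nonzero $\ga_\Phi$-weights'' to that coefficient estimate is not established.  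As written the proof is incomplete.

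More to the point, this case is vacuous, which is the observation the paper makes and which makes the whole lemma short.  Because the cascade coefficients along the simple roots are weakly decreasing in the index, the $\Psi\setminus\Phi$-coefficients of $\beta_1,\beta_2,\dots$ are weakly decreasing, so the indices with a common restriction to $\ga_\Phi$ form a consecutive block.  Hence $I_1, I_2, \dots$ are consecutive, and $i \in I_k$, $i' \in I_j$ with $k>j$ already forces $i > i'$.  This kills your troublesome case outright and reduces the lemma to the two easy cases $r>s$ (where $[\gl_r,\gl_s]\subset\gl_s$) and $r=s$ (where $[\gl_r,\gl_r]=\gg_{\beta_r}$).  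You should either prove that implication $k>j \Rightarrow i>i'$ (one line, from weak monotonicity of the cascade coefficients) and drop the $i<i'$ case, or actually supply the missing coefficient estimate — but the former is clearly the intended route, and the latter is chasing a case that never occurs.
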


\begin{proof}
As we run along the sequence $\{\beta_1,\beta_2,\dots\}$ the coefficients
of the simple roots are weakly decreasing, so in particular the coefficients
of the roots in $\Psi \setminus \Phi$ are weakly decreasing.  If
$r \in I_k$, $s \in I_j$ and $k > j$ now $r > s$.  Using
$[\gl_r,\gl_s] \subset \gl_s$ (and thus
$[(\gl_r \cap \gn_\Phi),(\gl_s \cap \gn_\Phi)] \subset 
\gl_s \cap \gn_\Phi$) for $r > s$ it follows that
$[\gl_{\Phi,k} , \gl_{\Phi,j}] \subset \gl_{\Phi,j}$ for $k > j$.
\medskip

Now suppose $k = j$.  If $r = s$ then $[\gl_r,\gl_r] = \gg_{\beta_r}$, so
we may assume $r > s$, and thus 
$[\gl_r,\gl_s] \subset \gl_s \subset \gl_{\Phi,j}$.  It follows that
$[\gl_{\Phi,k} , \gl_{\Phi,j}] \subset \gl_{\Phi,j}$ for $k = j$.
\medskip

Now it is immediate that 
$\gl_{\Phi,j}$ and $\gl^{\dagger}_{\Phi,j}$ are subalgebras of $\gn_\Phi$ and
$\gl_{\Phi,j}$ is an ideal in $\gl^{\dagger}_{\Phi,j}$\,.
\end{proof}

\begin{lemma}\label{not-beta}
If $k > j$ then 
$[\gl_{\Phi,k} \,, \gl_{\Phi,j}] \cap \sum_{i \in I_j}\gg_{\beta_i} = 0$.
\end{lemma}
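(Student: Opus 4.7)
The plan is to work root-space by root-space. Because $\gl_r \cap \gn_\Phi$ is a direct sum of restricted root spaces $\gg_\alpha$ with $\alpha \in \{\beta_r\} \cup \Delta_r^+$ and $\gg_\alpha \subset \gn_\Phi$, any element of $[\gl_{\Phi,k}, \gl_{\Phi,j}]$ is a sum of brackets $[X_\alpha, X_{\alpha'}]$ with $X_\alpha \in \gg_\alpha$ and $X_{\alpha'} \in \gg_{\alpha'}$, where $\alpha \in \{\beta_r\} \cup \Delta_r^+$ for some $r \in I_k$ and $\alpha' \in \{\beta_s\} \cup \Delta_s^+$ for some $s \in I_j$, both root spaces contained in $\gn_\Phi$. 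Such a bracket lives in $\gg_{\alpha+\alpha'}$, vanishing unless $\alpha + \alpha'$ is a restricted root. So the lemma reduces to ruling out $\alpha + \alpha' = \beta_i$ for any $i \in I_j$.

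I would first record, as already exploited in the proof of Lemma \ref{some-brackets}, that $k > j$ forces $r > s$: the $\Psi$-coefficients of $\beta_1, \beta_2, \ldots$ are weakly decreasing along the cascade, so $r \leq s$ with $r \in I_k$, $s \in I_j$ would give $\beta_{q_k}|_{\ga_\Phi} = \beta_{q_j}|_{\ga_\Phi}$, contradicting $k \ne j$. Once $r > s$ is in hand, the minimal-parabolic containment $[\gl_r, \gl_s] \subset \gl_s$ from Section \ref{sec2} forces $\alpha + \alpha'$, whenever it is a root, into $\{\beta_s\} \cup \Delta_s^+$. Since $\{\beta_1, \ldots, \beta_m\} \sqcup \bigsqcup_t \Delta_t^+$ partitions $\Delta^+(\gg,\ga)$ by \cite[Lemma 6.1]{W2013}, the supposed identity $\alpha + \alpha' = \beta_i$ with $i \in I_j$ then forces $i = s$, so $\alpha + \alpha' = \beta_s$.

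A short case analysis then produces the contradiction. If $\alpha = \beta_r$ then $\alpha' = \beta_s - \beta_r$, which is not a root by strong orthogonality of the $\beta_t$'s. If $\alpha' = \beta_s$ then $\alpha = 0$. In the remaining case $\alpha \in \Delta_r^+$ and $\alpha' \in \Delta_s^+$, and the equation $\alpha + \alpha' = \beta_s$ gives $\alpha = \sigma_s \alpha' \in \Delta_s^+$, contradicting $\Delta_r^+ \cap \Delta_s^+ = \emptyset$ for $r \ne s$. Hence no bracket $[X_\alpha, X_{\alpha'}]$ projects nontrivially onto $\sum_{i \in I_j} \gg_{\beta_i}$, which is the assertion of the lemma.

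The only mildly delicate point is the middle step: one needs both the layer ordering $r > s$ and the inclusion $[\gl_r, \gl_s] \subset \gl_s$ in place before the cascade partition of $\Delta^+(\gg, \ga)$ can be applied. Once that reduction is made, strong orthogonality of the $\beta_t$'s together with the disjointness of the cascade pieces $\Delta_t^+$ eliminate the remaining cases immediately.
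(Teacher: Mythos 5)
Your argument is correct and follows essentially the same route as the paper's terse proof: reduce to root spaces, then show $\alpha + \alpha' = \beta_i$ (with $i \in I_j$) is impossible because of the cascade layer structure. The paper compresses your steps 2--4 into the single observation that $\gamma = \beta_i - \alpha$ forces $\gg_\gamma = \gg_{\beta_i - \alpha} \subset \gl_i$ (by $\sigma_i$-invariance of $\Delta_i^+$), which then contradicts $\gg_\gamma \subset \gl_r$ with $r \ne i$; your version explicitly supplies the ordering $r > s$, the containment $[\gl_r,\gl_s] \subset \gl_s$, and the partition argument that the paper leaves implicit.
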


\begin{proof} This is implicit in Theorem \ref{plancherel-general}, which 
gives (\ref{c-weak}), but we give a direct 
proof for the convenience of the reader.
Let $\gg_\gamma \subset \gl_{\Phi,k}$ and $\gg_\alpha \subset \gl_j$
with $[\gg_\gamma , \gg_\alpha] \cap \sum_{i \in I_j}\gg_{\beta_i} \ne 0$.
Then $[\gg_\gamma , \gg_\alpha] = \gg_{\beta_i}$ where 
$\gg_\gamma \subset \gl_r$ and $\gg_\alpha \subset \gl_i$\,, so
$\gg_\gamma = \gg_{\beta_i - \alpha} \subset \gl_r \cap \gl_i = 0$.
That contradiction proves the lemma.
\end{proof}

Given $r \in I_j$ we use the notation of Lemma \ref{inter-compl}
to decompose
\begin{equation}\label{split-lr}
\gl_r \cap \gn_\Phi = \gl'_r + \gl''_r \text{ where }
	\gl'_r = \gg_{\beta_r} + {\sum}_{J'_r}\gg_\alpha \text{ and }
	\gl''_r = {\sum}_{J''_r}\gg_\alpha\,.
\end{equation}
Here $J'_r$ consists of roots $\alpha \in \Delta_r^+$
such that $\gg_\alpha + \gg_{\beta_r - \alpha}
\subset \gn_\Phi$\,, and $J''_r$ consists of roots $\alpha \in \Delta_r^+$
such that $\gg_\alpha \subset \gn_\Phi$ but $\gg_{\beta_r - \alpha}
\not\subset \gn_\Phi$\,.
For $1 \leqq j \leqq \ell$ define 
\begin{equation}\label{split-lphi}
\gz_{\Phi,j} = {\sum}_{i \in I_j} (\gg_{\beta_i} +  \gl''_i)
\end{equation} 
and decompose
\begin{equation}\label{big-split}
\gl_{\Phi,j} = \gl'_{\Phi,j} + \gl''_{\Phi,j} \text{ where }
	\gl'_{\Phi,j} = {\sum}_{i \in I_j} \gl'_i \text{ and }
	\gl''_{\Phi,j} = {\sum}_{i \in I_j} \gl''_i\,.
\end{equation}

\begin{lemma}\label{central-ideal}
Recall $\gl^{\dagger}_{\Phi,j} = {\sum}_{k \geqq j} \gl_{\Phi,k}$ from 
{\rm (\ref{big-summands})}.
For each $j$, both $\gz_{\Phi,j}$ and $\gl''_{\Phi,j}$ are 
central ideals in $\gl^{\dagger}_{\Phi,j}$\,, and $\gz_{\Phi,j}$ is the center
of $\gl_{\Phi,j}$.
\end{lemma}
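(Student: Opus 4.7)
The statement contains three assertions: (I) $\gz_{\Phi,j}$ is a central ideal in $\gl^\dagger_{\Phi,j}$; (II) $\gl''_{\Phi,j}$ is a central ideal in $\gl^\dagger_{\Phi,j}$; (III) $\gz_{\Phi,j}$ is the center of $\gl_{\Phi,j}$. Inspecting (\ref{split-lphi}) and (\ref{big-split}) shows $\gl''_{\Phi,j} = \sum_{i \in I_j} \gl''_i \subset \sum_{i \in I_j}(\gg_{\beta_i} + \gl''_i) = \gz_{\Phi,j}$, so (II) will follow from (I); and one inclusion of (III) will follow by applying (I) to $\gl_{\Phi,j} \subset \gl^\dagger_{\Phi,j}$. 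The plan is therefore to establish (I) by case analysis and then verify the remaining inclusion of (III).

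For (I) I will look at a typical bracket $[X_\gamma, X_\alpha]$ of restricted-root vectors with $\gg_\gamma \subset \gl_r \cap \gn_\Phi$ for some $r \in I_k$ ($k \geq j$) and $\alpha \in \{\beta_i\} \cup J''_i$ for some $i \in I_j$, and split on how $r$ relates to $i$. When $k > j$, the weakly-decreasing-coefficient observation in the proof of Lemma \ref{some-brackets} forces $r > i$, so Lemma \ref{part-c} applies verbatim; the same is true when $k = j$ and $r > i$. When $r = i$, Lemma \ref{semidirect} identifies $\gg_{\beta_i} + \gl''_i$ as the center of $\gl_i \cap \gn_\Phi$, giving the vanishing at once.

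The main obstacle is the remaining case $k = j$ with $r < i$, which Lemma \ref{part-c} does not cover in either direction. I plan to adapt its coefficient-counting proof, reversing the roles of the two layer indices. Suppose for contradiction that $\gamma + \alpha \in \Delta(\gg,\ga)$. Since $i > r$, the strong computability condition inside the minimal parabolic nilradical gives $[\gl_i, \gl_r] \subset \gv_r \subset \gl_r$, so $\gamma + \alpha \in \{\beta_r\} \cup \Delta_r^+$ and hence every simple-root coefficient of $\gamma + \alpha$ is bounded above by the corresponding coefficient of $\beta_r$. On the other hand, Lemma \ref{inter-compl} yields $\alpha|_{\ga_\Phi} = \beta_i|_{\ga_\Phi}$, and $r, i \in I_j$ yields $\beta_i|_{\ga_\Phi} = \beta_r|_{\ga_\Phi}$; so the $\psi$-coefficients of $\alpha$ and $\beta_r$ agree for every $\psi \in \Psi \setminus \Phi$. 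Since $\gg_\gamma \subset \gn_\Phi$, i.e.\ $\gamma \in \Phi^{nil}$, some $\psi_0 \in \Psi \setminus \Phi$ carries a strictly positive $\psi_0$-coefficient of $\gamma$; adding it to the $\psi_0$-coefficient of $\alpha$ produces a $\psi_0$-coefficient of $\gamma + \alpha$ strictly exceeding that of $\beta_r$, the desired contradiction. This completes (I) and hence (II).

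For the remaining inclusion of (III), I will take $Y \in \gl_{\Phi,j}$ central and decompose $Y = \sum_{i \in I_j}(Y'_i + Y''_i)$ along $\gl_i \cap \gn_\Phi = \gl'_i \oplus \gl''_i$ of Lemma \ref{semidirect}. Part (I) gives $[X, Y''_{i'}] = 0$ for every $X \in \gl_{\Phi,j}$ and every $i' \in I_j$, so the condition $[X, Y] = 0$ reduces to $\sum_{i'} [X, Y'_{i'}] = 0$. Fixing $i \in I_j$ and specializing $X \in \gl'_i \subset \gl_{\Phi,j}$, strong computability places $[X, Y'_{i'}] \subset \gv_{\min(i,i')}$ for $i' \ne i$, and this carries no $\gg_{\beta_i}$-component; after projecting the equation onto $\gg_{\beta_i}$ only the Heisenberg contribution $[X, Y'_i]_{\gg_{\beta_i}}$ inside $\gl'_i$ survives. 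Because this pairing is non-degenerate on $\sum_{\alpha \in J'_i} \gg_\alpha$ (the Heisenberg vector part of $\gl'_i$), the vanishing for all such $X$ forces $Y'_i \in \gg_{\beta_i}$, so $Y_i \in \gg_{\beta_i} + \gl''_i$, and summing over $i \in I_j$ yields $Y \in \gz_{\Phi,j}$.
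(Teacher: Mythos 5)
Your proof is correct and follows essentially the paper's route: the core of both arguments is the count of $\Psi\setminus\Phi$-coefficients showing that any nonzero bracket $[\gl_{\Phi,k},\gz_{\Phi,j}]$ would produce a root lying in some $\gl_r\cap\gn_\Phi$ with $r\in I_j$ whose $\Psi\setminus\Phi$-coefficients strictly exceed those of $\beta_r$, which is impossible. You organize this as a case split and invoke Lemmas \ref{part-c} and \ref{semidirect} where they apply (reserving the direct coefficient count for the one case $k=j,\ r<i$ that neither lemma covers), and you spell out the minimality of the center via the Heisenberg nondegeneracy on $\sum_{J'_i}\gg_\alpha$ more explicitly than the paper's one-line appeal to the definition of $\gl'_{\Phi,j}$; these are presentational refinements rather than a different strategy.
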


\begin{proof}
Lemma \ref{inter-compl} shows that $\alpha|_{\ga_\Phi} = \beta_i|_{\ga_\Phi}$
whenever $i \in I_j$ and $\gg_\alpha \subset \gl''_{\Phi,j}$\,.
If $[\gl_{\Phi,k},\gl''_i] \ne 0$ it contains some $\gg_{\delta}$ 
such that $\gg_{\delta} \subset \gl_{\Phi,j}$ and 
at least one of the coefficients of $\delta$ along roots of 
$\Psi\setminus\Phi$ is greater than that of $\beta_i$.  As
$\gg_\delta \subset \gl_i$ that is impossible.  
Thus $\gl''_{\Phi,j}$ is a
central ideal in $\gl^{\dagger}_{\Phi,j}$\,.   The same is immediate for
$\gz_{\Phi,j} = {\sum}_{i \in I_j} (\gg_{\beta_i} +  \gl''_i)$\,. 
In particular $\gz_{\Phi,j}$ is central in $\gl_{\Phi,j}$.  But the
center of $\gl_{\Phi,j}$ can't be any larger, by definition of
$\gl'_{\Phi,j}$\,.
\end{proof}

Decompose 
\begin{equation}\label{nzv}
\gn_\Phi = \gz_\Phi + \gv_\Phi \text{ where }
\gz_\Phi = \sum_j \gz_{\Phi,j}\,\,,\,\, 
\gv_\Phi = \sum_j \gv_{\Phi,j} \text{ and }
\gv_{\Phi,j} = \sum_{i \in I_j} \sum_{\alpha \in J'_i} \gg_\alpha\,.
\end{equation}
Then Lemma \ref{central-ideal} gives us (\ref{c-weak}) for the $\gl_{\Phi,j}$:
$\gl_{\Phi,j} =  \gl'_{\Phi,j} \oplus \gl''_{\Phi,j}$ with 
$\gl''_{\Phi,j} \subset \gz_{\Phi,j}$ and $\gv_{\Phi,j} \subset \gl'_{\Phi,j}$.

\begin{lemma} \label{stepwise-nondegen}
For generic $\lambda_j \in \gz_{\Phi,j}^*$ the kernel of $b_{\lambda_j}$ on 
$\gl_{\Phi,j}$ is just $\gz_{\Phi,j}$, in other words $b_{\lambda_j}$ is
is nondegenerate on $\gv_{\Phi,j} \simeq \gl_{\Phi,j}/\gz_{\Phi,j}$.
In particular $L_{\Phi,j}$ has square integrable representations.
\end{lemma}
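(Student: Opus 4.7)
The plan is to verify nondegeneracy of $b_{\lambda_j}$ by exhibiting a concrete $\lambda_j$ for which it holds, then using the fact that nondegeneracy is a Zariski-open condition (nonvanishing of the Pfaffian) on $\gz_{\Phi,j}^*$. More precisely, I would restrict attention to the subspace of functionals of the form $\lambda_j = \sum_{i \in I_j} \lambda_i$ with $\lambda_i \in \gg_{\beta_i}^* \subset \gz_{\Phi,j}^*$, so that $\lambda_j$ vanishes on the central ideal $\gl''_{\Phi,j}$. For these special $\lambda_j$ the pairing $b_{\lambda_j}$ on $\gv_{\Phi,j}$ breaks into a direct sum of blocks indexed by $i \in I_j$, and each block is nondegenerate for generic $\lambda_i$.

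\medskip

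The first step is to show the off-diagonal blocks vanish. For $i, i' \in I_j$ with $i > i'$, strong computability (\ref{c-strong}) of the ambient minimal parabolic gives $[\gl_i, \gl_{i'}] \subset \gv_{i'} = \sum_{\alpha \in \Delta_{i'}^+} \gg_\alpha$, so $[\gl_i, \gl_{i'}]$ has zero component in every $\gg_{\beta_k}$. Intersecting with $\gn_\Phi$ yields
$$
[\gl_i \cap \gn_\Phi,\, \gl_{i'} \cap \gn_\Phi] \subset
  {\sum}_{J'_{i'} \cup J''_{i'}} \gg_\alpha = \gv'_{i'} + \gl''_{i'},
$$
where $\gv'_{i'} := \sum_{J'_{i'}} \gg_\alpha$. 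Now $\lambda_j$ is a functional on $\gz_{\Phi,j}$ that, by our choice, vanishes on $\gl''_{i'}$, and $\gv'_{i'}$ is outside its domain of definition anyway. So the cross-bracket is killed by $\lambda_j$.

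\medskip

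The second step is nondegeneracy on each diagonal block $\gv'_i$. Inside $\gl_i$ we are in the Heisenberg setting of Section \ref{sec2}: for $\alpha, \alpha' \in \Delta_i^+$ one has $[\gg_\alpha, \gg_{\alpha'}] = \gg_{\beta_i}$ when $\alpha' = \sigma_i \alpha$ and $0$ otherwise. By the very definition of $J'_i$ in Lemma \ref{inter-compl}, the set $J'_i$ is $\sigma_i$-stable, so $\gv'_i = \sum_{J'_i} \gg_\alpha$ decomposes into $\sigma_i$-pairs. Hence $b_{\lambda_i}$ on $\gv'_i$ is the standard symplectic form determined by $\lambda_i \in \gg_{\beta_i}^*$, and is nondegenerate as soon as $\lambda_i \ne 0$.

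\medskip

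Combining the two steps, taking all $\lambda_i \ne 0$ produces a $\lambda_j$ for which $b_{\lambda_j}$ is block-diagonal with nondegenerate blocks on $\gv_{\Phi,j} = \bigoplus_{i \in I_j} \gv'_i$. Since the set of $\lambda_j \in \gz_{\Phi,j}^*$ on which $b_{\lambda_j}$ is nondegenerate is the complement of the zero locus of the Pfaffian polynomial of $b_{\lambda_j}|_{\gv_{\Phi,j}}$, and we have exhibited a point where this Pfaffian is nonzero, nondegeneracy holds on a Zariski-open dense subset of $\gz_{\Phi,j}^*$. Combined with Lemma \ref{central-ideal}, which identifies $\gz_{\Phi,j}$ as the center of $\gl_{\Phi,j}$, this is exactly the Moore--Wolf criterion \cite{MW1973} for $L_{\Phi,j}$ to have square integrable representations. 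The main obstacle is the first step: one must recognize that the original strong computability property of the minimal parabolic, not available for $\gn_\Phi$ itself, can still be invoked to kill the cross-brackets inside a single index class $I_j$, provided one restricts $\lambda_j$ to vanish on $\gl''_{\Phi,j}$.
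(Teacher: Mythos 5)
Your proof is correct and takes essentially the same route as the paper, whose terse proof of this lemma relies on the explicit construction at the end of Section \ref{sec3}: there the same $\lambda$ vanishing on the $\gl''$ directions is chosen, strong computability of the ambient minimal parabolic kills the cross-brackets, and nondegeneracy on each $\gv'_i$ is read off from the $\sigma_i$-stability of $J'_i$. One slight imprecision: the claim ``nondegenerate as soon as $\lambda_i \ne 0$'' should read ``for generic $\lambda_i \in \gg_{\beta_i}^*$'', since $\gg_{\beta_i}$ can have dimension greater than one; this weaker statement is all your argument uses, so the conclusion is unaffected.
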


\begin{proof}  From the definition of $\gl'_{\Phi,j}$\,, the bilinear 
form $b_{\lambda_j}$ on $\gl_{\Phi,j}$
annihilates the center $\gz_{\Phi,j}$ and is nondegenerate on
$\gv_{\Phi,j}$\,.  Thus the corresponding representation
$\pi_{\lambda_j}$ of $L_{\Phi,j}$ has coefficients that are
square integrable modulo its center.
\end{proof}

Now we come to our first main result:

\begin{theorem}\label{gen-setup}
Let $G$ be a real reductive Lie group and $Q$ a real parabolic subgroup.
Express $Q = Q_\Phi$ in the notation of {\rm (\ref{para-roots})} and
{\rm (\ref{para-pieces})}.  Then its nilradical $N_\Phi$ has
decomposition $N_\Phi = L_{\Phi,1}L_{\Phi,2}\dots L_{\Phi,\ell}$
that satisfies the conditions of {\rm (\ref{setup})} and {\rm (\ref{c-weak})}
as follows.
The center $Z_{\Phi,j}$ of $L_{\Phi,j}$ is the analytic subgroup
for $\gz_{\Phi,j}$ and
\begin{equation}\label{gen-setup-list}
\begin{aligned}
 &\text{{\rm (a)} each factor $L_{\Phi,j}$ has unitary representations
	with coefficients in $L^2(L_{\Phi,j}/Z_{\Phi,j})$, and} \\
 &\text{{\rm (b)} each } N_{\Phi,j} :=
        L_{\Phi,1}L_{\Phi,2}\dots L_{\Phi,j}
        \text{ is a normal subgroup of } N_\Phi\\
        &\phantom{XXXXXXXXXXXXXX}
   	\text{ with } N_{\Phi,j} = N_{\Phi,j-1}\rtimes L_{\Phi,j}
         \text{ semidirect,}\\
 &\text{{\rm (c)} } [\gl_{\Phi,k},\gz_{\Phi,j}] = 0 \text{ and }
	[\gl_{\Phi,k}, \gl_{\Phi,j}] \subset \gv_{\Phi,j} + \gl_{\Phi,j}'' 
	\text{ for } k > j.
\end{aligned}
\end{equation}
In particular $N_\Phi$ has stepwise square integrable representations
relative to the decomposition 
$N_\Phi = L_{\Phi,1}L_{\Phi,2}\dots L_{\Phi,\ell}$\,, and the results of
{\rm Theorem \ref{plancherel-general}}, specifically {\rm (\ref{frob1}),
(\ref{frob2})} and {\rm (\ref{frob3})}, hold for $N_\Phi$\,.
\end{theorem}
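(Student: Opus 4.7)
The plan is essentially to assemble the machinery already built in Lemmas \ref{some-brackets}, \ref{not-beta}, \ref{central-ideal}, and \ref{stepwise-nondegen}: all the geometric and combinatorial content has been isolated, and what remains is to verify items (a), (b), (c) of (\ref{gen-setup-list}) in order and then invoke Theorem \ref{plancherel-general} as a black box. The only conceptual move is the regrouping from $\{L_i \cap N_\Phi\}$ to $\{L_{\Phi,j}\}$ by collecting indices $i$ into the sets $I_j$ according to the common value of $\beta_i|_{\ga_\Phi}$. Section \ref{sec3} showed that the individual $L_i \cap N_\Phi$ already give (\ref{setup}(a)) and (\ref{setup}(b)), and allow Construction \ref{construction} to run, but fail (\ref{c-weak}); the point of the regrouping is precisely to absorb the obstruction (roots $\alpha$ with $\sigma_r\alpha \notin J_r$, forming $\gl''_r$) into enlarged centers $\gz_{\Phi,j}$, as Lemmas \ref{inter-compl} and \ref{central-ideal} record.

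Concretely, for (a) I would just cite Lemma \ref{stepwise-nondegen}, which produces the generic $\lambda_j \in \gz_{\Phi,j}^*$ with $b_{\lambda_j}$ nondegenerate on $\gv_{\Phi,j} \simeq \gl_{\Phi,j}/\gz_{\Phi,j}$, giving square integrable representations of $L_{\Phi,j}$ modulo $Z_{\Phi,j}$. For (b), Lemma \ref{some-brackets} gives $[\gl_{\Phi,k},\gl_{\Phi,j}] \subset \gl_{\Phi,j}$ for $k \geqq j$, so $\gn_{\Phi,j-1}$ is an ideal in $\gn_{\Phi,j}$; the semidirect structure follows because $L_{\Phi,j}$ and $N_{\Phi,j-1}$ are built from disjoint collections of restricted root spaces, so their intersection is trivial and their product is $N_{\Phi,j}$. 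For (c), the vanishing $[\gl_{\Phi,k},\gz_{\Phi,j}] = 0$ for $k > j$ is exactly the content of Lemma \ref{central-ideal}, which identifies $\gz_{\Phi,j}$ as a central ideal of $\gl^{\dagger}_{\Phi,j}$. The bracket inclusion $[\gl_{\Phi,k},\gl_{\Phi,j}] \subset \gv_{\Phi,j} + \gl''_{\Phi,j}$ combines Lemma \ref{some-brackets} (the bracket lands in $\gl_{\Phi,j}$) with Lemma \ref{not-beta} (it avoids the $\beta_i$-summand $\sum_{i\in I_j}\gg_{\beta_i}$), and the decomposition $\gl_{\Phi,j} = \bigl(\sum_{i\in I_j}\gg_{\beta_i}\bigr) + \gv_{\Phi,j} + \gl''_{\Phi,j}$ from (\ref{nzv}) then forces the stated inclusion.

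The only step that requires a moment's thought, rather than pure citation, is checking that the direct sum of ideals decomposition $\gl_{\Phi,j} = \gl'_{\Phi,j} \oplus \gl''_{\Phi,j}$ really supplies the data demanded by (\ref{c-weak}), that is $\gl''_{\Phi,j}\subset \gz_{\Phi,j}$ and $\gv_{\Phi,j} \subset \gl'_{\Phi,j}$; both are immediate from the constructions (\ref{split-lphi}), (\ref{big-split}), (\ref{nzv}), with centrality of $\gl''_{\Phi,j}$ supplied by Lemma \ref{central-ideal}. With (a), (b), (c) in hand, Construction \ref{construction} yields the stepwise square integrable representations $\pi_\lambda$ of $N_\Phi$ for $\lambda \in \gt_\Phi^*$, and Theorem \ref{plancherel-general}, which we have already noted requires only (\ref{setup}) together with (\ref{c-weak}), delivers the coefficient norm (\ref{frob1}), the character formula (\ref{frob2}), and the Plancherel inversion (\ref{frob3}) for $N_\Phi$. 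I do not anticipate a genuine obstacle; the only place one might stumble is in tracking the partition $J_r = J'_r \cup J''_r$ across the different layers $i \in I_j$ to confirm that Lemma \ref{not-beta} applies uniformly, but this is essentially a bookkeeping exercise already implicit in Lemma \ref{inter-compl}.
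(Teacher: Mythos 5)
Your proof is correct and follows essentially the same route as the paper: it assembles (a) from Lemma \ref{stepwise-nondegen}, (b) from Lemma \ref{some-brackets}, and (c) from Lemmas \ref{central-ideal} and \ref{not-beta}, then invokes Theorem \ref{plancherel-general}. The additional commentary on the regrouping and the decomposition $\gl_{\Phi,j} = \gl'_{\Phi,j} \oplus \gl''_{\Phi,j}$ is accurate but only amplifies the same citations the paper's own (much terser) proof uses.
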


\begin{proof} Statement (a) is the content of Lemma \ref{stepwise-nondegen},
and statement (b) follows from Lemma \ref{some-brackets}.  The first part
of (c), $[\gl_{\Phi,k},\gz_{\Phi,j}] = 0$ for $k > j$, is contained in 
Lemma \ref{central-ideal}.  The second part,
$[\gl_{\Phi,k}, \gl_{\Phi,j}] \subset \gv_\Phi + \gl_{\Phi,j}''$ for $k > j$,
follows from Lemma \ref{not-beta}.  Now Theorem \ref{plancherel-general}
applies.
\end{proof}

\section{The Maximal Exponential-Solvable Subgroup $A_\Phi N_\Phi$}
\label{sec5}
\setcounter{equation}{0}

In this section we extend the considerations of \cite[\S 4]{W2014} 
from minimal parabolics to the exponential-solvable subgroups $A_\Phi N_\Phi$
of real parabolics $Q_\Phi = M_\Phi A_\Phi N_\Phi$.  It turns out that 
the of Plancherel and Fourier inversion formulae of $N_\Phi$ go through,
with only small changes, to the non-unimodular solvable group 
$A_\Phi N_\Phi$\,.  We follow the development in \cite[\S 4]{W2014}.
\medskip

Let $H$ be a separable locally compact group of type I.  Then
\cite[\S1]{LW1978} the Fourier inversion formula for $H$ has form
\begin{equation}\label{nonunimod-planch-gen}
f(x) = \int_{\widehat{H}} \tr \pi(D(r_xf))d\mu_H(\pi)
\end{equation}
where $D$ is an invertible positive self adjoint operator on $L^2(H)$,
conjugation semi-invariant of weight equal to that of the modular function
$\delta_H$\,, $r_xf$ is the right translate $y \mapsto f(yx)$, 
and $\mu$ is a positive Borel measure on the unitary dual
$\widehat{H}$.  When $H$ is unimodular, $D$ is the identity and 
(\ref{nonunimod-planch-gen}) reduces to the usual Fourier inversion formula
for $H$.  In general the semi-invariance of $D$ compensates any
lack of unimodularity.  See \cite[\S1]{LW1978} for a detailed discussion
including a discussion of the domains of $D$ and $D^{1/2}$.  Here 
$D\otimes{\mu}$ is unique up to normalization of Haar measure, 
but $(D,\mu)$ is not unique, except of course when
we fix one of them, such as in the unimodular case when we take $D = 1$.
Given such a pair $(D,\mu)$ we refer to $D$ as a 
{\em Dixmier-Puk\'anszky operator} and to $\mu$ as the associated
Plancherel measure.
\medskip

One goal of this section is to describe a ``best'' choice of the
Dixmier-Puk\'anszky operator for $A_\Phi N_\Phi$ in terms of the 
decomposition $N_\Phi = L_{\Phi,1}L_{\Phi,2}\dots L_{\Phi,\ell}$ that
gives stepwise square integrable representations of $N_\Phi$\,.
\medskip

Let $\delta = \delta_{Q_\Phi}$ denote the modular function of $Q_\Phi$\,.  
Its kernel
contains $M_\Phi N_\Phi$ because $\Ad(M_\Phi)$ is reductive with compact
center and $\Ad(N_\Phi)$ is unipotent. Thus
$\delta(man) = \delta(a)$, and if $\xi \in \ga_\Phi$ then
$\delta(\exp(\xi)) = \exp(\tr(\ad(\xi)))$.  Note that $\delta$ also is the 
modular function for $A_\Phi N_\Phi$\,.
\medskip

\begin{lemma}\label{traces}  Let $\xi \in \ga_\Phi$\,.  Then each
$\dim \gl_{\Phi,j} + \dim \gz_{\Phi,j}$ is even, and

\noindent\phantom{ii}{\rm (i)} the trace of $\ad(\xi)$ on $\gl_{\Phi,j}$
	is $\frac{1}{2}\dim(\gl_{\Phi,j} + \dim \gz_{\Phi,j})\beta_{j_0}(\xi)$
	for any $j_0 \in I_j$,

\noindent\phantom{i}{\rm (ii)} the trace of $\ad(\xi)$ on $\gn_\Phi$,
	on $\ga_\Phi + \gn_\Phi$ and on $\gq_\Phi$ is
	$\frac{1}{2}\sum_j(\dim \gl_{\Phi,j} + 	
	\dim \gz_{\Phi,j})\beta_{j_0}(\xi)$, \text{ and}

\noindent {\rm (iii)} the determinant of $\Ad(\exp(\xi))$ on $\gn_\Phi$\,,
        on $\ga_\Phi + \gn_\Phi$\,, and on $\gq_\Phi$\,, is
	$\prod_j \exp(\beta_{j_0}(\xi))^{\frac{1}{2}
	(\dim \gl_{\Phi,j} + \dim \gz_{\Phi,j})}$.
\end{lemma}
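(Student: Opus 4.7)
The plan is to compute $\tr\ad(\xi)|_{\gl_{\Phi,j}}$ directly from the restricted $\ga$-root decomposition. Since $\xi\in\ga_\Phi$, $\ad(\xi)$ acts as the scalar $\alpha(\xi)$ on each root space $\gg_\alpha$, contributing $(\dim\gg_\alpha)\alpha(\xi)$ to the trace. I will use the decomposition $\gl_{\Phi,j}=\gz_{\Phi,j}+\gv_{\Phi,j}$ from (\ref{nzv}) and evaluate the two summands separately.

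On the central part $\gz_{\Phi,j}=\sum_{i\in I_j}(\gg_{\beta_i}+\gl''_i)$, every occurring root restricts to $\beta_{j_0}|_{\ga_\Phi}$: the $\beta_i$ with $i\in I_j$ by the very definition of $I_j$, and the roots in each $J''_i$ by Lemma \ref{inter-compl}. Thus $\tr\ad(\xi)|_{\gz_{\Phi,j}}=\dim(\gz_{\Phi,j})\,\beta_{j_0}(\xi)$. On $\gv_{\Phi,j}=\sum_{i\in I_j}\sum_{\alpha\in J'_i}\gg_\alpha$, the involution $\sigma_i$ pairs $\alpha\in J'_i$ with $\beta_i-\alpha\in J'_i$; Weyl-invariance of restricted-root multiplicities gives $\dim\gg_\alpha=\dim\gg_{\sigma_i\alpha}$, and each $\sigma_i$-orbit contributes $(\alpha+\sigma_i\alpha)(\xi)\dim\gg_\alpha=\beta_i(\xi)\dim\gg_\alpha=\beta_{j_0}(\xi)\dim\gg_\alpha$. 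Summing over orbits yields $\tr\ad(\xi)|_{\gv_{\Phi,j}}=\tfrac12\dim(\gv_{\Phi,j})\beta_{j_0}(\xi)$. Evenness of $\dim\gv_{\Phi,j}$ is automatic from the nondegenerate antisymmetric form exhibited in Lemma \ref{stepwise-nondegen}, so $\dim\gl_{\Phi,j}+\dim\gz_{\Phi,j}=2\dim\gz_{\Phi,j}+\dim\gv_{\Phi,j}$ is even. Adding the two contributions gives (i).

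For (ii) I sum the identity of (i) over $j$, using $\gn_\Phi=\bigoplus_j\gl_{\Phi,j}$. To pass from $\gn_\Phi$ to $\ga_\Phi+\gn_\Phi$ and to $\gq_\Phi=\gm_\Phi+\ga_\Phi+\gn_\Phi$, observe that $\ad(\xi)$ annihilates $\ga_\Phi$ (which is abelian) and $\gm_\Phi$ (which centralizes $\ga_\Phi$ by (\ref{para-pieces})), so these subspaces contribute nothing to the trace. Finally (iii) follows by applying $\det\Ad(\exp\xi)=\exp(\tr\ad\xi)$ on each of the three subalgebras and converting the sum in (ii) into the stated product. The only nontrivial ingredient is the evenness of $\dim\gv_{\Phi,j}$, and that comes essentially for free from the symplectic structure already established in Lemma \ref{stepwise-nondegen}; everything else is routine once the role of the involution $\sigma_i$ on $J'_i$ is recognized.
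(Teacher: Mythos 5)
Your proof is correct and follows essentially the same strategy as the paper's: split $\gl_{\Phi,j}$ into the central part $\gz_{\Phi,j}$ (all of whose roots restrict on $\ga_\Phi$ to $\beta_{j_0}$, by Lemma \ref{inter-compl}) and the complementary part $\gv_{\Phi,j}$, on which the involution $\sigma_i$ pairs roots summing to $\beta_i$, and then combine. The paper organizes the bookkeeping slightly differently, computing the trace separately on $\gg_{\beta_r}$, $\sum_{J'_r}\gg_\alpha$, and $\gl''_r$ for each $r\in I_j$ and summing over $I_j$, while you aggregate directly in $\gl_{\Phi,j} = \gz_{\Phi,j} + \gv_{\Phi,j}$; both are the same calculation. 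Your version has the minor virtue of making explicit the two ingredients the paper leaves tacit: Weyl-invariance of restricted-root multiplicities ($\dim\gg_\alpha = \dim\gg_{\sigma_i\alpha}$), which is needed to turn the $\sigma_i$-orbit sum into $\beta_i(\xi)$ times a dimension, and the nondegeneracy of $b_{\lambda_j}$ on $\gv_{\Phi,j}$ from Lemma \ref{stepwise-nondegen}, which gives the evenness claim directly.
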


\begin{proof}
We use the notation of (\ref{split-lr}), (\ref{split-lphi}) and 
(\ref{big-split}).  It is immediate that 
$\dim \gl_r + \dim (\gg_{\beta_r} + \gl_r'')$ is even.  Sum over $r \in I_j$
to see that $\dim \gl_{\Phi,j} + \dim \gz_{\Phi,j}$ is even. 
\medskip

The trace of $\ad(\xi)$ on $\gl_r \cap \gn_\Phi$ is
$(\dim \gg_{\beta_r})\beta_r(\xi)$ on $\gg_{\beta_r}$, 
plus $\frac{1}{2}\sum_{\alpha \in J'_r}(\dim \gg_\alpha)\beta_r(\xi)$ 
(for the pairs $\gg_\alpha, \gg_\alpha' \in \Delta_r^+\cap \Phi^{nil}$
that pair into $\gg_{\beta_r}$), plus
$\sum_{\alpha \in J''_r}(\dim \gg_\alpha)\beta_r(\xi)$ (since 
$\alpha \in J''_r$ implies 
$\alpha|_{\ga_\Phi} = \beta_r|_{\ga_\Phi}$).  Now the trace of
$\ad(\xi)$ on $\gl_r \cap \gn_\Phi$ is \hfill\newline
\phantom{XXXXX}$(\frac{1}{2}\dim \gg_{\beta_r} + 
\frac{1}{2}\dim \gl'_r + \dim \gl_r'') \beta_r(\xi)$
= $\frac{1}{2}\dim(\gl_r \cap \gn_\Phi) + \dim(\gg_{\beta_r} + \gl''_r)
\beta_r(\xi)$.\hfill\newline
Summing over $r \in I_j$ we arrive at assertion (i).
Then sum over $j$ for (ii) and exponentiate for (iii).
\end{proof}

We reformulate Lemma \ref{traces} as
\begin{lemma}\label{modfunction}
The modular function $\delta = 
\delta_{Q_\Phi}$ of $Q_\Phi = M_\Phi A_\Phi N_\Phi$ is 
$$
\delta(man) = {\prod}_j \exp(\beta_{j_0}(\log a))^{\frac{1}{2}
        (\dim \gl_{\Phi,j} + \dim \gz_{\Phi,j})}\,.
$$
The modular function $\delta_{A_\Phi N_\Phi} = \delta|_{A_\Phi N_\Phi}$\,,
and $\delta_{U_\Phi A_\Phi N_\Phi} = \delta|_{U_\Phi A_\Phi N_\Phi}$ where
$U_\Phi$ is a maximal compact subgroup of $M_\Phi$\,.
\end{lemma}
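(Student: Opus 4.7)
The plan is to deduce the formula directly from Lemma~\ref{traces}(iii), and then to verify by a short computation that the modular functions of $A_\Phi N_\Phi$ and $U_\Phi A_\Phi N_\Phi$ coincide with the restriction of $\delta_{Q_\Phi}$. There is essentially no new computational content beyond Lemma~\ref{traces}; the only task is identifying each modular function correctly.

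For the first assertion I would begin from the observation already made in the text that $\delta_{Q_\Phi}(man) = \delta_{Q_\Phi}(a)$ and that $\delta_{Q_\Phi}(\exp \xi) = \exp(\tr \ad(\xi)|_{\gq_\Phi})$ for $\xi \in \ga_\Phi$. Because $\gm_\Phi + \ga_\Phi$ is the centralizer of $\ga_\Phi$ in $\gg$ (see (\ref{para-pieces})), the operator $\ad(\xi)$ annihilates $\gm_\Phi + \ga_\Phi$, so the trace over $\gq_\Phi$ reduces to the trace over $\gn_\Phi$; Lemma~\ref{traces}(iii) then rewrites $\exp(\tr \ad(\log a)|_{\gn_\Phi})$ as the product asserted in the statement. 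For $A_\Phi N_\Phi$ I would compute $\delta_{A_\Phi N_\Phi}(an) = \exp(\tr \ad(\log a)|_{\ga_\Phi + \gn_\Phi})$ directly, and since $\ad(\log a)$ vanishes on $\ga_\Phi$ this equals $\exp(\tr \ad(\log a)|_{\gn_\Phi}) = \delta_{Q_\Phi}(an)$. For $U_\Phi A_\Phi N_\Phi$, I use $U_\Phi \subset M_\Phi$ to obtain $[\ga_\Phi,\gu_\Phi] = 0$, so that $\ad(\log a)$ kills $\gu_\Phi$; compactness of $U_\Phi$ yields $|\det \Ad_{\gu_\Phi + \ga_\Phi + \gn_\Phi}(u)| = 1$ for $u \in U_\Phi$; hence the modular function again restricts without change.

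The main obstacle, such as it is, is the abstract fact that modular functions need not in general restrict well to closed subgroups. What makes the restriction work here is the very specific internal structure of $\gq_\Phi$: the Levi factor $\gm_\Phi + \ga_\Phi$ is precisely the $\ga_\Phi$-centralizer, so removing $\gm_\Phi$ (or its compact part $\gu_\Phi$) from the ambient space does not change the relevant trace, and the pieces we remove act with unit determinant for structural reasons (reductive with compact center, respectively compact). Once these two features are isolated, the three formulas for $\delta$ agree on the nose.
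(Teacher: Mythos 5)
Your proof is correct and matches the paper's intent: the paper treats Lemma~\ref{modfunction} as a direct reformulation of Lemma~\ref{traces} together with the observations made just before it (that $\delta_{Q_\Phi}$ is trivial on $M_\Phi N_\Phi$ and exponentiates $\tr\ad$ on $\ga_\Phi$, and that $\delta$ is also the modular function of $A_\Phi N_\Phi$), and you have merely spelled out the short verifications the paper leaves implicit. The only genuinely additional content in your write-up is the $U_\Phi A_\Phi N_\Phi$ case, where your use of $[\ga_\Phi,\gu_\Phi]=0$ together with unimodularity of the compact factor is exactly the intended argument.
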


Consider semi-invariance of the polynomial $P$ of (\ref{c-d}(d)), which
by definition is the product of factors $\Pf_{\gl_{\Phi,j}}$\,.  
Using
(\ref{nzv}) and Lemma \ref{stepwise-nondegen}, calculate with bases of
the $\gv_{\Phi,j}$ as in \cite[Lemma 4.4]{W2014} to arrive at

\begin{lemma}\label{invar-P}
Let $\xi \in \ga_\Phi$ and $a = \exp(\xi) \in A_\Phi\,$.  Then
$\ad(\xi)P = \left (\frac{1}{2}\sum_j 
\dim(\gl_{\Phi,j}/\gz_{\Phi,j})\beta_{j_0}(\xi)\right ) P$ 
and $\Ad(a)P = \left ( \prod_j(\exp (\beta_{j_0}(\xi)))^{\frac{1}{2}
\sum_j \dim(\gl_{\Phi,j}/\gz_{\Phi,j})}\right ) P$.
\end{lemma}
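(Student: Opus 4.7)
The plan is to decompose $P$ as a product of Pfaffians and to establish the semi-invariance one layer at a time. By definition (\ref{c-d}(d)), $P = \prod_j \Pf_{\gl_{\Phi,j}}$, so it suffices to show that for each $j$,
$\Ad(a)\Pf_{\gl_{\Phi,j}} = \exp\bigl(\tfrac{1}{2}\dim(\gl_{\Phi,j}/\gz_{\Phi,j})\,\beta_{j_0}(\xi)\bigr)\Pf_{\gl_{\Phi,j}}$,
and then multiply.

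Each $\Pf_{\gl_{\Phi,j}}$ is the Pfaffian of the skew form $b_\lambda(X,Y) = \lambda([X,Y])$ on the $2d_j$-dimensional complement $\gv_{\Phi,j} \simeq \gl_{\Phi,j}/\gz_{\Phi,j}$. By Lemma \ref{central-ideal}, $\gv_{\Phi,j}$ is $\Ad(A_\Phi)$-stable, and since the Lie bracket is $\Ad$-equivariant one has $b_{\Ad^*(a^{-1})\lambda}(X,Y) = b_\lambda(\Ad(a)X,\Ad(a)Y)$. The Pfaffian-determinant identity $\Pf(T^{\!\top} M T) = \det(T)\Pf(M)$ then gives
$\Pf_{\gl_{\Phi,j}}(\Ad^*(a^{-1})\lambda) = \det(\Ad(a)|_{\gv_{\Phi,j}})\,\Pf_{\gl_{\Phi,j}}(\lambda)$,
which reduces the lemma to evaluating $\det(\Ad(a)|_{\gv_{\Phi,j}})$ as a character of $A_\Phi$.

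Using the restricted-root decomposition $\gv_{\Phi,j} = \sum_{i \in I_j}\sum_{\alpha \in J'_i}\gg_\alpha$, we have $\tr(\ad(\xi)|_{\gv_{\Phi,j}}) = \sum_{i,\alpha}\alpha(\xi)\dim\gg_\alpha$. Grouping roots of $J'_i$ into $\sigma_i$-pairs, the identity $\alpha + \sigma_i\alpha = \beta_i$ together with $\dim\gg_\alpha = \dim\gg_{\sigma_i\alpha}$ (and, in the self-paired case $\alpha = \beta_i/2$, the equality $\alpha(\xi) = \tfrac{1}{2}\beta_i(\xi)$ directly) shows that each pair contributes $\tfrac{1}{2}\beta_i(\xi)$ times its total root-space dimension. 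Summing over $i \in I_j$ and invoking $\beta_i|_{\ga_\Phi} = \beta_{j_0}|_{\ga_\Phi}$ from the definition of $I_j$, the trace collapses to $\tfrac{1}{2}\beta_{j_0}(\xi)\dim\gv_{\Phi,j} = \tfrac{1}{2}\beta_{j_0}(\xi)\dim(\gl_{\Phi,j}/\gz_{\Phi,j})$. Exponentiating, multiplying over $j$, and differentiating at $\xi = 0$ yields both statements of the lemma.

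The main obstacle is the bookkeeping in the root-pairing computation: handling the possible self-paired case $\alpha = \beta_i/2$ that can arise in $BC$-type restricted root systems, and verifying the multiplicity equality $\dim\gg_\alpha = \dim\gg_{\sigma_i\alpha}$ of partners under $\sigma_i$. These are precisely the considerations of \cite[Lemma 4.4]{W2014}; after the reindexing that collects the layers $L_i$ for $i \in I_j$ into a single factor $L_{\Phi,j}$, the calculation goes through essentially verbatim.
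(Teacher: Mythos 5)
Your proof is correct and follows the same route as the paper: the paper's proof simply defers to the computation with root-space bases of the $\gv_{\Phi,j}$ done in \cite[Lemma 4.4]{W2014}, and your argument spells out exactly that reduction (factor $P$ over $j$, apply $\Pf(T^\top M T)=\det(T)\Pf(M)$ for $T=\Ad(a)|_{\gv_{\Phi,j}}$, compute $\tr(\ad(\xi)|_{\gv_{\Phi,j}})$ by $\sigma_i$-pairing of the roots in $J'_i$, and use $\beta_i|_{\ga_\Phi}=\beta_{j_0}|_{\ga_\Phi}$ for $i\in I_j$).
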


\begin{definition}\label{quasicenter}{\rm
The {\em quasi-center} of $\gn_\Phi$ is $\gs_\Phi = \sum_j\gz_{\Phi,j}$\,.
Fix a basis $\{e_t\}$ of $\gs_\Phi$ consisting of ordinary root vectors,
$e_t \in \gg_{\alpha_t}$.  The {\em quasi-center determinant} relative to
the choice of $\{e_t\}$ is the polynomial function
$\Det_{\gs_\Phi}(\lambda)$ = $\prod_t \lambda(e_t)$
on $\gs^*_\Phi$\,.}\hfill$\diamondsuit$
\end{definition}

Let $a \in A_\Phi$ and compute $(\Ad(a)\Det_{\gs_\Phi})(\lambda)$
= $\Det_{\gs_\Phi}(\Ad^*($$a$$)^{-1}\lambda)$
= $\prod_t \lambda(\Ad(a)e_t)$.  Each $e_t \in \gz_{\Phi,j}$ is multiplied
by $\exp(\beta_{j_0}(\log a))$.  So $(\Ad(a)\Det_{\gs_\Phi})(\lambda)$
= $\bigl (\prod_j \exp(\beta_{j_0}(\log a))^{\dim \gz_{\Phi,j}} \bigr )
\Det_{\gs_\Phi}(\lambda)$.  Now

\begin{lemma}\label{pf-d}
If $\xi \in \ga_\Phi$ then $\Ad(\exp(\xi))\Det_{\gs_\Phi}$ = 
$\Bigl (\prod_j \exp(\beta_{j_0}(\xi))^{\dim\gz_{\Phi,j}}\Bigr )
\Det_{\gs_\Phi}$ where $j_0 \in I_j$\,.
\end{lemma}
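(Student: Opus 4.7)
The plan is to read Lemma \ref{pf-d} directly off the product formula that defines $\Det_{\gs_\Phi}$, since the coadjoint action of $\exp(\xi)$ is diagonal on the chosen basis $\{e_t\}$ of root vectors. Writing $a = \exp(\xi)$, I would first expand
$$
\bigl( \Ad(a)\Det_{\gs_\Phi}\bigr)(\lambda) = \Det_{\gs_\Phi}\bigl(\Ad^*(a)^{-1}\lambda\bigr) = {\prod}_t \lambda(\Ad(a)e_t),
$$
which is exactly the identity already displayed just before the statement of the lemma. Everything therefore reduces to identifying, for each $t$, the scalar by which $\Ad(\exp(\xi))$ acts on $e_t \in \gg_{\alpha_t}$.

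The key step is to verify that for each $e_t \in \gz_{\Phi,j}$ the associated root $\alpha_t$ satisfies $\alpha_t|_{\ga_\Phi} = \beta_{j_0}|_{\ga_\Phi}$ for any $j_0 \in I_j$. Recall from (\ref{split-lphi}) that $\gz_{\Phi,j} = \sum_{i \in I_j}(\gg_{\beta_i} + \gl_i'')$. For basis vectors lying in $\gg_{\beta_i}$ with $i \in I_j$, the claim is immediate from the very definition of $I_j$, which groups the indices $i$ for which $\beta_i|_{\ga_\Phi}$ is a fixed linear functional. For basis vectors lying in $\gl_i'' = \sum_{\alpha \in J_i''}\gg_\alpha$, the claim is precisely the conclusion of Lemma \ref{inter-compl}, which states that $\alpha|_{\ga_\Phi} = \beta_i|_{\ga_\Phi}$ for every $\alpha \in J_i''$, and again $\beta_i|_{\ga_\Phi} = \beta_{j_0}|_{\ga_\Phi}$ by definition of $I_j$.

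With that identification in hand, each factor contributes $\Ad(\exp(\xi))e_t = \exp(\alpha_t(\xi))e_t = \exp(\beta_{j_0}(\xi))e_t$, so
$$
{\prod}_t \lambda(\Ad(a)e_t) = \Bigl({\prod}_j \exp(\beta_{j_0}(\xi))^{\#\{t : e_t \in \gz_{\Phi,j}\}}\Bigr)\,\Det_{\gs_\Phi}(\lambda).
$$
Since $\{e_t\}$ restricts to a basis of each $\gz_{\Phi,j}$, the exponent of $\exp(\beta_{j_0}(\xi))$ equals $\dim \gz_{\Phi,j}$, yielding the stated formula.

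The only genuine content of the proof is the uniformity of the weight $\beta_{j_0}|_{\ga_\Phi}$ across all root vectors contributing to $\gz_{\Phi,j}$; everything else is a one-line expansion of the definition. Thus the main (and only) obstacle is having Lemma \ref{inter-compl} and the definition of the index sets $I_j$ available, both of which have been established earlier. Note also that the result is independent of the choice of $j_0 \in I_j$, since any two such indices restrict to the same functional on $\ga_\Phi$.
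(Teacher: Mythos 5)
Your proposal is correct and takes essentially the same approach as the paper: both expand $\Ad(a)\Det_{\gs_\Phi}(\lambda) = \prod_t\lambda(\Ad(a)e_t)$ and then observe that each $e_t\in\gz_{\Phi,j}$ is scaled by $\exp(\beta_{j_0}(\log a))$. You merely spell out the appeal to Lemma~\ref{inter-compl} (and the definition of $I_j$) that the paper leaves implicit when asserting the uniform eigenvalue on $\gz_{\Phi,j}$.
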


Combining Lemmas \ref{traces}, \ref{modfunction} and \ref{pf-d} we have

\begin{proposition}\label{pf-mod}
The product $P\cdot\Det_{\gs_\Phi}$ is an $\Ad(Q_\Phi)$-semi-invariant
polynomial on $\gs^*_\Phi$ of degree 
$\frac{1}{2}(\dim \gn_\Phi + \dim \gs_\Phi)$ and of weight equal to the 
weight of the modular function $\delta_{Q_\Phi}$\,.
\end{proposition}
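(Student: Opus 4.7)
The plan is to assemble the proposition directly from the three preceding lemmas, with the dimension and weight computations falling out of straightforward arithmetic once the lemmas are combined. I will first handle the degree, then the $A_\Phi$-semi-invariance (the main content), and finally argue that the factors on $M_\Phi$ and $N_\Phi$ cancel to match the weight of $\delta_{Q_\Phi}$.

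For the degree, recall from (\ref{c-d}(d)) that $P$ is a product of Pfaffians $\Pf(b_{\lambda_j})$ on $\gl_{\Phi,j}/\gz_{\Phi,j}$, so it has total degree $\sum_j \tfrac{1}{2}\dim(\gl_{\Phi,j}/\gz_{\Phi,j}) = \tfrac{1}{2}(\dim\gn_\Phi - \dim\gs_\Phi)$. By Definition \ref{quasicenter}, $\Det_{\gs_\Phi}$ is a product over a basis of $\gs_\Phi$ of linear factors, hence of degree $\dim\gs_\Phi$. Adding the two gives $\tfrac{1}{2}(\dim\gn_\Phi + \dim\gs_\Phi)$, as claimed.

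For semi-invariance under $\Ad(A_\Phi)$, let $\xi \in \ga_\Phi$ and $a = \exp(\xi)$. Multiplying Lemma \ref{invar-P} and Lemma \ref{pf-d} yields
\begin{equation*}
\Ad(a)(P \cdot \Det_{\gs_\Phi}) = \prod_j \exp(\beta_{j_0}(\xi))^{\frac{1}{2}\dim(\gl_{\Phi,j}/\gz_{\Phi,j}) + \dim \gz_{\Phi,j}} (P \cdot \Det_{\gs_\Phi}).
\end{equation*}
The exponent simplifies via
\begin{equation*}
\tfrac{1}{2}(\dim\gl_{\Phi,j} - \dim\gz_{\Phi,j}) + \dim\gz_{\Phi,j} = \tfrac{1}{2}(\dim\gl_{\Phi,j} + \dim\gz_{\Phi,j}),
\end{equation*}
which is exactly the exponent appearing in Lemma \ref{modfunction} for $\delta_{Q_\Phi}(a)$. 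So $P \cdot \Det_{\gs_\Phi}$ transforms under $\Ad(A_\Phi)$ by the character $\delta_{Q_\Phi}|_{A_\Phi}$.

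The remaining task is $\Ad(M_\Phi)$- and $\Ad(N_\Phi)$-invariance, which is where I expect the only real subtlety. Since $\delta_{Q_\Phi}$ is trivial on $M_\Phi N_\Phi$, I need strict invariance under these subgroups. For $N_\Phi$, the action on $\gs_\Phi$ (through the quotient $\gn_\Phi / \gv_\Phi$ in the appropriate sense) is unipotent and fixes each ordinary root line $\gg_{\alpha_t}$ modulo strictly higher root contributions, so both $P$ and $\Det_{\gs_\Phi}$ — being polynomials in the coordinate functions $\lambda(e_t)$ — are fixed. For $M_\Phi$, the key point is that $\Ad(M_\Phi)$ preserves each layer $\gl_{\Phi,j}$ and each center $\gz_{\Phi,j}$, and acts through a group with trivial modular character; on $\gl_{\Phi,j}/\gz_{\Phi,j}$ this forces the Pfaffian factor of $P$ to be $M_\Phi$-invariant (a symplectic-type action up to a scalar that must then be $1$), and on $\gz_{\Phi,j}$ the determinant factor is similarly invariant. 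Combining these three actions yields the $\Ad(Q_\Phi)$-semi-invariance of weight $\delta_{Q_\Phi}$, completing the proof.
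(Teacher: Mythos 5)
Your degree computation and your treatment of the $A_\Phi$-semi-invariance --- multiplying Lemma \ref{invar-P} against Lemma \ref{pf-d} and matching the resulting exponent $\tfrac{1}{2}(\dim\gl_{\Phi,j} - \dim\gz_{\Phi,j}) + \dim\gz_{\Phi,j} = \tfrac{1}{2}(\dim\gl_{\Phi,j} + \dim\gz_{\Phi,j})$ against Lemma \ref{modfunction} --- are correct and are exactly the content of the paper's proof (which is the one-line ``combine the preceding lemmas''). The degree count $\tfrac{1}{2}(\dim\gn_\Phi - \dim\gs_\Phi) + \dim\gs_\Phi$ is also fine.

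The problem is your third paragraph, the $M_\Phi$-invariance step. You assert that ``$\Ad(M_\Phi)$ preserves each layer $\gl_{\Phi,j}$ and each center $\gz_{\Phi,j}$.'' That is precisely the condition the paper calls \emph{invariance} (Definition \ref{invariant}), and the paper goes out of its way to show it can fail: see examples (3) and (4) following that definition, where for $G = SL(6;\R)$, $\Phi = \{\psi_1,\psi_4,\psi_5\}$, the center $\gz_{\Phi,2} = \gg_{\psi_3}$ sits as a proper subspace of the irreducible $\Ad(M_\Phi)$-module $\gg_{[\Phi,\beta_3]}$ and is not $\Ad(M_\Phi)$-stable (indeed not even $\Ad(U_\Phi)$-stable). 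When that happens $\gs_\Phi$ itself is not $\Ad(M_\Phi)$-invariant, so there is no induced action of $M_\Phi$ on $\gs^*_\Phi$ and the whole $M_\Phi$-argument (``symplectic-type action up to a scalar that must then be $1$'') never gets off the ground. Even in the invariant case you would still owe justification that $\det(\Ad(m)|_{\gz_{\Phi,j}}) = 1$; this does not follow just from $\delta_{Q_\Phi}|_{M_\Phi} \equiv 1$.

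The intended reading of the proposition, which is also how it is used in Theorem \ref{dix-for-an}, is that $P\cdot\Det_{\gs_\Phi}$ is semi-invariant under the $\Ad(A_\Phi)$-action with weight $\delta_{Q_\Phi}|_{A_\Phi}$; since $\delta_{Q_\Phi}$ is trivial on $M_\Phi N_\Phi$, that specifies the weight as a character of $Q_\Phi$. Your first two paragraphs already establish that. The honest thing to do is stop there, and flag that in the non-weakly-invariant case there is no natural action of $M_\Phi$ on $\gs_\Phi^*$ to speak of; the paper's workaround for that issue is not in this proposition but in the $\widetilde{\gs_\Phi}$ machinery of Section \ref{sec6} (Lemma \ref{easy-tilde} and Theorem \ref{w-o-weakly-invar}).
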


Denote $V_\Phi = \exp(\gv_\Phi)$ and $S_\Phi = \exp(\gs_\Phi)$.  Then
$V_\Phi \times S_\Phi \to N_\Phi$\,, by $(v,s) \mapsto vs$, is an
analytic diffeomorphism.  Define

\begin{equation}\label{def-dp}
D_0: \text{ Fourier transform of } P\cdot\Det_{\gs_\Phi}
\text{ acting on $A_\Phi N_\Phi = A_\Phi V_\Phi S_\Phi$
 by acting on the $S_\Phi$ variable.}
\end{equation}

\begin{theorem}\label{dix-for-an}
The operator $D_0$ of {\rm (\ref{def-dp})} is an invertible self-adjoint
differential operator of degree $\frac{1}{2}(\dim\gn_\Phi + \dim\gs_\Phi)$
on $L^2(A_\Phi N_\Phi)$ with dense domain the Schwartz space
$\cC(A_\Phi N_\Phi)$, and $D := D_0^{1,2} (D_0^{1/2})^*$ is a well defined
invertible positive self-adjoint operator of the same
degree $\frac{1}{2}(\dim\gn_\Phi + \dim\gs_\Phi)$
on $L^2(A_\Phi N_\Phi)$ with dense domain $\cC(A_\Phi N_\Phi)$.
In particular $D$
is a  Dixmier-Puk\'anszky operator on $A_\Phi N_\Phi$ with domain equal
to the space of rapidly decreasing $C^\infty$ functions.
\end{theorem}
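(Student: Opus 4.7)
The plan is to realize $D_0$ as a constant-coefficient differential operator acting only in the $S_\Phi$ direction of $A_\Phi N_\Phi$, and then invoke standard Euclidean Fourier-analytic machinery, with the conjugation semi-invariance supplied by Proposition \ref{pf-mod}. First I would observe that the quasi-center $\gs_\Phi = \sum_j \gz_{\Phi,j}$ is abelian: by Lemma \ref{central-ideal} each $\gz_{\Phi,j}$ is central in $\gl^{\dagger}_{\Phi,j} = \sum_{k \geqq j}\gl_{\Phi,k}$, so $[\gz_{\Phi,j},\gz_{\Phi,k}] = 0$ for $k \geqq j$, and by symmetry for all $j,k$. Hence $\exp$ identifies $\gs_\Phi$ with $S_\Phi$ as abelian Lie groups, and the Euclidean Fourier transform turns multiplication by the polynomial $q := P\cdot\Det_{\gs_\Phi}$ on $\gs_\Phi^*$ into a constant-coefficient differential operator $\widetilde D_0$ on $\gs_\Phi$ of degree $\deg q = \tfrac{1}{2}(\dim\gn_\Phi + \dim\gs_\Phi)$ by Proposition \ref{pf-mod}. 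Via the analytic diffeomorphism $A_\Phi \times V_\Phi \times S_\Phi \to A_\Phi N_\Phi$, $(a,v,s) \mapsto avs$, this lifts to $D_0$ acting only in the $S_\Phi$ coordinate, so $D_0$ preserves $\cC(A_\Phi N_\Phi)$ and has the claimed degree.

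Next I would establish that $D_0$ is essentially self-adjoint, invertible in the densely-defined sense, and then pass to $D$. The polynomial $q$ is real valued on the real vector space $\gs_\Phi^*$ (each $\Pf(b_{\lambda_j})$ is a real polynomial and $\Det_{\gs_\Phi}(\lambda) = \prod_t \lambda(e_t)$ is a product of real linear forms), so the associated Fourier-multiplier on $L^2(S_\Phi)$ is essentially self-adjoint by Plancherel for the abelian group $S_\Phi$. A Fubini argument in the $A_\Phi V_\Phi$ variables extends this to $L^2(A_\Phi N_\Phi)$ with core $\cC(A_\Phi N_\Phi)$. Since $q$ is nonzero, its zero locus is a proper real-algebraic subvariety of $\gs_\Phi^*$ of Lebesgue measure zero, so multiplication by $q$ is injective with dense range on $L^2(\gs_\Phi^*)$; correspondingly $D_0$ has a densely defined inverse, which is the sense of invertibility needed by the Dixmier-Puk\'anszky formalism of \cite{LW1978}. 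For the second assertion, letting $D_0^{1/2}$ denote the Fourier image of multiplication by the principal square root $\sqrt{q}$ (a closable operator on the dense subspace $\cC(A_\Phi N_\Phi)$, normal because $\overline{\sqrt q}\cdot\sqrt q = \sqrt q \cdot \overline{\sqrt q} = |q|$), the product $D = D_0^{1/2}(D_0^{1/2})^*$ corresponds to multiplication by $|q|$, hence is positive self-adjoint of the same degree, with the same dense domain and the same zero locus.

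The main obstacle — and what promotes $D$ from a mere positive differential operator to a genuine Dixmier-Puk\'anszky operator — is the requirement from \cite[\S 1]{LW1978} that $D$ be conjugation semi-invariant under $A_\Phi N_\Phi$ of weight exactly equal to that of $\delta_{A_\Phi N_\Phi}$. Here Proposition \ref{pf-mod} is decisive: $q$ is $\Ad(Q_\Phi)$-semi-invariant of weight equal to that of $\delta_{Q_\Phi}$, and Lemma \ref{modfunction} gives $\delta_{A_\Phi N_\Phi} = \delta_{Q_\Phi}|_{A_\Phi N_\Phi}$. Translating polynomial semi-invariance on $\gs_\Phi^*$ into conjugation semi-invariance of $D_0$ (and hence of $D = |D_0|$) on $A_\Phi N_\Phi$ requires tracking how the Fourier transform, the Haar measures on $A_\Phi$, $V_\Phi$, and $S_\Phi$, and the Jacobian of $\Ad(a)$ on $\gs_\Phi$ combine, but this is precisely the bookkeeping that Lemmas \ref{traces}, \ref{modfunction}, and \ref{pf-d} were set up to perform — the weight contributed by $P$ (Lemma \ref{invar-P}) together with the weight contributed by $\Det_{\gs_\Phi}$ (Lemma \ref{pf-d}) assemble into exactly the weight of $\delta$ from Lemma \ref{traces}(iii), so no residual correction is needed.
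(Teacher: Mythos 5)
Your proof takes a genuinely more direct route to invertibility than the paper does, and that part of it is sound. The paper obtains invertibility of $D$ by an appeal to Dixmier's theory of quasi-Hilbert algebras \cite{D1952} as deployed by Kleppner--Lipsman \cite{KL1972}: it constructs the abstract Dixmier--Puk\'anszky operator $M$ of \cite[\S 6]{KL1972} (which is invertible by \cite[pp.\ 293--294]{D1952}) and then unwinds the definitions in \cite{KL1972} and \cite{P1955} to identify $M$ with the explicit operator $D$. Your argument instead realizes $D_0$ as a Fourier multiplier by the polynomial $q=P\cdot\Det_{\gs_\Phi}$ in the abelian $S_\Phi$-direction, and infers injectivity with dense range from the fact that $q$ is a nonzero polynomial, so its zero set is a measure-zero algebraic subvariety of $\gs_\Phi^*$. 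That observation is correct, the operator-theoretic bookkeeping ($D_0$ self-adjoint as the inverse Fourier image of multiplication by a real polynomial, $D$ corresponding to multiplication by $|q|$ since $\overline{\sqrt q}\sqrt q = |q|$) is sound, and your treatment of semi-invariance via Proposition \ref{pf-mod} and Lemma \ref{modfunction} matches the paper's. What your direct argument buys is that it bypasses the opaque detour through \cite{D1952}, \cite{KL1972}, and \cite{P1955} entirely.

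The one place where you diverge without adequate justification is the final sentence of the theorem, ``$D$ is a Dixmier--Puk\'anszky operator.'' By the paper's own definition just before the theorem, that means there exists a Plancherel measure $\mu$ making \eqref{nonunimod-planch-gen} hold for the pair $(D,\mu)$; it is not a purely formal condition. You have verified that $D$ is positive, self-adjoint, invertible in the densely-defined sense, and conjugation semi-invariant of weight $\delta_{A_\Phi N_\Phi}$, and you assert that this ``is the sense of invertibility needed by the Dixmier--Puk\'anszky formalism of \cite{LW1978},'' but you never show that these formal properties suffice for $D$ to actually appear in a Plancherel pair. The paper's identification $D=M$ with the Kleppner--Lipsman operator settles this for free, because $M$ is constructed precisely to satisfy the Plancherel formula. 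If you want to keep your more elementary line of argument you should either cite a theorem in \cite[\S 1]{LW1978} that says any positive invertible self-adjoint operator, semi-invariant of weight $\delta_H$ on a sufficiently nice domain, is a Dixmier--Puk\'anszky operator, or else observe that the explicit Plancherel computation carried out in the proof of Theorem \ref{planch-an} (which assumes only the formal properties of $D$ established so far) supplies the missing verification.
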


\begin{proof} Since it is the Fourier transform of a real polynomial, $D_0$
is a differential operator that is self-adjoint on $L^2(A_\Phi N_\Phi)$
with dense domain $\cC(A_\Phi N_\Phi)$.  Thus $D$ is well defined, and
is positive and self-adjoint as asserted.  Now it remains only
to see that $D$ (and thus $D_0$) are invertible.
\smallskip

Invertibility of $D$ comes out of Dixmier's theory of quasi-Hilbert
algebras \cite{D1952} as applied by Kleppner and Lipsman to group extensions.
Specifically, \cite[\S 6]{KL1972} leads to a
Dixmier-Puk\'anszky operator, there called $M$.  The quasi-Hilbert algebra in
question is defined on \cite[pp. 481--482]{KL1972}, the relevant transformations
$M$ and $\Upsilon$ are specified in \cite[Theorem 1]{P1955}, and invertibility
of $M$ is shown in \cite[pp. 293--294]{D1952}.  Unwinding the definitions of
$M$ and $\Upsilon$ in \cite[\S 6]{KL1972} one sees that the
Dixmier-Puk\'anszky operator $M$ of \cite{KL1972} is the same as our
operator $D$.  That completes the proof.
\end{proof}

The action of $\ga_\Phi$ on $\gz_{\Phi,j}$ is scalar,
$\ad(\alpha)\zeta = \beta_{j_0}(\alpha)\zeta$ where (as before)
$j_0 \in I_j$\,.  So the isotropy
algebra $(\ga_\Phi)_\lambda$ is the same at every $\lambda \in \gt_\Phi^*$\,,
given by $(\ga_\Phi)_\lambda = \{\alpha \in \ga_\Phi \mid \text{ every }
\beta_{j_0}(\alpha) = 0\}$.  Thus the $(A_\Phi)$-stabilizer on $\gt_\Phi^*$
is
\begin{equation}\label{a-stab}
A'_\Phi := \{\exp(\alpha) \mid \text{ every } 
	\beta_{j_0}(\alpha) = 0\}, \text{ independent of choice of }
	\lambda \in \gt_\Phi^*\,.
\end{equation}

Given $\lambda \in \gt_\Phi^*$, in other words given a stepwise square
integrable representation $\pi_\lambda$ where $\lambda \in \gs_\Phi^*$\,,
we write $\pi_\lambda^\dagger$ for the extension of $\pi_\lambda$ to a
representation of $A'_\Phi N_\Phi$ on the same Hilbert space.  That
extension exists because $A'_\Phi$ is a vector group, thus contractible to 
a point, so $H^2(A'_\Phi;\C') = H^2(point;\C') = \{1\}$, and
the Mackey obstruction vanishes.  Now the representations of $A_\Phi N_\Phi$
corresponding to $\pi_\lambda$ are the
\begin{equation}\label{rep-an-lambda}
\pi_{\lambda,\xi} := \Ind_{A'_\Phi N_\Phi}^{A_\Phi N_\Phi}
	(\exp(i \xi) \otimes \pi_\lambda^\dagger) \text{ where }
	\xi \in (\ga'_\Phi)^* \text{ and }
	\exp(i \xi): \exp(\alpha) := \exp(i\xi(\alpha)) \text{ for }
	\alpha \in \ga'_\Phi\,.
\end{equation}
Note also that
\begin{equation}\label{rep-an-conj}
\pi_{\lambda,\xi}\cdot \Ad(an) = \pi_{\Ad^*(a)\lambda,\xi}
	\text{ for } a \in A_\Phi \text{ and } n \in N_\Phi\, .
\end{equation} 
The resulting Plancherel
formula (\ref{nonunimod-planch-gen}), 
$f(x) = \int_{\widehat{H}} \tr \pi(D(r_xf))d\mu_H(\pi)$,
$H = A_\Phi N_\Phi$\,, is

\begin{theorem}\label{planch-an}
Let $Q_\Phi = M_\Phi A_\Phi N_\Phi$ be a parabolic subgroup of the 
real reductive Lie group $G$.  Let $D$ denote the Dixmier-Puk\'anszky
operator of {\rm (\ref{def-dp})}.  Let
$\pi_{\lambda,\xi} \in \widehat{A_\Phi N_\Phi}$ as
described in {\rm (\ref{rep-an-lambda})} and let
$\Theta_{\pi_{\lambda,\xi}}: h \mapsto
    \tr \pi_{\lambda,\phi}(h)$
denote its distribution character.  Then
$\Theta_{\pi_{\lambda,\xi}}$ is a tempered distribution.
If $f \in \cC(A_\Phi N_\Phi)$ and $x \in A_\Phi N_\Phi$ then
$$
f(x) = c \int_{(\ga'_\Phi)^*} 
	\left ( \int_{\gs_\Phi^*/\Ad^*(A_\Phi)}
        \Theta_{\pi_{\lambda,\xi}}(D(r_xf)) |\Pf(\lambda)| d\lambda
	\right ) d\phi
$$
where $c = 2^{d_1 + \dots + d_m} d_1! d_2! \dots d_m!$ as in {\rm (\ref{c-d}a)}
and $m$ is the number of factors $L_r$ in $N_n$\,.
\end{theorem}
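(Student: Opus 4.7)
The plan is to combine Mackey's normal subgroup analysis for the extension $1 \to N_\Phi \to A_\Phi N_\Phi \to A_\Phi \to 1$ with the general non-unimodular Plancherel formula (\ref{nonunimod-planch-gen}), using Theorem \ref{gen-setup} as the input for $N_\Phi$ and Theorem \ref{dix-for-an} as the source of the Dixmier-Puk\'anszky compensation.

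First I would describe the Plancherel-relevant part of $\widehat{A_\Phi N_\Phi}$. By Theorem \ref{gen-setup} and Lemma \ref{concentrated}, Plancherel measure on $\widehat{N_\Phi}$ is concentrated on $\{\pi_\lambda \mid \lambda \in \gt_\Phi^*\}$. Since $\ad(\alpha)$ acts by the scalar $\beta_{j_0}(\alpha)$ on each $\gz_{\Phi,j}$, the $A_\Phi$-stabilizer of every $\lambda \in \gt_\Phi^*$ is the common vector group $A'_\Phi$ of (\ref{a-stab}). The Mackey obstruction to extending $\pi_\lambda$ to $A'_\Phi N_\Phi$ vanishes since $A'_\Phi$ is contractible, producing $\pi_\lambda^\dagger$, and Mackey's theorem identifies the irreducible unitary representations of $A_\Phi N_\Phi$ lying above the $A_\Phi$-orbit of $\pi_\lambda$ as the $\pi_{\lambda,\xi}$ of (\ref{rep-an-lambda}) with $\xi \in (\ga'_\Phi)^*$. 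By (\ref{rep-an-conj}) these depend only on the $A_\Phi$-orbit of $\lambda$, so the support of Plancherel measure on $\widehat{A_\Phi N_\Phi}$ is parameterized by $(\ga'_\Phi)^* \times \bigl(\gs_\Phi^*/\Ad^*(A_\Phi)\bigr)$, which produces the iterated integral in the statement.

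To see that $\Theta_{\pi_{\lambda,\xi}}$ is tempered, I would observe that the character of $\pi_\lambda^\dagger$ on $A'_\Phi N_\Phi$ is the tensor product of the tempered character $\Theta_{\pi_\lambda}$ of (\ref{frob2}) with the unitary character $\exp(i\xi)$ of $A'_\Phi$, and then note that induction along the smooth finite-dimensional orbit $A_\Phi/A'_\Phi$ preserves the tempered estimate on $\cC(A_\Phi N_\Phi)$. The inversion formula itself I would derive in two steps. For fixed $a \in A_\Phi$, I first apply (\ref{frob3}) from Theorem \ref{plancherel-general} in the $N_\Phi$-variable, obtaining the inner integral $\int_{\gs_\Phi^*}\Theta_{\pi_\lambda}(r_{an}f)|\Pf(\lambda)|\,d\lambda$ with the constant $c$ of (\ref{c-d}(a)). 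I then apply ordinary Fourier inversion on the abelian factor $A_\Phi$: the subgroup $A_\Phi/A'_\Phi$ acts freely on $\gt_\Phi^*$ with quotient $\gs_\Phi^*/\Ad^*(A_\Phi)$, converting the flat $\gs_\Phi^*$-integral into the quotient integral in the statement, while Fourier inversion on $A'_\Phi$ contributes the outer integral over $(\ga'_\Phi)^*$.

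The Dixmier-Puk\'anszky operator $D$ enters through the Kleppner-Lipsman formalism recorded in Theorem \ref{dix-for-an}: Proposition \ref{pf-mod} shows that $P\cdot\Det_{\gs_\Phi}$ is $\Ad(Q_\Phi)$-semi-invariant of weight exactly equal to that of the modular function $\delta_{A_\Phi N_\Phi}$ computed in Lemma \ref{modfunction}, so the Fourier-side multiplier by this polynomial is precisely $D$, and inserting $D(r_xf)$ in the character pairing is what absorbs the non-unimodularity. The main obstacle will be the careful bookkeeping of three normalizations at once: (i) the Plancherel density $|\Pf(\lambda)|$ from $\widehat{N_\Phi}$, (ii) the quotient and transverse measures on $\gs_\Phi^*/\Ad^*(A_\Phi)$ coming from the slice decomposition of $\gt_\Phi^*$, and (iii) the Jacobian of the $\Ad^*(A_\Phi)$-action; these must cooperate with the semi-invariance weight of $P\cdot\Det_{\gs_\Phi}$ so that no spurious scalar survives and the constant $c$ inherited from (\ref{c-d}(a)) is exactly the one in the displayed formula.
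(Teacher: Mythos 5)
Your outline correctly identifies the Mackey little-group picture: the stabilizer $A'_\Phi$ is $\lambda$-independent, the obstruction vanishes, the Plancherel support is parameterized by $(\ga'_\Phi)^* \times (\gs_\Phi^*/\Ad^*(A_\Phi))$, and the Dixmier--Puk\'anszky operator must exactly compensate the modular character through the semi-invariance of $P\cdot\Det_{\gs_\Phi}$ (Proposition \ref{pf-mod}). However, there is a genuine gap, and it is precisely at the point you flag as ``the main obstacle'' and then leave as a black box. Your two-step plan---apply (\ref{frob3}) on $N_\Phi$ for each fixed $a$, then perform ``ordinary Fourier inversion on the abelian factor $A_\Phi$''---does not directly produce the pairing $\Theta_{\pi_{\lambda,\xi}}(D(r_xf))$ in the statement, because $A_\Phi N_\Phi$ is a nontrivial semidirect product: $A_\Phi$ both translates and conjugates, and its conjugation skews the parameter $\lambda \in \gs_\Phi^*$ via $\Ad^*$. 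Flat Fourier inversion on the vector group $A_\Phi$ would apply to a direct product; here it cannot cleanly split off the $A_\Phi$-variable without rederiving the very Jacobian identity you have postponed.

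The paper instead works from the other end: it writes out the trace of the induced representation $\pi_{\lambda,\xi}$ via Kleppner--Lipsman's formula (\cite[Theorem~3.2]{KL1972}), so that $D(r_xf)$ sits inside the integrand from the start. The crux is then the explicit computation
\[
(\pi^\dagger_{\lambda})_*(\Ad(x)\cdot D)\,\tr (\Ad(x^{-1})\cdot\pi^\dagger_{\lambda})(h)
= \delta_{A_\Phi N_\Phi}(x)\,\tr (\Ad(x^{-1})\cdot\pi^\dagger_{\lambda})(h),
\]
after which the change of variables $x \mapsto \Ad^*(x)\lambda$ converts $\int_{A_\Phi/A'_\Phi}\delta(x)(\,\cdot\,)\,dx$ into $\int_{\Ad^*(A_\Phi)\lambda}(\,\cdot\,)\,|\Pf(\lambda')|\,d\lambda'$. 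Fubini over orbit representatives then reconstitutes the full $\gs_\Phi^*$ integral and reduces to the $N_\Phi$-inversion formula, giving $h(1) = f(x)$. This cancellation of $\delta(x)$ against the semi-invariance of the symbol of $D$, and the accompanying Jacobian normalization, is the actual content of the theorem; you need to carry out that calculation rather than assert that the three normalizations ``must cooperate.'' Without it, your argument establishes only the shape of the formula, not the theorem.
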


\begin{proof}
We compute along the lines of the computation of 
\cite[Theorem 2.7]{LW1982} and \cite[Theorem 3.2]{KL1972}.
$$
\begin{aligned}
\tr &\pi_{\lambda,\phi}(Dh) \\
        &=\int_{x \in A_\Phi/ A'_\Phi}\delta(x)^{-1}
                \tr \int_{N_\Phi A'_\Phi}(Dh)(x^{-1}nax)\cdot
                (\pi_{\lambda}^\dagger\otimes \exp(i\phi))
                   (na)\,dn\, da\, dx\\
&= \int_{x \in A_\Phi/ A'_\Phi}
        \tr \int_{N_\Phi A'_\Phi}(Dh)(nx^{-1}ax)\cdot
                (\pi_{\lambda}^\dagger\otimes \exp(i\phi))
                (xnx^{-1}a)\,
                dn\, da\, dx.
\end{aligned}
$$
Now
\begin{equation}\label{one-orbit}
\begin{aligned}
\int_{(\ga'_\Phi)^*}
  &\tr \pi_{\lambda,\phi}(Dh) \,d\phi \\
&= \int_{\widehat{A'_\Phi}}
  \int_{x \in A_\Phi/ A'_\Phi}
        \tr \int_{N_\Phi A'_\Phi}(Dh)(nx^{-1}ax) 
                (\pi^\dagger_{\lambda}\otimes \exp(i\phi))
                (xnx^{-1}a)\,
                dn\,  da\, dx\, \,d\phi\\
&= \int_{x\in  A_\Phi/ A'_\Phi}
        \int_{\widehat{A'_\Phi}}
        \tr \int_{N_\Phi A'_\Phi}
        (Dh)(nx^{-1}ax) 
                (\pi^\dagger_{\lambda}\otimes \exp(i\phi))
                (xnx^{-1}a)\, dn\, da\, \,d\phi\,\, dx \\
&= \int_{x \in A_\Phi/ A'_\Phi} \tr \int_{N_\Phi} (Dh)(n)
        \pi^\dagger_{\lambda}(xnx^{-1}) dn\, dx \\
&= \int_{x \in A_\Phi/ A'_\Phi} \tr \int_{N_\Phi} (Dh)(n)
        (\Ad(x^{-1})\cdot \pi^\dagger_{\lambda})(n) dn\, dx \\
&= \int_{x \in A_\Phi/ A'_\Phi} \tr (\Ad(x^{-1})\cdot
        \pi^\dagger_{\lambda})(Dh))\, dx\\
&= \int_{x \in A_\Phi/ A'_\Phi}
        (\Ad(x^{-1})\cdot\pi^\dagger_{\lambda})_*(D)
        \,\,\tr (\Ad(x^{-1})\cdot\pi^\dagger_{\lambda})(h) dx\\
&= \int_{x \in A_\Phi/ A'_\Phi}(\pi^\dagger_{\lambda})_*(\Ad(x)\cdot D)
        \,\tr (\Ad(x^{-1})\cdot\pi^\dagger_{\lambda})(h)\, dx \\
&= \int_{x \in A_\Phi/ A'_\Phi} \delta_{A_\Phi N_\Phi} (x)
        \,\tr (\Ad(x^{-1})\cdot\pi^\dagger_{\lambda})(h)\, dx
= \int_{\lambda' \in \Ad^*(A_\Phi)\lambda} 
	\tr \pi^\dagger_{\lambda'}(h)|\Pf(\lambda')|d\lambda' .
\end{aligned}
\end{equation}
Summing over $\overline{\lambda} = \Ad^*(A_\Phi)(\lambda) \in 
\gt^*/\Ad^*(A_\Phi)$ we now have
\begin{equation}\label{all-orbits}
\begin{aligned}
\int_{\overline{\lambda} \in 
	\gt_\Phi^*/\Ad^*(A_\Phi)} & \left ( \int_{(\ga'_\Phi)^*}
        \tr \pi_{\lambda,\phi}(Dh) \,d\phi\right ) d\overline{\lambda} \\
&=   \int_{\overline{\lambda} \in \gt_\Phi^*/\Ad^*(A_\Phi)} \left (
	\int_{\lambda' \in \Ad^*(A_\Phi)\lambda} 
        \tr \pi^\dagger_{\lambda'}(h)|\Pf(\lambda')|d\lambda'\right ) 
	d\overline{\lambda} \\
& =  \int_{\lambda \in \gs_\Phi^*} \tr \pi_\lambda(h) |\Pf(\lambda)|d\lambda 
	= h(1).
\end{aligned}
\end{equation}
If $h = r_xf$ then $h(1) = f(x)$ and the theorem follows.
\end{proof}

\section{The Maximal Amenable Subgroup $U_\Phi A_\Phi N_\Phi$}
\label{sec6}
\setcounter{equation}{0}

In this section we extend our results on $N_\Phi$ and $A_\Phi N_\Phi$ to
the maximal amenable subgroups 
$$
E_\Phi := U_\Phi A_\Phi N_\Phi \text{ where }
	U_\Phi \text{ is a maximal compact subgroup of }M_\Phi\,.  
$$
Of course
if $\Phi = \emptyset$, i.e. if $Q_\Phi$ is a minimal parabolic, then 
$U_\Phi = M_\Phi$\,.
We start by recalling the classification of maximal amenable subgroups in
real reductive Lie groups.
\medskip

Recall the definition.  A {\em mean} on a locally compact group $H$ is a
linear functional $\mu$ on $L^\infty(H)$ of norm $1$ and such that
$\mu(f) \geqq 0$ for all real-valued $f \geqq 0$.  $H$ is {\em amenable} 
if it has a
left-invariant mean.  There are more than a dozen useful equivalent conditions.
Solvable groups and compact groups are amenable, as are extensions of
amenable groups by amenable subgroups.
In particular if $U_\Phi$ is a maximal compact
subgroup of $M_\Phi$ then $E_\Phi := U_\Phi A_\Phi N_\Phi$ is amenable.
\medskip

We'll need a technical condition \cite[p. 132]{M1979}.  Let $H$ be the group
of real points in a linear algebraic group whose rational points are Zariski 
dense, let $A$ be a maximal $\R$-split torus in $H$, let $Z_H(A)$ denote
the centralizer of $A$ in $H$, and let $H^0$ be the algebraic connected 
component of the identity in $H$.  Then $H$ is {\em isotropically connected}
if $H = H^0\cdot Z_H(A)$.  More generally we will say that a subgroup 
$H \subset G$ is {\em isotropically connected} if the algebraic hull of 
$\Ad_G(H)$ is isotropically connected.  The point is Moore's theorem

\begin{proposition}\label{moore}{\rm \cite[Theorem 3.2]{M1979}.}
The groups $E_\Phi := U_\Phi A_\Phi N_\Phi$ are maximal amenable subgroups
of $G$.  They are isotropically connected and self-normalizing.  
As $\Phi$ runs over the $2^{|\Psi|}$ subsets of $\Psi$ the $E_\Phi$ are
mutually non-conjugate.  An amenable subgroup $H \subset G$ is contained in
some $E_\Phi$ if and only if it is isotropically connected.
\end{proposition}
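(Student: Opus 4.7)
The plan is to derive the proposition by combining the standard structure of real parabolics with the general theory of amenable subgroups of semisimple groups, essentially recovering Moore's argument in \cite{M1979}. Amenability of $E_\Phi$ is immediate: $U_\Phi$ is compact and $A_\Phi N_\Phi$ is connected solvable, both amenable, and an extension of amenable by amenable is amenable. So the work lies in maximality, isotropic connectivity, self-normalization, non-conjugacy, and the converse characterization.

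For maximality and the converse characterization I would use a Furstenberg-type fixed point argument. Given any amenable subgroup $H \subset G$, amenability produces an $H$-invariant probability measure on every compact $G$-space, and applied to the flag variety $G/Q$ of a minimal parabolic this forces $H$ (after conjugation) to lie inside some standard parabolic $Q_\Phi = M_\Phi A_\Phi N_\Phi$. The image of $H$ in the Levi quotient $M_\Phi$ is then an amenable subgroup of the reductive group $M_\Phi$; since its identity component must be compact (otherwise one extracts a non-compact semisimple piece, contradicting amenability), it sits in a maximal compact $U_\Phi$ up to conjugacy. Hence $H \subset E_\Phi$, and this also shows $E_\Phi$ itself is maximal amenable: any strictly larger amenable subgroup would project to a non-compact amenable subgroup of $M_\Phi$ strictly containing $U_\Phi$, which does not exist.

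Isotropic connectivity is structural: the identity component of $E_\Phi$ is $U_\Phi^0 A_\Phi N_\Phi$, while $Z_{E_\Phi}(A_\Phi) \supset U_\Phi A_\Phi$, and their product recovers $E_\Phi$, so the algebraic hull of $\Ad_G(E_\Phi)$ satisfies Moore's condition. Self-normalization follows because $g \in N_G(E_\Phi)$ must normalize the nilradical $N_\Phi$ (the unique maximal connected unipotent normal subgroup), so $g \in Q_\Phi$; writing $g = man$ and using that $m$ normalizes $U_\Phi$ and $a$ centralizes $U_\Phi$ pins $g$ inside $E_\Phi$ itself. Non-conjugacy across distinct $\Phi$ reduces to non-conjugacy of the $Q_\Phi$: each $E_\Phi$ determines $Q_\Phi$ as its Zariski closure's normalizer, and the $Q_\Phi$ are in bijection with subsets $\Phi \subset \Psi$ by standard parabolic theory.

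The main obstacle I expect is the converse direction of the "if and only if": showing that an amenable, isotropically connected $H \subset G$ lies in some specific $E_\Phi$, rather than merely in some amenable subgroup of a parabolic. The fixed-point argument places $H$ in some $Q_\Phi$, but a priori the conjugating element might shrink the ambient parabolic in a way that disrupts the containment $H \subset U_{\Phi'} A_{\Phi'} N_{\Phi'}$ for the correct $\Phi'$. The hypothesis of isotropic connectivity is designed to force the algebraic hull of $\Ad_G(H)$ to contain a genuine split torus conjugate to some $A_\Phi$, and combined with the unipotent part $N_\Phi$ this selects $\Phi$ canonically. Making this selection canonical, and verifying it matches the $E_\Phi$ produced by the fixed-point argument, is the delicate step, and is exactly where one must invoke the full strength of \cite[Theorem 3.2]{M1979}.
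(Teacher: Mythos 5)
The paper does not prove this proposition; it is stated verbatim as a quotation of Moore's result \cite[Theorem 3.2]{M1979}, so there is no ``paper's own proof'' to compare against. That said, your sketch has three concrete problems worth flagging.

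First, the Furstenberg fixed-point step does not do what you claim. Amenability of $H$ gives an $H$-invariant probability measure on $G/Q$, but an invariant measure does not automatically place $H$ inside a (conjugate of a) parabolic: any compact subgroup of $G$ has an invariant measure on $G/Q$ without sitting in a proper parabolic. Moore's actual route works through the real algebraic hull of $\Ad_G(H)$: amenability forces the connected component of the hull to be solvable-by-compact, and one then embeds the maximal connected triangulable part in a standard split solvable subgroup, which is what produces the parabolic. Your measure-theoretic argument would need to be replaced or supplemented by this algebraic-hull analysis, and that is precisely where the isotropic connectivity condition enters.

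Second, your maximality argument --- ``any strictly larger amenable subgroup would project to a non-compact amenable subgroup of $M_\Phi$ strictly containing $U_\Phi$'' --- presupposes that the larger group $H' \supsetneq E_\Phi$ is already contained in $Q_\Phi$, which is not given. You would first need to show that any amenable $H'$ containing $E_\Phi$ normalizes $N_\Phi$ (plausible, since $N_\Phi$ is the unipotent radical of the Zariski closure of $E_\Phi$), and only then does the projection to $M_\Phi$ make sense. Without that step the argument has a gap.

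Third, and most importantly, you acknowledge at the end that the converse of the ``if and only if'' --- deciding when an isotropically connected amenable $H$ actually lands in a specific $E_\Phi$ --- ``is exactly where one must invoke the full strength of \cite[Theorem 3.2]{M1979}.'' That concedes the core of the proposition to the theorem you are attempting to prove, so the proposal as written is circular on the one genuinely nontrivial point. Given that the paper itself simply cites Moore and moves on, matching the paper here means citing Moore, not re-deriving him; if you want to actually prove the proposition, the algebraic-hull structure theory (amenable real algebraic groups are compact-by-solvable, parabolics are self-normalizing and classified by $\Phi \subset \Psi$, etc.) is the machinery to develop, and the Furstenberg argument alone will not suffice.
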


Now we need some notation and definitions. 
\begin{equation}\label{def-mult}
\text{ if } \alpha \in \Delta^+(\gg,\ga) \text{ then }
	[\alpha]_\Phi = \{\gamma \in \Delta^+(\gg,\ga)
	\mid \gamma|_{\ga_\Phi} = \alpha |_{\ga_\Phi}\} \text{ and } 
	\gg_{[\Phi, \alpha]} = {\sum}_{\gamma \in [\alpha]_\Phi} \gg_\gamma\,.
\end{equation}
Recall \cite[Theorem 8.3.13]{W1966} that the various $\gg_{[\Phi, \alpha]}$,
$\alpha \notin \Phi^{red}$, are $\ad(\gm_\Phi)$-invariant and are
absolutely irreducible as $\ad(\gm_\Phi)$-modules.
\begin{definition}\label{invariant}{\rm
The decomposition $N_\Phi = 
L_{\Phi,1}L_{\Phi,2}\dots L_{\Phi,\ell}$ of Theorem \ref{gen-setup} is
{\em invariant} if each \newline $\ad(\gm_\Phi)\gz_{\Phi,j} \subset \gz_{\Phi,j}$,
equivalently if each $\Ad(M_\Phi)\gz_{\Phi,j} = \gz_{\Phi,j}$,
in other words whenever $\gz_{\Phi,j} = \gg_{[\Phi, \beta_{j_0}]}$\,.
The decomposition $N_\Phi = L_{\Phi,1}L_{\Phi,2}\dots L_{\Phi,\ell}$
is {\em weakly invariant} if each $\Ad(U_\Phi)\gz_{\Phi,j} = \gz_{\Phi,j}$\,.
} \hfill $\diamondsuit$
\end{definition}

Here are some special cases.  \hfill\newline
\smallskip
\noindent (1) If $\Phi$ is empty, i.e. if $Q_\Phi$ is a minimal
parabolic, then the decomposition $N_\Phi = 
L_{\Phi,1}L_{\Phi,2}\dots L_{\Phi,\ell}$ is invariant.  \hfill\newline
\smallskip
\noindent (2) If $|\Psi \setminus \Phi| = 1$, i.e. if $Q_\Phi$ is a maximal
parabolic, then $N_\Phi = L_{\Phi,1}$ is invariant.\hfill\newline
\smallskip
\noindent (3) Let $G = SL(6;\R)$ with simple roots 
$\Psi = \{\psi_1\,, \dots , \psi_5\}$ in 
the usual order and $\Phi = \{\psi_1, \psi_4, \psi_5\}$.  Then
$\beta_1 = \psi_1 + \dots + \psi_5$, $\beta_2 = \psi_2 + \psi_3 + \psi_4$
and $\beta_3 = \psi_3$\,.  Note $\beta_1|_{\ga_\Phi} = \beta_2|_{\ga_\Phi}
\ne \beta_3|_{\ga_\Phi} = (\psi_3 + \psi_4)|_{\ga_\Phi}$\,. 
Thus $\gn_\Phi = \gl_{\Phi,1} + \gl_{\Phi_2}$ with
$\gl_{\Phi,1} = (\gl_1 + \gl_2)\cap \gn_\Phi$ and 
$\gl_{\Phi_2} = \gg_{\beta_3}$\,.  Now $\gg_{[\Phi, \beta_3]} \ne \gz_{\Phi,2}$
so the decomposition $N_\Phi =
L_{\Phi,1}L_{\Phi,2}\dots L_{\Phi,\ell}$ is not invariant.
\hfill\newline
\smallskip
\noindent (4)  In the example just above, 
$[\beta_3] = \{\psi_3, \psi_3 + \psi_4, \psi_3 + \psi_4 + \psi_5\}$.
The semisimple part $[\gm_\Phi,\gm_\Phi]$ of $\gm_\Phi$ is direct sum of 
$\gm_1 = \gs\gl(2;\R)$ with simple root $\psi_1$ and 
$\gm_{4,5} = \gs\gl(3;\R)$ with simple roots $\psi_4$ and $\psi_5$\,.
The action of $[\gm_\Phi,\gm_\Phi]$ on $\gg_{[\beta_3]}$ is trivial on
$\gm_1$ and the usual (vector) representation of $\gm_{4,5}$.  That
remains irreducible on the maximal compact $\gs\go(3)$ in $\gm_{4,5}$.
It follows that here the decomposition $N_\Phi =
L_{\Phi,1}L_{\Phi,2}\dots L_{\Phi,\ell}$ is not weakly invariant.
\medskip

\begin{lemma}\label{components}
Let $F = \exp(i\ga)\cap K$.  Then $F$ is an elementary abelian
$2$-group of cardinality $\leqq 2^{\dim \ga}$.  In particular, 
$F$ is finite, and if $x \in F$ then $x^2  = 1$.  Further,
$F$ is central in $M_\Phi$ $($thus also in $U_\Phi)$,
$U_\Phi = F U_\Phi^0$, $E_\Phi = FE_\Phi^0$ and  $M_\Phi = FM_\Phi^0$\,.
\end{lemma}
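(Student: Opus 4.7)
The plan is to dispatch each assertion in turn, using the Cartan involution on $\gg_\C$ together with the standard structure theorem for Langlands Levi subgroups. Let $\theta$ denote the Cartan involution fixing $K$, extended $\C$-linearly to $\gg_\C$, and let $U \subset G_\C$ be the compact real form.

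For the $2$-group property, observe that for $H \in \ga \subset \gp$ one has $\theta(iH) = i\theta(H) = -iH$, so $\theta(\exp(iH)) = \exp(-iH)$. Since $K = G^\theta$, the condition $x = \exp(iH) \in K$ forces $\exp(iH) = \exp(-iH)$, and hence $x^2 = \exp(2iH) = 1$. The torus $\exp(i\ga) \subset U$ is abelian of real dimension at most $\dim\ga$, so $F$ is elementary abelian, finite, and of cardinality at most $2^{\dim\ga}$.

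For centrality of $F$ in $M_\Phi$: the subgroup $F$ lies in the abelian $\exp(\ga_\C)$, so it commutes with $A$. For $f = \exp(iH) \in F$ and $m \in M_\Phi^0$, commutativity reduces to $\Ad(f)|_{\gm_\Phi} = 1$. Since $\Ad(f)$ acts trivially on the Cartan and acts on each $\gg_\alpha \subset \gm_\Phi$ by $e^{i\alpha(H)}$, one uses the integrality $\exp(2iH) = 1$ together with the fact that $\Phi^{red}$ lies in the $\Z$-sublattice of $\Delta(\gg,\ga)$ generated by the simple roots in $\Phi$ to conclude $e^{i\alpha(H)} = 1$ for every $\alpha \in \Phi^{red}$. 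The decomposition $M_\Phi = F M_\Phi^0$ is then the standard structure theorem on component groups of Langlands Levi subgroups (the Kostant--Helgason minimal-parabolic result), applied to $Z_G(\ga_\Phi) = M_\Phi A_\Phi$ viewed as a real reductive group with maximal split Cartan $\ga$. Intersecting $M_\Phi = F M_\Phi^0$ with $K$ and using $F \subset K$ gives $U_\Phi = F U_\Phi^0$, and then $E_\Phi = U_\Phi A_\Phi N_\Phi = F(U_\Phi^0 A_\Phi N_\Phi) = F E_\Phi^0$ because $A_\Phi N_\Phi$ is connected.

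The main obstacle is the centrality step: the bare condition $\exp(2iH) = 1$ yields only $\alpha(H) \in \pi\Z$ on a general restricted root, so the required upgrade to $\alpha(H) \in 2\pi\Z$ for $\alpha \in \Phi^{red}$ must exploit both the sublattice structure of $\Phi^{red} \subset \Delta(\gg,\ga)$ and the precise global form of $\ker(\exp)|_{\ga_\C}$ in $G_\C$, rather than merely in its adjoint quotient.
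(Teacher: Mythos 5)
Your Cartan-involution argument for the $2$-group claim matches the paper's exactly. The centrality of $F$ in $M_\Phi$ is where you have correctly located the difficulty, and the gap you flag is not merely hard to close --- the assertion itself fails once $\Phi \ne \emptyset$. For $G = SL(3,\R)$, $K = SO(3)$, $\Phi = \{\psi_1\}$ with $\psi_1 = e_1 - e_2$, one has $\gm_\Phi \cong \gsl(2,\R)$ in the upper-left block, $\diag(-1,1,-1) \in F$, and $\Ad(\diag(-1,1,-1))$ acts by $-1$ on $\gg_{\psi_1} \subset \gm_\Phi$, so this element of $F$ does not even centralize $M_\Phi^0$. Your root-space computation is right ($\exp(2iH) = 1$ gives only $\alpha(H) \in \pi\Z$), and the sublattice observation $\Phi^{red} \subset \Z\Phi$ cannot rescue it because each $\psi(H)$, $\psi\in\Phi$, is likewise constrained only to lie in $\pi\Z$. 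The true and relevant fact (what is actually used later via Lemma \ref{F-triv} and \cite[Proposition 3.6]{W2014}) is that $F$ is central in the \emph{minimal}-parabolic $M = Z_K(A)$, which is elementary precisely because $\Ad(F)$ is $\pm 1$ on each restricted root space while $\Ad(M)$ preserves each such space. The paper's own proof of the present lemma does not establish centrality in $M_\Phi$ either; it is asserted in one line and used only to conclude $F \subset U_\Phi$, which follows more directly from $F \subset K \cap Z_G(A_\Phi) = U_\Phi$: if $g = ma \in K$ with $m \in M_\Phi$, $a \in A_\Phi$, applying $\theta$ gives $m^{-1}\theta(m) = a^{2} \in M_\Phi \cap A_\Phi = \{1\}$.

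For the component statements $U_\Phi = FU_\Phi^0$, $M_\Phi = FM_\Phi^0$, $E_\Phi = FE_\Phi^0$ --- which are correct and are the real content here --- you outsource to a ``standard structure theorem on component groups of Langlands Levi subgroups.'' The paper proves them directly: with $\gh_\C = \gc_{\Phi,\C} + \gb_{\Phi,\C} + \ga_{\Phi,\C}$ as in (\ref{chop-0}), the centralizer $Z_{G_\C}(A_{\Phi,\C}) = M_{\Phi,\C}A_{\Phi,\C}$ is connected with connected Cartan subgroup $C_{\Phi,\C}B_{\Phi,\C}A_{\Phi,\C}$, so every component of $M_\Phi A_\Phi = Z_{G_\C}(A_{\Phi,\C})\cap G$ meets $\exp(\gc_\Phi + i\gb_\Phi + i\ga_\Phi)$; the compact part of such an element lies in $\exp(i\gb_\Phi + i\ga_\Phi) = \exp(i\ga)$, giving $U_\Phi \subset FU_\Phi^0$, and then $M_\Phi = FM_\Phi^0$ and $E_\Phi = FE_\Phi^0$ follow much as you finish. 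Citing the structure theorem is not wrong, but this complexification argument is essentially its proof and is what you should supply to make the argument self-contained.
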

\begin{proof}
Let $\theta$ be the Cartan involution of $G$ for which $K = G^\theta$.
If $x \in F$ then $x = \theta(x) = x^{-1}$ so $x^2 = 1$.  Now $F$ is an
elementary abelian $2$-group of cardinality $\leqq 2^{\dim \ga}$, in
particular $F$ is finite.  
\medskip

Let $G_u$ denote the compact real form of $G_\C$ such that $G \cap G_u = K$,
and let $A_{\Phi,u}$ denote the torus subgroup $\exp(i\ga_\Phi)$.
The centralizer $M_{\Phi,u}A_{\Phi,u} = Z_{G_u}(A_{\Phi,u})$ is connected.  
It has a maximal torus $C_{\Phi,u}B_{\Phi,u}A_{\Phi,u}$ corresponding
to 
\begin{equation}\label{chop-0}
\gh_\C = \gc_{\Phi,\C}+\gb_{\Phi,\C}+\ga_{\Phi,\C}
\end{equation} 
where $\gc_\Phi$
is a Cartan subalgebra of $\gu_\Phi$\,, $\gc_\Phi + \gb_\Phi$ is a
Cartan subalgebra of $\gm_\Phi$ and $\gb_\Phi + \ga_\Phi = \ga$.
The complexification $M_{\Phi,\C}A_{\Phi,\C} = Z_{G_\C}(A_{\Phi,\C})$
is connected and has connected Cartan subgroup 
$C_{\Phi,\C}B_{\Phi,\C}A_{\Phi,\C}$\,.  Now every component of $M_\Phi A_\Phi 
= (M_{\Phi,\C}A_{\Phi,\C})\cap G$ contains an element of
$\exp(\gc_\Phi +i\gb_\Phi +i\ga_\Phi)$.  Thus every
component of its maximal compact subgroup $U_\Phi$ contains an element
of $\exp(i\gb_\Phi +i\ga_\Phi) = \exp(i\ga)$.  This proves
$U_\Phi \subset F U_\Phi^0$\,.  But $F \subset M_\Phi A_\Phi$\,, and is
finite and central there, so $F \subset U_\Phi$.  Now $U_\Phi = F U_\Phi^0$\,.
It follows that $M_\Phi = FM_\Phi^0$\,.  As $E_\Phi$ is the semidirect
product of $U_\Phi$ with an exponential solvable (thus topologically
contractible) group it also follows that $E_\Phi = FE_\Phi^0$\,.
\end{proof}

Notice that the parabolic $Q_\Phi$ is cuspidal (in the sense of 
Harish-Chandra) if and only if $\gb_\Phi = 0$, in other words if
and only if $M_\Phi$ has discrete series representations.  The cuspidal
parabolics are the ones used to construct standard tempered representations
of real reductive Lie groups.

\begin{lemma}\label{F-triv}
The action of $F$ on $\gs_\Phi^*$ is trivial.
\end{lemma}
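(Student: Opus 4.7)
The plan is to reduce to ruling out $-1$ eigenvalues of $\Ad(f)$ on each restricted root space appearing in $\gs_\Phi$. By Lemma~\ref{components}, every $f \in F$ satisfies $f^2 = 1$ and can be written $f = \exp(i\xi)$ for some $\xi \in \ga$. For a restricted root $\alpha$, the action of $\Ad(f)$ on $\gg_\alpha$ is multiplication by the scalar $e^{i\alpha(\xi)}$, and the relation $f^2 = 1$ forces $e^{2i\alpha(\xi)} = 1$, so this scalar lies in $\{\pm 1\}$. Hence $\Ad(f)$ is a diagonalisable involution on $\gs_\Phi = \sum_j \gz_{\Phi,j}$, and it suffices to show that its eigenvalue on every root space $\gg_\alpha \subset \gs_\Phi$ is $+1$; duality with $\gs_\Phi^*$ is then automatic.

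Next I would analyse which roots appear. The roots occurring in $\gs_\Phi$ are the cascade roots $\beta_i$ for $i \in I_1 \cup \cdots \cup I_\ell$ together with the roots $\alpha \in J''_i$; by Lemma~\ref{inter-compl} these all share the common $\ga_\Phi$-restriction $\beta_{j_0}|_{\ga_\Phi}$ within a given $\gz_{\Phi,j}$, so $\gz_{\Phi,j}$ is a subspace of the $\ga_\Phi$-root space $\gg_{[\Phi,\beta_{j_0}]}$. Decomposing $\xi = \xi_\ga + \xi_\gb$ with $\xi_\ga \in \ga_\Phi$ and $\xi_\gb \in \gb_\Phi$ (notation from the proof of Lemma~\ref{components}), the eigenvalue on $\gg_\alpha \subset \gz_{\Phi,j}$ factors as $e^{i\beta_{j_0}(\xi_\ga)}\, e^{i\alpha(\xi_\gb)}$, with the first factor constant on $\gz_{\Phi,j}$.

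The $\xi_\gb$-contribution can be controlled via centrality and Schur's lemma. Since $\exp(i\xi_\gb)$ lies in the compact Cartan subgroup of $M_\Phi^0$ and $F$ is central in $M_\Phi$, the absolute irreducibility of the $\ad(\gm_\Phi)$-action on $\gg_{[\Phi,\beta_{j_0}]}$ (\cite[Theorem 8.3.13]{W1966}) and Schur's lemma jointly force $\exp(i\xi_\gb)$ to act by a single scalar on $\gg_{[\Phi,\beta_{j_0}]}$, hence on the invariant subspace $\gz_{\Phi,j}$. Combined with the common $\xi_\ga$-scalar, this shows that $f$ acts on each $\gz_{\Phi,j}$ by a uniform sign $\epsilon_j \in \{\pm 1\}$.

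The main obstacle is to prove $\epsilon_j = +1$, and I expect this to require a lattice-theoretic step. Describe $F$ as $\tfrac12\Lambda/\Lambda$, where $\Lambda = \{\eta \in \ga : \exp(i\eta) = 1\}$; restricted roots take values in $2\pi\Z$ on $\Lambda$, so $2\xi \in \Lambda$ only gives the a priori constraint $\alpha(\xi) \in \pi\Z$. To upgrade this to $\alpha(\xi) \in 2\pi\Z$ for the roots appearing in $\gs_\Phi$, I would exploit the strong orthogonality of the cascade $\{\beta_r\}$ and the fact that each $\beta_{j_0}$ is the maximum of an appropriate restricted subsystem, using these together with Lemma~\ref{inter-compl} (which further constrains roots in $J''_i$ by their $\ga_\Phi$-restriction) to place the relevant roots into the sublattice of $\ga^*$ on which $F$ acts trivially. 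This arithmetic step is the delicate heart of the proof.
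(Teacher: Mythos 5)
Your reduction via absolute irreducibility and centrality is essentially the device the paper also uses: by Lemma~\ref{inter-compl} each $\gz_{\Phi,j}$ lies inside a single $\ga_\Phi$-root space $\gg_{[\Phi,\beta_{j_0}]}$, on which $\Ad(M_\Phi)$ is absolutely irreducible (citing \cite[Theorem 8.3.13]{W1966}), and since $F$ is central in $M_\Phi$, Schur's lemma forces $\Ad(f)$ to act by a single scalar there, necessarily $\pm1$ because $f^2 = 1$. Up to this point you and the paper agree. (A minor infelicity: the Schur argument should be applied to $\Ad(f)$ for the whole $f\in F$, not to $\exp(i\xi_\gb)$ alone, which need not be central; but since $\exp(i\xi_\ga)$ centralizes $\gm_\Phi$, this is cosmetic.)

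The genuine gap is the step you yourself flag as ``the delicate heart of the proof'': showing $\epsilon_j = +1$. You propose a self-contained lattice argument exploiting the maximality and strong orthogonality of the cascade roots, but you do not carry it out, and as written there is no reason the scalar cannot be $-1$. The paper does not attempt any such argument. Instead it settles this step in one stroke by quoting \cite[Proposition 3.6]{W2014}, the minimal-parabolic analogue, which already says that $F$ acts trivially on each $\gg_{\beta_i}$; since $\gg_{\beta_i}\subset\gz_{\Phi,j}$ for $i\in I_j$, the Schur scalar is pinned at $+1$. So the correct completion of your argument is not a new lattice computation but rather an appeal to the already-established triviality of $F$ on the cascade root spaces. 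Without that ingredient (or a genuine proof of your arithmetic claim), the proposal is incomplete.
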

\begin{proof} We know that the action of $F$ is trivial on each $\gz_j^*$
\cite[Proposition 3.6]{W2014}.  The action of $M_\Phi$ is absolutely 
irreducible on every $\ga_\Phi$-root space \cite[Theorem 8.13.3]{W1966}.
Recall $\gz_{\Phi,j} = \sum_{I_j} (\gg_{\beta_i} + \gl_i'')$ where 
$\gl_i'' = \sum_{J_i''} \gg_\alpha$ from (\ref{split-lr}) and 
(\ref{split-lphi}).  Using Lemma \ref{inter-compl} we see that the action 
of $F$ is trivial on each $\gg_{\beta_i} + \gl_i''$\,, thus trivial on 
$\gz_{\Phi,j}$\,, and thus trivial on their sum $\gs_\Phi$\,, and finally 
by duality is trivial on $\gs_\Phi^*$\,.
\end{proof}

When $N_\Phi = L_{\Phi,1}L_{\Phi,2}\dots L_{\Phi,\ell}$
is weakly invariant we can proceed more or less as in \cite{W2014}.
Set
\begin{equation}\label{regset}
\gr_\Phi^* = \{\lambda \in \gs_\Phi^* \mid P(\lambda) \ne 0 \text{ and }
	\Ad(U_\Phi)\lambda \text{ is a principal } U_\Phi\text{-orbit on }
	\gs_\Phi^*\}.
\end{equation}
If $c \ne 0$ and $\lambda \in \gr_\Phi^*$ then $c\lambda \in \gr_\Phi^*$\,.
Thus we obtain $\gr_\Phi^*$ by scaling the set of all $\lambda$ in a
unit sphere $s$ of $\gs_\Phi^*$ (for any norm) such that 
$\Ad(U_\Phi)\lambda \text{ is a principal } U_\Phi\text{-orbit on } s$.
Thus, as in the case of compact group actions on compact spaces, 
$\gr_\Phi^*$ is dense, open and $U_\Phi$-invariant in $\gs_\Phi^*$\,.
By definition of principal orbit the isotropy subgroups of $U_\Phi$ at the 
various points of $\gr_\Phi^*$ are conjugate, and we take a measurable 
section $\sigma$ to $\gr_\Phi^* \to \Ad^*(U_\Phi)\backslash \gr^*_\Phi$ on whose
image all the isotropy subgroups are the same,
\begin{equation}\label{m-iso-regset}
U'_\Phi: \text{ isotropy subgroup of } U_\Phi \text{ at }
	\sigma(U_\Phi(\lambda)), \text{ independent of } 
	\lambda \in \gr_\Phi^*\,.
\end{equation}
Lemma \ref{F-triv} says that $U'_\Phi = F(U'_\Phi \cap U_\Phi^0)$
In view of Lemma \ref{F-triv} the principal isotropy subgroups 
$U'_\Phi$ are specified by the
work of W.-C. and W.-Y. Hsiang \cite{HH1970} on the structure and
classification of principal orbits of compact connected linear groups.
With a glance back at (\ref{a-stab}) we have
\begin{equation}\label{ma-iso-regset}
U'_\Phi A'_\Phi: \text{ isotropy subgroup of } U_\Phi A_\Phi \text{ at }
        \sigma(U_\Phi A_\Phi (\lambda)), 
	\text{ independent of } \lambda \in \gr_\Phi^*\,.
\end{equation}
The first consequence, as in \cite[Proposition 3.3]{W2014}, is
\begin{theorem}\label{planch-mn}
Suppose that  $N_\Phi = L_{\Phi,1}L_{\Phi,2}\dots L_{\Phi,\ell}$
is weakly invariant.  Let $f \in \cC(U_\Phi N_\Phi)$ 
Given $\lambda \in \gr_\Phi^*$ let $\pi_\lambda^\dagger$ denote the 
extension of $\pi_\lambda$ to a representation of $U'_\Phi N_\Phi$ on
the space of $\pi_\lambda$.  Then
the Plancherel density at $\Ind_{U'_\Phi N_\Phi}^{U_\Phi N_\Phi}
(\pi_\lambda^\dagger \otimes \mu')$, $\mu' \in \widehat{U'_\Phi}$\,,
is $(\dim \mu')|P(\lambda)|$
and the Plancherel Formula for $U_\Phi N_\Phi$ is 
$$
f(un) = c\int_{\gr_\Phi^*/\Ad^*(U_\Phi)}\,
        {\sum}_{\mu' \in \widehat{U'_\Phi}}\,\,
        \tr \Bigl ( \bigl ( \Ind_{U'_\Phi N_\Phi}^{U_\Phi N_\Phi}\,
	(\pi_\lambda^\dagger \otimes \mu')\bigr )
		\bigl ( r_{un}f\bigr )\Bigr ) \cdot
        \dim(\mu')\cdot |P(\lambda)|d\lambda
$$
where $c = 2^{d_1 + \dots + d_m} d_1! d_2! \dots d_m!$\,, from
{\rm (\ref{c-d})}.
\end{theorem}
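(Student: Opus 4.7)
The plan is to combine the $N_\Phi$-Plancherel formula of Theorem \ref{plancherel-general} with Mackey's normal subgroup analysis applied to the semidirect product $U_\Phi N_\Phi = N_\Phi \rtimes U_\Phi$, following the minimal parabolic case of \cite[Proposition 3.3]{W2014}. Weak invariance is exactly what is needed to keep the argument on the rails: since $\Ad(U_\Phi)$ preserves each $\gz_{\Phi,j}$, the contragredient action preserves $\gs_\Phi^*$ and hence the parameter set $\gt_\Phi^*$; the stepwise square integrable representations transform as $\pi_\lambda\circ\Ad(u^{-1})\cong\pi_{\Ad^*(u)\lambda}$, so $U_\Phi$ acts on the Plancherel support of $\widehat{N_\Phi}$ by permuting the $\pi_\lambda$.

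First I would verify that Plancherel measure on $\widehat{U_\Phi N_\Phi}$ is concentrated on the Mackey-type representations $\Ind_{U'_\Phi N_\Phi}^{U_\Phi N_\Phi}(\pi_\lambda^\dagger\otimes\mu')$ with $\lambda\in\gr_\Phi^*$ and $\mu'\in\widehat{U'_\Phi}$. By Theorem \ref{plancherel-general}, Plancherel measure for $N_\Phi$ is carried by $\{\pi_\lambda:\lambda\in\gt_\Phi^*\}$, and the regular set $\gr_\Phi^*$ of principal $U_\Phi$-orbits is open, dense and conull in $\gs_\Phi^*$. A measurable section $\sigma$ as in (\ref{m-iso-regset}) trivializes the isotropy, and standard Mackey theory then exhibits each induced representation as irreducible and accounts for Plancherel-almost every $\pi\in\widehat{U_\Phi N_\Phi}$.

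Next I would construct the extension $\pi_\lambda^\dagger$ on $U'_\Phi N_\Phi$ and invoke the Kleppner--Lipsman Plancherel theorem for group extensions \cite{KL1972} to identify the density. The extension exists because the Mackey obstruction in $H^2(U'_\Phi;\T)$ vanishes: by Lemma \ref{components}, $U_\Phi=FU_\Phi^0$, by Lemma \ref{F-triv} the component group $F$ acts trivially on $\gs_\Phi^*$ so $F\subset U'_\Phi$, and the connected compact Lie group $(U'_\Phi)^0$ has the same Lie algebra as a product of simply connected covers; for the weakly invariant case one uses exactly the vanishing argument of \cite[Proposition 3.6]{W2014}, which already handled the minimal parabolic situation and carries over verbatim since only $\Ad(U_\Phi)$-invariance of $\gz_{\Phi,j}$ (and thus of the cocycle data) is used. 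Once $\pi_\lambda^\dagger$ is in hand, Kleppner--Lipsman gives Plancherel density $(\dim\mu')|P(\lambda)|$ at $\Ind(\pi_\lambda^\dagger\otimes\mu')$ relative to $d\bar\lambda\cdot\#\widehat{U'_\Phi}$, where $\bar\lambda$ ranges over $\gr_\Phi^*/\Ad^*(U_\Phi)$, with the constant $c=2^{d_1+\dots+d_m}d_1!\cdots d_m!$ inherited from (\ref{c-d}(a)).

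Finally I would assemble the formula by Fubini. Writing $h=r_{un}f$, Frobenius reciprocity in the form
\[
\tr\bigl(\Ind_{U'_\Phi N_\Phi}^{U_\Phi N_\Phi}(\pi_\lambda^\dagger\otimes\mu')(h)\bigr)
=\int_{U_\Phi/U'_\Phi}\tr\bigl((\pi_\lambda^\dagger\otimes\mu')(u^{-1}\cdot h)\bigr)\,du
\]
together with disintegration of $\int_{\gt_\Phi^*}$ as $\int_{\gr_\Phi^*/\Ad^*(U_\Phi)}\int_{\Ad^*(U_\Phi)\lambda}$ converts Theorem \ref{plancherel-general} for $N_\Phi$ applied at $x=un$ into the asserted identity; the summation $\sum_{\mu'\in\widehat{U'_\Phi}}\dim(\mu')\,\mu'(1)$ collapses by Peter--Weyl on $U'_\Phi$ to the point evaluation needed to recover $f(un)$. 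The main obstacle is the second step: verifying, under the weak invariance hypothesis alone, that the Mackey obstruction vanishes and that $\sigma$ can be chosen so that $\pi_\lambda^\dagger$ varies measurably in $\lambda$; once that is done the rest is bookkeeping parallel to \cite{W2014}.
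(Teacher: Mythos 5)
Your outline---disintegrate $\widehat{U_\Phi N_\Phi}$ via Mackey's machinery for $N_\Phi \rtimes U_\Phi$, identify the density via Kleppner--Lipsman, and collapse the $U'_\Phi$-sum by the compact Plancherel formula---is exactly the strategy the paper implicitly invokes (it gives no proof beyond citing \cite[Proposition 3.3]{W2014} together with the regular set $\gr_\Phi^*$, the section $\sigma$, and $U'_\Phi$). So the approach matches.

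The one point that needs to be fixed is your claim that the Mackey obstruction in $H^2(U'_\Phi;\T)$ vanishes. You assert this carries over verbatim from \cite[Proposition 3.6]{W2014}, but the paper itself does not claim vanishing in the non-minimal case: in Section \ref{sec12}, just after (\ref{rep-inf-un}), it says explicitly that $\mu'$ must be a \emph{cocycle} representation of $U'_{\Phi,\infty}$, with cocycle the inverse of the Mackey obstruction to extending $\pi_{\Phi,\gamma,\infty}$. Weak invariance ensures $\Ad(U_\Phi)$ preserves each $\gz_{\Phi,j}$, and Lemma \ref{F-triv} shows the component group $F$ acts trivially on $\gs_\Phi^*$, so $F \subset U'_\Phi$ causes no trouble; but neither fact forces the cocycle on the identity component $(U'_\Phi)^0$ to split---a metaplectic-type obstruction coming from the projective action of $U'_\Phi$ on $\cH_{\pi_\lambda}$ can a priori survive. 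The safe formulation is either to interpret $\widehat{U'_\Phi}$ as the $\varepsilon^{-1}$-cocycle dual, as the paper does in Section \ref{sec12}, or to note that the theorem is stated conditionally on the existence of the extension $\pi_\lambda^\dagger$ and leave that existence as a hypothesis. Two minor repairs: the display you call ``Frobenius reciprocity'' is really the induced-character (Weil) formula $\Theta_{\Ind\tau}(h)=\int_{U_\Phi/U'_\Phi}\Theta_\tau(x^{-1}\cdot h)\,dx$; and the expression ``$\sum_{\mu'}\dim(\mu')\,\mu'(1)$'' should be read as the compact Plancherel identity $\phi(1)=\sum_{\mu'\in\widehat{U'_\Phi}}\dim(\mu')\,\tr\mu'(\phi)$, which is what actually produces the point evaluation.
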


Combining Theorems \ref{planch-an} and \ref{planch-mn} we come to
\begin{theorem}\label{planch-uan}
Let $Q_\Phi = M_\Phi A_\Phi N_\Phi$ be a parabolic subgroup of the
real reductive Lie group $G$.  Let $U_\Phi$ be a maximal compact subgroup
of $M_\Phi$\,, so $E_\Phi := U_\Phi A_\Phi N_\Phi$ is a maximal
amenable subgroup of $Q_\Phi$\,.  Suppose that the decomposition
$N_\Phi = L_{\Phi,1}L_{\Phi,2}\dots L_{\Phi,\ell}$
is weakly invariant.  Given $\lambda \in \gr_\Phi^*$, $\phi \in \ga'_\Phi$
and $\mu' \in \widehat{U'_\Phi}$ denote
$$
\pi_{\lambda,\phi,\mu'} = 
	\Ind_{U'_\Phi A'_\Phi N_\Phi}^{U_\Phi A_\Phi N_\Phi}
	(\pi_\lambda^\dagger \otimes e^{i\phi} \otimes \mu')
	\in \widehat{E_\Phi}\,.
$$ 
Let
$\Theta_{\pi_{\lambda,\phi,\mu'}}: h \mapsto
    \tr \pi_{\lambda,\phi,\mu'}(h)$
denote its distribution character.  Then
$\Theta_{\pi_{\lambda,\phi,\mu'}}$ is a tempered distribution
on the maximal amenable subgroup $E_\Phi$\,.
If $f \in \cC(E_\Phi)$ then
$$
f(x) = c \int_{(\ga'_\Phi)^*}
        \left ( \int_{\gr_\Phi^*/\Ad^*(U_\Phi A_\Phi)}
	\left ( {\sum}_{\mu' \in \in \widehat{U'_\Phi}}\,\,
        \Theta_{\pi_{\lambda,\phi,\mu'}}(D(r_xf)) \cdot \dim(\mu') \right )
	\, |P(\lambda)| d\lambda \right ) d\phi
$$
where $c = (\frac{1}{2\pi})^{\dim \ga'_\Phi /2}\,
	2^{d_1 + \dots + d_m} d_1! d_2! \dots d_m!$\,.
\end{theorem}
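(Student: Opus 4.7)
The plan is to derive Theorem \ref{planch-uan} by combining the two previously established Plancherel formulae, Theorem \ref{planch-an} for $A_\Phi N_\Phi$ and Theorem \ref{planch-mn} for $U_\Phi N_\Phi$, via a two-stage Mackey little-group construction adapted to the semidirect product $E_\Phi = U_\Phi \ltimes (A_\Phi N_\Phi)$. The weak invariance hypothesis on $N_\Phi = L_{\Phi,1}\dots L_{\Phi,\ell}$ is used to ensure that the compact group $U_\Phi$ preserves the central subalgebra $\gs_\Phi$, hence acts on $\gs_\Phi^*$ preserving the Pfaffian polynomial $P$, so the $U_\Phi$- and $A_\Phi$-orbit decompositions on the regular set $\gr_\Phi^*$ fit together cleanly.

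First I would establish the representation-theoretic framework. Given $\lambda \in \gr_\Phi^*$, the stepwise square integrable representation $\pi_\lambda$ of $N_\Phi$ extends to $\pi_\lambda^\dagger$ on $A'_\Phi N_\Phi$ as in Section \ref{sec5} (the Mackey obstruction vanishes because $A'_\Phi$ is a vector group), and then further extends to $U'_\Phi A'_\Phi N_\Phi$ on the same Hilbert space: weak invariance lets $U'_\Phi$ preserve the unitary equivalence class of $\pi_\lambda^\dagger$, and since $U'_\Phi$ is compact the second cohomology obstruction may be killed by an ordinary Peter--Weyl argument (as in the analogous step of Theorem \ref{planch-mn}). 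Tensoring with $e^{i\phi}$ on $A'_\Phi$ and with $\mu' \in \widehat{U'_\Phi}$ gives the little-group representation $\pi_\lambda^\dagger \otimes e^{i\phi} \otimes \mu'$ whose induction to $E_\Phi$ is the irreducible $\pi_{\lambda,\phi,\mu'}$. Temperedness of its distribution character then follows because the character is built from tempered ingredients by induction in stages.

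Second I would carry out the trace computation by mimicking the chain (\ref{one-orbit})--(\ref{all-orbits}), but now with the compact factor included. Write a test function $h \in \cC(E_\Phi)$ as a function on $U_\Phi \times A_\Phi N_\Phi$ (semidirect) and apply $D$, which by Theorem \ref{dix-for-an} is defined on $\cC(A_\Phi N_\Phi)$ and whose semi-invariance by Proposition \ref{pf-mod} and Lemma \ref{modfunction} exactly compensates $\delta_{E_\Phi}$ (since $U_\Phi$ being compact contributes nothing to the modular function). For fixed $\lambda$ the integral of $\tr \pi_{\lambda,\phi,\mu'}(Dh)$ against $\dim \mu'$, summed over $\mu' \in \widehat{U'_\Phi}$, collapses by Peter--Weyl on the compact fiber $U_\Phi/U'_\Phi$ to a trace of $\pi_{\lambda,\phi}^\dagger$ evaluated at $h$ restricted to $A_\Phi N_\Phi$; the $\phi$-integral over $(\ga'_\Phi)^*$ then produces the orbit integral over $\Ad^*(A_\Phi)\lambda$ as in (\ref{one-orbit}), with the usual vector-group Plancherel constant $(2\pi)^{-\dim \ga'_\Phi/2}$. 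Finally, integrating over $\gr_\Phi^*/\Ad^*(U_\Phi A_\Phi)$ completes (\ref{all-orbits}) against the combined orbit measure, yielding $h(1)$; specializing to $h = r_x f$ gives the stated formula with the asserted constant.

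The main obstacle will be bookkeeping: combining the non-unimodular contribution of $A_\Phi$ (which Theorem \ref{planch-an} handles via $D$) with the compact contribution of $U_\Phi$ (which Theorem \ref{planch-mn} handles via $\dim \mu'$) requires confirming that $D$ commutes with the $\Ad(U_\Phi)$-action on $\cC(A_\Phi N_\Phi)$ up to a factor of $1$, which holds because $U_\Phi$ is compact and fixes the polynomial $P\cdot\Det_{\gs_\Phi}$ up to a unimodular Jacobian. Once this compatibility is checked, everything else is a careful but routine integration in stages and produces the formula as stated.
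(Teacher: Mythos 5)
Your approach matches the paper's: Theorem \ref{planch-uan} is proved there by specializing the calculation in Theorem \ref{limit-inversion-uan}, which is precisely what you describe --- expand $\tr\pi_{\lambda,\phi,\mu'}(Dh)$ by the Kleppner--Lipsman formula over $U_\Phi/U'_\Phi$ and $A_\Phi/A'_\Phi$, collapse the sum $\sum_{\mu'}\deg\mu'$ (Peter--Weyl on $U'_\Phi$) and the $\phi$-integral over $(\ga'_\Phi)^*$ (Fourier inversion on $A'_\Phi$) to the $\Ad^*(U_\Phi A_\Phi)$-orbit integral carrying $|P|\,d\lambda$ and the Dixmier--Puk\'anszky semi-invariance factor, then integrate over the orbit space to recover $h(1)=f(x)$. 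One imprecision worth correcting: compactness of $U'_\Phi$ does not make the Mackey obstruction to extending $\pi_\lambda$ vanish (Peter--Weyl says nothing about $H^2(U'_\Phi,\T)$); the paper's remedy, made explicit in the direct-limit version (\ref{rep-inf-un}), is to let $\mu'$ range over cocycle representations of $U'_\Phi$ for the inverse obstruction cocycle, and the inversion formula is unchanged --- you should invoke that device rather than a vanishing claim.
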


\begin{proof} Theorem \ref{limit-inversion-uan} extends this result to
certain direct limit parabolics, and the calculation in the proof of 
Theorem \ref{limit-inversion-uan} specializes to give the proof of 
Theorem \ref{planch-uan}.
\end{proof}

When weak invariance fails we replace the $\gz_{\Phi,j}$ by the larger
\begin{equation}\label{bigger-z}
\gg_{[\Phi,\beta_j]} = {\sum}_{\alpha \in [\beta_j]_\Phi}\gg_\alpha
	\text{ where } [\beta_j]_\Phi = \{\alpha \in \Delta^+(\gg,\ga) \mid
	\alpha|_{\ga_\Phi} = \beta_{j_0}|_{\ga_\Phi}\}, \text{ any } j_0 \in I_j\,,
\end{equation}
as in (\ref{def-mult}).  Note that $\gg_{[\Phi,\beta_j]}$ is an 
irreducible $\Ad(M_\Phi^0)$-module.
We need to show that we can replace $\gs_\Phi = \sum \gz_{\Phi,j}$ by
$$
\widetilde{\gs_\Phi} := {\sum}_j\,\, \gg_{[\Phi,\beta_j]}
$$
in our Plancherel formulae.  The key is
\begin{lemma}\label{easy-tilde}
Let $\lambda_j \in \gg_{[\Phi,\beta_j]}^*$\,.  Split
$\gg_{[\Phi,\beta_j]} = \gz_{\Phi,j} + \gw_{\Phi,j}$ 
where $\gw_{\Phi,j} = \gg_{[\Phi,\beta_j]} \cap \gv_\Phi$ 
is the sum of the $\gg_\alpha$ that occur in
$\gg_{[\Phi,\beta_j]}$ but not in $\gz_{\Phi,j}$\,.  Then the Pfaffian
$\Pf_j(\lambda_j) = \Pf_j(\lambda_j|_{\gz_{\Phi,j}})$.
\end{lemma}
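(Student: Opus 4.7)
The plan is to reduce the lemma to the Lie-algebraic containment $[\gv_{\Phi,j},\gv_{\Phi,j}] \cap \gw_{\Phi,j} = 0$. Interpret $\Pf_j(\lambda_j)$, for $\lambda_j \in \gg_{[\Phi,\beta_j]}^*$ extended by zero off $\gg_{[\Phi,\beta_j]}$, as the Pfaffian of the skew form $b_{\lambda_j}(x,y) = \lambda_j([x,y])$ on $\gv_{\Phi,j}$. As a polynomial in the coefficients of $\lambda_j$, the Pfaffian only sees those brackets $[x,y]$ whose $\gg_{[\Phi,\beta_j]}$-component is nonzero, so it suffices to show this component always lies in $\gz_{\Phi,j}$, equivalently
\[
[\gv_{\Phi,j},\gv_{\Phi,j}] \cap \gw_{\Phi,j} = 0.
\]

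I would prove this identity by locating both sides inside the original cascade filtration $\{\gl_r\}$ from Section \ref{sec2}. For the bracket side: the strong-computability condition {\rm (\ref{c-strong})} available in the minimal parabolic $Q \subset Q_\Phi$ gives $[\gl_i,\gl_{i'}] \subset \gl_{\min(i,i')}$, so any nonzero $[\gg_\alpha,\gg_{\alpha'}]$ with $\alpha \in J'_i$, $\alpha' \in J'_{i'}$, $i,i' \in I_j$, is a root space $\gg_\gamma$ whose layer index $\min(i,i')$ lies in $I_j$. For $\gw_{\Phi,j}$: if $\gg_\gamma \subset \gw_{\Phi,j} = \gg_{[\Phi,\beta_j]} \cap \gv_\Phi$ then $\gamma \in J'_r$ for some $r$ and $\gamma|_{\ga_\Phi} = \beta_{j_0}|_{\ga_\Phi}$. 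The crucial step is that $r \notin I_j$: if instead $r \in I_j$ then $\beta_r|_{\ga_\Phi} = \beta_{j_0}|_{\ga_\Phi}$, so
\[
\sigma_r\gamma|_{\ga_\Phi} = \beta_r|_{\ga_\Phi} - \gamma|_{\ga_\Phi} = 0,
\]
which puts the positive root $\sigma_r\gamma$ outside $\Phi^{nil}$, contradicting $\gamma \in J'_r$.

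Consequently the two sides of the intersection live in disjoint parts of the cascade---$[\gv_{\Phi,j},\gv_{\Phi,j}]$ in layers with index in $I_j$, and $\gw_{\Phi,j}$ in layers with index outside $I_j$---so the intersection vanishes. The layer-identification of $\gw_{\Phi,j}$ via the $\sigma_r$-reflection argument is the only substantive step; once it is in hand, $\lambda_j([x,y])$ never involves the $\gw_{\Phi,j}$-component of $\lambda_j$ and the equality $\Pf_j(\lambda_j) = \Pf_j(\lambda_j|_{\gz_{\Phi,j}})$ follows immediately.
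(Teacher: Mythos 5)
Your proof is correct and follows the same route as the paper: both arguments reduce $\Pf_j(\lambda_j)=\Pf_j(\lambda_j|_{\gz_{\Phi,j}})$ to the Lie-algebraic fact that brackets $[\gg_\alpha,\gg_{\alpha'}]$ with $\gg_\alpha,\gg_{\alpha'}\subset\gv_{\Phi,j}$ never meet $\gw_{\Phi,j}$. You make explicit, via the $\sigma_r$-reflection argument, the point that $\gw_{\Phi,j}$ sits entirely in layers whose index lies outside $I_j$ --- a fact the paper's proof takes for granted when it passes from $[\gg_\gamma,\gg_\delta]\subset\gl_{\Phi,j}$ directly to $[\gg_\gamma,\gg_\delta]\cap\gw_{\Phi,j}=0$.
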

\begin{proof}
Write $\lambda_j = \lambda_{\gz,j} + \lambda_{\gw,j}$ where 
$\lambda_{\gz,j}(\gw_{\Phi,j}) = 0 = \lambda_{\gw,j}(\gz_{\Phi,j})$.
Let $\gg_\gamma, \gg_\delta \subset \gl_{\Phi,j}$ with
$[\gg_\gamma , \gg_\delta] \ne 0$.  Then 
$[\gg_\gamma , \gg_\delta] \subset \gl_{\Phi,j}$, so
$[\gg_\gamma , \gg_\delta] \cap \gw_{\Phi,j} = 0$, in
particular $\lambda_{\gw,j}([\gg_\gamma , \gg_\delta]) = 0$.
In other words $\lambda_j([\gg_\gamma , \gg_\delta]) =
\lambda_j|_{\gz_{\Phi,j}}([\gg_\gamma , \gg_\delta])$.
Now $b_{\lambda_j|_{\gz_{\Phi,j}}} = b_{\lambda_j}$, so their
Pfaffians are the same.
\end{proof}

In order to extend Theorems \ref{planch-mn} and \ref{planch-uan} we now need
only make some trivial changes to (\ref{regset}), (\ref{m-iso-regset}), 
(\ref{ma-iso-regset}) and the measurable section:
\begin{itemize}
\item $ \widetilde{\gr_\Phi}^* = \{\lambda \in \widetilde{\gs_\Phi}^* 
	\mid P(\lambda) \ne 0 \text{ and }
        \Ad(U_\Phi)\lambda \text{ is a principal } U_\Phi\text{-orbit on }
        \widetilde{\gs_\Phi}^*\}$.
\item $\widetilde{\sigma}$: measurable section to 
	$\widetilde{\gr_\Phi}^* \to 
	\widetilde{\gr_\Phi}^* \backslash U_\Phi$ on whose
	image all the isotropy subgroups are the same.
\item $U'_\Phi: \text{ isotropy subgroup of } U_\Phi \text{ at }
        \widetilde{\sigma}(U_\Phi(\lambda)), \text{ independent of }
        \lambda \in \widetilde{\gr_\Phi}^*$\,.
\item $U'_\Phi A'_\Phi: \text{ isotropy subgroup of } U_\Phi A_\Phi \text{ at }
        \widetilde{\sigma}(U_\Phi A_\Phi (\lambda)),
        \text{ independent of } \lambda \in \widetilde{\gr_\Phi}^*$\,.
\end{itemize}
The result is

\begin{theorem}\label{w-o-weakly-invar}
In Theorems {\rm \ref{planch-mn}} and {\rm \ref{planch-uan}} 
one can omit the requirement that 
$N_\Phi = L_{\Phi,1}L_{\Phi,2}\dots L_{\Phi,\ell}$ be weakly invariant.
\end{theorem}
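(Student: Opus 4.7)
The strategy is to verify that the only substantive role of weak invariance in the proofs of Theorems \ref{planch-mn} and \ref{planch-uan} is to make $\gs_\Phi^*$ stable under $\Ad^*(U_\Phi)$ so that principal-orbit analysis is available. Replacing $\gs_\Phi$ by $\widetilde{\gs_\Phi} = \sum_j \gg_{[\Phi,\beta_j]}$ restores that stability unconditionally, and Lemma \ref{easy-tilde} shows that all of the representation-theoretic data (the Pfaffian $P$ and the stepwise square integrable representation $\pi_\lambda$) descends, depending only on the restriction $\lambda|_{\gs_\Phi}$. One then makes the substitutions listed in the bullet points preceding the theorem statement and reruns the arguments.

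First I would establish that $\widetilde{\gs_\Phi}$ is genuinely $\Ad(U_\Phi)$-stable. Each $\gg_{[\Phi,\beta_j]}$ is an irreducible $\Ad(M_\Phi^0)$-module by \cite[Theorem 8.3.13]{W1966}, so the sum $\widetilde{\gs_\Phi}$ is $\Ad(M_\Phi^0)$-invariant and in particular $\Ad(U_\Phi^0)$-invariant. For the component group, Lemma \ref{components} puts $F \subset \exp(i\ga)$, whence $\Ad(F)$ acts on each restricted root space $\gg_\alpha$ by a scalar (as in the proof of Lemma \ref{F-triv}); each $\gg_\alpha \subset \widetilde{\gs_\Phi}$ is preserved individually, so $\widetilde{\gs_\Phi}$ is $\Ad(U_\Phi) = \Ad(F U_\Phi^0)$-invariant. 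Next, for $\lambda \in \widetilde{\gs_\Phi}^*$ write $\lambda = \lambda_\gs + \lambda_\gw$ as in Lemma \ref{easy-tilde}; that lemma gives $P(\lambda) = P(\lambda_\gs)$, and the stepwise square integrable representation produced by Construction \ref{construction} depends only on $b_\lambda$ on $\gl_{\Phi,j}/\gz_{\Phi,j}$, hence only on $\lambda_\gs$, so one sets $\pi_\lambda := \pi_{\lambda_\gs}$. The principal-orbit machinery (\ref{regset})--(\ref{ma-iso-regset}) then applies verbatim to $\widetilde{\gs_\Phi}^*$, producing $\widetilde{\gr_\Phi}^*$, a measurable section $\widetilde{\sigma}$, and isotropy subgroups $U'_\Phi$ and $U'_\Phi A'_\Phi$ as in the bulleted list.

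With these two ingredients in place the computations in the proofs of Theorems \ref{planch-mn} and \ref{planch-uan}---the character-unfolding of $\tr \pi_{\lambda,\phi,\mu'}(D(r_xf))$, the reduction of the $\widehat{A'_\Phi}$-integration to a single $\Ad^*(A_\Phi)$-orbit as in (\ref{one-orbit}), and the assembly of orbits as in (\ref{all-orbits})---go through line by line with $\widetilde{\gs_\Phi}, \widetilde{\gr_\Phi}^*, \widetilde{\sigma}, U'_\Phi, U'_\Phi A'_\Phi$ replacing their untilded counterparts, because every appearance of $\lambda$ enters only through $P(\lambda)$ and $\pi_\lambda$, both of which are invariant under the extension by $\gw$-coordinates. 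The main obstacle I anticipate is bookkeeping for the orbit-space measure: the $\Ad^*(U_\Phi A_\Phi)$-orbits in $\widetilde{\gs_\Phi}^*$ generically have larger dimension than their would-be counterparts in $\gs_\Phi^*$ (precisely by the extra $\gw_{\Phi,j}$-directions that $U_\Phi$ now mixes into the $\gz_{\Phi,j}$-directions), so one must check that this dimension increase in the ambient space is exactly compensated by the increase in principal-orbit dimension, so that the quotient $\widetilde{\gr_\Phi}^*/\Ad^*(U_\Phi A_\Phi)$ carries a finite orbital measure against which $|P(\lambda)|$ integrates to reproduce the Plancherel density. This compensation follows from the irreducibility of each $\gg_{[\Phi,\beta_j]}$ as an $\Ad(M_\Phi^0)$-module, which forces the $U_\Phi$-action to be transitive on the $\gw$-fibers of the projection $\widetilde{\gs_\Phi}^* \to \gs_\Phi^*$ modulo the isotropy, giving the desired dimension balance.
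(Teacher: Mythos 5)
Your overall strategy matches the paper's: replace $\gs_\Phi$ by $\widetilde{\gs_\Phi}=\sum_j\gg_{[\Phi,\beta_j]}$, invoke Lemma \ref{easy-tilde} to see that $P(\lambda)$ and $\pi_\lambda$ depend only on $\lambda|_{\gs_\Phi}$, and then run the arguments of Theorems \ref{planch-mn} and \ref{planch-uan} with the bulleted substitutions. Your verification that $\widetilde{\gs_\Phi}$ is $\Ad(U_\Phi)$-stable (irreducibility of each $\gg_{[\Phi,\beta_j]}$ under $\Ad(M_\Phi^0)$ plus the scalar action of $F$ on root spaces) is correct and a welcome point that the paper leaves implicit.

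The last paragraph, however, contains a claim that is not correct, and it matters because you present it as the mechanism that makes the orbital measure come out right. You assert that irreducibility of $\gg_{[\Phi,\beta_j]}$ as an $\Ad(M_\Phi^0)$-module forces $U_\Phi$ to act transitively on the $\gw$-fibers of $\widetilde{\gs_\Phi}^*\to\gs_\Phi^*$ modulo the isotropy. But $U_\Phi$ is compact, while each fiber is an affine copy of $\gw_\Phi^*$, a nonzero vector space; a compact group cannot act transitively on it. Concretely, in the paper's Example (4), $U_\Phi^0$ acts on $\gg_{[\Phi,\beta_3]}^*\cong\R^3$ through $SO(3)$, $\gz_{\Phi,2}^*$ is a line, and a $\gw$-fiber is a $2$-plane on which the $SO(3)$-orbits are single points, not the whole plane. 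The genuine reason things work is rather that the integrand depends on $\lambda$ only through $\lambda|_{\gs_\Phi}$, so that one may (and should) take the measurable section $\widetilde\sigma$ with values in $\gs_\Phi^*$; for such $\lambda$ a $U_\Phi$-invariant inner product splitting $\gg_{[\Phi,\beta_j]}=\gz_{\Phi,j}\oplus\gw_{\Phi,j}$ shows the $\Ad^*(U_\Phi)$-isotropy of $\lambda$ in $\widetilde{\gs_\Phi}^*$ coincides with the Mackey stabilizer of $\pi_\lambda$ in $\widehat{N_\Phi}$, and the Plancherel density $|P(\lambda)|\,d\lambda$ retains its meaning as the measure on $\gs_\Phi^*$ pushed to the orbit space. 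The paper itself is terse here — it merely lists the substitutions and declares them trivial — so your instinct to probe this point is good, but the transitivity argument you offer in its place would need to be replaced by the section-plus-isotropy observation just described.
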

\pagebreak

\centerline{\Large \bf Part II: Infinite Dimensional Theory}

\section{Direct Limits of Nilpotent Lie Groups}
\label{sec7}
\setcounter{equation}{0}

In this section we describe the basic outline for direct limits of
stepwise square integrable representations of simply connected nilpotent
Lie groups.  Later we will specialize these constructions to nilradicals 
$N_{\Phi,\infty} = \varinjlim N_{\Phi,n}$ of parabolic subgroups 
$Q_{\Phi,\infty} = \varinjlim Q_{\Phi,n}$ in our real reductive Lie
groups $G_\infty = \varinjlim G_n$\,.  In order to do that we
will need to adjust the ordering in the decompositions (\ref{setup}) 
so that they fit together as $n$ increases.  We do that by reversing the 
indices and keeping the $L_r$ constant as $n$ goes to infinity.  Thus, 
we suppose that
\begin{equation}\label{nil-direct-system}
\{N_n\} \text{ is a}\text{ strict direct system of connected nilpotent Lie groups,}
\end{equation}
in other words  the connected simply connected nilpotent Lie groups
$N_n$ have the property that $N_n$ is a closed analytic subgroup of $N_\ell$
for all $\ell \geqq n$.  As usual, $Z_r$ denotes the center of $L_r$\,.
                For each $n$, we require that
{\small
\begin{equation}\label{newsetup}
\begin{aligned}
N_n = L_1L_2&\cdots L_{m_n} \text{ where }\\
\phantom{X} \text{(a) }&L_r \text{ is a closed analytic subgroup of } N_n \text{ for }
        1 \leqq r \leqq m_n\text{ and} \\
\phantom{X} \text{(b) }&\text{each $L_r$ has unitary representations with
        coefficients in $L^2(L_r/Z_r)$.}\\
\phantom{X} \text{(x) }&L_{p,q} = L_{p+1}L_{p+2}\cdots L_q (p < q)
        \text{ and } N_{\ell,n} = L_{m_\ell +1}L_{m_\ell +2}\cdots L_{m_n}
        = L_{m_\ell,m_n} (\ell < n);\\
\phantom{X} \text{(c) }&N_{\ell ,n}
        \text{ is normal in } N_n
   \text{ and } N_n = N_r \ltimes N_{r,n} \text{ semidirect product,} \\
\phantom{X} \text{(d) }&\text{decompose }\gl_r = \gz_r + \gv_r \text{ and }
        \gn_n = \gs_n + {\bigoplus}_{r \leqq m_n}\, \gv_r \text{ where } 
	\gs_n = {\bigoplus}_{r \leqq m_n}\, \gz_r ; \\
    &\text{then } [\mathfrak{l}_r,{\mathfrak z}_s] = 0 \text{ and } 
	[\mathfrak{l}_r,\mathfrak{l}_s] \subset \gl''_s + \mathfrak{v}
        \text{ for } r < s \text{ where } \\
	&\gl_r = \gl'_r \oplus \gl''_r
	\text{ direct sum of ideals with } \gl''_r \subset \gz_r \text{ and } 
	\gv_r \subset \gl'_r
\end{aligned}
\end{equation}
}

With this setup we can follow the lines of the constructions in
\cite[Section 5]{W2013} as indicated in \S \ref{sec1} above.  Denote
\begin{equation}
P_n(\gamma_n) = \Pf_1(\lambda_1)\Pf_2(\lambda_2) \cdots \Pf_{m_n}(\lambda_{m_n})
\text{ where } \lambda_r \in \gz_r^* \text{ and } \gamma_n = \lambda_1 + \dots
	+ \lambda_{m_n}
\end{equation}
and the nonsingular set
\begin{equation}
\mathfrak{t}_n^* = \{\gamma_n \in \mathfrak{s}_n^* \mid P_n(\gamma_n) \ne 0\}.
\end{equation}
When $\gamma_n \in \gt_n^*$ the stepwise square integrable representation 
$\pi_{\gamma_n} \in \widehat{N_n}$ is defined as in Construction 
\ref{construction}, but with the indices reversed: 
$\pi_{\lambda_1 + \dots + \lambda_{m+1}} = \pi^\dagger_{\lambda_1 + \dots + \lambda_m}
\widehat{\otimes} \pi_{\lambda_{m+1}}$ with representation space 
$\cH_{\pi_{\lambda_1 + \dots + \lambda_{m+1}}} = 
\cH_{\pi_{\lambda_1 + \dots + \lambda_m}} \widehat{\otimes}
\cH_{\pi_{\lambda_{m+1}}}$\,.
\medskip

The parameter space for our representations of the direct limit Lie group
$N = \varinjlim N_n$ is
\begin{equation}\label{paramspacegen}
\gt^* = \left \{ \left . \gamma = (\gamma_\ell) \in \gs^* = 
	\varprojlim \gs_\ell^* \right |
	\gamma_\ell = \lambda_1 + \dots + \lambda_{m_\ell} 
	\in \gt_\ell^* \text{ for all } \ell \right \}.
\end{equation}
The closed normal subgroups $N_{n,n+1}$ and $N_{n,\infty}$ satisfy
$N_n \cong N_{n+1}/N_{n,n+1} \cong N/N_{n,\infty}$.  Let $\gamma \in \gt^*$ 
and denote
\begin{equation} \label{def-sq-lim-reps1}
\begin{aligned}
\pi_{\gamma,n} &\text{: the stepwise square integrable }
\pi_{\lambda_1 + \cdots + \lambda_{m_n}} \in \widehat{N_n}\\
\pi_{\gamma,n,n+1} &\text{: the stepwise square integrable }
\pi_{\lambda_{m_n+1} + \cdots + \lambda_{m_{n+1}}} \in \widehat{N_{n,n+1}}
\end{aligned}
\end{equation}
Using $N_n \cong N_{n+1}/N_{n,n+1}$ we lift $\pi_{\gamma,n}$ to a
representation $\pi^\dagger_{\gamma,n}$ of $N_{n+1}$ whose kernel contains
$N_{n,n+1}$ and we extend $\pi_{\gamma,n,n+1}$ to a representation
$\pi^\dagger_{\gamma,n,n+1}$
of $N_{n+1}$ on the same representation space $\cH_{\pi_{\gamma,n,n+1}}$.
Then we define
\begin{equation} \label{def-sq-lim-reps2}
\pi_{\gamma,{n+1}} = \pi^\dagger_{\gamma,n} \otimes \pi^\dagger_{\gamma,{n,n+1}} \in
	\widehat{N_{n+1}}\,.
\end{equation}
The representation space is the projective (jointly continuous) tensor product
$\mathcal{H}_{\pi_{\gamma,{n+1}}}
= \mathcal{H}_{\pi_{\gamma,n}} \widehat{\otimes} \cH_{\pi_{\gamma,{n,n+1}}}$
where $\cH_{\pi_{\gamma,{n,n+1}}} = 
\cH_{\pi_{\lambda,{m_n + 1}}} \widehat{\otimes} \cdots
        \widehat{\otimes} \cH_{\pi_{\lambda,{m_{n+1}}}}$.
Choose a $C^\infty$ unit vector 
$e_{n+1} \in \cH_{\pi_{\gamma,{n,n+1}}}$\,.  Then
\begin{equation}\label{tensor-inj-N}
v \mapsto v \otimes e_{n+1} \text{ is an } N_n \text{-equivariant isometry } 
\mathcal{H}_{\pi_{\gamma,n}} \hookrightarrow \mathcal{H}_{\pi_{\gamma,{n+1}}}
\end{equation}
exhibits $\pi_{\gamma,n}$ as the restriction $\pi_{\gamma,{n+1}}|_{N_n}$ on
the subspace $(\mathcal{H}_{\pi_{\gamma,n}} \otimes e_{n+1})$ of
$\mathcal{H}_{\pi_{\gamma,{n+1}}}$\,.

\begin{lemma}\label{lim-rep-N}
The maps just described, define direct system 
$\{(\pi_{\gamma,n},\cH_{\pi_{\gamma,n}})\}$ of irreducible stepwise square 
integrable unitary representations, and thus define an irreducible unitary 
representation
$\pi_\gamma = \varinjlim \pi_{\gamma,n}$ of $N = \varinjlim N_n$
on the Hilbert space 
$\cH_{\pi_\gamma} =\varinjlim \mathcal{H}_{\pi_{\gamma,n}}$\,.
\end{lemma}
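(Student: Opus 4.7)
My plan is to verify four ingredients of the lemma in turn: (i) that the maps $\iota_n : v \mapsto v \otimes e_{n+1}$ assemble into a strict direct system of $N_n$-equivariant isometries; (ii) that their colimit produces a Hilbert space $\cH_{\pi_\gamma}$; (iii) that the $\pi_{\gamma,n}$ glue to a strongly continuous unitary representation $\pi_\gamma$; and (iv) that $\pi_\gamma$ is irreducible. The hard part will be (iv); (i)--(iii) are essentially bookkeeping once the right compatibility is established.

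For (i), I compute directly. For $x \in N_n$ and $v \in \cH_{\pi_{\gamma,n}}$,
$$\pi_{\gamma,n+1}(x)(v \otimes e_{n+1}) = \pi^\dagger_{\gamma,n}(x)v \otimes \pi^\dagger_{\gamma,n,n+1}(x)e_{n+1} = \pi_{\gamma,n}(x)v \otimes \pi^\dagger_{\gamma,n,n+1}(x)e_{n+1},$$
using that the quotient lift $\pi^\dagger_{\gamma,n}$ restricts to $\pi_{\gamma,n}$ on the complementary subgroup $N_n$ in $N_{n+1} = N_n \ltimes N_{n,n+1}$. Hence $\iota_n$ is $N_n$-equivariant exactly when $\pi^\dagger_{\gamma,n,n+1}(x)e_{n+1} = e_{n+1}$ for every $x \in N_n$. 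The Mackey extension $\pi^\dagger_{\gamma,n,n+1}$ is determined only up to twisting by a character of the contractible quotient $N_{n+1}/N_{n,n+1} \cong N_n$, and I choose the twist together with the smooth unit vector $e_{n+1}$ so the required fixity holds. Isometry of $\iota_n$ is immediate from $\|e_{n+1}\| = 1$.

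For (ii) and (iii), the composites $\iota_{n,\ell} = \iota_{\ell-1} \circ \cdots \circ \iota_n$ for $\ell \geq n$ form a strict directed system of Hilbert spaces with isometric transition maps; the algebraic colimit is a pre-Hilbert space and I take $\cH_{\pi_\gamma}$ to be its completion. For $x \in N$ I pick any $n$ with $x \in N_n$ and define $\pi_\gamma(x)$ on the dense subspace $\bigcup_\ell \iota_{n,\ell}(\cH_{\pi_{\gamma,\ell}})$ by $\pi_\gamma(x)\iota_{n,\ell}(v) = \iota_{n,\ell}(\pi_{\gamma,\ell}(x)v)$; equivariance from (i) makes this independent of the choices of $n$ and $\ell$, and unitarity at each level lets the operator extend by continuity to a unitary on $\cH_{\pi_\gamma}$. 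Strong continuity in the direct-limit topology on $N$ follows from strong continuity of each $\pi_{\gamma,\ell}$.

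The main obstacle is (iv). Given $T \in \pi_\gamma(N)'$, I write $\cH_{\pi_\gamma} = \iota_n(\cH_{\pi_{\gamma,n}}) \oplus \iota_n(\cH_{\pi_{\gamma,n}})^\perp$; the second summand is $N_n$-stable because $e_{n+1}, e_{n+2}, \ldots$ are $N_n$-fixed, so each $\pi^\dagger_{\gamma,n,\ell}|_{N_n}$ preserves the orthogonal complement of the product reference vector inside $\cH_{\pi_{\gamma,n,\ell}}$. Split $T|_{\iota_n(\cH_{\pi_{\gamma,n}})}$ into its diagonal block $T_{n,n}$ and off-diagonal block $T_{n,n}^\perp$, and apply $T\pi_\gamma(x) = \pi_\gamma(x)T$ for $x \in N_n$. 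The diagonal block $T_{n,n}$ commutes with the irreducible $\pi_{\gamma,n}$, so Schur's lemma gives $T_{n,n} = c_n I$ for some scalar $c_n$. The off-diagonal block is an $N_n$-intertwiner of $\pi_{\gamma,n}$ with $\pi_\gamma|_{N_n}$ on $\iota_n(\cH_{\pi_{\gamma,n}})^\perp$, and the latter decomposes unitarily as a direct sum of tensor products $\pi_{\gamma,n} \otimes \rho$ where $\rho$ has no nonzero $N_n$-invariant vectors (by our normalization of the $e_j$); such a tensor product contains no subrepresentation equivalent to $\pi_{\gamma,n}$, forcing $T_{n,n}^\perp = 0$. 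Finally, the scalars $c_n$ must agree across levels by the chain of inclusions $\iota_n(\cH_{\pi_{\gamma,n}}) \subset \iota_{n+1}(\cH_{\pi_{\gamma,n+1}})$, giving $T = cI$ and proving irreducibility.
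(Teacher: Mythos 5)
The paper gives no proof of this lemma; it rests entirely on the preceding assertion~(\ref{tensor-inj-N}) that $v\mapsto v\otimes e_{n+1}$ is an $N_n$-equivariant isometry, which is stated without argument.  You are the more careful reader: computing $\pi_{\gamma,n+1}(x)(v\otimes e_{n+1})=\pi_{\gamma,n}(x)v\otimes\pi^\dagger_{\gamma,n,n+1}(x)e_{n+1}$ for $x\in N_n$, you correctly isolate that equivariance in the Mackey tensor factorization $\cH_{\pi_{\gamma,n+1}}=\cH_{\pi_{\gamma,n}}\widehat\otimes\cH_{\pi_{\gamma,n,n+1}}$ forces $\pi^\dagger_{\gamma,n,n+1}(x)e_{n+1}=e_{n+1}$ for every $x\in N_n$.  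That is a genuine obstruction that the paper glosses over.

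Where your proof goes wrong is the proposed fix.  You cannot in general twist the Mackey extension so that $\pi^\dagger_{\gamma,n,n+1}|_{N_n}$ acquires a fixed vector: that representation of $N_n$ on $\cH_{\pi_{\gamma,n,n+1}}$ implements the unipotent symplectic action of $\Ad(N_n)$ on $\gn_{n,n+1}/\gs_{n,n+1}$, and whenever $[\gn_n,\gn_{n,n+1}]\not\subset\gs_{n,n+1}$ (e.g. $\gsl(6,\R)$ minimal parabolic, where after the index reversal $N_1=\exp\gg_{e_3-e_4}$ acts nontrivially on the $5$-dimensional Heisenberg factor $N_{1,2}$) the resulting one-parameter metaplectic group is, in a Schr\"odinger model, unitarily equivalent to multiplication by $e^{itQ(\xi)}$ with $Q$ nonconstant.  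That has no $L^2$ fixed vector, and twisting by a unitary character of $N_n$ only replaces $Q$ by $Q+c$.  So (i), and the normalization used in (iv) (``$e_{n+1},e_{n+2},\dots$ are $N_n$-fixed''), both fail as stated.

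The mechanism the paper is implicitly relying on is different, and it appears explicitly a page later in Proposition~\ref{rescale}: the coadjoint orbit $\cO(\gamma_{n+1})=\gamma_{n+1}+\gv_{n+1}^*\subset\gn_{n+1}^*$ projects onto $\cO(\gamma_n)=\gamma_n+\gv_n^*\subset\gn_n^*$, so by Kirillov's restriction theorem $\pi_{\gamma,n+1}|_{N_n}$ is an (infinite) multiple of $\pi_{\gamma,n}$.  Hence there \emph{is} a tensor factorization $\cH_{\pi_{\gamma,n+1}}\cong\cH_{\pi_{\gamma,n}}\widehat\otimes\cM_{n+1}$ in which $N_n$ acts as $\pi_{\gamma,n}\otimes I$; it is that multiplicity factorization (not the Mackey one) in which $v\mapsto v\otimes e$ is automatically $N_n$-equivariant for any unit $e\in\cM_{n+1}$, and in which your steps (ii)--(iii) and the Schur/off-diagonal argument of (iv) can be carried out --- though in (iv) you must then replace the claim that ``$\rho$ has no $N_n$-invariant vectors'' (false for a multiplicity space) by a direct argument that $\bigcap_n(\,I\otimes B(\cM_n)\,)=\C I$, using that $\bigcup_n\iota_{n,\infty}(\cH_{\pi_{\gamma,n}})$ is dense.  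In short: you caught a real gap in the paper's exposition, but the repair requires the multiplicity decomposition coming from the rescaling proposition, not a twist of the Mackey extension.
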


The representations $\pi_\gamma$ described in Lemma \ref{lim-rep-N} 
are the {\sl limit stepwise square integrable} representations of $N$.  
Corollary \ref{l2-well-def} will show that the unitary 
equivalence class of $\pi_\gamma$ is independent of the choice of the 
$C^\infty$ unit vectors
$e_n$\,.

\section{Direct Limit Structure of Parabolics and some Subgroups}\label{sec8}
\setcounter{equation}{0}

We adapt the constructions Section \ref{sec7}
to limits of nilradicals of parabolic subgroups.  That requires some alignment
of root systems so that the direct limit respects the restricted root 
structures, in particular the strongly orthogonal root structures,
of the $N_n$\,.  We enumerate the set $\Psi_n = 
\Psi(\mathfrak{g}_n, \mathfrak{a}_n)$ of nonmultipliable simple restricted
roots so that, in the Dynkin diagram, for type $A$ we spread from the
center of the diagram.  For types $B$, $C$ and $D$, \,
$\psi_1$ is the \textit{right} endpoint.
In other words for $\ell \geqq n$ $\Psi_\ell$
is constructed from $\Psi_n$ adding simple roots to the \textit{left} end
of their Dynkin diagrams.  Thus

{\small
\begin{equation}\label{rootorderA}
\begin{aligned}
&\begin{tabular}{|c|l|c|}\hline
$A_{2\ell+1}$ &
\setlength{\unitlength}{.4 mm}
\begin{picture}(180,18)
\put(10,2){\circle{2}}
\put(5,5){$\psi_{-\ell}$}
\put(11,2){\line(1,0){13}}
\put(27,2){\circle*{1}}
\put(30,2){\circle*{1}}
\put(33,2){\circle*{1}}
\put(36,2){\line(1,0){13}}
\put(50,2){\circle{2}}
\put(45,5){$\psi_{-n}$}
\put(51,2){\line(1,0){13}}
\put(67,2){\circle*{1}}
\put(70,2){\circle*{1}}
\put(73,2){\circle*{1}}
\put(76,2){\line(1,0){13}}
\put(90,2){\circle{2}}
\put(87,5){$\psi_0$}
\put(91,2){\line(1,0){13}}
\put(107,2){\circle*{1}}
\put(110,2){\circle*{1}}
\put(113,2){\circle*{1}}
\put(116,2){\line(1,0){13}}
\put(130,2){\circle{2}}
\put(128,5){$\psi_n$}
\put(131,2){\line(1,0){13}}
\put(147,2){\circle*{1}}
\put(150,2){\circle*{1}}
\put(153,2){\circle*{1}}
\put(156,2){\line(1,0){13}}
\put(170,2){\circle{2}}
\put(167,5){$\psi_\ell$}
\end{picture}
&$\ell \geqq n \geqq 0$
\\
\hline
\end{tabular}\\
&\begin{tabular}{|c|l|c|}\hline
\setlength{\unitlength}{.4 mm}
$A_{2\ell}\phantom{i.}$ &
\setlength{\unitlength}{.4 mm}
\begin{picture}(180,18)
\put(1,2){\circle{2}}
\put(-4,5){$\psi_{-\ell}$}
\put(2,2){\line(1,0){13}}
\put(18,2){\circle*{1}}
\put(21,2){\circle*{1}}
\put(24,2){\circle*{1}}
\put(27,2){\line(1,0){13}}
\put(41,2){\circle{2}}
\put(36,5){$\psi_{-n}$}
\put(42,2){\line(1,0){13}}
\put(58,2){\circle*{1}}
\put(61,2){\circle*{1}}
\put(64,2){\circle*{1}}
\put(67,2){\line(1,0){13}}
\put(81,2){\circle{2}}
\put(78,5){$\psi_{-1}$}
\put(82,2){\line(1,0){13}}
\put(96,2){\circle{2}}
\put(93,5){$\psi_1$}
\put(97,2){\line(1,0){13}}
\put(113,2){\circle*{1}}
\put(116,2){\circle*{1}}
\put(119,2){\circle*{1}}
\put(122,2){\line(1,0){13}}
\put(136,2){\circle{2}}
\put(134,5){$\psi_n$}
\put(137,2){\line(1,0){13}}
\put(153,2){\circle*{1}}
\put(156,2){\circle*{1}}
\put(159,2){\circle*{1}}
\put(162,2){\line(1,0){13}}
\put(176,2){\circle{2}}
\put(173,5){$\psi_\ell$}
\end{picture}
&$\ell \geqq n \geqq 1$
\\
\hline
\end{tabular}
\end{aligned}
\end{equation}
\begin{equation}\label{rootorderBCD}
\begin{aligned}
&\begin{tabular}{|c|l|c|}\hline
$B_\ell$&
\setlength{\unitlength}{.5 mm}
\begin{picture}(155,13)
\put(5,2){\circle{2}}
\put(2,5){$\psi_{\ell}$}
\put(6,2){\line(1,0){13}}
\put(24,2){\circle*{1}}
\put(27,2){\circle*{1}}
\put(30,2){\circle*{1}}
\put(34,2){\line(1,0){13}}
\put(48,2){\circle{2}}
\put(45,5){$\psi_n$}
\put(49,2){\line(1,0){23}}
\put(73,2){\circle{2}}
\put(70,5){$\psi_{n-1}$}
\put(74,2){\line(1,0){13}}
\put(93,2){\circle*{1}}
\put(96,2){\circle*{1}}
\put(99,2){\circle*{1}}
\put(104,2){\line(1,0){13}}
\put(118,2){\circle{2}}
\put(115,5){$\psi_2$}
\put(119,2.5){\line(1,0){23}}
\put(119,1.5){\line(1,0){23}}
\put(143,2){\circle*{2}}
\put(140,5){$\psi_1$}
\end{picture}
&$\ell\geqq n \geqq 2$\\
\hline
\end{tabular} \\
&\begin{tabular}{|c|l|c|}\hline
$C_\ell$ &
\setlength{\unitlength}{.5 mm}
\begin{picture}(155,13)
\put(5,2){\circle*{2}}
\put(2,5){$\psi_{\ell}$}
\put(6,2){\line(1,0){13}}
\put(24,2){\circle*{1}}
\put(27,2){\circle*{1}}
\put(30,2){\circle*{1}}
\put(34,2){\line(1,0){13}}
\put(48,2){\circle*{2}}
\put(45,5){$\psi_n$}
\put(49,2){\line(1,0){23}}
\put(73,2){\circle*{2}}
\put(70,5){$\psi_{n-1}$}
\put(74,2){\line(1,0){13}}
\put(93,2){\circle*{1}}
\put(96,2){\circle*{1}}
\put(99,2){\circle*{1}}
\put(104,2){\line(1,0){13}}
\put(118,2){\circle*{2}}
\put(115,5){$\psi_2$}
\put(119,2.5){\line(1,0){23}}
\put(119,1.5){\line(1,0){23}}
\put(143,2){\circle{2}}
\put(140,5){$\psi_1$}
\end{picture}
& $\ell\geqq n \geqq 3$
\\
\hline
\end{tabular}\\
&\begin{tabular}{|c|l|c|}\hline
$D_\ell$ &
\setlength{\unitlength}{.5 mm}
\begin{picture}(155,20)
\put(5,9){\circle{2}}
\put(2,12){$\psi_{\ell}$}
\put(6,9){\line(1,0){13}}
\put(24,9){\circle*{1}}
\put(27,9){\circle*{1}}
\put(30,9){\circle*{1}}
\put(34,9){\line(1,0){13}}
\put(48,9){\circle{2}}
\put(45,12){$\psi_n$}
\put(49,9){\line(1,0){23}}
\put(73,9){\circle{2}}
\put(70,12){$\psi_{n-1}$}
\put(74,9){\line(1,0){13}}
\put(93,9){\circle*{1}}
\put(96,9){\circle*{1}}
\put(99,9){\circle*{1}}
\put(104,9){\line(1,0){13}}
\put(118,9){\circle{2}}
\put(113,12){$\psi_3$}
\put(119,8.5){\line(2,-1){13}}
\put(133,2){\circle{2}}
\put(136,0){$\psi_1$}
\put(119,9.5){\line(2,1){13}}
\put(133,16){\circle{2}}
\put(136,14){$\psi_2$}
\end{picture}
& $\ell\geqq n \geqq 4$
\\
\hline
\end{tabular}
\end{aligned}
\end{equation}
}

\noindent We describe this by saying that $G_\ell$ {\em propagates} $G_n$\,.
For types $B$, $C$ and $D$ this is the same as the notion of propagation in
\cite{OW2011} and \cite{OW2014}.  
\medskip

The direct limit groups obtained
this way are $SL(\infty;\C)$, $SO(\infty;\C)$, $Sp(\infty;\C)$, $SL(\infty;\R)$,
$SL(\infty;\H)$, $SU(\infty,q)$ with $q \leqq \infty$, $SO(\infty,q)$
with $q \leqq \infty$, $Sp(\infty,q)$ with $q \leqq \infty$, 
$Sp(\infty;\R)$ and $SO^*(2\infty)$.
\medskip

Let $\{G_n\}$ be a direct system of real semisimple Lie groups in which
$G_\ell$ propagates $G_n$ for $\ell \geqq n$.  Then the corresponding simple
restricted root systems satisfy $\Psi_n \subset \Psi_\ell$ as indicated in
(\ref{rootorderA}) and (\ref{rootorderBCD}).  Consider conditions on a 
family $\Phi = \{\Phi_n\}$ of subsets
$\Phi_n \subset \Psi_n$ such that $G_n \hookrightarrow G_\ell$ maps the
corresponding parabolics $Q_{\Phi,n} \hookrightarrow Q_{\Phi,\ell}$. Then
we have
\begin{equation}\label{limq}
Q_{\Phi,\infty} := \varinjlim Q_{\Phi,n} \text{ inside } 
G_\infty := \varinjlim G_n\,.
\end{equation}
Express $Q_{\Phi,n} = M_{\Phi,n} A_{\Phi,n} N_{\Phi,n}$ and
$Q_{\Phi,\ell} = M_{\Phi,\ell} A_{\Phi,\ell} N_{\Phi,\ell}$\,.  Then
$M_{\Phi,n} \hookrightarrow M_{\Phi,\ell}$ is equivalent to 
$\Phi_n \subset \Phi_\ell$\,, $A_{\Phi,n} \hookrightarrow A_{\Phi,\ell}$ is
implicit in the condition that $G_\ell$ propagates $G_n$\,, 
and $N_{\Phi,n} \hookrightarrow N_{\Phi,\ell}$ is equivalent to 
$(\Psi_n \setminus \Phi_n) \subset (\Psi_\ell \setminus \Phi_\ell)$\,.  As
before let $U_{\Phi,n}$ denote a maximal compact subgroup of $M_{\Phi,n}$\,;
we implicitly assume that $U_{\Phi,n} \hookrightarrow U_{\Phi,\ell}$
whenever $M_{\Phi,n} \hookrightarrow M_{\Phi,\ell}$\,.  
\medskip

We will extend some of our results from the finite dimensional
setting to these subgroups of $Q_{\Phi,\infty}$\,.
\begin{equation}\label{lim-subgroups1}
\begin{aligned}
{\rm (a)}\,
&N_{\Phi,\infty} := \varinjlim N_{\Phi,n} \text{ maximal locally unipotent 
	subgroup, requiring } 
	(\Psi_n \setminus \Phi_n) \subset (\Psi_\ell \setminus \Phi_\ell), \\
{\rm (b)}\,
&A_{\Phi,\infty} := \varinjlim A_{\Phi,n}\,, \\
{\rm (c)}\,
&U_{\Phi,\infty} := \varinjlim U_{\Phi,n} \text{ maximal lim-compact subgroup, 
	requiring } \Phi_n \subset \Phi_\ell,\\
{\rm (d)}\,
&U_{\Phi,\infty} N_{\Phi,\infty} := \varinjlim U_{\Phi,n}N_{\Phi,n} 
	\text{, requiring } \Phi_n \subset \Phi_\ell \text{ and } 
	  (\Psi_n \setminus \Phi_n) \subset (\Psi_\ell \setminus \Phi_\ell).
\end{aligned}
\end{equation}  
\medskip

To study these we will need to extend some notation from the finite 
dimensional setting to the system $\{\gg_n\}$.  For 
$\alpha \in \Delta^+(\gg_n,\ga_n)$ we denote
\begin{equation}
 [\alpha]_{\Phi,n} = \{\delta \in \Delta^+(\gg_n,\ga_n) \,\mid\,
	\delta|_{\ga_{\Phi,n}} = \alpha|_{\ga_{\Phi,n}}\} \text{ and }
 \gg_{\Phi,n,\alpha} = 
	{\sum}_{\delta \in [\alpha]_{\Phi,n}} \gg_\delta \,.
\end{equation}
The adjoint action of $\gm_{\Phi,n}$ on
$\gg_{\Phi,n,\alpha}$ is absolutely irreducible \cite[Theorem 8.3.13]{W1966}.
Note that $\gg_{\Phi,n,\alpha}$ is the sum of the root spaces for roots 
$\delta = \sum_{\psi \in \Psi_n} n_\psi(\delta)\psi \in \Delta^+(\gg_n,\ga_n)$ 
such that $n_\psi(\delta) = n_\psi(\alpha)$ for all $\psi \in \Psi_n \setminus
\Phi_n$\,, in other words the same coefficients along $\Psi_n \setminus \Phi_n$ 
in $\sum_{\psi \in \Psi_n} n_\psi(\cdot)\psi$.  The following lemma is immediate. 
\medskip

\begin{lemma}\label{very-restricted}
Let $n \leqq \ell$ and assume the condition 
$(\Psi_n \setminus \Phi_n) \subset (\Psi_\ell \setminus \Phi_\ell)$
of {\rm (\ref{lim-subgroups1})(a)} for $N_{\Phi,\infty}$\,.
Then $\gg_{\Phi,n,\alpha} \subset \gg_{\Phi,\ell,\alpha}$\,.  
In particular we have the joint $\ga_{\Phi,\infty}$-eigenspaces 
$\gg_{\Phi,\infty,\alpha}
= \varinjlim_n\, \gg_{\Phi,n,\alpha}$ in $\gn_{\Phi,\infty}$\,.
\end{lemma}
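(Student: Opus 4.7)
The plan is to translate the defining condition $\delta|_{\ga_{\Phi,n}} = \alpha|_{\ga_{\Phi,n}}$ into a statement about simple-root coefficients and then compare levels $n$ and $\ell$. Since $\ga_{\Phi,n} = \Phi_n^\perp$ inside $\ga_n$, the annihilator of $\ga_{\Phi,n}$ in $\ga_n^*$ is $\mathrm{Span}_\R(\Phi_n)$. Hence $\delta \in [\alpha]_{\Phi,n}$ is equivalent to $\delta - \alpha \in \mathrm{Span}_\R(\Phi_n)$, and this in turn is equivalent to $n_\psi(\delta) = n_\psi(\alpha)$ for every $\psi \in \Psi_n \setminus \Phi_n$. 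The identical equivalence holds at level $\ell$ with $\Psi_\ell, \Phi_\ell$ replacing $\Psi_n, \Phi_n$. This is exactly the characterization recorded right before the lemma.

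For a fixed $\delta \in [\alpha]_{\Phi,n}$, I would verify $\delta \in [\alpha]_{\Phi,\ell}$ by picking an arbitrary $\psi \in \Psi_\ell \setminus \Phi_\ell$ and checking $n_\psi(\delta) = n_\psi(\alpha)$ in two cases. If $\psi \notin \Psi_n$, both coefficients are zero because $\delta, \alpha \in \Delta^+(\gg_n, \ga_n)$. If instead $\psi \in \Psi_n$, the compatibility of the direct system of parabolics (which packages $\Psi_n \cap \Phi_\ell \subset \Phi_n$ together with the stated $\Psi_n \setminus \Phi_n \subset \Psi_\ell \setminus \Phi_\ell$, so that the embedding $Q_{\Phi,n} \hookrightarrow Q_{\Phi,\ell}$ from \eqref{limq} is well defined) places $\psi$ in $\Psi_n \setminus \Phi_n$, and the previous paragraph delivers the required equality. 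Summing $\gg_\delta$ over all $\delta \in [\alpha]_{\Phi,n}$ yields $\gg_{\Phi,n,\alpha} \subset \gg_{\Phi,\ell,\alpha}$.

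For the ``in particular'' statement, the first part makes $\{\gg_{\Phi,n,\alpha}\}_n$ an increasing chain of subspaces of $\gn_{\Phi,\infty} = \varinjlim_n \gn_{\Phi,n}$, so $\varinjlim_n \gg_{\Phi,n,\alpha}$ is a well-defined subspace. I would identify it with the joint $\ga_{\Phi,\infty}$-eigenspace whose character is the compatible limit of the $\alpha|_{\ga_{\Phi,n}}$. For the forward containment, any $\xi \in \ga_{\Phi,\infty}$ factors through some $\ga_{\Phi,m}$ and acts by $\alpha(\xi)$ on each $\gg_{\Phi,n,\alpha}$ with $n \geqq m$. For the converse, any $\ga_{\Phi,\infty}$-eigenvector in $\gn_{\Phi,\infty}$ already lives at some finite stage $n$, where the finite-dimensional decomposition of $\gn_{\Phi,n}$ into $\ga_{\Phi,n}$-weight spaces isolates it inside the weight space $\gg_{\Phi,n,\alpha}$.

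The main obstacle is the second case of the case analysis in paragraph two: it is where the propagation convention genuinely enters, ensuring that a simple root shared by $\Psi_n$ and $\Psi_\ell \setminus \Phi_\ell$ is automatically in $\Psi_n \setminus \Phi_n$. Once that alignment is in hand, everything else is a direct unwinding of definitions and of the elementary structure of strict direct limits.
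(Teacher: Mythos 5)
Your translation of the defining condition into simple-root coefficients is correct, and the two-case analysis on $\psi \in \Psi_\ell \setminus \Phi_\ell$ is the right framework.  The first case ($\psi \notin \Psi_n$) is fine.  The gap is in the second case, and it is not a cosmetic slip: it is the whole content of the lemma.

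When $\psi \in \Psi_n \cap (\Psi_\ell \setminus \Phi_\ell)$, you need to conclude $\psi \in \Psi_n \setminus \Phi_n$.  Starting from $\psi \in \Psi_n$ and $\psi \notin \Phi_\ell$, the statement $\psi \notin \Phi_n$ is exactly the implication $\Phi_n \subset \Phi_\ell$ (equivalently $\Phi_n \subset \Psi_n \cap \Phi_\ell$).  You cite instead the inclusion $\Psi_n \cap \Phi_\ell \subset \Phi_n$.  That runs the wrong way.  Moreover, $\Psi_n \cap \Phi_\ell \subset \Phi_n$ is, by contraposition and $\Psi_n \subset \Psi_\ell$, \emph{equivalent} to the stated hypothesis $(\Psi_n \setminus \Phi_n) \subset (\Psi_\ell \setminus \Phi_\ell)$, so your parenthetical ``packaging'' adds nothing beyond the hypothesis you already have, and it leaves precisely the containment you needed unestablished.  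The containment $\Phi_n \subset \Phi_\ell$ is condition {\rm (\ref{lim-subgroups1})(c)} (the one required for $M_{\Phi,n}\hookrightarrow M_{\Phi,\ell}$ and hence for $Q_{\Phi,n}\hookrightarrow Q_{\Phi,\ell}$); it is \emph{not} implied by condition {\rm (\ref{lim-subgroups1})(a)} alone.

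To see that this is a genuine obstruction and not something recoverable inside the stated hypothesis, consider $G_3 = SL(4;\R) \hookrightarrow G_5 = SL(6;\R)$ with $\Psi_3 = \{\psi_{-1},\psi_0,\psi_1\}$ and $\Psi_5 = \{\psi_{-2},\psi_{-1},\psi_0,\psi_1,\psi_2\}$ as in {\rm (\ref{rootorderA})}.  Take $\Phi_3 = \{\psi_0,\psi_1\}$ and $\Phi_5 = \{\psi_1\}$.  Then $\Psi_3 \setminus \Phi_3 = \{\psi_{-1}\} \subset \Psi_5 \setminus \Phi_5$, so the hypothesis of the lemma holds, but $\psi_0 \in \Phi_3 \setminus \Phi_5$, so $\Phi_3 \not\subset \Phi_5$.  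With $\alpha = \psi_{-1}$ and $\delta = \psi_{-1}+\psi_0$ one has $n_{\psi_{-1}}(\delta)=n_{\psi_{-1}}(\alpha)$, so $\delta \in [\alpha]_{\Phi,3}$; but $n_{\psi_0}(\delta) = 1 \ne 0 = n_{\psi_0}(\alpha)$ with $\psi_0 \in \Psi_5\setminus\Phi_5$, so $\delta \notin [\alpha]_{\Phi,5}$, and $\gg_{\Phi,3,\alpha} \not\subset \gg_{\Phi,5,\alpha}$.  So your argument cannot be repaired without invoking $\Phi_n \subset \Phi_\ell$ as an additional hypothesis.  Put the correct inclusion in, state explicitly that you are using condition {\rm (\ref{lim-subgroups1})(c)} (or the well-definedness of the full parabolic system $Q_{\Phi,n}\hookrightarrow Q_{\Phi,\ell}$, which forces it), and the second case closes; the rest of your write-up then goes through.
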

\medskip

We will also say something about representations, but not about
Fourier inversion, for the
\begin{equation}\label{lim-subgroups2}
\begin{aligned}
&A_{\Phi,\infty} N_{\Phi,\infty} := \varinjlim A_{\Phi,n}N_{\Phi,n} 
	\text{ max. exponential solvable subgroup where } 
	(\Psi_n \setminus \Phi_n) \subset (\Psi_\ell \setminus \Phi_\ell),
	\text{ and for the } \\
&E_{\Phi,\infty}:= \varinjlim E_{\Phi,n} 
	\text{ maximal amenable subgroup where } \Phi_n \subset \Phi_\ell
	\text{ and }
	(\Psi_n \setminus \Phi_n) \subset (\Psi_\ell \setminus \Phi_\ell)\,.
\end{aligned}
\end{equation}
Here  $E_{\Phi,n} = U_{\Phi,n} A_{\Phi,n} N_{\Phi,n}$\,, so
$E_{\Phi,\infty} = U_{\Phi,\infty} A_{\Phi,\infty} N_{\Phi,\infty}$\,.
The difficulty with Fourier inversion for the two limit groups of 
(\ref{lim-subgroups2}) is that we don't have an explicit 
Dixmier-Puk\'anszky operator.
\medskip

Start with $N_{\Phi,\infty}$\,.  For that we must assume
$(\Psi_n \setminus \Phi_n) \subset (\Psi_\ell \setminus \Phi_\ell)$.
In view of the propagation assumption on the $G_n$ the maximal set of
strongly orthogonal non-multipliable roots in $\Delta^+(\gg_n,\ga_n)$
is increasing in $n$.  It is
obtained by cascading up (we reversed the indexing from the finite
dimensional setting) has form $\{\beta_1, \dots , \beta_{r_n}\}$
in $\Delta^+(\gg_n,\ga_n)$.  Following ideas of Section \ref{sec4} we 
partition 
$\{\beta_1, \dots , \beta_{r_n}\} = \bigcup_k \bigcup_{I_{n,k}} \beta_i$
where $I_{n,k}$ consists of the indices $i$ for which the $\beta_i$ have a 
given restriction to $\ga_{\Phi,n}$ and belong to $\Delta^+(\gg_n,\ga_n)$.  
Note that $I_{n,k}$ can increase as $n$ increases. This happens in some cases 
where the $\Phi$ stop growing,
i.e. where there is an index $n_0$ such that $\Phi_n = \Phi_{n_0}
\ne \emptyset$ for $n \geqq n_0$\,.  That is the case when
$\Delta(\gg_n,\ga_n)$ is of type $A_n$ with each $\Psi = \{\psi_1\}$.
Thus we also denote $I_{\infty,k} = \bigcup_n I_{n,k}$\,.
\medskip

As in (\ref{big-summands}), following the idea of $\gl_j = \gz_j + \gv_j$\,, 
we define
\begin{equation}\label{big-summands-n}
\begin{aligned}
&\gl_{\Phi,n,j} \phantom{i}= {\sum}_{i \in I_{n,j}} (\gl_i \cap \gn_{\Phi,n})
	\text{, the } \beta_j\text{ part of } \gn_{\Phi,n}\,, \\
&\gl_{\Phi,\infty,j} = {\sum}_{i \in I_{\infty,j}} (\gl_i \cap \gn_\Phi)
	\text{, the } \beta_j\text{ part of } \gn_{\Phi,\infty}\,, \\
&\gz_{\Phi,\infty,j} = {\sum}_n \,\gz_{\Phi,n,j}\,,\,\, 
	\gs_{\Phi,\infty} = {\sum}_j \,\gz_{\Phi,\infty,j} \text{ and }
\gv_{\Phi,\infty} = {\sum}_{n,j} \,\gv_{\Phi,n,j}\,,
\end{aligned}
\end{equation}
so $\gn_{\Phi,\infty} = \gs_{\Phi,\infty} + \gv_{\Phi,\infty}$\,.
We'll also use $\gs_{\Phi,n} = 
{\sum}_j \,\gz_{\Phi,n,j}$ and $\gv_{\Phi,n} = {\sum}_j \,\gv_{\Phi,n,j}$\,,
so $\gn_{\Phi,n} = \gs_{\Phi,n} + \gv_{\Phi,n}$\,.
\medskip

$L_{\Phi,n,j}$ denotes the analytic subgroup with Lie algebra $\gl_{\Phi,n,j}$
and $L_{\Phi,\infty,j} = \varinjlim_n L_{\Phi,n,j}$ has Lie algebra 
$\gl_{\Phi,\infty,j}$\,.  We have this set up so that 
\begin{equation}\label{big-summands-N}
N_{\Phi,\infty} = {\varinjlim}_n\, N_{\Phi,n} = 
{\varinjlim}_j\, L_{\Phi,\infty,j} = {\varinjlim}_j\, {\varinjlim}_n\, L_{\Phi,n,j}\,.
\end{equation}

\section{Representations of the Limit Groups I: the $N_{\Phi,\infty}$}
\label{sec9}
\setcounter{equation}{0}

In this section we indicate the limit stepwise square integrable 
representations $\pi_{\Phi,\gamma} = \varinjlim \pi_{\Phi,\gamma_n}$ of the 
direct limit group $N_{\Phi,\infty} = \varinjlim N_{\Phi,n}$\,.  The 
parameter space for the stepwise square integrable representations of the 
$N_{\Phi,n}$ is 
$\gt_{\Phi,n}^* = \{\gamma_n \in \gs_{\Phi,n}^* \,\mid\, P(\gamma_n) \ne 0 \}$
where $\gamma_n = \sum_1^{m_n}\lambda_j$ and $P(\gamma_n)$ is the product of
the Pfaffians $P_j(\lambda_j)$. Note that $\gamma_\ell|_{\gs_{\Phi,n}} =
\gamma_n$ for $\ell > n$. The parameter space for the $\pi_{\Phi,\gamma}$ 
is $\gt_{\Phi,\infty}^* = \{(\gamma_n) \in \gs_{\Phi,\infty}^* \mid
\text{ each } \gamma_n \in \gt_{\Phi,n}^*\}$ where
$\gs_{\Phi,\infty}^* = \varprojlim \gs_{\Phi,n}$.  The stepwise square 
integrable representations $\pi_{\gamma_n}$ were obtained recursively in
Construction \ref{construction}, from square integrable representations of
the $L_r$, $r \leqq m_n$\,, and in Lemma \ref{lim-rep-N} we described method 
of construction of the direct limit representations $\pi_{\Phi,\gamma}$\,.
\medskip

As noted before we must assume the condition
$(\Psi_n \setminus \Phi_n) \subset (\Psi_\ell \setminus \Phi_\ell)$
of (\ref{lim-subgroups1})(a), so that $\{N_{\Phi,n}\}$ is a direct system, 
in order to work with $N_{\Phi,\infty}$\,.
Then we have  the decompositions (\ref{big-summands-n}) and
(\ref{big-summands-N}).  With those in mind we will build up the
parameter space for direct limits of stepwise square integrable 
representations of $N_{\Phi,\infty}$ in two steps.  First,

\begin{lemma}\label{restr}
If $\lambda \in \gg_{\Phi,\infty,\beta_j}^*$ 
then the bilinear form $b_\lambda$ on
$\gn_{\Phi,\infty,j}/\gz_{\Phi,\infty,j}$ satisfies 
$b_\lambda = b_{\lambda|_{\gz_{\Phi,\infty,j}}}$\,.
\end{lemma}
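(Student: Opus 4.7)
The plan is to mimic the proof of Lemma \ref{easy-tilde} and reduce to it via the direct limit. First I would split $\gg_{\Phi,\infty,\beta_j} = \gz_{\Phi,\infty,j} \oplus \gw_{\Phi,\infty,j}$, where $\gw_{\Phi,\infty,j} := \gg_{\Phi,\infty,\beta_j} \cap \gv_{\Phi,\infty}$ collects the root spaces of $\gg_{\Phi,\infty,\beta_j}$ that do not lie in $\gs_{\Phi,\infty}$. Decomposing $\lambda = \lambda_\gz + \lambda_\gw$ accordingly (so that $\lambda_\gw$ vanishes on $\gz_{\Phi,\infty,j}$ while $\lambda_\gz = \lambda|_{\gz_{\Phi,\infty,j}}$), the desired equality $b_\lambda = b_{\lambda|_{\gz_{\Phi,\infty,j}}}$ on $\gl_{\Phi,\infty,j}/\gz_{\Phi,\infty,j}$ reduces to showing that $\lambda_\gw([x,y]) = 0$ for all $x,y \in \gl_{\Phi,\infty,j}$, i.e., that the $\gg_{\Phi,\infty,\beta_j}$-component of every bracket $[x,y]$ already lies in $\gz_{\Phi,\infty,j}$.

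To establish this, I would argue that it suffices to verify the claim at each finite level. By the definition (\ref{big-summands-n}), $\gl_{\Phi,\infty,j} = \varinjlim_n \gl_{\Phi,n,j}$, so any two elements $x, y \in \gl_{\Phi,\infty,j}$ lie in $\gl_{\Phi,n,j}$ for $n$ sufficiently large. Their bracket $[x,y]$ then belongs to $\gl_{\Phi,n,j}$, and the finite-dimensional Lemma \ref{easy-tilde}, applied at level $n$ to the restriction $\lambda|_{\gg_{\Phi,n,\beta_j}}$, tells us that the $\gg_{\Phi,n,\beta_j}$-component of $[x,y]$ lies in $\gz_{\Phi,n,j}$. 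Since $\gl_{\Phi,n,j}$ is a sum of root spaces coming from $\Delta^+(\gg_n,\ga_n)$ only, its $\gg_{\Phi,n,\beta_j}$-component agrees with its $\gg_{\Phi,\infty,\beta_j}$-component under the inclusion $\gg_{\Phi,n,\beta_j} \hookrightarrow \gg_{\Phi,\infty,\beta_j}$ from Lemma \ref{very-restricted}. Hence the $\gg_{\Phi,\infty,\beta_j}$-component of $[x,y]$ lies in $\gz_{\Phi,n,j} \subset \gz_{\Phi,\infty,j}$, and $\lambda_\gw([x,y]) = 0$ as required.

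The only real obstacle is bookkeeping: one must verify that the splittings $\gg_{\Phi,n,\beta_j} = \gz_{\Phi,n,j} \oplus \gw_{\Phi,n,j}$ pass compatibly to the limit, so that $\gz_{\Phi,\infty,j} = \varinjlim_n \gz_{\Phi,n,j}$ and $\gw_{\Phi,\infty,j} = \varinjlim_n \gw_{\Phi,n,j}$, and that brackets computed inside $\gl_{\Phi,n,j}$ coincide with those computed inside $\gl_{\Phi,\infty,j}$. Both points are immediate from the propagation hypothesis and the definitions (\ref{big-summands-n}), together with Lemma \ref{very-restricted}, which ensures that root-space inclusions are preserved as $n$ grows. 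No genuine infinite-dimensional argument is needed beyond this compatibility, since any single bracket computation involves only finitely many root spaces and thus descends to the finite-dimensional instance already handled by Lemma \ref{easy-tilde}.
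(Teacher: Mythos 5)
Your proposal is essentially the same argument the paper gives: pass to a sufficiently large finite level $n$, use the propagation/compatibility of the joint $\ga_{\Phi}$-eigenspaces (Lemma \ref{very-restricted}) to see that nothing is lost in the limit, and then invoke the finite-dimensional Pfaffian computation. You make one thing more explicit than the paper's very terse proof: you name Lemma \ref{easy-tilde} as the finite-dimensional ingredient and reproduce its $\lambda_\gz + \lambda_\gw$ decomposition at level $\infty$, whereas the paper states the resulting equality $(b_\lambda)|_{\gn_{\Phi,\ell}/\gs_{\Phi,\ell}} = (b_{\lambda|_{\gz_{\Phi,\ell,j}}})|_{\gn_{\Phi,\ell}/\gs_{\Phi,\ell}}$ right after citing Lemma \ref{very-restricted}, leaving the Lemma \ref{easy-tilde}-style step implicit. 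That is a cosmetic clarification, not a different route.
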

\begin{proof} 
Let $n$ be sufficiently large that  $\gg_{\beta_j} \subset \gl_{\Phi,n,j}$\,.
Apply Lemma \ref{very-restricted} to each $\gg_{\Phi,\ell,\beta_j}$ with
$\ell \geqq n$.  That gives $(b_\lambda)|_{\gn_{\Phi,\ell}/\gs_{\Phi,\ell}}
= (b_{\lambda|_{\gz_{\Phi,\ell,j}}})|_{\gn_{\Phi,\ell}/\gs_{\Phi,\ell}}$\,.
As $\ell$ increases the additional brackets go into $\gn_{\Phi,\ell}$ and
thus into the kernel of $b_\lambda$\,.
\end{proof}

Second, we define the $\beta_j$ part of the parameter space.  In view of 
Lemma \ref{restr} we need only look at
\begin{equation}
\gt_{\Phi,\infty,j}^* = 
	\left \{ \left . \lambda_j = (\lambda_{n,j}) \in {\varprojlim}_n\, 
	\gz_{\Phi,n,j}^* \right |
	\lambda_{n,j} \in \gz_{\Phi,n,j}^* \text{ with }
	P_{\gl_{\Phi,n,j}}(\lambda_{n,j}) \ne 0 \text{ for } n \geqq n(\lambda_j)
	\right \}
\end{equation}	
where $n(\lambda_j)$ is the first index $n$ such that 
$\lambda_{n,j} \in \gz_{\Phi,n,j}^*$. 
We start this way because of the possibility that the 
$\gz_{\Phi,n,j}$ 
could grow, for fixed $j$, if the multiplicity of $\beta_j$ as a joint
eigenvalue of $\ad(\ga_{\Phi,n})$, increases as $n$ increases.  Third, 
\begin{equation}
\gt_{\Phi,\infty}^* = 
	\left \{ \left . \gamma = (\gamma_n) \in {\varprojlim}_n \gs^*_{\Phi,n} 
	\right | \text{ every } \gamma_n \in \gt_{\Phi,n}^* \right \}.
\end{equation}

Fix $\gamma = (\gamma_j) \in \gt_{\Phi,\infty}^*$\,.  As in Construction
\ref{construction} and Lemma \ref{lim-rep-N} we have the limit stepwise square 
integrable representation
$\pi_{\Phi,\lambda_j,\infty}$ of $L_{\Phi,\infty,j}$.  
Apply Construction \ref{construction} and Lemma \ref{lim-rep-N} to the 
$\pi_{\Phi,\lambda_j,\infty}$ as $j$ increases, obtaining the limit 
stepwise square integrable representation  
$\pi_{\Phi,\gamma,\infty} \in \widehat{N_{\Phi,\infty}}$. 

\begin{theorem} \label{l2-lim}
Assume the condition
$(\Psi_n \setminus \Phi_n) \subset (\Psi_\ell \setminus \Phi_\ell)$
of {\rm (\ref{lim-subgroups1})(a)}, so that $\{N_{\Phi,n}\}$ is a direct
system and $N_{\Phi,\infty} = \varinjlim N_{\Phi,n}$ is well defined.  
Let $\gamma = (\gamma_n) \in \gt_{\Phi,\infty}^*$ and $\pi_{\Phi,\gamma,\infty}
= \varinjlim \pi_{\Phi,\gamma,n}$ as in {\rm Lemma \ref{lim-rep-N}}.  View 
$\cH_{\pi_{\Phi,\gamma,\infty}}$ as the direct limit of the
$\cH_{\pi_{\Phi,\gamma,n}}$ in the category of Hilbert spaces and
partial isometries.  Let
$u,v \in \cH_{\pi_{\Phi,\gamma,\ell}} \subset 
\cH_{\pi_{\Phi,\gamma,\infty}}$.  Then the coefficient function
$f_{\pi_{\Phi,\gamma,\infty};\,u,v}(x) 
= \langle u, \pi_{\Phi,\gamma,\infty}(x)v\rangle$ satisfies
\begin{equation}\label{frob-schur-n-inf}
||f_{\pi_{\Phi,\gamma,\infty};\,u,v}|_{N_{\Phi,\ell}}||^2_{L^2(N_{\Phi,\ell}/
	S_{\Phi,\ell})} = \tfrac{||u||^2||v||^2}{|P_\ell(\gamma_\ell)|}
\end{equation}
\end{theorem}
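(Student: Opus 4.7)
The plan is to reduce the infinite-dimensional coefficient function back to a coefficient function at a finite stage and then invoke the finite-dimensional formula (\ref{frob1}) from Theorem \ref{plancherel-general}, which applies to $N_{\Phi,\ell}$ by Theorem \ref{gen-setup}. The whole point of the construction leading up to Lemma \ref{lim-rep-N} via (\ref{tensor-inj-N}) is that each embedding $v \mapsto v \otimes e_{n+1}$ of $\cH_{\pi_{\Phi,\gamma,n}}$ into $\cH_{\pi_{\Phi,\gamma,n+1}}$ is $N_{\Phi,n}$-equivariant, so I expect to leverage this at every step.

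First I would unwind the direct-limit representation at $N_{\Phi,\ell}$. Iterating (\ref{tensor-inj-N}) from stage $\ell$ up through stage $n$ for each $n > \ell$, the embedding $\cH_{\pi_{\Phi,\gamma,\ell}} \hookrightarrow \cH_{\pi_{\Phi,\gamma,n}}$ is $N_{\Phi,\ell}$-equivariant and has image $\cH_{\pi_{\Phi,\gamma,\ell}} \otimes e_{\ell+1} \otimes \cdots \otimes e_n$. Taking the direct limit in the category of Hilbert spaces with partial isometries, the composite embedding $\cH_{\pi_{\Phi,\gamma,\ell}} \hookrightarrow \cH_{\pi_{\Phi,\gamma,\infty}}$ intertwines $\pi_{\Phi,\gamma,\ell}$ with the restriction $\pi_{\Phi,\gamma,\infty}|_{N_{\Phi,\ell}}$ on that subspace. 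Consequently, for $u,v \in \cH_{\pi_{\Phi,\gamma,\ell}}$ and $x \in N_{\Phi,\ell}$,
\begin{equation*}
f_{\pi_{\Phi,\gamma,\infty};\,u,v}(x) = \langle u,\, \pi_{\Phi,\gamma,\infty}(x)v\rangle = \langle u,\, \pi_{\Phi,\gamma,\ell}(x)v\rangle = f_{\pi_{\Phi,\gamma,\ell};\,u,v}(x).
\end{equation*}

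Next I would apply the finite-dimensional Plancherel formula. By Theorem \ref{gen-setup}, $N_{\Phi,\ell}$ satisfies (\ref{setup}) and (\ref{c-weak}) with the decomposition $N_{\Phi,\ell} = L_{\Phi,\ell,1}\cdots L_{\Phi,\ell,\ell'}$, with quasi-center $S_{\Phi,\ell}$ and Pfaffian polynomial $P_\ell$ equal to the product of the $\Pf_{\gl_{\Phi,\ell,j}}(\lambda_{\ell,j})$. Since $\gamma_\ell \in \gt_{\Phi,\ell}^*$, $\pi_{\Phi,\gamma,\ell}$ is stepwise square integrable, and (\ref{frob1}) of Theorem \ref{plancherel-general} gives
\begin{equation*}
\|f_{\pi_{\Phi,\gamma,\ell};\,u,v}\|^2_{L^2(N_{\Phi,\ell}/S_{\Phi,\ell})} = \frac{\|u\|^2\|v\|^2}{|P_\ell(\gamma_\ell)|}.
\end{equation*}
Combining with the identification of the restricted coefficient function above yields (\ref{frob-schur-n-inf}).

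The main obstacle, I expect, is the rigorous handling of the direct limit of Hilbert spaces with partial isometries: one must confirm that the embedding $\cH_{\pi_{\Phi,\gamma,\ell}} \hookrightarrow \cH_{\pi_{\Phi,\gamma,\infty}}$ is isometric (preserving norms of $u,v$) and intertwines the action of $N_{\Phi,\ell}$, independently of the choice of the unit vectors $e_{\ell+1}, e_{\ell+2}, \ldots$. The isometry is immediate from each individual step being an isometric embedding by (\ref{tensor-inj-N}); the intertwining property at stage $n$ uses that $N_{\Phi,\ell} \subset N_{\Phi,n}$ stabilizes the line through $e_{\ell+1}\otimes\cdots\otimes e_n$ up to the scalar coming from the $\pi^\dagger$-extension, but when forming the coefficient $\langle u \otimes e_{\ell+1}\otimes\cdots, \pi_{\Phi,\gamma,n}(x)(v\otimes e_{\ell+1}\otimes\cdots)\rangle$ the extra tensor factors contribute $\|e_{\ell+1}\|^2\cdots\|e_n\|^2 = 1$. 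This is precisely the same induction scheme used in the displayed calculation verifying (\ref{frob1}) in Section \ref{sec1} for finite $m$, so the argument transfers verbatim.
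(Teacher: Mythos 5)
Your proof takes a genuinely different and arguably cleaner route than the paper's. The paper's proof works with decomposable vectors $u = \bigotimes_j u_j$, $v = \bigotimes_j v_j$ along the layer direction, computes the $L^2$-norm of each layer coefficient on $L_{\Phi,\infty,j}/Z_{\Phi,\infty,j}$ via square integrability of $\pi_{\Phi,\gamma_j,\infty}$, takes the product over $j$, and extends to general $u,v$ by density of decomposable vectors. You instead observe that the $N_{\Phi,\ell}$-equivariant isometry of (\ref{tensor-inj-N}), composed through all stages from $\ell$ to $\infty$, forces $f_{\pi_{\Phi,\gamma,\infty};\,u,v}\big|_{N_{\Phi,\ell}} = f_{\pi_{\Phi,\gamma,\ell};\,u,v}$ for $u,v\in\cH_{\pi_{\Phi,\gamma,\ell}}$, after which (\ref{frob-schur-n-inf}) is just (\ref{frob1}) applied to $N_{\Phi,\ell}$ --- which is legitimate because Theorem \ref{gen-setup} puts $N_{\Phi,\ell}$ in the scope of Theorem \ref{plancherel-general}. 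This avoids both the density argument and the product-over-layers bookkeeping, and it treats arbitrary $u,v$ from stage $\ell$ directly rather than through decomposable approximants.

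One small caution: in your final paragraph you attribute the intertwining at stage $n$ to the fact that the extra factors contribute $\|e_{\ell+1}\|^2\cdots\|e_n\|^2 = 1$. What is actually needed is that $\pi^\dagger_{\Phi,\gamma,n,n+1}(x)e_{n+1} = e_{n+1}$ for $x\in N_{\Phi,n}$ (so that the inner product $\langle e_{n+1},\pi^\dagger_{\Phi,\gamma,n,n+1}(x)e_{n+1}\rangle$ equals $1$, not merely that $e_{n+1}$ has unit norm). That is exactly what (\ref{tensor-inj-N}) asserts, so the step is fine as long as you cite it --- but phrase it as reliance on the equivariance statement in (\ref{tensor-inj-N}) rather than on normalization of the $e_k$.
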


\begin{proof}
Let $u = \bigotimes u_j$ and $v = \bigotimes v_j$ where 
$u_j, v_j \in \cH_{\pi_{\Phi,\gamma_j,\infty}}$\,, the representation
spaces of the $\pi_{\Phi,\gamma_j,\infty}$\,.  We know from stepwise
square integrability that the coefficients satisfy
$$
||f_{\pi_{\Phi,\gamma_j,\infty};\,u_j,v_j}|_{N_n}||_{
	L^2(L_{\Phi,n,j}/Z_{\Phi,n,j})}^2
	= \tfrac{||u_j||^2 ||v_j||^2}{|P_{\gl_{\Phi,n,j}}(\gamma_j)|}
	\text{ for } n \ggg 0.
$$
In other words,
$$
||f_{\pi_{\Phi,\gamma_j,\infty};\,u_j,v_j}|_{N_n}||^2_{
	L^2(L_{\Phi,\infty,j}/Z_{\Phi,\infty,j})} =
	\tfrac{||u_j||^2 ||v_j||^2}{|P_{\gl_{\Phi,\infty,j}}(\gamma_j)|}\,.
$$
Taking the product over $j$ we have (\ref{frob-schur-n-inf}) for
decomposable $u$ and $v$.  Decomposable vectors are dense in 
$\cH_{\pi_{\Phi,\gamma,\infty}}$ so 
(\ref{frob-schur-n-inf}) follows from the decomposable case by continuity.
\end{proof}

Now we continue as in \cite[Sections 3, 4 \& 5]{W2015}.  The first step is the 
rescaling implicit in Theorem \ref{l2-lim}, specifically in 
(\ref{frob-schur-n-inf}), which holds in our situation with only the obvious
changes.
Recall $N_{\Phi,a,b} = L_{\Phi,m_a+1}\dots L_{\Phi,m_b} = L_{\Phi,m_a+1,m_n}$,
and $N_{\Phi,a,\infty} = \varinjlim_b N_{\Phi,a,b}$, so
$N_{\Phi,\infty} = N_{\Phi,n} \ltimes N_{\Phi,n,\infty}$.

\begin{proposition}\label{rescale}
Let $\gamma \in \mathfrak{t}_{\Phi,\infty}^*$ and $\ell > n$ so that
$\gamma_{\ell}|_{\gs_{\Phi,n}} = \gamma_n$.
Then $\pi_{\Phi,\gamma,{\ell}}|_{N_{\Phi,n}}$ is an infinite multiple of 
$\pi_{\Phi,\gamma,n}$.  Split $\mathcal{H}_{\pi_{\Phi,\gamma,{\ell}}} 
= \cH' \widehat{\otimes}\mathcal{H}''$ where $\cH' 
= \mathcal{H}_{\pi_{\Phi,\gamma,n}}$, and where $\cH'' =
\mathcal{H}_{\pi_{\Phi,\lambda,{m_n +1}}} \widehat{\otimes} 
\cdots \widehat{\otimes} \mathcal{H}_{\pi_{\Phi,\lambda,{m_{\ell}}}}$ with
$\gamma_\ell = \lambda_1 + \dots + \lambda_{m_\ell}$.
Choose a $C^\infty$ unit vector $e \in \mathcal{H}''$, so
\begin{equation}\label{unitfactor}
\mathcal{H}_{\pi_{\Phi,\gamma,n}} \hookrightarrow \mathcal{H}_{\pi_{\Phi,\gamma,{\ell}}} 
\text{ by } v \mapsto v\otimes e
\end{equation}
is an $N_n$-equivariant isometric injection that sends $C^\infty$
vectors to $C^\infty$ vectors.
If $u, v \in \mathcal{H}_{\pi_{\Phi,\gamma,n}}$ then
\begin{equation}\label{rescale1}
\begin{aligned}
||f_{\pi_{\Phi,\gamma,{\ell}};\,u\otimes e,v\otimes e}&||^2_
	{L^2(N_{\Phi,\ell}/S_{\Phi,\ell})} \\
        &= \frac{|P_n(\gamma,n)|}{|P_{\ell}(\gamma,{\ell})|}
	||f_{\pi_{\Phi,\gamma,n};\,u\otimes e,v\otimes e}||^2_
        {L^2(N_{\Phi,n}/S_{\Phi,n})}
\end{aligned}
\end{equation}
\end{proposition}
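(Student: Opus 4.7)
The plan is to derive the rescaling identity as a direct consequence of the stepwise square integrable formal degree formula (\ref{frob1}), applied at level $\ell$ and again at level $n$. Everything else is a structural setup verifying that the tensor-splitting is compatible with restriction from $N_{\Phi,\ell}$ to $N_{\Phi,n}$.

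First I would justify the statement that $\pi_{\Phi,\gamma,\ell}|_{N_{\Phi,n}}$ is an infinite multiple of $\pi_{\Phi,\gamma,n}$, together with the claimed tensor-product splitting. Iterating Construction \ref{construction} across the intermediate layers $L_{\Phi,m_n+1},\ldots,L_{\Phi,m_\ell}$ gives
\begin{equation*}
\pi_{\Phi,\gamma,\ell} \;=\; \pi^\dagger_{\Phi,\gamma,n} \,\widehat{\otimes}\, \pi_{\Phi,\lambda_{m_n+1}} \,\widehat{\otimes}\, \cdots \,\widehat{\otimes}\, \pi_{\Phi,\lambda_{m_\ell}},
\end{equation*}
with representation space $\cH_{\pi_{\Phi,\gamma,\ell}} = \cH' \widehat{\otimes} \cH''$ as in the statement. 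By the recipe of Construction \ref{construction}, $\pi^\dagger_{\Phi,\gamma,n}$ restricts on $N_{\Phi,n}$ to $\pi_{\Phi,\gamma,n}$, while each factor $\pi_{\Phi,\lambda_j}$ ($j > m_n$) is realized as a representation of $N_{\Phi,\ell}$ whose kernel contains $N_{\Phi,n}$. Hence $\pi_{\Phi,\gamma,\ell}(x)(v\otimes e) = (\pi_{\Phi,\gamma,n}(x)v)\otimes e$ for every $x\in N_{\Phi,n}$, so $v \mapsto v\otimes e$ is an $N_{\Phi,n}$-equivariant isometric embedding, and since $e$ is $C^\infty$ it carries $C^\infty$ vectors to $C^\infty$ vectors.

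Second, I would compute both norms via (\ref{frob1}). On the left, Theorem \ref{l2-lim} (which rests on (\ref{frob1}) of Theorem \ref{plancherel-general}) gives
\begin{equation*}
\|f_{\pi_{\Phi,\gamma,\ell};\,u\otimes e,\,v\otimes e}\|^2_{L^2(N_{\Phi,\ell}/S_{\Phi,\ell})} \;=\; \frac{\|u\otimes e\|^2\,\|v\otimes e\|^2}{|P_\ell(\gamma_\ell)|} \;=\; \frac{\|u\|^2\,\|v\|^2}{|P_\ell(\gamma_\ell)|},
\end{equation*}
using $\|e\|=1$. Applying the same formula at level $n$ yields
\begin{equation*}
\|f_{\pi_{\Phi,\gamma,n};\,u,\,v}\|^2_{L^2(N_{\Phi,n}/S_{\Phi,n})} \;=\; \frac{\|u\|^2\,\|v\|^2}{|P_n(\gamma_n)|}.
\end{equation*}
Combined with the tensor identity from the first step, the coefficient $f_{\pi_{\Phi,\gamma,n};\,u\otimes e,\,v\otimes e}$ on $N_{\Phi,n}$ is identically $f_{\pi_{\Phi,\gamma,n};\,u,\,v}$, so dividing the two displayed formulas produces (\ref{rescale1}).

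There is no serious obstacle; the only point requiring care is bookkeeping around Construction \ref{construction}, namely verifying that restricting the tensor factors corresponding to the upper layers to $N_{\Phi,n}$ really is trivial. This uses only (\ref{newsetup})(c), which holds by Theorem \ref{gen-setup} in the parabolic-nilradical setting that will be applied later. Once the restriction is understood, the rescaling identity is essentially a one-line consequence of the formal degree formula.
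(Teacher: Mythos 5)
Your proof is correct and follows exactly the route the paper intends; the paper presents Proposition~\ref{rescale} without an explicit argument, describing it only as ``the rescaling implicit in Theorem~\ref{l2-lim}, specifically in (\ref{frob-schur-n-inf}),'' and your computation spells out precisely that derivation: apply the formal degree formula (\ref{frob1}) at level $\ell$ (with $\|e\|=1$) and at level $n$, note via Construction~\ref{construction} that the upper-layer tensor factors have $N_{\Phi,n}$ in their kernel so that $f_{\pi_{\Phi,\gamma,n};\,u\otimes e,\,v\otimes e}=f_{\pi_{\Phi,\gamma,n};\,u,\,v}$, and take the quotient of the two norms.
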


Given $\gamma \in \mathfrak{t}_{\Phi,\infty}^*$ consider the
unitary character $\zeta_\gamma = \exp(2\pi i \gamma)$ on $S_{\Phi,\infty}$,
given by $\zeta_\gamma(\exp(\xi)) = e^{2\pi i \gamma(\xi)}$ for
$\xi \in \mathfrak{s}_{\Phi,\infty}$\,.
The corresponding Hilbert space is 
$$
L^2(N_{\Phi,\infty}/S_{\Phi,\infty};\zeta_\gamma) =
	\varinjlim L^2(N_{\Phi,n}/S_{\Phi,n};\zeta_{\gamma_n})
$$ 
where 
$$
\begin{aligned}
L^2(&N_{\Phi,n}/S_{\Phi,n};\zeta_{\gamma_n}) = \\
& \{f: N_{\Phi,n} \to \C \mid f(gx) = \zeta_\gamma(x)^{-1}f(g)
\text{ and } |f| \in L^2(N_{\Phi,n}/S_{\Phi,n})
\text{ for } x \in S_{\Phi,n}\}.
\end{aligned}
$$
The finite linear combinations of the 
coefficients $f_{\pi_{\Phi,\gamma,n};\,u,v}$\,, 
where $u, v \in \cH_{\pi_{\Phi,\gamma,n}}$\,, are dense in
$L^2(N_{\Phi,n})$.
That gives us a $N_{\Phi,n} \times N_{\Phi,n}$ equivariant
Hilbert space isomorphism
$$
L^2(N_{\Phi,n}/S_{\Phi,n}; \zeta_{\gamma_n})
\cong \cH_{\pi_{\Phi,\gamma,n}} \widehat{\otimes}
\cH^*_{\pi_{\Phi,\gamma,n}}\,.
$$
The stepwise square integrable group $N_{\Phi,n}$ satisfies
$$
L^2(N_{\Phi,n}) 
= \int_{\gamma_n \in \gt_{\Phi,n}^*} \cH_{\pi_{\Phi,\gamma,n}}
\widehat{\otimes}\cH^*_{\pi_{\Phi,\gamma,n}} |P_n(\gamma)|d\gamma_n\,.
$$
That expands functions on $N_{\Phi,\infty} = N_{\Phi,1}N_{\Phi,2}\dots$
that depend only on the first $m_n$ factors.  To increase the number of
factors we must deal the renormalization
implicit in (\ref{rescale1}).  Reformulate (\ref{rescale1}):
\begin{equation}\label{hilbert-proj0}
p_{\gamma,n,\ell}: f_{\pi_{\Phi,\gamma,{\ell}};\,u\otimes u',v\otimes v'}
   \mapsto \langle u',v'\rangle \tfrac{|P_n(\gamma_n)|}{|P_{\ell}(\gamma_{\ell})|}
   f_{\pi_{\Phi,\gamma,n};\,u,v}
\end{equation}
is the orthogonal projection dual to 
$\cH_{\pi_{\Phi,\gamma,n}}\hookrightarrow\cH_{\pi_{\Phi,\gamma,{\ell}}}$\,.
These maps sum over $(\gamma_n,\gamma_{\ell})$ to a Hilbert space projection 
$p_{\ell,n}=\bigl (\int_{\gamma_{\ell}\in \gs_{\Phi,\ell}}p_{\gamma,n,\ell} 
d\gamma' \bigr )$,
\begin{equation}\label{hilbert-proj}
p_{\ell,n}: L^2(N_{\Phi,\ell}) \to L^2(N_{\Phi,n}) \text{ for } \ell \geqq n.
\end{equation}
The maps (\ref{hilbert-proj}) define an inverse system in the category of
Hilbert spaces and partial isometries:
\begin{equation}\label{hilbert-proj-sys}
L^2(N_{\Phi,1}) \overset{p_{2,1}}{\longleftarrow} L^2(N_{\Phi,2})
	\overset{p_{3,2}}{\longleftarrow}  L^2(N_{\Phi,3})
        \overset{p_{4,3}}{\longleftarrow} \,\, ... \,\,\,\longleftarrow\,
        L^2(N_\Phi)
\end{equation}
where the projective limit 
$L^2(N_\Phi) := \varprojlim \{L^2(N_{\Phi,n}),p_{\ell,n}\}$ is taken in that 
category.  We now have the Hilbert space projective limit
\begin{equation}\label{hilbert-limit}
L^2(N_\Phi) := \varprojlim \{L^2(N_{\Phi,n}), p_{\ell,n}\}.
\end{equation}
Because of the renormalizations in (\ref{hilbert-proj0}), the elements of
$L^2(N_\Phi)$ do not have an immediate interpretation as functions on $N_\Phi$.
We address that problem by looking at the Schwartz space.
\medskip

The Schwartz space considerations of \cite[Section 5]{W2015} extend to
our setting with only obvious modifications, so we restrict our discussion to 
the relevant definitions and results.
\medskip

Given $\gamma = (\gamma_n) \in \mathfrak{t}_{\Phi,\infty}^*$ we have the
unitary character $\zeta_\gamma = \exp(2\pi i \gamma)$ on 
$S_{\Phi,\infty} = \varinjlim S_{\Phi,n}$\,.  Express 
$\zeta_\gamma = (\zeta_{\gamma_n}) \in \varprojlim \widehat{S_{\Phi,n}}$\,.
The corresponding
{\em relative Schwartz space} $\cC((N_{\Phi,n}/S_{\Phi,n}); \zeta_{\gamma_n})$
consists of all
functions $f \in C^\infty(N_{\Phi,n})$ such that
\begin{equation}\label{rel-schwartz-n}
\begin{aligned}
&f(xs) = \zeta_{\gamma_n}(s)^{-1}f(x) \text{ for } x \in N_{\Phi,n} \text{ and }
	s \in S_{\Phi,n}\,, \text{ and }
	|q(g)p(D)f| \text{ is bounded }\\
&\text{ on } N_{\Phi,n}/S_{\Phi,n} \text{ for all polynomials }
	p, q \text{ on } N_{\Phi,n}/S_{\Phi,n}
	\text{ and all } D \in \cU(\gn_{\Phi,n}).
\end{aligned}
\end{equation}	
The corresponding {\em limit Schwartz space} 
$\cC((N_{\Phi,\infty}/S_{\Phi,\infty});\zeta_\gamma) = \varprojlim 
\cC(N_{\Phi,n}/S_{\Phi,n}; \zeta_{\gamma_n})$, consisting of all functions
$f \in C^\infty(N_{\Phi,\infty})$ such that
\begin{equation}\label{lim-schwartz-n}
\begin{aligned}
&f(xs) = \zeta_\gamma(s)^{-1}f(x) \text{ for } x \in N_{\Phi,\infty} \text{ and }
        s \in S_{\Phi,\infty}\,, \text{ and }
        |q(g)p(D)f| \text{ is bounded }\\
&\text{ on } N_{\Phi,\infty}/S_{\Phi,\infty} \text{ for all polynomials }
        p, q \text{ on } N_{\Phi,\infty}/S_{\Phi,\infty}
        \text{ and all } D \in \cU(\gn_{\Phi,\infty}).
\end{aligned}
\end{equation}
As expected, $\cC(N_{\Phi,n}/S_{\Phi,n}; \zeta_{\gamma_n})$ is a nuclear 
Fr\' echet space and it is dense in 
$L^2(N_{\Phi,n}/S_{\Phi,n};\zeta_{\gamma_n})$, and \cite[Theorem 5.7]{W2015}
and its corollaries go through in our setting as follows.

\begin{theorem}\label{coef-sch}
Let $\gamma = (\gamma_n) \in \gt^*_{\Phi,\infty}$\,.  Let $n > 0$ and let 
$u$ and $v$ be $C^\infty$ vectors for the stepwise square integrable
representation $\pi_{\Phi,\gamma,n}$ of $N_{\Phi,n}$\,.
Then the coefficient function
$f_{\pi_{\Phi,\gamma,n};\,u,v}$ belongs to the relative Schwartz space
$\cC((N_{\Phi,n}/S_{\Phi,n});\zeta_{\gamma_n})$, and the coefficient
function $f_{\pi_{\Phi,\gamma,\infty};\,u,v}$ belongs to the limit
Schwartz space $\cC(N_{\Phi,\infty}/S_{\Phi,\infty};\zeta_\gamma)$.
\end{theorem}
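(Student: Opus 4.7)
The plan is to extend \cite[Theorem 5.7]{W2015} to the setting of parabolic nilradicals, splitting the argument into a finite-dimensional claim for fixed $n$ and a projective-limit claim for the limit coefficient. First I would verify the two defining conditions of \eqref{rel-schwartz-n}. The central-character property is immediate from Construction \ref{construction}: each layer representation $\pi_{\lambda_j}$ has central character $\zeta_{\lambda_j}$ on $Z_{\Phi,n,j}$; each Mackey extension $\pi^\dagger_{\lambda_j}$ acts on the same Hilbert space and retains this character on $Z_{\Phi,n,j}$; and $S_{\Phi,n} = Z_{\Phi,n,1}\cdots Z_{\Phi,n,\ell_n}$. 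Consequently $\pi_{\Phi,\gamma,n}(s) = \zeta_{\gamma_n}(s)\cdot \mathrm{Id}$ for $s \in S_{\Phi,n}$, which gives $f_{\pi_{\Phi,\gamma,n};u,v}(xs) = \zeta_{\gamma_n}(s)^{-1}f_{\pi_{\Phi,\gamma,n};u,v}(x)$.

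For the Schwartz decay I would induct on the number $\ell_n$ of layers $L_{\Phi,n,j}$ in $N_{\Phi,n}$. The base case is the classical theorem of Moore and the author \cite{MW1973}: for a square integrable representation of a simply connected nilpotent Lie group, coefficients of $C^\infty$ vectors lie in the Schwartz space modulo the center. For the inductive step, the tensor-product structure $\pi_{\Phi,\gamma,n} = \pi^\dagger_{\Phi,\gamma,n-} \,\widehat{\otimes}\, \pi_{\lambda_{\ell_n}}$ together with the unipotent action of the top layer on the lower ones (as in the discussion following Theorem \ref{plancherel-general}) yields a measure-preserving decomposition $N_{\Phi,n}/S_{\Phi,n} \cong (N_{\Phi,n-}/S_{\Phi,n-}) \times (L_{\Phi,n,\ell_n}/Z_{\Phi,n,\ell_n})$. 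For decomposable vectors $u = u_1\otimes u_2$, $v = v_1 \otimes v_2$ and $x = x'y$ with $x' \in N_{\Phi,n-}$, $y \in L_{\Phi,n,\ell_n}$,
\begin{equation*}
f_{u,v}(x'y) \;=\; \langle u_1, \pi^\dagger_{\Phi,\gamma,n-}(x'y)v_1\rangle \cdot \langle u_2, \pi_{\lambda_{\ell_n}}(y)v_2\rangle,
\end{equation*}
reducing Schwartz estimates to the two factors. The action of $\cU(\gn_{\Phi,n})$ produces finite linear combinations of such factored expressions, and density of decomposable vectors together with continuity of the relevant seminorms extends the estimate to arbitrary smooth $u,v$.

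For the limit assertion, the limit Schwartz space is by definition the projective limit $\cC(N_{\Phi,\infty}/S_{\Phi,\infty};\zeta_\gamma) = \varprojlim_\ell \cC(N_{\Phi,\ell}/S_{\Phi,\ell};\zeta_{\gamma_\ell})$. Thus it suffices to show that for every $\ell \geqq n$ the restriction of $f_{\pi_{\Phi,\gamma,\infty};u,v}$ to $N_{\Phi,\ell}$ lies in $\cC(N_{\Phi,\ell}/S_{\Phi,\ell};\zeta_{\gamma_\ell})$. Iterating the embedding \eqref{unitfactor}, view $u,v \in \cH_{\pi_{\Phi,\gamma,n}}$ as $u\otimes E,\, v\otimes E \in \cH_{\pi_{\Phi,\gamma,\ell}}$ where $E = e_{n+1}\otimes\cdots\otimes e_\ell$ is a chosen $C^\infty$ unit vector; since each $e_j$ is smooth, so is $E$, hence so are $u\otimes E$ and $v\otimes E$. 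Then $f_{\pi_{\Phi,\gamma,\infty};u,v}|_{N_{\Phi,\ell}} = f_{\pi_{\Phi,\gamma,\ell};u\otimes E,v\otimes E}$, which by the finite-dimensional part lies in the relative Schwartz space. The main obstacle is the inductive Schwartz estimate, specifically tracking the action of $\cU(\gn_{\Phi,n})$-derivatives through the Mackey extensions $\pi^\dagger$ and verifying that smoothness of vectors is preserved under these extensions; since each semidirect factor is contractible this reduces to arguments already carried out in \cite{W2013} and \cite{W2015}, which adapt directly because the layer decomposition $N_{\Phi,n} = L_{\Phi,n,1}\cdots L_{\Phi,n,\ell_n}$ satisfies the same structural conditions \eqref{setup} and \eqref{c-weak} established in Theorem \ref{gen-setup}.
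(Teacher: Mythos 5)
Your proposal is essentially correct and tracks the same route the paper takes: the paper itself supplies no independent argument here but simply asserts that \cite[Theorem 5.7]{W2015} and its corollaries carry over because the decomposition of $N_{\Phi,n}$ satisfies the same structural hypotheses, and your sketch is a faithful unpacking of that argument — central character from Construction \ref{construction}, induction on layers with the Moore--Wolf result as the base case, and the tensor-product factorization controlled through the Mackey extension, followed by the easy reduction of the limit assertion to the finite-$\ell$ case via the smooth unit vectors $e_{n+1}\otimes\cdots\otimes e_\ell$. One small imprecision: the first factor $\langle u_1,\pi^\dagger_{\Phi,\gamma,n-}(x'y)v_1\rangle$ is not a function of $x'$ alone (it depends on $y$ through $v_1^y := \pi^\dagger(y)v_1$), so the phrase ``reducing Schwartz estimates to the two factors'' overstates the cleanliness of the split; what actually makes the induction work is that $\Ad(y^{-1})$ applied to $\cU(\gn_{\Phi,n-})$ has polynomially bounded coefficients in $y$, so the relevant seminorms of $f_{u_1,v_1^y}$ on $N_{\Phi,n-}/S_{\Phi,n-}$ grow at most polynomially in $y$ and are absorbed by the genuine Schwartz decay of the top-layer factor. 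You gesture at this in your final paragraph, but making it explicit is the one place the argument needs care.
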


\begin{corollary}\label{l1-coef1}
Let $\gamma = (\gamma_n) \in \gt^*_{\Phi,\infty}$\,.  Let $n > 0$ and let
$u$ and $v$ be $C^\infty$ vectors for the stepwise square integrable
representation $\pi_{\Phi,\gamma,n}$ of $N_{\Phi,n}$\,.
Then the coefficient function
$f_{\pi_{\Phi,\gamma,n};\,u,v} \in L^1(N_{\Phi,n}/S_{\Phi,n};\zeta_{\gamma_n})$,
and the coefficient function $f_{\pi_{\Phi,\gamma,\infty};\,u,v} \in
\varprojlim L^1(N_{\Phi,n}/S_{\Phi,n};\zeta_{\gamma_n})$.
\end{corollary}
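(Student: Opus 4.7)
The plan is to derive both assertions directly from Theorem \ref{coef-sch}, by exploiting the standard fact that a Schwartz function on a real vector space of polynomial growth is automatically integrable.

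First I would treat the finite-level statement. By Theorem \ref{coef-sch} the coefficient $f := f_{\pi_{\Phi,\gamma,n};\,u,v}$ lies in the relative Schwartz space $\cC((N_{\Phi,n}/S_{\Phi,n});\zeta_{\gamma_n})$. Since $\gn_{\Phi,n} = \gs_{\Phi,n} + \gv_{\Phi,n}$, exponential coordinates identify $N_{\Phi,n}/S_{\Phi,n}$ diffeomorphically with $\gv_{\Phi,n} \cong \R^d$, $d = \dim\gv_{\Phi,n}$. By (\ref{rel-schwartz-n}), for every polynomial $q$ on $N_{\Phi,n}/S_{\Phi,n}$ there is a constant $C_q$ with $|q(x)f(x)| \leq C_q$. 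Choosing $q(\xi) = (1+\|\xi\|^2)^k$ with $2k > d$ gives $|f(\xi)| \leq C_q(1+\|\xi\|^2)^{-k}$, a function in $L^1(\R^d)$; hence $f \in L^1(N_{\Phi,n}/S_{\Phi,n};\zeta_{\gamma_n})$.

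Next I would handle the limit statement. The vectors $u,v \in \cH_{\pi_{\Phi,\gamma,n}}$ embed into every $\cH_{\pi_{\Phi,\gamma,\ell}}$ ($\ell \geq n$) via the $N_{\Phi,n}$-equivariant isometry $w \mapsto w \otimes e_{n,\ell}$ of Proposition \ref{rescale}, and this embedding sends $C^\infty$ vectors to $C^\infty$ vectors. Applied at level $\ell$, Theorem \ref{coef-sch} and the first paragraph yield that each $f_{\pi_{\Phi,\gamma,\ell};\,u\otimes e_{n,\ell},\,v\otimes e_{n,\ell}}$ lies in $L^1(N_{\Phi,\ell}/S_{\Phi,\ell};\zeta_{\gamma_\ell})$. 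The renormalized projections $p_{\ell',\ell}$ of (\ref{hilbert-proj0}) move these coefficient functions coherently down the tower (the factor $|P_\ell(\gamma_\ell)|/|P_{\ell'}(\gamma_{\ell'})|$ is a finite positive scalar), exactly as in the $L^2$ construction (\ref{hilbert-proj-sys}). Thus the collection assembles into an element $f_{\pi_{\Phi,\gamma,\infty};\,u,v} \in \varprojlim L^1(N_{\Phi,n}/S_{\Phi,n};\zeta_{\gamma_n})$.

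The only delicate bookkeeping is the compatibility of the projections $p_{\ell,n}$ when restricted to $L^1$ rather than $L^2$; but since the maps are defined by pointwise rescaling and integration against a character on $S_{\Phi,\ell}/S_{\Phi,n}$, their $L^1$-version is a contraction in the opposite direction of the rescaling factor, and the verification is an immediate adaptation of the $L^2$ argument giving (\ref{hilbert-proj-sys}).
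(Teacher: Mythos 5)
Your first paragraph is the whole content of the corollary and matches the paper's (implicit) argument: Theorem \ref{coef-sch} places the coefficient in the relative Schwartz space, relative Schwartz functions on a space growing polynomially in exponential coordinates are integrable, hence $L^1$.

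The second and third paragraphs introduce an unnecessary and slightly misleading digression. The maps $p_{\ell,n}$ of (\ref{hilbert-proj0}) are the \emph{renormalized} orthogonal projections built for the $L^2$ inverse limit; on a coefficient function $f_{\pi_{\Phi,\gamma,\ell};\,u\otimes e,\,v\otimes e}$ they produce $\tfrac{|P_n(\gamma_n)|}{|P_\ell(\gamma_\ell)|} f_{\pi_{\Phi,\gamma,n};\,u,v}$, which differs by a nontrivial scalar from the plain restriction $f_{\pi_{\Phi,\gamma,\ell};\,u\otimes e,\,v\otimes e}\big|_{N_{\Phi,n}} = f_{\pi_{\Phi,\gamma,n};\,u,v}$. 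The projective limit of $L^1$ spaces in the corollary is of the same flavor as the limit Schwartz space defined in (\ref{lim-schwartz-n}): it is taken with respect to restriction, not with respect to $p_{\ell,n}$. So the family $\{f_{\pi_{\Phi,\gamma,\ell};\,u\otimes e,\,v\otimes e}\}_\ell$ is automatically restriction-coherent (rather than $p_{\ell,n}$-coherent), and the second assertion follows directly from the first assertion applied at each level, with no $L^1$-version of $p_{\ell,n}$ or ``contraction'' argument needed. Deleting the second and third paragraphs and replacing them by the single observation that $f_{\pi_{\Phi,\gamma,\infty};\,u,v}\big|_{N_{\Phi,\ell}} = f_{\pi_{\Phi,\gamma,\ell};\,u\otimes e,\,v\otimes e}$ lies in $L^1(N_{\Phi,\ell}/S_{\Phi,\ell};\zeta_{\gamma_\ell})$ for each $\ell \geq n$ would make the proof both shorter and more accurate.
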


In fact the argument shows

\begin{corollary}\label{l1-coef2}
Let $L$ be a connected simply connected nilpotent Lie group, $Z$ its center,
and $\lambda \in \gz^*$ such that $\pi_\lambda$ is a square integrable 
(mod $Z$) representation of $L$.  
Let $\zeta = e^{2\pi i \lambda} \in \widehat{Z}$ and let $u$ and $v$
be $C^\infty$ vectors for $\pi_\lambda$\,.  Then the coefficient 
$f_{\pi_\lambda;\,u,v} \in L^1(L/Z,\zeta_\lambda)$.
\end{corollary}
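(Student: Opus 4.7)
My plan is to reduce Corollary \ref{l1-coef2} to two ingredients already established in the paper: the Schwartz-space containment of coefficients, and the elementary fact that Schwartz functions on a nilpotent quotient are $L^1$. The whole statement is just the single-layer case ($m=1$) of the setup of \S\ref{sec7}, with $L$ playing the role of $L_1$ (so $Z = Z_1$ and $\gs = \gz$); in particular the relative Schwartz space $\cC(L/Z;\zeta_\lambda)$ defined in (\ref{rel-schwartz-n}) is well defined here.

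First I would invoke Theorem \ref{coef-sch} in its $m=1$ incarnation (which is the base case of the inductive argument behind it): for any pair of $C^\infty$ vectors $u,v$ of $\pi_\lambda$, the coefficient $f_{\pi_\lambda;\,u,v}$ lies in $\cC(L/Z;\zeta_\lambda)$. Concretely, one realizes $\pi_\lambda$ on $L^2(\gv^*)$ via a Kirillov polarization at $\lambda$, where $\gv$ is any vector space complement to $\gz$ in $\gl$; the $C^\infty$ vectors are the usual Schwartz functions on $\gv^*$, and $f_{\pi_\lambda;\,u,v}$ comes out as a partial Fourier transform of a product of two Schwartz functions, giving a Schwartz function on $\gl/\gz$ that transforms by $\zeta_\lambda$ under $Z$.

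Second I would show $\cC(L/Z;\zeta_\lambda)\subset L^1(L/Z;\zeta_\lambda)$. Because $L$ is connected and simply connected nilpotent, $\exp\colon \gl/\gz \to L/Z$ is a global diffeomorphism carrying Haar measure to Lebesgue measure on the finite-dimensional real vector space $\gl/\gz$. By the defining estimates (\ref{rel-schwartz-n}), every $f\in \cC(L/Z;\zeta_\lambda)$ satisfies a polynomial bound $|f(x)|\leqq C_N(1+\|x\|)^{-N}$ for each $N$ and any fixed norm $\|\cdot\|$ on $\gl/\gz$; picking $N>\dim(\gl/\gz)$ makes $(1+\|x\|)^{-N}$ Lebesgue-integrable, so $f\in L^1(L/Z;\zeta_\lambda)$.

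Neither step presents a serious obstacle, and the main content of the proposed argument is simply to observe that the reasoning behind Corollary \ref{l1-coef1} uses nothing about the limit or about multiple layers beyond what already applies in the single square integrable layer. In fact, logically Corollary \ref{l1-coef2} could have been established first, and Corollary \ref{l1-coef1} would then follow by applying it layer by layer together with the tensor-product decomposition of $\pi_{\Phi,\gamma,n}$ supplied by Construction \ref{construction}.
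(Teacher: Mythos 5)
Your argument is correct and takes essentially the same route as the paper, whose entire proof is the remark ``in fact the argument shows'': namely, that coefficients of $C^\infty$ vectors lie in the relative Schwartz space $\cC(L/Z;\zeta_\lambda)$ (the $m=1$ base case of Theorem \ref{coef-sch}), and Schwartz functions on the finite-dimensional vector space $\gl/\gz$ are integrable. One small slip in your concrete Kirillov-model sketch: $\pi_\lambda$ is realized on $L^2$ of a \emph{Lagrangian} (half-dimensional) subspace for $b_\lambda$, of dimension $\tfrac12\dim(\gl/\gz)$, not on $L^2(\gv^*)$ itself, which is the full $\dim(\gl/\gz) = 2d$ dimensions; this does not affect the logic, since the coefficient is still Schwartz on $L/Z$ and your appeal to Theorem \ref{coef-sch} already supplies that step.
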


A norm $|\xi|$ on $\gn_{\Phi,n}$ corresponds to a norm
$||\exp(\xi)|| := ||\xi||$ on $N_{\Phi,n}$\,.  Thus classical Schwartz space
$\cC(\gn_{\Phi,n})$ on the real vector space $\gn_{\Phi,n}$, corresponds to the
Schwartz space $\cC(N_{\Phi,n})$\,, which thus is defined by seminorms
\begin{equation}\label{seminorm}
\nu_{k,D,n}(f) = {\sup}_{x \in N_{\Phi,n}} |(1 + |x|^2)^k (Df)(x)|.
\end{equation}
Here $k$ is a positive integer, and $D \in \mathcal{U}(\gn_{\Phi,n})$ is
a differential operator acting on the left on $N_{\Phi,n}$\,.
Since $\exp: \mathfrak{n}_{\Phi,n} \to N_{\Phi,n}$ is a polynomial diffeomorphism,
$f \mapsto f\cdot\exp$ is 
a topological isomorphism of $\cC(N_{\Phi,n})$ onto $\cC(\gn_{\Phi,n})$:
\begin{equation}\label{schwartz-n-n}
\cC(N_{\Phi,n}) = \{f \in C^\infty(N_{\Phi,n}) \mid
        f\circ \exp \in \cC(\gn_{\Phi,n})\}.
\end{equation}
We now define the Schwartz space 
\begin{equation}\label{schwartz-n-inf}
\cC(N_{\Phi,\infty}) = 
 \{f \in C^\infty(N_{\Phi,\infty}) \mid f|_{N_{\Phi,n}} \in \cC(N_{\Phi,n})
	\text{ for } n \ggg 0\} = \varprojlim \cC(N_{\Phi,n})
\end{equation}
where the inverse limit is taken in the category of complete locally convex
topological vector spaces and continuous linear maps.  Since $\cC(N_{\Phi,n})$
is defined by the seminorms (\ref{seminorm}), the same follows for $\cC(N_{\Phi,\infty})$.  In other words,
\begin{lemma}\label{lim-schwartz-prop-n}
The Schwartz space $\cC(N_{\Phi,\infty})$ consists of all functions
$f \in C^\infty(N_{\Phi,\infty})$ such that, for all $n > 0$, $\nu_{k,D,n}(f)$
is bounded for all integers $k > 0$ all $D \in \cU(\gn_{\Phi,n})$.
Here the seminorms $\nu_{k,D,n}$ are given by {\rm (\ref{seminorm})}.
\end{lemma}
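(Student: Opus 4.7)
The plan is to unwind the inverse-limit definition (\ref{schwartz-n-inf}) and match it term by term against the seminorm characterization, so the content is really a bookkeeping statement about initial topologies.

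First I would verify that $\{\cC(N_{\Phi,n})\}$ together with the restriction maps is genuinely an inverse system of complete locally convex spaces. For $n < \ell$ the inclusion $N_{\Phi,n} \hookrightarrow N_{\Phi,\ell}$ is a closed analytic embedding; via $\exp$ it corresponds to the linear inclusion $\gn_{\Phi,n} \hookrightarrow \gn_{\Phi,\ell}$. Restriction of a classical Schwartz function on a real vector space to a linear subspace is still Schwartz, and the restriction map is continuous in the usual seminorms. Transporting through $\exp$ via (\ref{schwartz-n-n}) yields a continuous restriction map $\cC(N_{\Phi,\ell}) \to \cC(N_{\Phi,n})$, so the inverse system is well defined.

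Next I would read off the projective-limit topology. By construction $\cC(N_{\Phi,\infty})$ carries the initial topology making each restriction $f \mapsto f|_{N_{\Phi,n}}$ continuous into $\cC(N_{\Phi,n})$. Since each $\cC(N_{\Phi,n})$ is defined by the seminorms $\nu_{k,D,n}$ of (\ref{seminorm}), a fundamental system of continuous seminorms on $\cC(N_{\Phi,\infty})$ is given by the pullbacks $f \mapsto \nu_{k,D,n}(f|_{N_{\Phi,n}})$ as $k$, $n$ and $D \in \cU(\gn_{\Phi,n})$ range. This yields the forward direction: a function in $\cC(N_{\Phi,\infty})$ has all $\nu_{k,D,n}$ bounded. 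Conversely, if $f \in C^\infty(N_{\Phi,\infty})$ satisfies $\nu_{k,D,n}(f) < \infty$ for every $n$, $k$, $D$, then $f|_{N_{\Phi,n}} \in \cC(N_{\Phi,n})$ for every $n$, and the family $(f|_{N_{\Phi,n}})$ is obviously compatible under restriction, so it defines an element of $\varprojlim \cC(N_{\Phi,n}) = \cC(N_{\Phi,\infty})$. The apparent discrepancy between the "$n \ggg 0$" clause of (\ref{schwartz-n-inf}) and the "all $n > 0$" clause of the lemma is harmless, precisely because Schwartzness on $N_{\Phi,\ell}$ propagates down to every closed subgroup $N_{\Phi,n}$ by the restriction step above.

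The only point requiring genuine verification, rather than pure unwinding, is the compatibility of seminorms under restriction: one needs $\cU(\gn_{\Phi,n}) \hookrightarrow \cU(\gn_{\Phi,\ell})$ so that the left-invariant action of $D \in \cU(\gn_{\Phi,n})$ on $N_{\Phi,\ell}$ agrees on $N_{\Phi,n}$ with its action there, and one needs control of the polynomial weight $(1+|x|^2)^k$ in passing between the two groups. Both are immediate from the fact that the Lie-algebra inclusion induces the enveloping-algebra inclusion and that $\exp$ is a polynomial diffeomorphism matching any chosen norm on $\gn_{\Phi,n}$ with the restriction of a norm on $\gn_{\Phi,\ell}$. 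With these compatibilities recorded, the lemma reduces to the formal observation that an inverse limit in the category of locally convex spaces is characterized by the conjunction of the defining seminorms pulled back from each stage.
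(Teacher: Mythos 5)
Your argument is correct and is essentially the same as the paper's, which treats the lemma as an immediate consequence of the defining seminorms (\ref{seminorm}) once $\cC(N_{\Phi,\infty})$ is written as the projective limit $\varprojlim \cC(N_{\Phi,n})$ in the category of complete locally convex spaces. Your version simply makes explicit what the paper leaves tacit --- the continuity and compatibility of the restriction maps $\cC(N_{\Phi,\ell}) \to \cC(N_{\Phi,n})$ and the equivalence of the $n \ggg 0$ formulation in (\ref{schwartz-n-inf}) with the stated all-$n$ formulation.
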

\medskip

Every $f \in \cC(N_{\Phi,n})$ is a limit in $\cC(N_{\Phi,n})$ of 
finite linear combinations of the functions
$f_{\gamma_n}(x) = \int_{S_{\Phi,n}} f(xs)\zeta_{\gamma_n}(s)ds$
in $\cC((N_{\Phi,n}/S_{\Phi,n}),\zeta_{\gamma_n})$.
Specifically, denote $\varphi_x(\gamma_n) := f_{\gamma_n}(x)$.  Then
$\varphi_x$ is a multiple of the Fourier transform 
$\cF_{\Phi,n}(\ell(x)^{-1}f)|_{S_{\Phi,n}})$.
\medskip

The inverse Fourier transform
$\cF_{\Phi,n}^{-1}(\varphi_x)$ reconstructs $f$ from the $f_{\gamma_n}$.
Since the relative Schwartz space 
$\cC((N_{\Phi,}/S_{\Phi,n});\zeta_{\gamma_n})$
is dense in $L^2(N_{\Phi,n}/S_{\Phi,n},\zeta_{\gamma_n})$
and the the set of finite linear combinations of coefficients 
$f_{\pi_{\Phi,\gamma,n};\,u,v}$ (where $u, v$ are $C^\infty$ vectors) is dense 
in $\cC(N_{\Phi,n}/S_{\Phi,n},\zeta_{\gamma_n})$, now every
$f \in \cC(N_{\Phi,n})$ is a Schwartz wave packet along $\gs_{\Phi,n}^*$ of
coefficients of the various $\pi_{\Phi,\gamma,n}$\,, $u$ and $v$ smooth.
Now we combine the inverse system (\ref{hilbert-proj-sys}) and its Schwartz
space analog.
\begin{equation}\label{schwartz-proj-sys}
\begin{CD}
\cC(N_{\Phi,1}) @< q_{2,1} << \cC(N_{\Phi,2})
        @< q_{3,2} <<  \cC(N_{\Phi,3}) @< q_{4,3} << \dots @ <<< \cC(N_\Phi)
	= \varprojlim \cC(N_{\Phi,n}) \\
    @VV{r_1}V       @VV{r_2}V     @VV{r_3}V        @VVV  @VV{r_\infty}V \\
L^2(N_{\Phi,1}) @< p_{2,1} << L^2(N_{\Phi,2}) 
	@< p_{3,2} <<  L^2(N_{\Phi,3}) @< p_{4,3} << \dots @ <<< L^2(N_\Phi)
	= \varprojlim L^2(N_{\Phi,n})
\end{CD}
\end{equation}
The $r_n: \cC(N_{\Phi,n}) \hookrightarrow L^2(N_{\Phi,n})$ are continuous 
injections with dense image, so 
$r_\infty: \cC(N_{\Phi,\infty}) \hookrightarrow L^2(N_{\Phi,\infty})$ is a 
continuous injection with dense image.  Putting all this together as in the 
minimal parabolic case \cite[Section 5]{W2015}, we have proved

\begin{proposition} \label{dense-embedding}
Assume {\rm (\ref{lim-subgroups1})(a)}, so that $\{N_{\Phi,n}\}$ is a direct
system and $N_{\Phi,\infty} = \varinjlim N_{\Phi,n}$ is well defined.  Define 
$r_\infty: \cC(N_{\Phi,\infty}) \hookrightarrow L^2(N_{\Phi,\infty})$ as in 
the commutative diagram {\rm (\ref{schwartz-proj-sys})}.  Then 
$L^2(N_{\Phi,\infty})$ is a Hilbert space completion of $\cC(N_{\Phi,\infty})$.  
In particular $r_\infty$ defines a pre-Hilbert
space structure on $\cC(N_{\Phi,\infty})$ with completion $L^2(N_{\Phi,\infty})$.
\end{proposition}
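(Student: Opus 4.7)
The plan is to imitate the argument for minimal parabolics in \cite[Section 5]{W2015}, verifying that each ingredient of the commutative diagram (\ref{schwartz-proj-sys}) extends to the parabolic-nilradical setting. At the finite level, $r_n \colon \cC(N_{\Phi,n}) \hookrightarrow L^2(N_{\Phi,n})$ is the standard dense continuous injection of Schwartz space into $L^2$ on a connected simply connected nilpotent Lie group, so the assertions that need real verification are (i) commutativity of the squares in (\ref{schwartz-proj-sys}), (ii) continuity and injectivity of the induced $r_\infty$, and (iii) density of $r_\infty(\cC(N_{\Phi,\infty}))$ in $L^2(N_{\Phi,\infty})$.

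For (i), I would check $r_n \circ q_{\ell,n} = p_{\ell,n} \circ r_\ell$ where $q_{\ell,n}$ is the natural Schwartz restriction and $p_{\ell,n}$ is the renormalized Hilbert space projection of (\ref{hilbert-proj0}). The method is to decompose $f \in \cC(N_{\Phi,\ell})$ as a Schwartz wave packet of coefficient functions $f_{\pi_{\Phi,\gamma,\ell};\,u,v}$; this is legitimate by Theorem \ref{coef-sch} at level $\ell$. One then applies Proposition \ref{rescale}: the rescaling factor $|P_n(\gamma_n)|/|P_\ell(\gamma_\ell)|$ that appears there is precisely the factor built into $p_{\ell,n}$, so the two ways around the square agree on a dense set and hence everywhere by continuity.

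For (ii), the universal property of the projective limit in the category of complete locally convex topological vector spaces yields a continuous linear $r_\infty \colon \cC(N_{\Phi,\infty}) \to L^2(N_{\Phi,\infty})$. Injectivity is automatic: each $r_n$ is injective, and by definition $\cC(N_{\Phi,\infty}) = \varprojlim \cC(N_{\Phi,n})$, so a compatible sequence mapping to zero at every level is zero.

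The main step, and the one I expect to be the chief obstacle, is (iii) density relative to the projective-limit topology defined by partial isometries. The strategy is: fix $(\varphi_n) \in L^2(N_{\Phi,\infty})$ and approximate each $\varphi_n$ by $r_n(f_n)$ with $f_n \in \cC(N_{\Phi,n})$ chosen coherently in $n$. Concretely, one builds the $f_n$ as Schwartz wave packets of coefficients $f_{\pi_{\Phi,\gamma,n};\,u,v}$ of stepwise square integrable representations, the truncation taken over compact subsets of $\gt_{\Phi,n}^*$, with the smooth vectors $u, v$ chosen so that the rescaling identity (\ref{rescale1}) makes the family $(f_n)$ compatible under the $p_{\ell,n}$. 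Theorem \ref{coef-sch} places these wave packets in $\cC(N_{\Phi,n})$, and Theorem \ref{l2-lim} converts the required $L^2$ estimates into Schwartz estimates controlling coherence across $n$. Because this construction mirrors the minimal-parabolic case of \cite[Section 5]{W2015} almost verbatim, the remaining work is bookkeeping the indices in the revised decomposition $N_{\Phi,\infty} = \varinjlim_j \varinjlim_n L_{\Phi,n,j}$ of (\ref{big-summands-N}) and verifying that the Pfaffian polynomials factor stepwise in the way required by Proposition \ref{rescale}.
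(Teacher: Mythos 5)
Your proposal tracks the paper's argument closely: the paper also assembles the commutative diagram (\ref{schwartz-proj-sys}), observes that each $r_n$ is a continuous injection with dense image, and concludes the same for $r_\infty$, delegating the remaining details of coherence and wave-packet approximation to the minimal-parabolic case of \cite[Section 5]{W2015} — which is precisely the content you spell out in steps (i)–(iii) via Theorem \ref{coef-sch}, Proposition \ref{rescale}, and the rescaling built into $p_{\ell,n}$. The paper's proof is essentially a one-line "putting all this together as in \cite{W2015}"; you are filling in the same details along the same route.
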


As in \cite[Corollary 5.17]{W2015}, we note that  $\cC(N_{\Phi,\infty})$ is 
independent of the choices we made in the construction of 
$L^2(N_{\Phi,\infty})$, so

\begin{corollary}\label{l2-well-def}
The limit Hilbert space 
$L^2(N_{\Phi,\infty}) = \varprojlim \{L^2(N_{\Phi,n}), p_{\ell,n}\}$ of
{\rm (\ref{schwartz-proj-sys})} , and the left/right regular representation
of $N_{\Phi,\infty}\times N_{\Phi,\infty}$ on $L^2(N_{\Phi,\infty})$, are 
independent of the choice of vectors $e$ in {\rm (\ref{unitfactor})}.
\end{corollary}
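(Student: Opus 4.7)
The plan is to invoke Proposition \ref{dense-embedding}, which realizes $L^2(N_{\Phi,\infty})$ as the Hilbert space completion of the limit Schwartz space $\cC(N_{\Phi,\infty})$ along $r_\infty$. Since a Hilbert completion is determined by the pre-Hilbert structure on a dense subspace, it suffices to show that (i) the topological vector space $\cC(N_{\Phi,\infty})$, (ii) the pre-Hilbert inner product pulled back via $r_\infty$, and (iii) the left/right translation action of $N_{\Phi,\infty}\times N_{\Phi,\infty}$ on $\cC(N_{\Phi,\infty})$, are all independent of the choice of $\{e_n\}$ in (\ref{unitfactor}).

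Points (i) and (iii) I would dispatch quickly. The definition $\cC(N_{\Phi,\infty}) = \varprojlim \cC(N_{\Phi,n})$ in (\ref{schwartz-n-inf}) uses only the classical Schwartz seminorms (\ref{seminorm}) on each $N_{\Phi,n}$ and the natural restriction maps between them; no representation-theoretic datum enters, and the translation action is the usual one. For each component, the inclusion $r_n: \cC(N_{\Phi,n}) \hookrightarrow L^2(N_{\Phi,n})$ is the classical one and the norm $\|r_n(f_n)\|_{L^2(N_{\Phi,n})}$ is the classical $L^2$ norm, hence intrinsic.

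The main obstacle is (ii): the projection maps $p_{\ell,n}$ used to build $L^2(N_{\Phi,\infty}) = \varprojlim L^2(N_{\Phi,n})$ are described in (\ref{hilbert-proj0}) through the embedding $v\mapsto v\otimes e$ of (\ref{unitfactor}), which appears to depend on $e$. I would overcome this by interpreting the formula $\langle u',v'\rangle\cdot|P_n(\gamma_n)|/|P_\ell(\gamma_\ell)|$ as the partial trace on the Plancherel fiber over the tail factor $\cH_{\pi_{\Phi,\gamma,n,\ell}}$: a direct rank-one computation reproduces the formula, and partial trace depends only on the canonical tensor-product factorization $\cH_{\pi_{\Phi,\gamma,\ell}} = \cH_{\pi_{\Phi,\gamma,n}} \widehat{\otimes} \cH_{\pi_{\Phi,\gamma,n,\ell}}$ provided by Construction \ref{construction}, not on any chosen unit vector inside the tail factor. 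Two choices $\{e_n\}$ and $\{e_n'\}$ therefore differ only by unitary twists of the tail Hilbert spaces; these twists intertwine the systems $\{p_{\ell,n}\}$ and $\{p'_{\ell,n}\}$, act trivially on the image of $r_\infty$, and commute with the regular action of $N_{\Phi,\infty}\times N_{\Phi,\infty}$ on each $L^2(N_{\Phi,n})$. They therefore assemble into a canonical isometric identification of the two versions of $L^2(N_{\Phi,\infty})$ which restricts to the identity on $\cC(N_{\Phi,\infty})$ and intertwines the regular representation, yielding the corollary.
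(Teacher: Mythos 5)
Your argument is correct and follows essentially the same route as the paper: both realize $L^2(N_{\Phi,\infty})$ as the Hilbert-space completion of the intrinsic Schwartz space $\cC(N_{\Phi,\infty})$ via Proposition \ref{dense-embedding} and read off independence from there, the paper delegating the details to \cite[Corollary 5.17]{W2015}. Your extra observation --- that one must check not only that $\cC(N_{\Phi,\infty})$ is choice-free but also that the pre-Hilbert pairing pulled back along $r_\infty$ is, which you settle by reading $p_{\ell,n}$ as a renormalized partial trace over the tail factor $\cH_{\pi_{\Phi,\gamma,n,\ell}}$ depending only on the canonical factorization $\cH_{\pi_{\Phi,\gamma,\ell}} = \cH_{\pi_{\Phi,\gamma,n}} \widehat{\otimes} \cH_{\pi_{\Phi,\gamma,n,\ell}}$ --- is exactly the content the paper leaves implicit.
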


\noindent Recall the notation

\begin{itemize}
\item $\mathfrak{t}_{\Phi,\infty}^* = \varprojlim \mathfrak{t}^*_{\Phi,n}$ 
consists of all $\gamma = (\gamma_n)$ where each 
$\gamma_n \in \mathfrak{t}_{\Phi,n}^*$ and
if $\ell \geqq n$ then $\gamma_{\ell}|_{\mathfrak{s}_{\Phi,n}} = \gamma_n$\,.
\item given $\gamma = (\gamma_n) \in \mathfrak{t}_{\Phi,\infty}^*$ the limit 
representation $\pi_{\Phi,\gamma,\infty} = \varprojlim \pi_{\Phi,\gamma_n}$ is 
constructed as in Section \ref{sec7},
\item The distribution characters $\Theta_{\pi_{\Phi,\gamma,n}}$ on the 
$N_{\Phi,n}$ are given by (\ref{def-dist-char}), and
\item $\mathcal{C}(N_{\Phi,\infty}) = \varprojlim \mathcal{C}(N_{\Phi,n})$ 
consists of all sets $f = (f_n)$ where each $f_n \in \mathcal{C}(N_{\Phi,n})$, 
and where if $\ell \geqq n$ then $f_{\ell}|_{N_{\Phi,n}} = f_n$\,.
\end{itemize}
Then as in the case of minimal parabolics \cite[Section 6]{W2015}, 
the limit Fourier inversion formula is
\begin{theorem}\label{limit-inversion}
Suppose that $N_{\Phi,\infty} = \varinjlim N_{\Phi,n}$ where $\{N_{\Phi,n}\}$ 
satisfies {\rm (\ref{newsetup})}.  
Let $f = (f_n) \in \mathcal{C}(N_{\Phi,\infty})$ and $x \in N_{\Phi,\infty}$\,.
Then $x \in N_{\Phi,n}$ for some $n$ and
\begin{equation}\label{lim-inv-formula}
f(x) = c_n\int_{\mathfrak{t}_{\Phi,n}^*} \Theta_{\pi_{\Phi,\gamma,n}}(r_xf)
        |\Pf_{\mathfrak{n}_{\Phi,n}}({\gamma_n})|d\gamma_n
\end{equation}
where $c_n = 2^{d_1 + \dots + d_m} d_1! d_2! \dots d_m!$ as in {\rm (\ref{c-d}a)}
and $m$ is the number of factors $L_r$ in $N_{\Phi,n}$.
\end{theorem}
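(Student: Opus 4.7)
The plan is to reduce Theorem \ref{limit-inversion} to the finite-dimensional inversion formula of Theorem \ref{plancherel-general}, applied at a single sufficiently large level $n$. The essential content of the limit statement has already been installed in the construction of $\cC(N_{\Phi,\infty})$ and in Proposition \ref{rescale}; the formula itself is then immediate from the level-$n$ Plancherel theorem.

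First I would use the definition of the direct limit $N_{\Phi,\infty} = \varinjlim N_{\Phi,n}$: any $x \in N_{\Phi,\infty}$ lies in some $N_{\Phi,n}$, establishing the first assertion. Fix such an $n$. Since $f = (f_n) \in \cC(N_{\Phi,\infty}) = \varprojlim \cC(N_{\Phi,n})$, the component $f_n$ equals $f|_{N_{\Phi,n}}$ and lies in $\cC(N_{\Phi,n})$, so in particular $f(x) = f_n(x)$. Because $x$ itself belongs to $N_{\Phi,n}$, right translation by $x$ preserves $N_{\Phi,n}$, so $(r_x f)|_{N_{\Phi,n}} = r_x f_n$.

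Next I would invoke Theorem \ref{gen-setup} together with the setup (\ref{newsetup}) to conclude that $N_{\Phi,n}$ satisfies (\ref{setup}) and the weak computability condition (\ref{c-weak}), so the hypotheses of Theorem \ref{plancherel-general} are met. Applying (\ref{frob3}) to $f_n$ at the point $x$ yields
\begin{equation*}
f_n(x) = c_n \int_{\gt_{\Phi,n}^*} \Theta_{\pi_{\Phi,\gamma,n}}(r_x f_n) |\Pf_{\gn_{\Phi,n}}(\gamma_n)| d\gamma_n.
\end{equation*}
Because $\Theta_{\pi_{\Phi,\gamma,n}}$ is a tempered distribution on $N_{\Phi,n}$, its value on $r_x f_n = (r_x f)|_{N_{\Phi,n}}$ agrees with its value on $r_x f$ in the sense used in (\ref{lim-inv-formula}). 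Together with $f(x) = f_n(x)$, this produces the claimed formula.

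The main conceptual point, more than a genuine obstacle, is consistency across the choice of $n$: if $x \in N_{\Phi,n} \cap N_{\Phi,\ell}$ with $\ell > n$, both the level-$n$ and the level-$\ell$ integrals must reconstruct $f(x)$. This was prepared for by the renormalization factor $|P_n(\gamma_n)|/|P_\ell(\gamma_\ell)|$ in Proposition \ref{rescale}, which is precisely what makes $p_{\ell,n}\colon L^2(N_{\Phi,\ell}) \to L^2(N_{\Phi,n})$ a partial isometry in the inverse system (\ref{schwartz-proj-sys}). Concretely, the fibre integral over $\gamma_\ell$ restricting to a fixed $\gamma_n$, weighted by $|\Pf_{\gn_{\Phi,\ell}}(\gamma_\ell)|$, collapses against $\Theta_{\pi_{\Phi,\gamma,\ell}}(r_x f_\ell)$ to the integrand at level $n$, because $f_\ell$ arises from $f_n$ by the compatibility in $\cC(N_{\Phi,\infty})$. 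Thus no additional analytic argument is required beyond the finite-dimensional inversion formula; the inverse-limit bookkeeping is already in place.
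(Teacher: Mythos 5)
Your proof is correct and follows the same route as the paper: the paper's own proof is literally the one line ``Apply Theorem~\ref{plancherel-general} to $N_{\Phi,n}$,'' and your argument is just a fuller writeout of that reduction, identifying $f(x)=f_n(x)$ and $(r_xf)|_{N_{\Phi,n}}=r_xf_n$ before invoking (\ref{frob3}). The concluding paragraph about consistency across $n$ is a harmless aside rather than a needed step, since each level-$n$ formula is verified directly by the finite-dimensional theorem rather than by comparison with another level.
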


\begin{proof}  Apply Theorem \ref{plancherel-general} to $N_{\Phi,n}$:
$f(x) = f_n(x) = c_n\int_{\gt_{\Phi,n}^*} 
\Theta_{\pi_{\Phi,\gamma,n}}(r_xf) |\Pf_{\gn_{\Phi,n}}({\gamma_n})|\, d\gamma_n$\,.
\end{proof}

\section{Representations of the Limit Groups II: 
	$A_{\Phi,\infty} N_{\Phi,\infty}$}
\label{sec10}
\setcounter{equation}{0}
We extend some of the results of Section \ref{sec9} to the maximal 
exponential (locally) solvable subgroup $A_{\Phi,\infty} N_{\Phi,\infty}$\,.
\medskip

The first step is to locate the $A_{\Phi,\infty}$-stabilizer of a limit
square integrable representation $\pi_\gamma$ of
$N_{\Phi,\infty}$\,.  Following (\ref{a-stab}) we set
\begin{equation}\label{a-stab-inf}
A'_{\Phi,\infty} = \{\exp(\xi) \mid \xi \in \ga_{\Phi,\infty} 
	\text{ and every } \beta_j(\xi) = 0\}.
\end{equation}
\begin{lemma}\label{lim-a-stab}
If $\gamma = (\gamma_n) \in \gt^*_{\Phi,\infty}$ then $A'_{\Phi,\infty}$
is the stabilizer of $\pi_\gamma$ in $A_{\Phi,\infty}$\,.
\end{lemma}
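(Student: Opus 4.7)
The strategy is to translate each side of the claimed equality into a condition at every finite level $n$ and then to invoke the finite-dimensional analog (\ref{a-stab}) combined with Proposition \ref{rescale}. The preliminary observation is that, for an element $a \in A_{\Phi,\infty}$ that already lies in $A_{\Phi,n_0}$,
\begin{equation*}
a \in A'_{\Phi,\infty} \iff a \in A'_{\Phi,n} \text{ for every } n \geqq n_0,
\end{equation*}
because $A'_{\Phi,\infty}$ is cut out by the conditions $\beta_j(\log a) = 0$ as $j$ ranges over the full limit cascade, while $A'_{\Phi,n}$ imposes these conditions only up through level $n$; letting $n \to \infty$ exhausts every $\beta_j$.

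For the containment $\mathrm{Stab}_{A_{\Phi,\infty}}(\pi_\gamma) \subset A'_{\Phi,\infty}$, let $a$ stabilize $\pi_\gamma$ and pick $n_0$ with $a \in A_{\Phi,n_0}$. For any $n \geqq n_0$, $a$ normalizes $N_{\Phi,n}$, so I restrict an intertwining equivalence $\pi_\gamma \cong \pi_\gamma \circ \Ad(a)^{-1}$ to $N_{\Phi,n}$. Iterating Proposition \ref{rescale} identifies $\pi_\gamma|_{N_{\Phi,n}}$ with an infinite ampliation of the irreducible $\pi_{\Phi,\gamma,n}$, so the restricted equivalence reads
\begin{equation*}
\infty \cdot \pi_{\Phi,\gamma,n} \;\cong\; \infty \cdot \bigl(\pi_{\Phi,\gamma,n} \circ \Ad(a)^{-1}\bigr),
\end{equation*}
and uniqueness of the irreducible component forces $\pi_{\Phi,\gamma,n} \cong \pi_{\Phi,\gamma,n} \circ \Ad(a)^{-1}$. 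Then (\ref{a-stab}) gives $a \in A'_{\Phi,n}$ for every such $n$, and the preliminary observation yields $a \in A'_{\Phi,\infty}$.

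For the reverse containment $A'_{\Phi,\infty} \subset \mathrm{Stab}_{A_{\Phi,\infty}}(\pi_\gamma)$, let $a \in A'_{\Phi,\infty}$ with $a \in A_{\Phi,n_0}$, so that $a \in A'_{\Phi,n}$ for every $n \geqq n_0$. At each level the Mackey argument that precedes (\ref{rep-an-lambda}) produces an extension $\pi^\dagger_{\Phi,\gamma,n}$ of $\pi_{\Phi,\gamma,n}$ to $A'_{\Phi,n} N_{\Phi,n}$, so that the unitary $\pi^\dagger_{\Phi,\gamma,n}(a)$ intertwines $\pi_{\Phi,\gamma,n}$ with $\pi_{\Phi,\gamma,n} \circ \Ad(a)^{-1}$. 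Because $\beta_j(\log a) = 0$ for every $j$, $\Ad^*(a)$ fixes every layer parameter $\lambda_j$, and the same extension construction applies layer by layer to each $L_{\Phi,\infty,j}$. By normalizing the phase of each layer intertwiner to preserve the smooth unit vectors $e_{j+1}$ of (\ref{unitfactor}), the operators $\pi^\dagger_{\Phi,\gamma,n}(a)$ become compatible with the Hilbert-space inclusions (\ref{tensor-inj-N}); their direct limit is a unitary on $\cH_{\pi_\gamma}$ intertwining $\pi_\gamma$ with $\pi_\gamma \circ \Ad(a)^{-1}$.

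The main obstacle is the coherent phase normalization in the reverse containment: each layer intertwiner is determined only up to a scalar, and the scalars must be threaded compatibly along the tower so that the ampliations glue. This reduces to the standard fact that each $A'_{\Phi,n}$ is a vector group, hence contractible, so $H^2(A'_{\Phi,n};\C^\times) = 0$, the Mackey obstruction vanishes uniformly in $n$, and the choices pass cleanly to the direct limit.
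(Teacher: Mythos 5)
Your proof is correct, but it takes a genuinely different and considerably longer route than the paper's. The paper's argument is a short Lie-algebraic computation: by Lemmas \ref{inter-compl} and \ref{semidirect}, each $\gz_{\Phi,j}$ is an $\ad(\ga_\Phi)$-eigenspace with eigenvalue $\beta_{j_0}|_{\ga_\Phi}$, so $\Ad^*(\exp\xi)$ acts on $\gz^*_{\Phi,j}$ by the scalar $\exp(-\beta_{j_0}(\xi))$; since every component $\gamma_j$ of $\gamma\in\gt^*_{\Phi,\infty}$ is nonzero, $\Ad^*(\exp\xi)\gamma=\gamma$ exactly when $\beta_{j_0}(\xi)=0$ for all $j$, which is the definition of $A'_{\Phi,\infty}$ in (\ref{a-stab-inf}). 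You instead bootstrap from the finite-dimensional fact (\ref{a-stab}) using the restriction/rescaling machinery. Your containment $\mathrm{Stab}\subset A'_{\Phi,\infty}$ via Proposition \ref{rescale} and the uniqueness of the isotypic component is clean and correct. Your reverse containment is also correct in spirit, but one phrase should be tightened: the extension operators $\pi^\dagger_{\Phi,\gamma,n}(a)$ do not literally fix the chosen $C^\infty$ unit vectors $e_{n+1}$ of (\ref{unitfactor}) — they send $e_{n+1}$ to $\pi^\dagger_{\Phi,\gamma,n,n+1}(a)e_{n+1}$ — so the naive direct limit of the $\pi^\dagger_{\Phi,\gamma,n}(a)$ is a unitary from $\cH_{\pi_\gamma}$ (built with the $e_n$'s) onto the limit Hilbert space built with the translated unit vectors; one then needs Corollary \ref{l2-well-def} to identify the latter with $\cH_{\pi_\gamma}$ and conclude that $\pi_\gamma\circ\Ad(a)^{-1}\simeq\pi_\gamma$. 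What your approach buys is a uniform template that does not require identifying the $\gz_{\Phi,j}$ as eigenspaces, but it trades that for heavier representation-theoretic input; the paper's approach is more elementary and avoids any discussion of intertwiners or phase coherence entirely.
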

\begin{proof}  Recall the $J''_r$ from Lemma \ref{inter-compl}.  Then
Lemma \ref{semidirect} tells us that, for each $r_0$\,, 
$\gl_{\Phi,r_0}$ has center
$$
\gz_{\Phi,r_0} = {\sum}_{\beta_r|{\ga_\Phi} = \beta_{r_0}|{\ga_\Phi}}
	\left ( \gg_{\beta_r} + {\sum}_{J''_r}\gg_\alpha \right ),
$$ 
and 
Lemma \ref{inter-compl} then says that $\gz_{\Phi,r}$ is an $\ad(\ga_\Phi)$
eigenspace on $\gg$.  Thus the $\ad^*(\ga_\Phi)$-stabilizer of $\gamma$
is given by $\beta_r(\ga_{\Phi,\infty}) = 0$ for all $r$.
\end{proof}

Lemma \ref{traces} shows that our methods cannot yield a
Dixmier-Puk\'anszky operator for $A_{\Phi,\infty} N_{\Phi,\infty}$
nor for $U_{\Phi,\infty}A_{\Phi,\infty} N_{\Phi,\infty}$\,,
but we do have such operators $D_n$ for the $A_{\Phi,n}N_{\Phi,n}$ and
the $U_{\Phi,n}A_{\Phi,n}N_{\Phi,n}$\,.  
\medskip

Let $\gamma = (\gamma_n) \in \gt_{\Phi,\infty}^*$\,.  Then 
$\pi_{\Phi,\gamma,\infty}$ extends from $N_{\Phi,\infty}$  to
a representation $\pi^\dagger_{\Phi,\gamma,\infty}$ of
$A'_{\Phi,\infty} N_{\Phi,\infty}$ with the same representation space, 
because every $\pi_{\Phi,\gamma,n}$ extends that way from 
$N_{\Phi,n}$ to $A'_{\Phi,n}N_{\Phi,n}$.  The
representations of $A'_{\Phi,n}N_{\Phi,n}$ corresponding to $\gamma_n$ are the
$\exp(2\pi i\xi|_{\ga'_{\Phi,n}}) \otimes \pi^\dagger_{\Phi,\gamma,n}$\,.  The 
representation of $A_{\Phi,\infty}N_{\Phi,\infty}$ and the 
$A_{\Phi,n}N_{\Phi,n}$\,, corresponding to $\gamma$ and $\xi = (\xi_n) \in
(\ga'_{\Phi,\infty})^*$, is the 
\begin{equation}\label{repan'}
\pi_{\Phi,\gamma,\xi,\infty} := \varinjlim \pi_{\Phi,\gamma,\xi,n}
	\text{ where } 
\pi_{\Phi,\gamma,\xi,n} =
\Ind_{A'_{\Phi,n}N_{\Phi,n}}^{A_{\Phi,n}N_{\Phi,n}}\,
\bigl (\exp(2\pi i\xi_n) \otimes 
\pi^\dagger_{\Phi,\gamma,n} \bigr ).
\end{equation} 
If $\dim(A_{\Phi,\infty}/A'_{\Phi,\infty}) < \infty$ then
(\ref{repan'}) becomes $\pi_{\Phi,\gamma,\xi,\infty} = 
\Ind_{A'_{\Phi,\infty} N_{\Phi,\infty}}^{A_{\Phi,\infty} N_{\Phi,\infty}}
(\exp(2\pi i\xi)\otimes \pi^\dagger_{\Phi,\gamma,\infty})$\,, 
because then one can integrate
over $A_{\Phi,\infty}/A'_{\Phi,\infty}$.  Or in general one may view   
(\ref{repan'}) as an interpretation of $\pi_{\Phi,\gamma,\xi,\infty} =
\Ind_{A'_{\Phi,\infty} N_{\Phi,\infty}}^{A_{\Phi,\infty} N_{\Phi,\infty}}
(\exp(2\pi i\xi)\otimes \pi^\dagger_{\Phi,\gamma,\infty})$\,.

\begin{lemma}\label{equiv-a-inf}
Let $\gamma, \gamma' \in \gt^*_{\Phi,\infty}$\,.  Then the
representations $\pi_{\Phi,\gamma,\xi,\infty}$ and 
$\pi_{\Phi,\gamma',\xi',\infty}$ are equivalent if and only if
both $\xi' = \xi$ and $\gamma' \in \Ad^*(A_{\Phi,\infty})(\gamma)$.
Express $\gamma = (\gamma_n)$ with 
$\gamma_n = \sum_{j=1}^{m_n} \gamma_{n,j}$ where
$\gamma_{n,j} \in \gz_j^*$\,.  Then $\Ad^*(A_{\Phi,\infty})(\gamma)$ 
consists of all $\left (\sum_{j=1}^{m_n} c_j\gamma_{n,j}\right )$ with 
every $c_j > 0$.
\end{lemma}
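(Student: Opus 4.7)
The plan is to separate the proof into two parts: (i) computing the $\Ad^*(A_{\Phi,\infty})$-orbit of $\gamma$ in the parameter space $\gt^*_{\Phi,\infty}$, and (ii) applying Mackey's equivalence criterion for induced representations at each finite level $n$ and then passing to the direct limit.

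For (i): By construction each $\gz_{\Phi,n,j}$ is contained in $\gg_{[\Phi,\beta_j]}$ and is a scalar $\ad(\ga_{\Phi,n})$-eigenspace of weight $\beta_{j_0}|_{\ga_{\Phi,n}}$ for any $j_0 \in I_j$ (cf.\ Lemmas \ref{inter-compl} and \ref{central-ideal}). Hence for $\xi \in \ga_{\Phi,n}$ one has $\Ad^*(\exp(\xi))\gamma_{n,j} = e^{-\beta_{j_0}(\xi)}\gamma_{n,j}$. In view of the definition $\ga'_{\Phi,\infty} = \bigcap_j \ker \beta_{j_0}$ from (\ref{a-stab-inf}), the $\beta_{j_0}$ descend to linearly independent elements of $(\ga_{\Phi,\infty}/\ga'_{\Phi,\infty})^*$, and the map $\xi \mapsto (e^{-\beta_{j_0}(\xi)})_j$ is a bijection from $\ga_{\Phi,\infty}/\ga'_{\Phi,\infty}$ onto the positive orthant. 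This identifies $\Ad^*(A_{\Phi,\infty})(\gamma)$ with the claimed set $\{(\sum_j c_j \gamma_{n,j}) : c_j > 0\}$ and proves the last sentence of the lemma.

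For (ii), the ``if'' direction is immediate: an element $a \in A_{\Phi,\infty}$ with $\Ad^*(a)\gamma = \gamma'$ conjugates $\pi^\dagger_{\Phi,\gamma,\infty}$ to a representation unitarily equivalent to $\pi^\dagger_{\Phi,\gamma',\infty}$, while leaving the character $\exp(2\pi i\xi)$ of $A'_{\Phi,\infty}$ fixed (since $A_{\Phi,\infty}$ is abelian, conjugation acts trivially on $A'_{\Phi,\infty}$), so the induced representations are equivalent. For the ``only if'' direction I would work one level at a time. At level $n$, the subgroup $A'_{\Phi,n}N_{\Phi,n}$ is normal in $A_{\Phi,n}N_{\Phi,n}$ with abelian quotient, and by Lemma \ref{lim-a-stab} the group $A'_{\Phi,n}$ is precisely the full stabilizer in $A_{\Phi,n}$ of the equivalence class of $\pi_{\Phi,\gamma,n}$. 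Mackey's irreducibility and equivalence theorems then apply: the $\pi_{\Phi,\gamma,\xi,n}$ are irreducible, and two of them coincide if and only if the inducing data $(\exp(2\pi i\xi_n), \pi^\dagger_{\Phi,\gamma,n})$ are $A_{\Phi,n}$-conjugate; since conjugation by $A_{\Phi,n}$ is trivial on characters of $A'_{\Phi,n}$, this forces $\xi_n = \xi'_n$ and $\gamma'_n \in \Ad^*(A_{\Phi,n})\gamma_n$. Assembling these conclusions over all sufficiently large $n$ yields $\xi = \xi'$ and $\gamma' \in \Ad^*(A_{\Phi,\infty})\gamma$.

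The main obstacle will be the passage to the limit in the ``only if'' direction: one must verify that a unitary equivalence of direct-limit representations forces equivalences at each finite level, and that the $A_{\Phi,n}$-orbits assemble compatibly under the inclusions $A_{\Phi,n} \hookrightarrow A_{\Phi,\ell}$ and the dual restrictions $\gs^*_{\Phi,\ell} \twoheadrightarrow \gs^*_{\Phi,n}$. Both features are built into the inductive construction of $\pi_{\Phi,\gamma,\xi,\infty}$ in Section \ref{sec7} (in particular the isometric injections of (\ref{tensor-inj-N}), which realize $\pi_{\Phi,\gamma,n}$ as the restriction of $\pi_{\Phi,\gamma,\ell}$ to $N_{\Phi,n}$ on a distinguished subspace), so the direct limit of the level-wise Mackey statements yields the lemma.
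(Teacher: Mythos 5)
Your proof follows essentially the same route as the paper's: apply Mackey's little group method at each finite level $n$ to get $\xi_n=\xi'_n$ and $\gamma'_n\in\Ad^*(A_{\Phi,n})\gamma_n$, pass to the limit, and note that $\Ad^*(A_{\Phi,\infty})$ scales each $\gamma_{n,j}$ by an arbitrary positive real via $\exp(\beta_{j_0}(\cdot))$. The only difference is that you spell out the passage-to-the-limit step (using the equivariant isometries of (\ref{tensor-inj-N})) which the paper leaves tacit; this is a correct and welcome elaboration but not a different argument.
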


\begin{proof}  The Mackey little group method implies 
$\pi_{\Phi,\gamma,\xi,n} \simeq \pi_{\Phi,\gamma',\xi',n}$
just when $\xi' = \xi$ and $\gamma'_n \in \Ad^*(A_{\Phi,n})(\gamma_n)$.
The first assertion follows.  The second is because the action of
$\Ad^*(A_{\Phi,\infty})$ on $\gamma_{n,j}$ is multiplication by an arbitrary
positive real $c_j = \exp(i\beta_j(\alpha))$ for 
$\alpha \in \ga_{\Phi,\infty}$.
\end{proof}

The representation space $\cH_{\pi_{\Phi,\gamma,\xi,\infty}}$ of
$\pi_{\Phi,\gamma,\xi,\infty} \in 
(A_{\Phi,\infty} N_{\Phi,\infty})\,\,\widehat{}$\,\, is the same as that of  
$N_{\Phi,\infty}$, except for the unitary character $\exp(2\pi i\xi)$.
We thus obtain
$$
L^2((A_{\Phi,n}N_{\Phi,n}/A'_{\Phi,n}S_{\Phi,n}); (\exp(2\pi i\xi)\otimes 
\zeta_n)) \cong (\cH_{\pi_{\Phi,\gamma,\xi,n}} \widehat{\otimes}
\cH^*_{\pi_{\Phi,\gamma,\xi,n}})\,.
$$
Summing over $\gt_{\Phi,n}^*$ and $\ga_{\Phi,n}/\ga'_{\Phi,n}$ we 
proceed as in Section \ref{sec9}; then 
$$
L^2(A_{\Phi,n}N_{\Phi,n})
= \int_{\ga_{\Phi,n}/\ga'_{\Phi,n}}\int_{\gt_{\Phi,n}^*} 
  (\cH_{\pi_{\Phi,\gamma,\xi,n}} \widehat{\otimes}
  \cH^*_{\pi_{\Phi,\gamma,\xi,n}}) |P_n(\gamma_n)|d\gamma_n\,d\xi
$$
so, as in (\ref{hilbert-proj-sys}) and (\ref{hilbert-limit}),
$$
L^2(A_{\Phi,\infty}N_{\Phi,\infty}) = \varinjlim 
\int_{\ga_{\Phi,n}/\ga'_{\Phi,n}}\int_{\gt_{\Phi,n}^*} 
  (\cH_{\pi_{\Phi,\gamma,\xi,n}} \widehat{\otimes}
  \cH^*_{\pi_{\Phi,\gamma,\xi,n}}) |P_n(\gamma_n)|d\gamma_n\,d\xi\,.
$$
Since the base spaces of the unitary line bundles
$$
A_{\Phi,n}N_{\Phi,n} \to A_{\Phi,n}N_{\Phi,n}/A'_{\Phi,n}S_{\Phi,n}
\text{ and } N_{\Phi,n} \to N_{\Phi,n}/S_{\Phi,n}
$$
are similar, we modify (\ref{rel-schwartz-n}) for the {\em relative 
Schwartz space} $\cC((A_{\Phi,n}N_{\Phi,n}/A'_{\Phi,n}S_{\Phi,n});
\exp(2\pi i\xi)\otimes \zeta_{\gamma_n})$ to consist of all functions
$f \in C^\infty(A_{\Phi,n}N_{\Phi,n})$ such that
\begin{equation}\label{rel-schwartz-an}
\begin{aligned}
&f(xas) = \exp(-2\pi i\xi(\log a))\zeta_{\gamma_n}(s)^{-1}f(x)\,\,
	(x \in N_{\Phi,n}, a \in A'_{\Phi,n}\,,
        s \in S_{\Phi,n}), \text{ and }
        |q(g)p(D)f| \text{ is}\\
&\text{ bounded on } A_{\Phi,n}N_{\Phi,n}/A'_{\Phi,n}S_{\Phi,n} \text{ for all 
	polynomials }
        p, q \text{ on } N_{\Phi,n}/S_{\Phi,n}
        \text{ and all } D \in \cU(\gn_{\Phi,n}).
\end{aligned}
\end{equation}
The corresponding {\em limit relative Schwartz space} is
$$
\cC((A_{\Phi,\infty}N_{\Phi,\infty}/A'_{\Phi,\infty}S_{\Phi,\infty});
	(\exp(2\pi i\xi)\otimes \zeta_\gamma)) = \varprojlim
\cC((A_{\Phi,n}N_{\Phi,n}/A'_{\Phi,n}S_{\Phi,n}); 
(\exp(2\pi i\xi)\otimes\zeta_{\gamma_n})),
$$ 
consisting of all functions
$f \in C^\infty(A_{\Phi,\infty}N_{\Phi,\infty})$ such that
\begin{equation}\label{lim-schwartz-an}
\begin{aligned}
&f(xas) = \exp(-2\pi i\xi(\log a))\zeta_\gamma(s)^{-1}f(x) \text{ for } 
	x \in N_{\Phi,\infty}, a \in A'_{\Phi,\infty} \text{ and }
        s \in S_{\Phi,\infty}\,, \text{ and }\\
&|q(g)p(D)f| \text{ is bounded } \text{ on } 
	A_{\Phi,\infty}N_{\Phi,\infty}/A'_{\Phi,\infty}S_{\Phi,\infty} \\
&\text{for all polynomials }
        p, q \text{ on } N_{\Phi,\infty}/S_{\Phi,\infty}
        \text{ and all } D \in \cU(\gn_{\Phi,\infty}).
\end{aligned}
\end{equation}

Theorem \ref{coef-sch} and Corollaries \ref{l1-coef1} and \ref{l1-coef2}
hold for our groups $A_{\Phi,\bullet}N_{\Phi,\bullet}$ here with essentially no change, 
so we will not repeat them.
\medskip

We use Casselman's extension \cite[p. 4]{C1989} of the classical definition 
for seminorms and Schwartz space (which we used for $\gn_{\Phi,n}$).
First, we have seminorms on the $\ga_{\Phi,n}+\gn_{\Phi,n}$ as in 
(\ref{seminorm}) as follows.  Fix a continuous norm $||\varphi||$ on
$A_{\Phi,n}N_{\Phi,n}$ such that 
\begin{equation}\label{norm}
||1_{A_{\Phi,n}N_{\Phi,n}}|| = 1, ||x|| = ||x^{-1}|| \geqq 1 \text{ for all }
x, \text{ and } ||x||/||y|| \leqq ||xy|| \leqq ||x||\, ||y|| \text{ for all }
x, y.
\end{equation}
That gives seminorms
\begin{equation}\label{seminorm-an}
\nu_{k,D,n}(f) = {\sup}_{x \in A_{\Phi,n}N_{\Phi,n}}||x||^k|Df(x)|
\text{ where } k > 0 \text{ and } D \in \cU(\ga_{\Phi,n}+\gn_{\Phi,n}).
\end{equation}
That defines the Schwartz space $\cC(A_{\Phi,n}N_{\Phi,n})$ as in 
(\ref{schwartz-n-n}): 
\begin{equation}\label{schwartz-an-n}
\cC(A_{\Phi,n}N_{\Phi,n}) = \{f \in C^\infty(A_{\Phi,n}N_{\Phi,n}) \mid
\nu_{k,D,n}(f) < \infty \text{ for } 
k > 0 \text{ and } D \in \cU(\ga_{\Phi,n}+\gn_{\Phi,n})\}.
\end{equation}
Finally 
we define $\cC(A_{\Phi,\infty}N_{\Phi,\infty})$
to be the inverse limit in the category of locally convex topological
vector spaces and continuous linear maps, as in (\ref{schwartz-n-inf}):
\begin{equation}\label{schwartz-an-inf}
\cC(A_{\Phi,\infty}N_{\Phi,\infty}) = \{f \in C^\infty(A_{\Phi,\infty}N_{\Phi,\infty})
\mid f|_{A_{\Phi,n}N_{\Phi,n}} \in \cC(A_{\Phi,n}N_{\Phi,n})\}
= \varprojlim \cC(A_{\Phi,n}N_{\Phi,n}).
\end{equation}

\begin{lemma}\label{lim-schwartz-prop-an}{\rm (\cite[Proposition 1.1]{C1989})}
The Schwartz space $\cC(A_{\Phi,\infty}N_{\Phi,\infty})$ consists of all 
functions $f \in C^\infty(A_{\Phi,\infty}N_{\Phi,\infty})$ such that, for all
$n > 0$, $\nu_{k,D,n}(f) < \infty$
for all integers $k > 0$ and all 
$D \in \cU(\ga_{\Phi,n}+\gn_{\Phi,n})$.
Here $\nu_{k,D,n}$ is given by {\rm (\ref{seminorm-an})}.
The $\cC(A_{\Phi,n}N_{\Phi,n})$ are nuclear Fr\' echet spaces and
$\cC(A_{\Phi,\infty}N_{\Phi,\infty})$ is an LF space.
The left/right actions of $(A_{\Phi,n}N_{\Phi,n} \times A_{\Phi,n}N_{\Phi,n})$
on $\cC(A_{\Phi,n}N_{\Phi,n})$ and of $(A_{\Phi,\infty}N_{\Phi,\infty}\times
A_{\Phi,\infty}N_{\Phi,\infty})$ on $\cC(A_{\Phi,\infty}N_{\Phi,\infty})$ are
continuous.
\end{lemma}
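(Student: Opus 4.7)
The plan is to reduce the assertion to the finite-dimensional Casselman result \cite[Proposition 1.1]{C1989}, and then pass to the projective limit in (\ref{schwartz-an-inf}). I would proceed in four steps, corresponding to the four separate claims of the lemma.

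First, I would establish the seminorm characterization. By (\ref{schwartz-an-inf}) the space $\cC(A_{\Phi,\infty}N_{\Phi,\infty})$ consists precisely of those $f \in C^\infty(A_{\Phi,\infty}N_{\Phi,\infty})$ whose restrictions $f|_{A_{\Phi,n}N_{\Phi,n}}$ lie in $\cC(A_{\Phi,n}N_{\Phi,n})$ for every $n$. Combining this with the finite-dimensional definition (\ref{schwartz-an-n}), the condition is $\nu_{k,D,n}(f) < \infty$ for every $n$, every integer $k > 0$, and every $D \in \cU(\ga_{\Phi,n}+\gn_{\Phi,n})$, which is exactly the stated description.

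Second, I would check that each $\cC(A_{\Phi,n}N_{\Phi,n})$ is a nuclear Fr\'echet space. After fixing a countable basis of $\cU(\ga_{\Phi,n}+\gn_{\Phi,n})$ the topology is determined by countably many seminorms $\nu_{k,D,n}$, and a standard diagonal-Cauchy-sequence argument yields completeness, hence the Fr\'echet property. For nuclearity I would use that $\exp: \ga_{\Phi,n}+\gn_{\Phi,n} \to A_{\Phi,n}N_{\Phi,n}$ is a diffeomorphism and that the submultiplicative norm in (\ref{norm}) is comparable with a polynomial norm in exponential coordinates on the $A$-factor and a linear norm on the $N$-factor; this identifies $\cC(A_{\Phi,n}N_{\Phi,n})$ as a topological vector space with a weighted variant of the classical Schwartz space on $\R^{\dim(\ga_{\Phi,n}+\gn_{\Phi,n})}$, which is nuclear Fr\'echet by Grothendieck's criterion.

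Third, and this is the step I expect to be the main obstacle, I would establish the LF structure on $\cC(A_{\Phi,\infty}N_{\Phi,\infty})$. The subtlety is that (\ref{schwartz-an-inf}) presents the space as a projective limit, so a direct route to an inductive topology is not obvious; one must realize it also as a strict inductive limit over a cofinal increasing family of closed subspaces built from the chain $A_{\Phi,n}N_{\Phi,n} \subset A_{\Phi,n+1}N_{\Phi,n+1} \subset \cdots$. I would follow Casselman's construction in \cite{C1989}, where such a presentation is carried out in the reductive setting, and verify that the argument transfers to $A_{\Phi,\infty}N_{\Phi,\infty}$. The compatibility between the projective-limit topology used in (\ref{schwartz-an-inf}) and this inductive-limit presentation is the key point to check.

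Fourth, I would deduce continuity of the two-sided regular representation. At level $n$, joint continuity of left/right translation on $\cC(A_{\Phi,n}N_{\Phi,n})$ is standard: the actions are smooth, and the transformed seminorms are dominated by finite linear combinations of the original seminorms via the submultiplicative inequality in (\ref{norm}) together with the Poincar\'e-Birkhoff-Witt expansion of $\Ad(g)D$ in $\cU(\ga_{\Phi,n}+\gn_{\Phi,n})$. At the limit level this continuity then lifts through the projective limit (equivalently, through each term of the LF presentation produced in the third step), yielding joint continuity of $(A_{\Phi,\infty}N_{\Phi,\infty})\times(A_{\Phi,\infty}N_{\Phi,\infty})$ on $\cC(A_{\Phi,\infty}N_{\Phi,\infty})$.
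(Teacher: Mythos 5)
Your first, second, and fourth steps are essentially on target. The seminorm characterization is immediate from (\ref{schwartz-an-inf}) together with (\ref{schwartz-an-n}); the Fr\'echet and nuclearity claims at each finite level are precisely what Casselman's Proposition~1.1 supplies for the exponential solvable group $A_{\Phi,n}N_{\Phi,n}$; and continuity of the two-sided regular representation at level $n$ follows from the submultiplicativity in (\ref{norm}) together with stability of $\cU(\ga_{\Phi,n}+\gn_{\Phi,n})$ under $\Ad$, after which the limit case is a seminorm-by-seminorm check as you indicate. One caution in step two: the norm (\ref{norm}) forces exponential growth in the $A_{\Phi,n}$-directions, so $\cC(A_{\Phi,n}N_{\Phi,n})$ is \emph{not} a polynomially weighted variant of classical Schwartz space on $\R^d$; this is exactly why one cites Casselman rather than reducing naively to $\cC(\R^d)$ and Grothendieck's criterion.

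The genuine gap is your third step. You read ``LF'' as demanding a proper strict inductive (direct) limit presentation, declare the conversion of the projective-limit description (\ref{schwartz-an-inf}) into an inductive one to be ``the main obstacle,'' and propose to ``follow Casselman's construction'' to effect it. This misidentifies the problem on two counts. First, there is no such construction in \cite{C1989} to follow: Casselman treats a single finite-dimensional group (or $\Gamma\backslash G$) and contains no infinite-dimensional limit machinery whatsoever. Second, and more fundamentally, no inductive presentation is needed. By (\ref{schwartz-an-inf}), $\cC(A_{\Phi,\infty}N_{\Phi,\infty}) = \varprojlim \cC(A_{\Phi,n}N_{\Phi,n})$ is a \emph{countable projective limit} of nuclear Fr\'echet spaces along continuous linear maps, hence a closed subspace of the countable product $\prod_n \cC(A_{\Phi,n}N_{\Phi,n})$, hence itself nuclear Fr\'echet --- and the paper uses ``$LF$'' loosely for ``limit of Fr\'echet'' (see the Section~\ref{sec9} discussion in the introduction), so this already gives the topological claim. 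The ``compatibility between the projective-limit topology and the inductive-limit presentation'' that you flag as the key point is therefore not a point to be checked; and if you did insist on manufacturing a strict inductive chain you would not succeed, since the transition maps $q_{\ell,n}$ of (\ref{schwartz-proj-sys-an}) point toward smaller $n$, not larger, and there is no cofinal increasing family of closed Fr\'echet subspaces exhausting $\cC(A_{\Phi,\infty}N_{\Phi,\infty})$.
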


As for the $N_{\Phi,n}$, if $f \in \cC(A_{\Phi,n}N_{\Phi,n})$ it is a limit in
$\cC(A_{\Phi,n}N_{\Phi,n})$ of finite linear combinations of the 
$f_{\xi,\gamma,n}(x) = \int_{A_{\Phi,n}}\int_{S_{\Phi,n}} 
f(xas)\exp(2\pi i\xi(\log a))
\zeta_{\gamma_n}(s)dsda$ in $\cC((A_{\Phi,n}N_{\Phi,n}/A'_{\Phi,n}S_{\Phi,n}),
(\exp(2\pi i\xi)\zeta_{\gamma_n}))$.
Specifically, denote $\varphi_x(\xi,\gamma_n) := f_{\xi,\gamma_n}(x)$.  Then
$\varphi_x$ is a multiple of the classical Fourier transform
$\cF_{\Phi,n}(\ell(x)^{-1}f)|_{A_{\Phi,n}S_{\Phi,n}})$, and the
inverse Fourier transform $\cF_{\Phi,n}^{-1}(\varphi_x)$ reconstructs $f$ from 
the $f_{\xi,\gamma_n}$. 
$\cC((A_{\Phi,n}N_{\Phi,n}/A'_{\Phi,n}S_{\Phi,n}), (\exp(2\pi i\xi)\zeta_{\gamma_n}))$
is dense 
in $L^2((A_{\Phi,n}N_{\Phi,n}/A'_{\Phi,n}S_{\Phi,n}), (\exp(2\pi i\xi)\zeta_{\gamma_n}))$.
The finite linear combinations of coefficients
of the $\pi_{\Phi,\gamma,\xi,n}$ along $C^\infty$ vectors form dense
subset of \newline
$\cC((A_{\Phi,n}N_{\Phi,n}/A'_{\Phi,n}S_{\Phi,n}),(\exp(2\pi i\xi)\zeta_{\gamma_n}))$.  So every
$f \in \cC(A_{\Phi,n}N_{\Phi,n})$ is a Schwartz wave packet along
$\ga_{\Phi,n}^* + \gs_{\Phi,n}^*$ of coefficients of the various 
$\pi_{\Phi,\gamma,\xi,n}$, and the corresponding inverse systems
fit together as in (\ref{schwartz-proj-sys}):
\begin{equation}\label{schwartz-proj-sys-an}
\begin{CD}
\cC(A_{\Phi,1}N_{\Phi,1}) @< q_{2,1} << \cC(A_{\Phi,2}N_{\Phi,2})
        @< q_{3,2} <<  \dots @ <<< 
	\cC(A_\Phi N_\Phi) = \varprojlim \cC(A_{\Phi,n}N_{\Phi,n}) \\
    @VV{r_1}V       @VV{r_2}V          @VVV  @VV{r_\infty}V \\
L^2(A_{\Phi,1}N_{\Phi,1}) @< p_{2,1} << L^2(A_{\Phi,2}N_{\Phi,2})
        @< p_{3,2} <<  \dots @ <<< 
	L^2(A_\Phi N_\Phi) = \varprojlim L^2(A_{\Phi,n}N_{\Phi,n})
\end{CD}
\end{equation}
Again, the $r_n$ are continuous injections with dense image, so 
$r_\infty: \cC(A_{\Phi,\infty}N_{\Phi,\infty}) \hookrightarrow 
L^2(A_{\Phi,\infty}N_{\Phi,\infty})$ is a continuous injection 
with dense image.  As in Proposition \ref{dense-embedding} we conclude

\begin{proposition} \label{dense-embedding-an}
Assume {\rm (\ref{lim-subgroups1})(a)}, 
so that $\{A_{\Phi,n}N_{\Phi,n}\}$ is a direct system and 
the limit group $A_{\Phi,\infty}N_{\Phi,\infty}$ = 
$\varinjlim A_{\Phi,n}N_{\Phi,n}$ is well defined.  Define
$r_\infty: \cC(A_{\Phi,\infty}N_{\Phi,\infty}) 
\hookrightarrow L^2(A_{\Phi,\infty}N_{\Phi,\infty})$ as in
the commutative diagram {\rm (\ref{schwartz-proj-sys-an})}.  Then
$L^2(A_{\Phi,\infty}N_{\Phi,\infty})$ is a Hilbert space completion of 
$\cC(A_{\Phi,\infty}N_{\Phi,\infty})$.
In particular $r_\infty$ defines a pre-Hilbert
space structure on $\cC(A_{\Phi,\infty}N_{\Phi,\infty})$ with completion 
$L^2(A_{\Phi,\infty}N_{\Phi,\infty})$.
\end{proposition}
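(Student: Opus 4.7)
The plan is to parallel the argument that established Proposition \ref{dense-embedding}, replacing the role played there by $N_{\Phi,n}$ with $A_{\Phi,n}N_{\Phi,n}$ and keeping track of the extra $A$-factor and unitary character $\exp(2\pi i \xi)$ throughout. The key technical ingredients are already in place: the relative Schwartz spaces (\ref{rel-schwartz-an}) and (\ref{lim-schwartz-an}), the seminorms (\ref{seminorm-an}), the compatibility lemma \ref{lim-schwartz-prop-an}, and, at the level of finite $n$, Theorem \ref{planch-an}, which provides the Plancherel decomposition of $L^2(A_{\Phi,n}N_{\Phi,n})$ in terms of the representations $\pi_{\Phi,\gamma,\xi,n}$ of (\ref{repan'}).

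First I would verify that each $r_n : \cC(A_{\Phi,n}N_{\Phi,n}) \hookrightarrow L^2(A_{\Phi,n}N_{\Phi,n})$ is a continuous injection with dense image. Continuity is immediate from the definition (\ref{seminorm-an}) of the seminorms (for $k$ large the bound $\nu_{k,1,n}(f) < \infty$ dominates an $L^2$-norm, using that the modular function $\delta_{A_\Phi N_\Phi}$ of Lemma \ref{modfunction} is polynomially controlled). Density follows from the discussion preceding (\ref{schwartz-proj-sys-an}): every $f \in \cC(A_{\Phi,n}N_{\Phi,n})$ admits a decomposition as a Schwartz wave packet
\[
f(x) = \int_{(\ga'_{\Phi,n})^*} \int_{\gt_{\Phi,n}^*} f_{\xi,\gamma_n}(x)\,|P_n(\gamma_n)|\,d\gamma_n\, d\xi,
\]
with each $f_{\xi,\gamma_n}$ a finite linear combination of coefficients of $\pi_{\Phi,\gamma,\xi,n}$ along $C^\infty$ vectors, and such coefficients span a dense subset of each relative Schwartz space, which in turn is dense in the corresponding $L^2$ fiber.

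Next I would check that the vertical maps $r_n$ commute with the horizontal projections in (\ref{schwartz-proj-sys-an}); i.e., that the Schwartz restriction map $q_{\ell,n}$ intertwines with the Hilbert-space partial isometry $p_{\ell,n}$ obtained (as in (\ref{hilbert-proj0})--(\ref{hilbert-proj-sys})) by rescaling with the ratio $|P_n(\gamma_n)|/|P_\ell(\gamma_\ell)|$ and with the change-of-variable Jacobian coming from $\ga_{\Phi,n} \subset \ga_{\Phi,\ell}$. This is essentially the content of Proposition \ref{rescale} applied, level by level in the $A$-parameter, to the representations $\pi_{\Phi,\gamma,\xi,\bullet}$; the extension from $N$ to $AN$ is harmless because $\pi_{\Phi,\gamma,\xi,\ell}|_{A'_{\Phi,n}N_{\Phi,n}}$ still contains $\exp(2\pi i \xi|_{\ga'_{\Phi,n}}) \otimes \pi^\dagger_{\Phi,\gamma,n}$ with the correct multiplicity, as in Lemma \ref{equiv-a-inf}. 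Taking the inverse limit in the category of complete locally convex spaces (top row) and of Hilbert spaces with partial isometries (bottom row) produces a continuous linear map $r_\infty$ with dense image, and injectivity of $r_\infty$ follows because the projective limit of injective Schwartz--to--$L^2$ maps remains injective on the diagonal limit: if $(f_n)$ is sent to $0$, then each $f_n = 0$ as an $L^2$ class, hence as a smooth function, since relative Schwartz functions are continuous representatives of their $L^2$ classes.

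Finally, once $r_\infty$ is shown to be a continuous dense injection, $L^2(A_{\Phi,\infty}N_{\Phi,\infty})$ is by definition (\ref{schwartz-proj-sys-an}) the limit Hilbert space, so one transports the Hilbert norm back via $r_\infty$ to obtain the asserted pre-Hilbert structure on $\cC(A_{\Phi,\infty}N_{\Phi,\infty})$ whose completion is $L^2(A_{\Phi,\infty}N_{\Phi,\infty})$; the independence of choices is checked exactly as in Corollary \ref{l2-well-def}. The main obstacle, as in the $N$-case, is the compatibility between the rescaling built into $p_{\ell,n}$ and the ordinary restriction of Schwartz functions; here one must additionally confirm that the integration over $\ga_{\Phi,\ell}/\ga_{\Phi,n}$ and over the orbit $\Ad^*(A_{\Phi,\ell}/A_{\Phi,n})$-direction contributes the correct Jacobian so that $p_{\ell,n}\circ r_\ell = r_n \circ q_{\ell,n}$. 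Granting this bookkeeping (which is a direct adaptation of (\ref{hilbert-proj0}) to the $AN$-setting combined with the orbit calculation (\ref{one-orbit})), the proposition follows.
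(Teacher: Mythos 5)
Your proposal is correct and follows essentially the same path the paper takes: adapt the $N$-case argument of Proposition \ref{dense-embedding} to the $AN$-setting, using the wave-packet density of coefficients of $\pi_{\Phi,\gamma,\xi,n}$ in $\cC(A_{\Phi,n}N_{\Phi,n})$, the rescaling built into the Hilbert-space projections $p_{\ell,n}$, and the compatibility of the diagram (\ref{schwartz-proj-sys-an}). The paper's proof is terse (essentially ``as in Proposition \ref{dense-embedding}''), and your plan fills in the same outline with reasonable bookkeeping about the $A$-factor and the Jacobian.
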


\begin{corollary}\label{l2-well-def-an}
The limit Hilbert space $L^2(A_{\Phi,\infty}N_{\Phi,\infty}) = 
\varprojlim \{L^2(A_{\Phi,n}N_{\Phi,n}), p_{\ell,n}\}$ of
{\rm (\ref{schwartz-proj-sys-an})} , and the left/right regular representation
of $(A_{\Phi,\infty}N_{\Phi,\infty})\times (A_{\Phi,\infty}N_{\Phi,\infty})$ 
on $L^2(A_{\Phi,\infty}N_{\Phi,\infty})$, are independent of the choice of 
$C^\infty$ unit vectors $e$ in the inclusions 
$
\cH_{\pi,\gamma,\xi,n} \hookrightarrow \cH_{\pi,\gamma,\xi,\ell}\,, 
\ell \geqq n, \text{ by } v \mapsto v \otimes e.
$
\end{corollary}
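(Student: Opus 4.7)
The plan is to reduce this to the intrinsic nature of the Schwartz space $\cC(A_{\Phi,\infty}N_{\Phi,\infty})$, mirroring the argument sketched for Corollary \ref{l2-well-def} in the $N_{\Phi,\infty}$ case. First I would observe that the definition of $\cC(A_{\Phi,\infty}N_{\Phi,\infty})$ in (\ref{schwartz-an-inf}) is manifestly intrinsic: it depends only on the differentiable structure of $A_{\Phi,\infty}N_{\Phi,\infty}$, the seminorms $\nu_{k,D,n}$ of (\ref{seminorm-an}), and the enveloping algebras $\cU(\ga_{\Phi,n}+\gn_{\Phi,n})$. No choice of $C^\infty$ unit vector $e$ in the inclusion $\cH_{\pi_{\Phi,\gamma,\xi,n}}\hookrightarrow \cH_{\pi_{\Phi,\gamma,\xi,\ell}}$ enters this definition.

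Next I would invoke Proposition \ref{dense-embedding-an} to identify $L^2(A_{\Phi,\infty}N_{\Phi,\infty})$ with the Hilbert-space completion of $\cC(A_{\Phi,\infty}N_{\Phi,\infty})$ under the pre-Hilbert structure supplied by $r_\infty$. For this identification to be choice-free it is enough to see that $r_\infty$ itself depends only on intrinsic data. Since $r_\infty=\varprojlim r_n$ and each $r_n:\cC(A_{\Phi,n}N_{\Phi,n})\hookrightarrow L^2(A_{\Phi,n}N_{\Phi,n})$ is the classical inclusion determined by Haar measure, it suffices to check that the transition maps $p_{\ell,n}$ in (\ref{schwartz-proj-sys-an}) agree, on Schwartz functions, with the intrinsic restriction maps $f_\ell\mapsto f_\ell|_{A_{\Phi,n}N_{\Phi,n}}$, i.e.\ with the $q_{\ell,n}$.

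Third, for the regular representation I would use Lemma \ref{lim-schwartz-prop-an}: the left/right action of $A_{\Phi,\infty}N_{\Phi,\infty}\times A_{\Phi,\infty}N_{\Phi,\infty}$ is continuous on the intrinsic Schwartz space, and each translation operator is isometric (up to the modular factor already built into the Haar inner product) at every finite level. Hence the action extends uniquely to unitary operators on the Hilbert completion, and because both the pre-Hilbert space and the action on it are intrinsic, so is the extension.

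The main obstacle I expect is the renormalization bookkeeping in the second step. What must be established is that the rescaling of coefficients analogous to (\ref{hilbert-proj0}), applied now to the representations $\pi_{\Phi,\gamma,\xi,n}$ of $A_{\Phi,n}N_{\Phi,n}$, exactly cancels the $|P_n(\gamma_n)|/|P_\ell(\gamma_\ell)|$ factor produced by the Plancherel density in Theorem \ref{planch-an}, so that $p_{\ell,n}$ becomes the plain restriction on the Schwartz level. I would carry this out by imitating Proposition \ref{rescale} on $A_{\Phi,n}N_{\Phi,n}$, using the tensor factorization $\cH_{\pi_{\Phi,\gamma,\xi,\ell}}\cong \cH_{\pi_{\Phi,\gamma,\xi,n}}\widehat{\otimes}\cH''$ and the fact that the $C^\infty$ vectors chosen have unit norm. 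Once this is done, any two choices $e,e'$ yield Schwartz restriction maps $q_{\ell,n}$ that coincide, so they induce the same pre-Hilbert structure on $\cC(A_{\Phi,\infty}N_{\Phi,\infty})$, and the corollary follows.
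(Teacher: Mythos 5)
Your proposal follows the same route as the paper: observe that the Schwartz space $\cC(A_{\Phi,\infty}N_{\Phi,\infty})$ defined by the seminorms (\ref{seminorm-an}) and restriction maps is manifestly independent of the vectors $e$, and then invoke Proposition \ref{dense-embedding-an} to realize $L^2(A_{\Phi,\infty}N_{\Phi,\infty})$ as its Hilbert-space completion, so that the $L^2$ space and the translation action on it inherit the same independence. That is exactly the paper's (very terse) justification, modeled on Corollary \ref{l2-well-def} and the reference to \cite[Corollary 5.17]{W2015}; you add a useful extra layer of care by isolating the precise point where the choice of $e$ could have sneaked in --- namely whether the rescaled transition maps $p_{\ell,n}$ of the Hilbert inverse system match the intrinsic restriction maps $q_{\ell,n}$ on Schwartz wave packets --- and in proposing to settle it by the analogue of Proposition \ref{rescale} on $A_{\Phi,n}N_{\Phi,n}$. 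That is the right technical point to check, and it is indeed where the Frobenius--Schur renormalization $|P_n(\gamma_n)|/|P_\ell(\gamma_\ell)|$ is designed to make things cancel; the paper treats it as implicit in the commutativity of the diagram (\ref{schwartz-proj-sys-an}).
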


The distribution characters $\Theta_{\pi_{\Phi,\gamma,\xi,n}} = 
\exp(2\pi i\xi)\Theta_{\pi_{\Phi,\gamma,n}}$ where 
$\Theta_{\pi_{\Phi,\gamma,n}}$ is given
by (\ref{def-dist-char}). The limit Schwartz space
$\mathcal{C}(A_{\Phi,\infty}N_{\Phi,\infty}) 
= \varprojlim \mathcal{C}(A_{\Phi,n}N_{\Phi,n})$ consists of all 
$f = (f_n)$ where each $f_n~\in~\mathcal{C}(A_{\Phi,n}N_{\Phi,n})$.

As in the case of minimal parabolics \cite[Section 6]{W2015},
the limit Fourier inversion formula is
\begin{theorem}\label{limit-inversion-an}
Suppose that $(\Psi_n \setminus \Phi_n) \subset 
(\Psi_\ell \setminus \Phi_\ell)$ for
$\ell \geqq n$, so that $A_{\Phi,\infty}N_{\Phi,\infty} =
\varinjlim A_{\Phi,n}N_{\Phi,n}$ is well defined.
Let $D_n$ be a Dixmier-Puk\'anszky operator for $A_{\Phi,n}N_{\Phi,n}$\,.
Let $f = (f_n) \in \mathcal{C}(A_{\Phi,n}N_{\Phi,n})$ and 
$x \in A_{\Phi,\infty}N_{\Phi,\infty}$\,.
Then $x \in A_{\Phi,n}N_{\Phi,n}$ for some $n$ and
\begin{equation}\label{lim-inv-formula-an}
f(x) = c_n\int_{\xi \in (\ga'_{\Phi,n})^*}
	\int_{\mathfrak{s}_{\Phi,n}^*/\Ad^*(A_{\Phi,n})} 
	\Theta_{\pi_{\Phi,\gamma,\xi,n}}(D_n(r_xf))
        |\Pf_{\mathfrak{n}_n}({\gamma_n})|d\gamma_n d\xi
\end{equation}
where $c_n = (\tfrac{1}{2\pi})^{\dim \ga'_\Phi/2}\,\,
2^{d_1 + \dots + d_{m_n}} d_1! d_2! \dots d_{m_n}!$ as in 
{\rm (\ref{c-d}a)} and $m_n$ is the number of factors $L_r$ in $N_{\Phi,n}$.
\end{theorem}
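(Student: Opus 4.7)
The plan is to reduce the infinite-dimensional inversion to the finite-dimensional Fourier inversion Theorem \ref{planch-an} by exploiting the inverse-system structure $\cC(A_{\Phi,\infty}N_{\Phi,\infty}) = \varprojlim \cC(A_{\Phi,n}N_{\Phi,n})$ from (\ref{schwartz-an-inf}), in exact parallel to the proof of Theorem \ref{limit-inversion}. Given $x \in A_{\Phi,\infty}N_{\Phi,\infty}$, first I would choose $n$ with $x \in A_{\Phi,n}N_{\Phi,n}$; by definition of the inverse limit $f(x) = f_n(x)$, and the compatibility $f_\ell|_{A_{\Phi,n}N_{\Phi,n}} = f_n$ applied at $yx \in A_{\Phi,n}N_{\Phi,n}$ for $y \in A_{\Phi,n}N_{\Phi,n}$ gives $(r_x f)_n(y) = (r_x f_n)(y)$. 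Thus both $f$ and its right translate are controlled at the finite level by their $n$-th components.

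Next I would simply invoke Theorem \ref{planch-an} for the finite-dimensional parabolic $Q_{\Phi,n} = M_{\Phi,n}A_{\Phi,n}N_{\Phi,n}$, with the Dixmier-Puk\'anszky operator $D_n$ from Theorem \ref{dix-for-an} and the irreducible unitary representations $\pi_{\Phi,\gamma,\xi,n}$ of (\ref{repan'}), which by construction are precisely the representations $\pi_{\lambda,\xi}$ of (\ref{rep-an-lambda}) for $A_{\Phi,n}N_{\Phi,n}$. Since $f_n \in \cC(A_{\Phi,n}N_{\Phi,n})$, Theorem \ref{planch-an} produces
$$
f_n(x) = c_n\int_{(\ga'_{\Phi,n})^*}\int_{\gs_{\Phi,n}^*/\Ad^*(A_{\Phi,n})} \Theta_{\pi_{\Phi,\gamma,\xi,n}}(D_n(r_x f_n))\,|\Pf_{\gn_n}(\gamma_n)|\,d\gamma_n\,d\xi,
$$
which is the asserted identity after substituting $f(x) = f_n(x)$ and $r_x f_n = (r_x f)_n$.

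The only genuine point of care, flagged in the discussion preceding (\ref{repan'}), is that there is no single Dixmier-Puk\'anszky operator for the limit group, so the formula is inherently $n$-dependent; I would therefore verify that the right-hand side is independent of the particular $n$ chosen with $x \in A_{\Phi,n}N_{\Phi,n}$. This is automatic because Theorem \ref{planch-an} holds at every stage, and each side equals $f(x)$, which itself does not depend on $n$. Beyond that consistency check, the only bookkeeping is to confirm that $r_x f_n$ remains in $\cC(A_{\Phi,n}N_{\Phi,n})$ (hence in the domain of $D_n$), which follows from continuity of right translation on the Schwartz space established in Lemma \ref{lim-schwartz-prop-an}. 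The anticipated obstacle is thus purely organizational, lining up the measures on $(\ga'_{\Phi,n})^*$ and $\gs_{\Phi,n}^*/\Ad^*(A_{\Phi,n})$ with the inverse-system projections of (\ref{schwartz-proj-sys-an}); there is no new analytic content beyond what the finite-level Theorem \ref{planch-an} already provides.
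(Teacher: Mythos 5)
Your proposal is correct and is essentially identical to the paper's own proof, which consists in its entirety of the sentence ``Apply Theorem \ref{planch-an} to $A_{\Phi,n}N_{\Phi,n}$.'' You have merely spelled out the routine compatibility checks (that $f(x) = f_n(x)$, that $r_xf_n = (r_xf)_n$, and that $r_xf_n$ stays in the Schwartz space so $D_n$ applies) that the author leaves implicit.
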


\begin{proof}  Apply Theorem \ref{planch-an} to $A_{\Phi,n}N_{\Phi,n}$.  
\end{proof}

\section{Representations of the Limit Groups III: $U_{\Phi,\infty}$}
\label{sec11}
\setcounter{equation}{0}
We are going to study highest weight limit representations of
$U_{\Phi,\infty} = \varinjlim U_{\Phi,n}$.  These are the 
representations for which there is an explicit Peter-Weyl Theorem
\cite[Theorem 4.3]{W2009}.  We restrict our attention to highest
weight representations of $U_{\Phi,\infty}$ for a good reason:
as noted in papers (\cite{SV1975}, \cite{SV1978}, \cite{SV1982}  and
\cite{V1972}) of Str\u atil\u a and Voiculescu, irreducible
unitary representations of $U(\infty)$ and other lim-compact groups 
can be extremely complicated, even of Type III.  This is summarized in 
\cite[Section 9]{W2014a}.
\medskip

Recall from Section \ref{sec8} that $G_\ell$ propagates $G_n$ for 
$\ell \geqq n$.  In particular $\Phi = (\Phi_n)$ where $\Phi_n$ is
the simple root system for $(\gm_{\Phi,n} + \ga_{\Phi,n})_\C$ and
$\Phi_n \subset \Phi_\ell$ for $\ell \geqq n$.  It is implicit that
the maximal compact subgroups $K_m \subset G_m$ satisfy $K_n \subset K_\ell$
for $\ell \geqq n$, so $K := \varinjlim\, K_n$ is a maximal lim--compact
subgroup of $G := \varinjlim\, G_n$\,.  We decompose the Cartan
subalgebras of $\gg_{\Phi,n}$ and $\gg_{\Phi,\infty}$ along the lines of
the proof of Lemma \ref{components}, as follows:
\begin{equation}\label{chop-cartan}
\gh_n = \gc_{\Phi,n} + \gb_{\Phi,n} + \ga_{\Phi,n} \text{ and }
\gh_\infty = \gc_{\Phi,\infty} + \gb_{\Phi,\infty} + \ga_{\Phi,\infty}
\end{equation}
where $\ga_{\Phi,n}$ is as before, $\gc_{\Phi,n} + \gb_{\Phi,n}$ is a
Cartan subalgebra of $\gm_{\Phi,n}$\,, $\gb_{\Phi,n} + \ga_{\Phi,n} = \ga_n$\,, 
and $\gc_{\Phi,n}$ is a Cartan
subalgebra of $\gu_{\Phi,n}$\,.  Then $\ga_{\Phi,\infty} = \varinjlim
\ga_{\Phi,n}$ as before, $\gb_{\Phi,\infty} = \varinjlim \gb_{\Phi,n}$\,,
and $\gc_{\Phi,\infty} = \varinjlim \gc_{\Phi,n}$\,.  Further,
$\ga_n = \gb_{\Phi,n} + \ga_{\Phi,n}$ and $\gc_{\Phi,n} = \gh_n \cap \gk_n$\,.
Notice that $C_{\Phi,n} := \exp(\gc_{\Phi,n})$ is a maximal torus in 
$U_{\Phi,n}^0$\,.
\medskip

We define a simple root system for $\gu_{\Phi,n}$ along the lines of
an idea of Borel and de Siebenthal.  Let $\{\gm^{(i)}_{\Phi,n}\}$ be 
the simple ideals in $\gm_{\Phi,n}$ and let $\{\Phi_n^{(i)}\}$ denote
the corresponding subsets of $\Phi_n$\,.  If every root in $\Phi_n^{(i)}$
is compact we set $\Sigma_n^{(i)} = \Phi_n^{(i)}$\,.  Otherwise 
$\Phi_n^{(i)}$ contains just one noncompact root, say $\alpha_n^{(i)}$\,.
let $\beta_n^{(i)}$ denote the maximal root of $\gm^{(i)}_{\Phi,n}$\,.
If $\alpha_n^{(i)}$ has coefficient $1$ as a summand of $\beta_n^{(i)}$
we set $\Sigma_n^{(i)} = \Phi_n^{(i)}\setminus \{\alpha_n^{(i)}\}$.
If it has coefficient $2$ as a summand of $\beta_n^{(i)}$ we set
$\Sigma_n^{(i)} = (\Phi_n^{(i)}\setminus \{\alpha_n^{(i)}\}) \cup
\{-\beta_n^{(i)}\}$.  Now $\Sigma_n:= \bigcup \Sigma_n^{(i)}$ is a
simple root system $\gu_{\Phi,n}$ and for its semisimple part 
$[\gu_{\Phi,n},\gu_{\Phi,n}]$.
\smallskip

\begin{lemma} \label{u-propagate}
If $\ell \geqq n$ then $\Sigma_{\Phi,n} \subset \Sigma_{\Phi,\ell}$\,.
Thus $\Sigma_\Phi := \bigcup \Sigma_{\Phi,n}$ is a simple root system for 
the semisimple part $[\gu_{\Phi,\infty},\gu_{\Phi,\infty}] := \varinjlim 
[\gu_{\Phi,n},\gu_{\Phi,n}]$ of $\gu_{\Phi,\infty}$\,.
\end{lemma}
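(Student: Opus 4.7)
The plan is to verify the inclusion $\Sigma_{\Phi,n} \subset \Sigma_{\Phi,\ell}$ component-by-component in the Dynkin diagram, and then pass to the direct limit. Propagation is arranged so that the Dynkin diagram of $\Phi_\ell$ is obtained from that of $\Phi_n$ by appending nodes at a fixed end of each classical series (as shown in \eqref{rootorderA} and \eqref{rootorderBCD}); therefore each connected component $\Phi_n^{(i)}$ of the diagram of $\Phi_n$ embeds into a unique connected component $\Phi_\ell^{(j(i))}$ of the diagram of $\Phi_\ell$. Since the Cartan involution $\theta_\ell$ restricts to $\theta_n$ on $\gg_n$, the compactness or noncompactness of each simple root of $\Phi_n^{(i)}$ is the same whether computed inside $\gm^{(i)}_{\Phi,n}$ or inside $\gm^{(j(i))}_{\Phi,\ell}$.

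First I would run the three-case analysis of the Borel--de Siebenthal construction. If $\Phi_n^{(i)}$ consists entirely of compact roots, each of those roots remains compact in $\gm^{(j(i))}_{\Phi,\ell}$, and a case check shows that none of them is the distinguished noncompact simple root $\alpha_\ell^{(j(i))}$ of $\Phi_\ell^{(j(i))}$ (if the latter exists), so $\Phi_n^{(i)} \subset \Sigma_\ell^{(j(i))}$. If $\Phi_n^{(i)}$ contains a single noncompact root $\alpha_n^{(i)}$, I would verify that $\alpha_n^{(i)}$ is still the unique noncompact root of $\Phi_\ell^{(j(i))}$, and that its coefficient in the highest root $\beta_\ell^{(j(i))}$ of $\gm^{(j(i))}_{\Phi,\ell}$ matches its coefficient in $\beta_n^{(i)}$. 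This coefficient stability holds because, in each classical series, the coefficient of a fixed simple root in the highest root depends only on its position relative to the end of the diagram that does not grow under propagation.

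The hard part will be the coefficient-$2$ case, where $\Sigma_n^{(i)} = (\Phi_n^{(i)}\setminus\{\alpha_n^{(i)}\})\cup\{-\beta_n^{(i)}\}$ while the highest root $\beta_\ell^{(j(i))}$ can genuinely differ from $\beta_n^{(i)}$. To handle it, I would show by direct inspection in the classical series listed after \eqref{rootorderBCD} that $-\beta_n^{(i)}$ is itself a compact simple root of the Borel--de Siebenthal system $\Sigma_\ell^{(j(i))}$: concretely, $-\beta_n^{(i)}$ sits at the end of the Dynkin diagram of $\gm^{(i)}_{\Phi,n}$ opposite to the side that grows under propagation, so it remains a node of the enlarged Borel--de Siebenthal diagram and is compact by the Cartan-involution compatibility noted above. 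This is precisely where the left-end convention of \eqref{rootorderA}--\eqref{rootorderBCD} is essential; abandoning it would break the argument.

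With $\Sigma_n^{(i)} \subset \Sigma_\ell^{(j(i))}$ for every simple ideal, summing over $i$ yields $\Sigma_{\Phi,n} \subset \Sigma_{\Phi,\ell}$. Finally, I would conclude that $\Sigma_\Phi = \bigcup_n \Sigma_{\Phi,n}$ is a simple root system for $[\gu_{\Phi,\infty},\gu_{\Phi,\infty}] := \varinjlim [\gu_{\Phi,n},\gu_{\Phi,n}]$: any finite subset of $\Sigma_\Phi$ lies in some $\Sigma_{\Phi,n}$, so linear independence passes up from finite level, and every root of $[\gu_{\Phi,\infty},\gu_{\Phi,\infty}]$ is a root of some $[\gu_{\Phi,n},\gu_{\Phi,n}]$ and therefore a nonnegative or nonpositive integer combination of elements of $\Sigma_{\Phi,n} \subset \Sigma_\Phi$.
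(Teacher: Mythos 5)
The paper proves this lemma with a single, uniform observation: if $\alpha \in \Sigma_{\Phi,n}$ failed to be simple as a root of $\gu_{\Phi,n+1}$, then expanding it against $\Sigma_{\Phi,n+1}$ (and thence against $\Phi_{n+1}$) would force a contribution from $\Phi_{n+1}\setminus\Phi_n$\,, contradicting $\alpha \in \Delta\bigl((\gm_{\Phi,n})_\C,(\gc_{\Phi,n}+\gb_{\Phi,n})_\C\bigr)$. No case distinction among the three Borel--de Siebenthal alternatives is made. Your plan is genuinely different: you trace each simple ideal $\gm_{\Phi,n}^{(i)}$ into $\gm_{\Phi,\ell}^{(j(i))}$ and check the three cases directly.

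The problem is the coefficient-$2$ case when the component actually grows, which is exactly the case you flag as ``the hard part.'' If $\Phi_n^{(i)}\subsetneq\Phi_\ell^{(j(i))}$, then $\beta_n^{(i)}\ne\beta_\ell^{(j(i))}$: the highest root of the larger component has strictly positive coefficients on the new simple roots, while $\beta_n^{(i)}$ is supported entirely on $\Phi_n^{(i)}$. But by the paper's definition $\Sigma_\ell^{(j(i))} = \bigl(\Phi_\ell^{(j(i))}\setminus\{\alpha_\ell^{(j(i))}\}\bigr)\cup\{-\beta_\ell^{(j(i))}\}$ in the coefficient-$2$ case (and is a subset of $\Phi_\ell^{(j(i))}$ in the other two). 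Thus every element of $\Sigma_\ell^{(j(i))}$ is either a \emph{positive} simple root of $\Phi_\ell$ or the negative of the \emph{new} highest root $\beta_\ell^{(j(i))}$. The vector $-\beta_n^{(i)}$ is a negative root and is not $-\beta_\ell^{(j(i))}$, so it cannot lie in $\Sigma_\ell^{(j(i))}$. Your geometric picture (``sits at the end opposite to the growing side, so it remains a node of the enlarged Borel--de Siebenthal diagram'') does not rescue this: adding nodes at the growing end moves the affine node $-\beta_\ell^{(j(i))}$ to a different element of the root lattice, whereas $-\beta_n^{(i)}$ stays where it was, and it is not a positive simple root either. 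To make your route work you would need an additional argument that the coefficient-$2$ alternative and a growing component never occur together under the propagation constraints $\Phi_n\subset\Phi_\ell$\,, $(\Psi_n\setminus\Phi_n)\subset(\Psi_\ell\setminus\Phi_\ell)$; as written that step is missing. The paper's short support argument sidesteps the issue entirely by never unpacking the Borel--de Siebenthal cases, which is both shorter and more robust.
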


\begin{proof} If $\alpha \in \Sigma_{\Phi,n}$ is not simple as a root
of $\gu_{\Phi,n+1}$, then as a linear combination of roots in $\Phi_{n+1}$
it must involve a root from $\Phi_{n+1}\setminus \Phi_n$\,, which 
contradicts 
$\alpha \in \Delta((\gm_{\Phi,n})_\C,(\gc_{\Phi,n}+\gb_{\Phi,n})_\C)$.
\end{proof}

As in Lemma \ref{components} we define $F_n = 
\exp(i\ga_n)\cap K_n$\,.
It is an elementary abelian $2$-subgroup of $U_{\Phi,n}$\,, central in both
$U_{\Phi,n}$ and $M_{\Phi,n}$\,, and has the properties
$$
U_{\Phi,n} = F_n U_{\Phi,n}^0\,, M_{\Phi,n} = 
F_n M_{\Phi,n}^0\,, \text{ and } E_{\Phi,n} = F_n E_{\Phi,n}^0\,.
$$
Further, $F_n C_{\Phi,n}$ is a Cartan subgroup of $U_{\Phi,n}$\,.
Passing to the limit, we define
$$
F = \varinjlim F_n = \exp(i\ga)\cap K\,,
\text{ and } C_{\Phi,\infty} = \varinjlim C_{\Phi,n}
$$
so that
$$
U_{\Phi,\infty} = F U_{\Phi,\infty}^0 \text{\,, and } F C_{\Phi,\infty}
\text{ is a lim-compact Cartan subgroup of } U_{\Phi,\infty}\,.
$$
\begin{definition}\label{dom-int-lim}
{\rm Let $\lambda_n \in \gc_{\Phi,n}^*$\,.  Then $\lambda_n$ is
{\em integral} if $\exp(2\pi i\lambda_n)$ is a well defined unitary character 
on the torus $C_{\Phi,n}$\,, and $\lambda_n$ is {\em dominant integral} 
if it is integral and $\langle \lambda_n,\alpha \rangle \geqq 0$ for every 
$\alpha \in \Sigma_{\Phi,n}$\,.  Write $\Lambda_{\Phi,n}$ for the set of
dominant integral weights in $\gc_{\Phi,n}^*$\,.

Let $\lambda = (\lambda_n) \in \gc_{\Phi,\infty}^*$\,.  Then $\lambda$ is 
{\em integral} if $\exp(2\pi i\lambda)$ is a well defined unitary character on 
the torus $C_{\Phi,\infty}$\,, in other words if each $\lambda_n$ is integral.
And $\lambda$ is {\em dominant integral} if it is integral and 
$\langle \lambda,\alpha \rangle \geqq 0$ for every $\alpha \in \Sigma_\Phi$\,,
in other words if each $\lambda_n$ is dominant integral.  Write 
$\Lambda_{\Phi,\infty}$
for the set of all dominant integral weights in $\gc_{\Phi,\infty}^*$\,.
}\hfill $\diamondsuit$
\end{definition}

Each $\lambda_n \in \Lambda_{\Phi,n}$ is the highest weight of an 
irreducible unitary representation $\mu_{\lambda,n}$ of $U_{\Phi,n}^0$\,.
Let $\cH_{\lambda_n}$ denote the representation space and $u_{\lambda,n}$
a highest weight unit vector.  Now let 
$\lambda = (\lambda_n) \in \Lambda_{\Phi,\infty}$\,.
Then $u_{\lambda,n} \mapsto u_{\lambda,\ell}$ defines a 
$U_{\Phi,n}^0$-equivariant isometric injection $\cH_{\lambda_n}
\hookrightarrow \cH_{\lambda_\ell}$\,.  Thus $\lambda$ defines a direct
limit highest weight unitary representation
$$
\mu_\lambda = \varinjlim \mu_{\lambda,n} \in \widehat{U_\Phi^0}
\text{ with representation space } \cH_\lambda = \varinjlim \cH_{\lambda_n}\,.
$$
Different choices of $\{u_{\lambda,n}\}$ lead to equivalent representations.
Here recall \cite[Theorem 5.10]{OW2014} that if $\ell \geqq n$ then
$\mu_{\lambda,\ell}|_{U_{\Phi,n}^0}$ contains $\mu_{\lambda,n}$ with
multiplicity $1$, so there is no ambiguity (beyond phase changes 
$u_{\lambda,n} \mapsto e^{i\epsilon_n}u_{\lambda,n}$) about the inclusion
$\cH_{\lambda_n} \hookrightarrow \cH_{\lambda_\ell}$\,.
Now denote
\begin{equation}\label{coherent-lim-u}
\begin{aligned}
&\Xi_{\Phi,n} = \left \{\mu_{\lambda,n,\varphi} := 
	\varphi \otimes \mu_{\lambda,n} \left | 
	\varphi \in \widehat{F}\,,
	\lambda_n \in \Lambda_{\Phi,n} \text{ and } 
	\varphi|_{F\cap U_{\Phi,n}^0} 
	= \mu_{\lambda,n}|_{F\cap U_{\Phi,n}^0}\right . \right \} \,,\\
&\Xi_{\Phi,\infty} = \left \{\mu_{\lambda,\varphi} :=
	\varphi \otimes \mu_\lambda \left | 
        \varphi \in \widehat{F}\,, 
	\lambda = (\lambda_n) \in \Lambda_{\Phi,\infty} \text{ and }
	\varphi|_{F\cap U_\Phi^0}
	= \mu_\lambda |_{F\cap U_\Phi^0} \right . \right \} \,.
\end{aligned}
\end{equation}

Lemma \ref{u-propagate} shows that the direct system $\{U_{\Phi,n}^0\}$ is
strict and is
parabolic in the sense of \cite[Eq. 4.2]{W2009}.  Thus we have the Peter-Weyl
Theorem for parabolic direct limits \cite[Theorem 4.3]{W2009}, and it follows
immediately for the system $\{U_{\Phi,n}\}$\,.  Rescaling matrix coefficients 
with the Frobenius-Schur orthogonality relations as in (\ref{rescale1}) and
(\ref{hilbert-proj0}) we obtain Hilbert space projections
$p_{\ell,n}: L^2(U_{\Phi,\ell}) \to L^2(U_{\Phi,n})$ and an inverse system
\begin{equation}\label{hilbert-proj-sys-u}
L^2(U_{\Phi,1}) \overset{p_{2,1}}{\longleftarrow} L^2(U_{\Phi,2})
        \overset{p_{3,2}}{\longleftarrow}  L^2(U_{\Phi,3})
        \overset{p_{4,3}}{\longleftarrow} \,\, ... \,\,\,\longleftarrow\,
        L^2(U_{\Phi,\infty})
\end{equation}
in the category of Hilbert spaces and projections,
where the projective limit
$L^2(U_{\Phi,\infty}) := \varprojlim \{L^2(U_{\Phi,n}),p_{\ell,n}\}$ is taken in that
category.  We now have the Hilbert space projective limit
\begin{equation}\label{hilbert-limit-u}
L^2(N_{\Phi,\infty}) := \varprojlim \{L^2(N_{\Phi,n}), p_{\ell,n}\} = 
{\sum}_{\mu_{\lambda,\varphi} \in\Xi_{\Phi,\infty}} 
\cH_\lambda \widehat{\otimes} \cH_\lambda^* \text{ orthogonal direct sum.}
\end{equation}
The left/right representation of $U_{\Phi,\infty} \times U_{\Phi,\infty}$ on $L^2(U_{\Phi,\infty})$ 
is multiplicity-free, preserves each summand
$\cH_\lambda \widehat{\otimes} \cH_\lambda^*$\,, and acts on
$\cH_\lambda \widehat{\otimes} \cH_\lambda^*$ by the irreducible representation
of highest weight $(\lambda,\lambda^*)$.  The connection with matrix 
coefficients is
\begin{equation}\label{schwartz-proj-sys-u}
\begin{CD}
\cC(U_{\Phi,1}) @< q_{2,1} << \cC(U_{\Phi,2})
        @< q_{3,2} <<  \dots @ <<< 
        \cC(U_{\Phi,\infty}) = \varprojlim \cC(U_{\Phi,n}) \\
    @VV{r_1}V       @VV{r_2}V          @VVV  @VV{r_\infty}V \\
L^2(U_{\Phi,1}) @< p_{2,1} << L^2(U_{\Phi,2})
        @< p_{3,2} <<  \dots @ <<< 
        L^2(U_{\Phi,\infty}) = \varprojlim L^2(U_{\Phi,n})
\end{CD}
\end{equation}
as in (\ref{schwartz-proj-sys}).  As in Proposition \ref{dense-embedding} this
realizes the limit space $L^2(U_{\Phi,\infty})$ as a Hilbert space completion of
the Schwartz space $\cC(U_{\Phi,\infty})$\,, and because of compactness the latter 
in turn is the projective limit of spaces 
$\cC(U_{\Phi,n}) = C^\infty(U_{\Phi,n})$.  The Fourier inversion formula for
$U_{\Phi,\infty}$ is given stepwise as in Theorem \ref{limit-inversion}.

\section{Representations of the Limit Groups IV: 
	$U_{\Phi,\infty} N_{\Phi,\infty}$}
\label{sec12}
\setcounter{equation}{0}
We combine some of the results of Sections \ref{sec9} and \ref{sec11},
extending them to the subgroup $U_{\Phi,\infty} N_{\Phi,\infty}$\,. 
\medskip

In view of the discussion culminating in (\ref{lim-subgroups1}) 
we assume that the direct system
$\{G_n\}$ of real semisimple Lie groups satisfies
\begin{equation}
\begin{aligned}
\text{ if } \ell \geqq n &\text{ then } \Phi_n \subset \Phi_\ell \text{ and }
	(\Psi_n \setminus \Phi_n) \subset (\Psi_\ell \setminus \Phi_\ell) 
	\text{ so that} \\
& U_{\Phi,\infty} := \varinjlim U_{\Phi,n} \text{ and }
U_{\Phi,\infty}N_{\Phi,\infty} := \varinjlim
U_{\Phi,n}N_{\Phi,n} \text{ exist.}
\end{aligned}
\end{equation}
We also extend Definition \ref{invariant}:

\begin{definition}\label{inv-inf}
{\rm $N_{\Phi,\infty} = L_{\Phi,1}L_{\Phi,2}L_{\Phi,3}\dots $
is {\em weakly invariant} if each 
$\Ad(U_{\Phi,\infty})\gz_{\Phi,j} = \gz_{\Phi,j}$\,.
\hfill $\diamondsuit$ }
\end{definition}
We'll need a variation on Lemma \ref{components}.  Recall
the maximal lim-compact subgroup $K = \varinjlim K_n$\,.

\begin{lemma}\label{components-inf}
Let $F_n = \exp(i\ga_{\Phi,n})\cap K_n$ and 
$F = \exp(i\ga_{\Phi,\infty})\cap K$.
Then $F = \varinjlim F_n$ is contained in $U_{\Phi,\infty}$ and is central
in $M_{\Phi,\infty}$; if $x \in F$ then $x^2 = 1$, $U_{\Phi,\infty} =
F U_{\Phi,\infty}^0$\,; and $M_{\Phi,\infty} = F M_{\Phi,\infty}^0$\,.
\end{lemma}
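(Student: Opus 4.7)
The plan is to reduce every assertion to its finite-level analogue established in Lemma \ref{components} and then pass to the direct limit. First I would verify that the family $\{F_n\}$ is nested: since $\ga_{\Phi,n}\subset \ga_{\Phi,\ell}$ and $K_n\subset K_\ell$ for $\ell\geq n$, we have $F_n\subset F_\ell$, so $\varinjlim F_n$ is well defined and sits inside both $\exp(i\ga_{\Phi,\infty})$ and $K$, giving $\varinjlim F_n\subset F$. The reverse inclusion is the key bookkeeping step: an element $x\in F$ is of the form $\exp(i\xi)$ with $\xi\in \ga_{\Phi,\infty}=\varinjlim \ga_{\Phi,n}$, so $\xi\in \ga_{\Phi,n}$ for some $n$; meanwhile $x\in K=\varinjlim K_m$ gives $x\in K_m$ for some $m$. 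Taking $p=\max(n,m)$ places $x$ in $F_p$, so $F=\varinjlim F_n$.

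Next I would derive the pointwise properties. Each $x\in F$ lies in some $F_n$, where Lemma \ref{components} yields $x^2=1$ and shows $F_n$ is central in $M_{\Phi,n}$. For centrality in $M_{\Phi,\infty}=\varinjlim M_{\Phi,n}$, any $y\in M_{\Phi,\infty}$ lies in some $M_{\Phi,\ell}$; after enlarging to $\ell\geq n$, the element $x\in F_n\subset F_\ell$ is central in $M_{\Phi,\ell}$ and in particular commutes with $y$. The inclusion $F\subset U_{\Phi,\infty}$ is immediate from $F_n\subset U_{\Phi,n}$ at each level and the compatibility $U_{\Phi,n}\hookrightarrow U_{\Phi,\ell}$ assumed in Section \ref{sec8}.

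Finally I would handle the component decompositions $U_{\Phi,\infty}=FU_{\Phi,\infty}^0$ and $M_{\Phi,\infty}=FM_{\Phi,\infty}^0$. Given $y\in U_{\Phi,\infty}$, pick $n$ with $y\in U_{\Phi,n}$ and use Lemma \ref{components} to write $y=fu$ with $f\in F_n\subset F$ and $u\in U_{\Phi,n}^0$. I would then identify $U_{\Phi,n}^0$ as the analytic subgroup for $\gu_{\Phi,n}\subset \gu_{\Phi,\infty}$, so that $U_{\Phi,n}^0\subset U_{\Phi,\infty}^0:=\varinjlim U_{\Phi,n}^0$, placing $u$ in $U_{\Phi,\infty}^0$. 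The identical argument on the $M$-side gives the second decomposition.

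The main obstacle will be the careful identification of the limit identity component $U_{\Phi,\infty}^0$ (and likewise for $M$) with $\varinjlim U_{\Phi,n}^0$, i.e., verifying that the nested component structure at finite levels propagates cleanly to the limit without collapsing a limit-identity component or producing spurious components. This ultimately rests on the propagation hypothesis of Section \ref{sec8}, which ensures the inclusions $U_{\Phi,n}^0\subset U_{\Phi,\ell}^0$ and the compatibility of the finite groups $F_n$ inside a common ambient lim-group; once that compatibility is in hand, the rest is formal.
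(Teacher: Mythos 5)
Your proof is correct and follows essentially the same route as the paper's, which simply compresses the whole argument into one sentence invoking Lemma \ref{components} at each finite level and passing to the limit. You have expanded this terse argument into its natural constituent steps — nesting of the $F_n$, identifying $F$ with $\varinjlim F_n$, pointwise transfer of centrality and $x^2=1$, and compatibility of identity components under the strict (propagation) direct system — all of which match what the paper intends.
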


\begin{proof}  Lemma \ref{components} contains the corresponding results
for the $F_n$\,.  It follows that $F$ is a subgroup of $U_{\Phi,\infty}$
central in $M_{\Phi,\infty}$\,, that describes the components as stated, and
in which every element has square $1$.
\end{proof}

\begin{lemma}\label{f-triv-s}
The action of $\Ad(F)$ on $\gs^*_{\Phi,\infty}$ is trivial.
\end{lemma}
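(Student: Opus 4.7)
The plan is to reduce the infinite-dimensional statement to the already-established finite-dimensional Lemma \ref{F-triv}. The essential observation is that, under the standing assumption
$(\Psi_n \setminus \Phi_n) \subset (\Psi_\ell \setminus \Phi_\ell)$, the definitions (\ref{big-summands-n}) express $\gs_{\Phi,\infty}$ as the increasing union $\bigcup_\ell \gs_{\Phi,\ell}$, and this union is compatible with the chain $F_n \hookrightarrow F_\ell \hookrightarrow F$ from Lemma \ref{components-inf}.

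First I would pick $x \in F$ and choose $n$ so that $x \in F_n$. Given $\lambda \in \gs^*_{\Phi,\infty}$ and any $Y \in \gs_{\Phi,\infty}$, there is some $\ell \geqq n$ with $Y \in \gs_{\Phi,\ell}$, and then $x \in F_n \subset F_\ell$. Since $F_\ell \subset \exp(i\ga_\ell)$, $\Ad(x)$ acts on each $\ga_\ell$-restricted root space $\gg_\alpha$ by a scalar $e^{i\alpha(\xi)}$ (where $x = \exp(\xi)$), so $\Ad(x)$ preserves the root-space sum $\gs_{\Phi,\ell}$. Moreover, because $x^2 = 1$, each such scalar is $\pm 1$, so triviality of the action on $\gs_{\Phi,\ell}^*$ is equivalent to triviality on $\gs_{\Phi,\ell}$.

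Next I would invoke Lemma \ref{F-triv} applied inside the finite-dimensional reductive group $G_\ell$ and its parabolic $Q_{\Phi,\ell}$: this gives $\Ad^*(F_\ell)|_{\gs_{\Phi,\ell}^*} = \mathrm{id}$, and hence $\Ad(x)Y = Y$ for our chosen $Y$. It then follows that
\[
(\Ad^*(x)\lambda)(Y) \;=\; \lambda(\Ad(x^{-1})Y) \;=\; \lambda(Y),
\]
and since $Y \in \gs_{\Phi,\infty}$ was arbitrary, $\Ad^*(x)\lambda = \lambda$. As $x \in F$ was arbitrary, the lemma follows.

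The only step that is not completely mechanical is verifying that $\gs_{\Phi,\infty}$ really is the directed union $\bigcup_\ell \gs_{\Phi,\ell}$ in a manner compatible with the inclusions of the $F_\ell$; this should follow at once from (\ref{big-summands-n}) together with the fact that for each fixed cascade index $j$, the spaces $\gz_{\Phi,n,j}$ form an ascending chain as $n$ grows. There is no real analytic obstacle here, since finiteness of $F$ (elementary abelian $2$-group, by Lemma \ref{components-inf}) means the whole argument takes place at finite level $\ell$.
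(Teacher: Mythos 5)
Your proposal is correct and takes essentially the same route as the paper: the paper's own proof is the one-line observation that Lemma \ref{F-triv} shows $\Ad(F_\ell)$ is trivial on $\gs^*_{\Phi,n}$ whenever $\ell \geqq n$, and your argument is simply a careful expansion of that reduction to the finite-dimensional case.
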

\begin{proof} Lemma \ref{F-triv} shows that $\Ad(F_\ell)$ is trivial
on $\gs^*_{\Phi,n}$ whenever $\ell \geqq n$.
\end{proof}

Now suppose that $N_{\Phi,\infty} = L_{\Phi,1}L_{\Phi,2}L_{\Phi,3}\dots $
is weakly invariant.  We continue as in Section \ref{sec6}.  

\begin{equation}\label{regset-inf}
\gr^*_{\Phi,\infty} = \{\gamma = (\gamma_n) \in \gt^*_{\Phi,\infty} \mid
	\text{ for each } n,\,\,\Ad^*(U_{\Phi,n})\gamma_n \text{ is a principal }
	U_{\Phi,n}\text{-orbit on } \gs^*_{\Phi,n}\}.
\end{equation}
It is dense, open and $\Ad^*(U_{\Phi,\infty})$-invariant in 
$\gs^*_{\Phi,\infty}$.
Let $\sigma: \Ad^*(U_{\Phi,\infty})\backslash \gr^*_{\Phi,\infty} \to
\gr^*_{\Phi,\infty}$ be a measurable section to $\gr^*_{\Phi,\infty} \to
\Ad^*(U_{\Phi,\infty})\backslash \gr^*_{\Phi,\infty}$ on whose image all the
isotropy subgroups are the same, denoted
\begin{equation}\label{m-iso-regset-inf}
U'_{\Phi,\infty}: \text{ isotropy subgroup of } U_{\Phi,\infty}
	\text{ at } \sigma(\Ad^*(U_{\Phi,\infty})(\gamma)),
	\text{ independent of } 
	\gamma \in \gr^*_{\Phi,\infty}\,.
\end{equation}
As a bonus, in view of Lemma \ref{lim-a-stab}, 
the isotropy subgroup of $U_{\Phi,\infty}A_{\Phi,\infty}$ at
$\Ad^*(a)\sigma(\Ad^*(U_{\Phi,\infty})(\gamma))$ is 
$U'_{\Phi,\infty}A'_{\Phi,\infty}$\,, independent of
$a \in A_{\Phi,\infty}$ and $\gamma \in \gr^*_{\Phi,\infty}$\,.
Note that 
$
U'_{\Phi,\infty} = \varinjlim U'_{\Phi,n}
$
where $U'_{\Phi,n}$ is the isotropy subgroup of $U_{\Phi,n}$ at
$\sigma(\Ad^*(U_{\Phi,\infty})(\gamma))_n$\,, independent of
$\gamma \in \gr^*_{\Phi,\infty}$\,.  
Given $\mu' \in \widehat{U'_{\Phi,\infty}}$, say $\mu' = \varinjlim \mu'_n$ 
where $\mu'_n \in \widehat{U'_{\Phi,n}}$, and $\gamma$ is in the image of
$\sigma$, we have representations
\begin{equation}\label{rep-inf-un}
\begin{aligned}
\pi_{\Phi,\gamma,\mu',n}: = 
&\Ind_{U'_{\Phi,n}N_{\Phi,n}}^{U_{\Phi,n}N_{\Phi,n}}
(\mu'_n\otimes \pi_{\Phi,\gamma,n}) \text{ and } \\
&\pi_{\Phi,\gamma,\mu',\infty} =
  \Ind_{U'_{\Phi,\infty}N_{\Phi,\infty}}^{U_{\Phi,\infty}N_{\Phi,\infty}}
    (\mu'\otimes \pi_{\Phi,\gamma,\infty})
	:= \varinjlim \pi_{\Phi,\gamma,\mu',n}\,.
\end{aligned}
\end{equation}
To be precise here, $\mu'$ must be a cocycle representation of 
$U'_{\Phi,\infty}$
where the cocycle $\varepsilon$ is the inverse of the Mackey obstruction to 
extending $\pi_{\Phi,\gamma,\infty}$ to a representation of 
$U'_{\Phi,\infty}N_{\Phi,\infty}$\,.
\medskip

As in (\ref{rel-schwartz-an}) we define the {\em relative Schwartz space}
$\cC((U_{\Phi,n}N_{\Phi,n}/U'_{\Phi,n}S_{\Phi,n}),
\mu'_n \otimes \zeta_{\gamma_n})$ to consist of all functions
$f \in C^\infty(U_{\Phi,n}N_{\Phi,n})$ such that
\begin{equation}\label{rel-schwartz-un}
\begin{aligned}
&f(xus) = \mu'_n(u)^{-1}\zeta_{\gamma_n}(s)^{-1}f(x)\,\,
        (x \in N_{\Phi,n}, u \in U'_{\Phi,n}\,,
        s \in S_{\Phi,n}), \text{ and }
        |q(g)p(D)f| \text{ is bounded}\\
&\text{ on } U_{\Phi,n}N_{\Phi,n}/U'_{\Phi,n}S_{\Phi,n} \text{ for all
        polynomials }
        p, q \text{ on } N_{\Phi,n}/S_{\Phi,n}
        \text{ and all } D \in \cU(\gu_{\Phi,n}+\gn_{\Phi,n}).
\end{aligned}
\end{equation}
The corresponding {\em limit relative Schwartz space} is
$$
\cC((U_{\Phi,\infty}N_{\Phi,\infty}/U'_{\Phi,\infty}S_{\Phi,\infty}),
        (\mu'\otimes \zeta_\gamma)) = \varprojlim
\cC((U_{\Phi,n}N_{\Phi,n}/U'_{\Phi,n}S_{\Phi,n}), 
	(\mu'_n\otimes\zeta_{\gamma_n})),
$$ 
consisting of all functions
$f \in C^\infty(U_{\Phi,\infty}N_{\Phi,\infty})$ such that
\begin{equation}\label{lim-schwartz-un}
\begin{aligned}
&f(xus) = \mu'(u)^{-1}\zeta_\gamma(s)^{-1}f(x) \text{ for }
        x \in N_{\Phi,\infty}, u \in U'_{\Phi,\infty} \text{ and }
        s \in S_{\Phi,\infty}\,, \text{ and }\\
&|q(g)p(D)f| \text{ is bounded } \text{ on }
        U_{\Phi,\infty}N_{\Phi,\infty}/U'_{\Phi,\infty}S_{\Phi,\infty} \\
&\text{for all polynomials }
        p, q \text{ on } N_{\Phi,\infty}/S_{\Phi,\infty}
        \text{ and all } D \in \cU(\gu_{\Phi,\infty}+\gn_{\Phi,\infty}).
\end{aligned}
\end{equation}

Theorem \ref{coef-sch} and Corollaries \ref{l1-coef1} and \ref{l1-coef2}
hold for our groups $U_{\Phi,n} N_{\Phi,n} $ here with essentially no change, 
so we will not repeat them.
\medskip

Following the discussion in Section \ref{sec10} for 
$\cC(A_{\Phi,n}N_{\Phi,n})$ and $\cC(A_{\Phi,\infty}N_{\Phi,\infty})$
we define seminorms
\begin{equation}\label{seminorm-un}
\nu_{k,D,n}(f) = {\sup}_{x \in U_{\Phi,n}N_{\Phi,n}}||x||^k|Df(x)|
\text{ where } k > 0 \text{ and } D \in \cU(\gu_{\Phi,n}+\gn_{\Phi,n}).
\end{equation}
That defines the Schwartz space $\cC(U_{\Phi,n}N_{\Phi,n})$ as in
(\ref{schwartz-n-n}):
\begin{equation}\label{schwartz-un-n}
\cC(U_{\Phi,n}N_{\Phi,n}) = 
	\left \{f \in C^\infty(U_{\Phi,n}N_{\Phi,n}) \left |
	\nu_{k,D,n}(f) < \infty \text{ for } k > 0 \text{ and } 
	D \in \cU(\gu_{\Phi,n}+\gn_{\Phi,n})\right . \right \}.
\end{equation}
Finally
we define $\cC(U_{\Phi,\infty}N_{\Phi,\infty})$
to be the inverse limit in the category of locally convex topological
vector spaces and continuous linear maps, as in (\ref{schwartz-n-inf}):
\begin{equation}\label{schwartz-un-inf}
\cC(U_{\Phi,\infty}N_{\Phi,\infty}) = 
	\left \{f \in C^\infty(U_{\Phi,\infty}N_{\Phi,\infty})
	\left | f|_{U_{\Phi,n}N_{\Phi,n}} \in \cC(U_{\Phi,n}N_{\Phi,n})
	\right . \right \}
= \varprojlim \cC(U_{\Phi,n}N_{\Phi,n}).
\end{equation}
Then we have
\begin{lemma}\label{lim-schwartz-prop-un}
The Schwartz space $\cC(U_{\Phi,\infty}N_{\Phi,\infty})$ consists of all
functions $f \in C^\infty(U_{\Phi,\infty}N_{\Phi,\infty})$ such that
$\nu_{k,D,n}(f) < \infty$ for all integers $k > 0$ and all
$D \in \cU(\gu_{\Phi,\infty}+\gn_{\Phi,\infty})$.
Here $\nu_{k,D,n}$ is given by {\rm (\ref{seminorm-un})}.
The $\cC(U_{\Phi,n}N_{\Phi,n})$ are nuclear Fr\' echet spaces and
$\cC(U_{\Phi,\infty}N_{\Phi,\infty})$ is an LF space.
The left/right actions of $(U_{\Phi,n}N_{\Phi,n}) \times (U_{\Phi,n}N_{\Phi,n})$
on $\cC(U_{\Phi,n}N_{\Phi,n})$ and of $(U_{\Phi,\infty}N_{\Phi,\infty})\times
(U_{\Phi,\infty}N_{\Phi,\infty})$ on $\cC(U_{\Phi,\infty}N_{\Phi,\infty})$ are
continuous.
\end{lemma}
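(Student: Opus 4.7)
The plan is to mirror the proof of Lemma \ref{lim-schwartz-prop-an}, invoking Casselman's \cite[Proposition 1.1]{C1989} development of Schwartz spaces on Lie groups equipped with a norm satisfying (\ref{norm}). The argument proceeds through the three assertions in turn, with the compact factor $U_{\Phi,n}$ (where $\|\cdot\|$ is bounded) simplifying the decay analysis compared with the $A_{\Phi,n}N_{\Phi,n}$ case handled above.

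First I would show that each $\cC(U_{\Phi,n}N_{\Phi,n})$ is a nuclear Fr\'echet space. The seminorms $\nu_{k,D,n}$ of (\ref{seminorm-un}) form a countable family as $k$ runs over $\Z_{>0}$ and $D$ over a countable PBW basis of $\cU(\gu_{\Phi,n}+\gn_{\Phi,n})$, which gives the Fr\'echet topology. Nuclearity comes by writing $U_{\Phi,n}N_{\Phi,n} \cong U_{\Phi,n}\times N_{\Phi,n}$ as a smooth manifold via the semidirect product map $(u,n)\mapsto un$, so that $\cC(U_{\Phi,n}N_{\Phi,n})$ is topologically isomorphic to $C^\infty(U_{\Phi,n}) \widehat\otimes \cC(N_{\Phi,n})$; both tensor factors are classical nuclear Fr\'echet spaces, and their completed projective tensor product is again nuclear Fr\'echet.

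Next I would establish the identification of $\cC(U_{\Phi,\infty}N_{\Phi,\infty})$ with the smooth $f$ satisfying $\nu_{k,D,n}(f) < \infty$ for all $k$, $D$, $n$; this is immediate from unwinding (\ref{schwartz-un-inf}), since each condition $f|_{U_{\Phi,n}N_{\Phi,n}} \in \cC(U_{\Phi,n}N_{\Phi,n})$ is precisely the finiteness of all level-$n$ seminorms. Equipping the limit with the projective-limit topology along the continuous restriction maps $\cC(U_{\Phi,\ell}N_{\Phi,\ell}) \to \cC(U_{\Phi,n}N_{\Phi,n})$ (for $\ell \geq n$) realizes $\cC(U_{\Phi,\infty}N_{\Phi,\infty})$ as the countable inverse limit of nuclear Fr\'echet spaces, which is the meaning of ``LF space'' in the present context, consistent with \cite[Section 5]{W2015} and Lemma \ref{lim-schwartz-prop-an}.

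Finally, continuity of the two-sided regular representation at each finite level is given by Casselman's result, with basic estimate $\nu_{k,D,n}(\ell(x)r(y)f) \leq C(x,y)\,\nu_{k,D',n}(f)$, where $D' = \Ad(y^{-1})D \in \cU(\gu_{\Phi,n}+\gn_{\Phi,n})$ and $C(x,y)$ depends polynomially on $\|x\|$ and $\|y\|$ by the submultiplicativity in (\ref{norm}). Since the restriction maps intertwine the two-sided actions, continuity transfers to the projective limit by its universal property. The chief technical point to watch is that the projective-limit topology is not literally that of a strict inductive limit of Fr\'echet spaces, so the ``LF'' designation is being used in the sense of ``limit of Fr\'echet spaces'' of this paper; one must verify that the countable family $\{\nu_{k,D,n}\}_{k,D,n}$ generates the correct topology, which is routine given that each finite-dimensional stage is nuclear Fr\'echet and the transition maps are continuous linear.
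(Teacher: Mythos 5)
Your proposal is correct and follows the same route the paper takes: the paper gives no separate proof of Lemma \ref{lim-schwartz-prop-un}, treating it as a direct analogue of Lemma \ref{lim-schwartz-prop-an} (which cites \cite[Proposition 1.1]{C1989}), and your argument simply fills in the routine details that the appeal to Casselman suppresses, including the tensor-product route to nuclearity of each $\cC(U_{\Phi,n}N_{\Phi,n})$ and the $\Ad$-conjugation estimate for continuity of the two-sided action. Your remark on the nonstandard use of ``LF'' (the paper means a limit of Fr\'echet spaces, here a countable projective limit, not a strict inductive limit) is an accurate reading of the paper's own convention.
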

\medskip

We construct $L^2(U_{\Phi,\infty} N_{\Phi,\infty}) := 
\varprojlim L^2(U_{\Phi,n}N_{\Phi,n})$ along the lines of Section \ref{sec9}.  
Let $\gamma = (\gamma_n) \in \gr^*_{\Phi,\infty}$
such that $\gamma$ is in the image of $\sigma$.
Consider $\mu' = \varinjlim \mu'_n \in \widehat{U'_{\Phi,\infty}}$ where
(i) $\mu'_n \in \widehat{U'_{\Phi,n}}$ and 
(ii) $\cH_{\mu'_n} \subset \cH_{\mu'_\ell}$ from a map $u_n \mapsto u_\ell$
of highest weight unit vectors, for $\ell \geqq n$.  
\medskip

Every $f \in \cC(U_{\Phi,n}N_{\Phi,n})$ is a Schwartz wave packet along
$(\gu'_{\Phi,n})^* + \gs_{\Phi,n}^*$ of coefficients of the various
$\pi_{\Phi,\gamma,\mu',n}$, and the corresponding inverse systems
fit together as in (\ref{schwartz-proj-sys}):
{\footnotesize
\begin{equation}\label{schwartz-proj-sys-un}
\begin{CD}
\cC(U_{\Phi,1}N_{\Phi,1}) @< q_{2,1} << \cC(U_{\Phi,2}N_{\Phi,2})
        @< q_{3,2} <<  \dots @ <<<
        \cC(U_{\Phi,\infty} N_{\Phi,\infty}) = \varprojlim \cC(U_{\Phi,n}N_{\Phi,n}) \\
    @VV{r_1}V       @VV{r_2}V          @VVV  @VV{r_\infty}V \\
L^2(U_{\Phi,1}N_{\Phi,1}) @< p_{2,1} << L^2(U_{\Phi,2}N_{\Phi,2})
        @< p_{3,2} <<  \dots @ <<<
        L^2(U_{\Phi,\infty} N_{\Phi,\infty}) = \varprojlim L^2(U_{\Phi,n}N_{\Phi,n})
\end{CD}
\end{equation}
}
Again, the $r_n$ are continuous injections with dense image, so
$r_\infty: \cC(U_{\Phi,\infty}N_{\Phi,\infty}) \hookrightarrow
L^2(U_{\Phi,\infty}N_{\Phi,\infty})$ is a continuous injection with dense image.
As in Proposition \ref{dense-embedding} we conclude

\begin{proposition} \label{dense-embedding-un}
Assume {\rm (\ref{lim-subgroups1})(d)}, so that 
$U_{\Phi,\infty}N_{\Phi,\infty} = \varinjlim U_{\Phi,n}N_{\Phi,n}$ is well 
defined.  Define $r_\infty: \cC(U_{\Phi,\infty}N_{\Phi,\infty}) 
\hookrightarrow L^2(U_{\Phi,\infty}N_{\Phi,\infty})$ as in 
the commutative diagram {\rm (\ref{schwartz-proj-sys-un})}.  Then 
$L^2(U_{\Phi,\infty}N_{\Phi,\infty})$ is a Hilbert space completion of 
$\cC(U_{\Phi,\infty} N_{\Phi,\infty})$.  
In particular $r_\infty$ defines a pre-Hilbert
space structure on $\cC(U_{\Phi,\infty}N_{\Phi,\infty})$ with completion 
$L^2(U_{\Phi,\infty}N_{\Phi,\infty})$.
\end{proposition}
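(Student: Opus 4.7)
The plan is to follow the template established by Propositions \ref{dense-embedding} and \ref{dense-embedding-an}, and reduce the limit statement to three ingredients: a finite-level density statement, a compatibility of the projections $q_{\ell,n}$ and $p_{\ell,n}$ with the vertical maps $r_n$, and the behavior of inverse limits of continuous injections with dense image in the appropriate categories.

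First, at each finite level, I would verify that $r_n: \cC(U_{\Phi,n}N_{\Phi,n}) \hookrightarrow L^2(U_{\Phi,n}N_{\Phi,n})$ is a continuous injection with dense image. Continuity follows from the seminorm description (\ref{seminorm-un}) dominating the $L^2$-norm. For density, decompose $L^2(U_{\Phi,n}N_{\Phi,n})$ using the Mackey-theoretic unitary dual parametrized by $(\gamma, \mu')$ with $\gamma \in \gr_{\Phi,n}^*/\Ad^*(U_{\Phi,n})$ and $\mu' \in \widehat{U'_{\Phi,n}}$, in the spirit of Theorem \ref{planch-mn}. Inside each isotypic piece $\cH_{\pi_{\Phi,\gamma,\mu',n}} \widehat\otimes \cH^*_{\pi_{\Phi,\gamma,\mu',n}}$ the dense subspace of Schwartz-type wave packets is built from matrix coefficients of $C^\infty$ vectors, and these lie in $\cC(U_{\Phi,n}N_{\Phi,n})$ by the $U_{\Phi,n}$-version of Theorem \ref{coef-sch} (combining the finite-dimensional $U_{\Phi,n}$ Peter--Weyl Theorem of Section \ref{sec11} with the coefficient estimates of Section \ref{sec9}).

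Second, I would define the projections $p_{\ell,n}: L^2(U_{\Phi,\ell}N_{\Phi,\ell}) \to L^2(U_{\Phi,n}N_{\Phi,n})$ following the Frobenius--Schur rescaling formula (\ref{hilbert-proj0}), now applied on the $N_{\Phi,n}$ factor using Theorem \ref{l2-lim}, and tensored with the ordinary Peter--Weyl projection of (\ref{hilbert-proj-sys-u}) on the $U_{\Phi,n}$ factor; the choice of $C^\infty$ unit vectors $e_n \in \cH_{\pi_{\Phi,\gamma,n,n+1}}$ together with highest weight unit vectors $u_{\lambda,n} \in \cH_{\lambda_n}$ controls the compatibilities. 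Simultaneously the $q_{\ell,n}$ on Schwartz spaces are defined by the corresponding integrations over $N_{\Phi,n,\ell}$ and $U_{\Phi,n,\ell}$. The diagram (\ref{schwartz-proj-sys-un}) then commutes by the same rescaling identity (\ref{rescale1}) applied on the $N_{\Phi,\bullet}$ side and by the standard orthogonality relations for $U_{\Phi,\bullet}$.

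Third, I would pass to the inverse limit. Since each $r_n$ is a continuous injection with dense image, since the $\cC(U_{\Phi,n}N_{\Phi,n})$ are nuclear Fr\'echet spaces (Lemma \ref{lim-schwartz-prop-un}) and the maps $q_{\ell,n}, p_{\ell,n}$ are continuous, the inverse limit $r_\infty$ is a continuous injection (injectivity is preserved because the maps $q_{\ell,n}$ separate points level-by-level) with dense image into the Hilbert-space inverse limit $L^2(U_{\Phi,\infty}N_{\Phi,\infty})$. The density at the limit follows by approximating a sequence $(h_n) \in \varprojlim L^2(U_{\Phi,n}N_{\Phi,n})$ by Schwartz wave packets level-by-level, using compatibility of $q_{\ell,n}$ with $r_\ell$. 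Therefore $L^2(U_{\Phi,\infty}N_{\Phi,\infty})$ is the Hilbert space completion of $\cC(U_{\Phi,\infty}N_{\Phi,\infty})$ under the pre-Hilbert structure induced by $r_\infty$.

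I expect the main obstacle to be step two: cleanly separating the compact (Peter--Weyl) rescaling on the $U_{\Phi,n}$-factor from the stepwise square-integrable rescaling on the $N_{\Phi,n}$-factor when the inducing construction (\ref{rep-inf-un}) tangles them through the Mackey obstruction cocycle $\varepsilon$. In practice one should check that the cocycle stabilizes along the system, so that the rescaling (\ref{hilbert-proj0}) lifts coherently to the induced representations $\pi_{\Phi,\gamma,\mu',n}$ and makes the diagram (\ref{schwartz-proj-sys-un}) strictly commute; once this is in place the rest is a straightforward transcription of the argument of Proposition \ref{dense-embedding}.
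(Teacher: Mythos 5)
Your proposal follows the same three-step template the paper uses (finite-level density of Schwartz matrix coefficients via the Mackey decomposition, compatible rescaled projections $p_{\ell,n}$ and $q_{\ell,n}$, then passage to the inverse limit as in Proposition \ref{dense-embedding}), so it is essentially the paper's argument; the paper itself is terse here, simply asserting that the $r_n$ are continuous injections with dense image and invoking the earlier template. Your closing caution about coherence of the Mackey obstruction cocycle $\varepsilon$ along the system is a reasonable point that the paper leaves implicit in (\ref{rep-inf-un}), but it does not constitute a different route.
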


As in \cite[Corollary 5.17]{W2015} $\cC(U_{\Phi,\infty}N_{\Phi,\infty})$ is 
independent of the choices we made in the construction of 
$L^2(U_{\Phi,\infty}N_{\Phi,\infty})$, so

\begin{corollary}\label{l2-well-def-un}
The limit Hilbert space $L^2(U_{\Phi,\infty}N_{\Phi,\infty}) = 
\varprojlim \{L^2(U_{\Phi,n}N_{\Phi,n}), p_{\ell,n}\}$ of
{\rm (\ref{schwartz-proj-sys-un})} , and the left/right regular representation
of $(U_{\Phi,\infty}N_{\Phi,\infty})\times (U_{\Phi,\infty}N_{\Phi,\infty})$ 
on $L^2(U_{\Phi,\infty}N_{\Phi,\infty})$, are 
independent of the choice of vectors $\{e\}$ in {\rm (\ref{unitfactor})}and 
highest weight unit vectors $\{u_n\}$.
\end{corollary}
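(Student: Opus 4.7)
The plan is to mimic the strategy used for Corollary \ref{l2-well-def} (the analog for $N_{\Phi,\infty}$), reducing the independence of the limit Hilbert space to the fact that the limit Schwartz space is intrinsic. The point is that $\cC(U_{\Phi,\infty}N_{\Phi,\infty})$, as defined in (\ref{schwartz-un-inf}) via the seminorms $\nu_{k,D,n}$ of (\ref{seminorm-un}), makes no reference at all to the vectors $\{e\}$ or the highest weight unit vectors $\{u_n\}$. All the choices entered only in identifying $L^2(U_{\Phi,n}N_{\Phi,n})$ with a sum of tensor products $\cH_{\pi_{\Phi,\gamma,\mu',n}} \widehat{\otimes} \cH^*_{\pi_{\Phi,\gamma,\mu',n}}$ and in the transition maps $p_{\ell,n}$ between these realizations.

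First I would observe that the pre-Hilbert structure on $\cC(U_{\Phi,\infty}N_{\Phi,\infty})$ obtained by pulling back via $r_\infty$ agrees, level by level, with the standard $L^2$ inner product on $\cC(U_{\Phi,n}N_{\Phi,n}) \hookrightarrow L^2(U_{\Phi,n}N_{\Phi,n})$, which is manifestly intrinsic (it uses only Haar measure on $U_{\Phi,n}N_{\Phi,n}$). By Proposition \ref{dense-embedding-un}, $r_\infty$ has dense image, so $L^2(U_{\Phi,\infty}N_{\Phi,\infty})$ is the Hilbert space completion of $\cC(U_{\Phi,\infty}N_{\Phi,\infty})$ in its intrinsic pre-Hilbert structure. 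Since Hilbert space completions are unique up to canonical unitary isomorphism, the limit $L^2$ space is independent of the choices.

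Next I would analyze how the two families of choices affect the identifications. Changing the unit vectors $e \in \cH''$ in (\ref{unitfactor}) only changes the isometric embeddings $\cH_{\pi_{\Phi,\gamma,n}} \hookrightarrow \cH_{\pi_{\Phi,\gamma,\ell}}$, but as in the proof of Corollary \ref{l2-well-def} (applied factor-by-factor to the tensor decomposition of $\cH_{\pi_{\Phi,\gamma,\mu',n}}$), the resulting rescaled projections $p_{\ell,n}$ on coefficient functions depend only on $|P_n(\gamma_n)|/|P_\ell(\gamma_\ell)|$, which is intrinsic. Changing the highest weight unit vectors $\{u_n\}$ by phases $u_n \mapsto e^{i\epsilon_n} u_n$ only rescales the injections $\cH_{\lambda_n} \hookrightarrow \cH_{\lambda_\ell}$ by a phase (recall from Section \ref{sec11} that the multiplicity-one branching ensures there is no further ambiguity). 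Both kinds of phase changes act unitarily on each $\cH_{\pi_{\Phi,\gamma,\mu',n}} \widehat{\otimes} \cH^*_{\pi_{\Phi,\gamma,\mu',n}}$ by a scalar in the first tensor factor and its conjugate in the second, which cancel on matrix coefficient functions. Therefore the induced Hilbert structure on $\cC(U_{\Phi,\infty}N_{\Phi,\infty})$ is unaffected.

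Finally, to transfer independence to the left/right regular representation, I would use Lemma \ref{lim-schwartz-prop-un}: the action of $(U_{\Phi,\infty}N_{\Phi,\infty}) \times (U_{\Phi,\infty}N_{\Phi,\infty})$ on $\cC(U_{\Phi,\infty}N_{\Phi,\infty})$ is defined directly by left and right translation of functions, which does not depend on any choice. Because this action preserves the intrinsic pre-Hilbert structure (Haar measure being bi-invariant up to modular function, which is already built into the finite-dimensional Plancherel in Theorem \ref{planch-mn}), it extends uniquely by continuity to the completion $L^2(U_{\Phi,\infty}N_{\Phi,\infty})$. The hard part, and the one to treat carefully, is checking that the phase cancellation in matrix coefficients is actually valid at the level of the projective limit of realizations; concretely, this means verifying that the different choices induce the \emph{same} family of projections $p_{\ell,n}$ on the Schwartz level, not merely projections that are unitarily conjugate, so that $\varprojlim \cC(U_{\Phi,n}N_{\Phi,n})$ is literally the same space. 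This is the step that requires the weak invariance hypothesis (via the coherent splitting into $\mu' \otimes \pi^\dagger_{\Phi,\gamma}$ and the multiplicity-one branching in (\ref{coherent-lim-u})), and it is where the argument parallels most closely the $N_{\Phi,\infty}$ case.
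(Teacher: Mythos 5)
Your argument is correct and follows essentially the same route as the paper: both proofs reduce the statement to the observation that $\cC(U_{\Phi,\infty}N_{\Phi,\infty})$ (defined intrinsically by the seminorms $\nu_{k,D,n}$ and by function restriction) makes no reference to the vectors $\{e\}$ or $\{u_n\}$, and then invoke Proposition \ref{dense-embedding-un} to realize $L^2(U_{\Phi,\infty}N_{\Phi,\infty})$ as the Hilbert completion of this intrinsic space. The paper simply cites the analogous argument of \cite[Corollary 5.17]{W2015} without spelling out the phase-cancellation details you supply; those details are correct and fill in exactly what the cited reference is doing.
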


The limit Fourier inversion formula is
\begin{theorem}\label{limit-inversion-un}
Given $\pi_{\Phi,\gamma,\mu_{\lambda,\varphi,n}} \in 
\widehat{U_{\Phi,n}N_{\Phi,n}}$ let 
$\Theta_{\pi_{\Phi,\gamma,\mu_{\lambda,\varphi,n}}}$
denote its distribution character.  Then
$\Theta_{\pi_{\Phi,\gamma,\mu_{\lambda,\varphi,n}}}$ is a tempered
distribution.  Let $f \in \cC(U_{\Phi,\infty}N_{\Phi,\infty})$ and
$x \in U_{\Phi,\infty}N_{\Phi,\infty}$\,.
Then $x \in U_{\Phi,n}N_{\Phi,n}$ for some $n$ and
\begin{equation}\label{lim-inv-formula-un}
f(x) = c_n\int_{\gamma_n\in\mathfrak{t}_{\Phi,n}^*}
        {\sum}_{\mu'_n \in \widehat{U'_{\Phi,n}}}\,
        \Theta_{\pi_{\Phi,\gamma,\mu',n}}(r_xf)
        \deg(\mu')|\Pf_{\mathfrak{n}_n}({\gamma_n})|d\gamma_n
\end{equation}
where $c_n = 2^{d_1 + \dots + d_{m_n}} d_1! d_2! \dots d_{m_n}!$ as in 
{\rm (\ref{c-d}a)}
and $m_n$ is the number of factors $L_r$ in $N_{\Phi,n}$.
\end{theorem}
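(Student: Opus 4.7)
The plan is to reduce the limit inversion formula to its finite-dimensional counterpart, Theorem \ref{planch-mn} (invoking Theorem \ref{w-o-weakly-invar} if weak invariance in the sense of Definition \ref{inv-inf} fails), in direct parallel with the reductions used for Theorems \ref{limit-inversion} and \ref{limit-inversion-an}.

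Given $x \in U_{\Phi,\infty}N_{\Phi,\infty} = \varinjlim U_{\Phi,n}N_{\Phi,n}$, I would first choose $n$ so that $x \in U_{\Phi,n}N_{\Phi,n}$. The definition (\ref{schwartz-un-inf}) of the limit Schwartz space as a projective limit then guarantees that $f_n := f|_{U_{\Phi,n}N_{\Phi,n}}$ lies in $\cC(U_{\Phi,n}N_{\Phi,n})$, and since $x$ already belongs to $U_{\Phi,n}N_{\Phi,n}$ the restriction of $r_xf$ to that subgroup agrees with $r_xf_n$; in particular $f(x) = f_n(x)$. I would then apply Theorem \ref{planch-mn} to $f_n$ at $x$ to compute $f_n(x)$ as an integral of $\Theta_{\pi_{\Phi,\gamma,\mu',n}}(r_xf_n)\deg(\mu'_n)|\Pf_{\mathfrak{n}_n}(\gamma_n)|$ with constant $c_n$. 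Temperedness of $\Theta_{\pi_{\Phi,\gamma,\mu',n}}$ is automatic on the Type I group $U_{\Phi,n}N_{\Phi,n}$ (an extension of a compact group by a connected simply-connected nilpotent group): its distribution character pairs continuously with the nuclear Fr\'echet space $\cC(U_{\Phi,n}N_{\Phi,n})$ of Lemma \ref{lim-schwartz-prop-un}.

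The one bookkeeping step that requires care is reconciling the domain of integration. Theorem \ref{planch-mn} writes the finite-dimensional formula over the orbit space $\mathfrak{r}_{\Phi,n}^*/\Ad^*(U_{\Phi,n})$, whereas the statement here integrates over $\mathfrak{t}_{\Phi,n}^*$ directly. By construction in (\ref{regset-inf})--(\ref{m-iso-regset-inf}) the isotropy subgroup $U'_{\Phi,n}$ and the induced representation $\pi_{\Phi,\gamma,\mu',n}$ depend on $\gamma_n$ only through its $\Ad^*(U_{\Phi,n})$-orbit, so the entire integrand is orbit-constant; the measure $|\Pf_{\mathfrak{n}_n}(\gamma_n)|\,d\gamma_n$ then fibers over the orbit space with the Jacobian absorbed into the Pfaffian density, in exact analogy with the passage from (\ref{one-orbit}) to (\ref{all-orbits}) in the proof of Theorem \ref{planch-an}.

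The main obstacle will be making the constants and measure normalizations line up cleanly, so that the single factor $c_n = 2^{d_1+\dots+d_{m_n}}d_1!\cdots d_{m_n}!$ of (\ref{c-d}(a)) absorbs both the constant of Theorem \ref{planch-mn} and the orbital Jacobian, and that the interaction between $\deg(\mu'_n)$ and the $U_{\Phi,n}$-orbit fibration contributes no spurious multiplicative factor. Independence of the resulting formula from the choice of $n$ (once $x \in U_{\Phi,n}N_{\Phi,n}$) is a compatibility statement along the projective system (\ref{schwartz-proj-sys-un}) and should follow from the rescaling identity (\ref{rescale1}) exactly as in the proof of Theorem \ref{limit-inversion}.
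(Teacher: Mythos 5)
Your reduction to Theorem \ref{planch-mn} is correct and yields the stated formula, but it takes a genuinely different route from the paper. The paper does not invoke Theorem \ref{planch-mn}; instead its proof of Theorem \ref{limit-inversion-un} carries out the finite-dimensional Kleppner--Lipsman computation explicitly: setting $h=r_xf$, it expands $\tr\pi_{\Phi,\gamma,\mu',n}(h)$ via \cite[Theorem 3.2]{KL1972} by integrating over $U_{\Phi,n}/U'_{\Phi,n}$, sums over $\mu'_n\in\widehat{U'_{\Phi,n}}$ (Peter--Weyl on the little group) to collapse to the trace of $\pi_{\Phi,\gamma,n}$ on $N_{\Phi,n}$, and then integrates over $U_{\Phi,n}$-orbits to recover $h(1)=f(x)$. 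This is in effect a self-contained proof of the finite-dimensional inversion for $U_{\Phi,n}N_{\Phi,n}$; the paper chooses to do it because Theorem \ref{planch-mn} itself is only asserted by analogy to \cite[Proposition 3.3]{W2014} rather than proved there. Your modular version is shorter and cleaner but premises the reader's acceptance of \ref{planch-mn}; the paper's direct computation buys self-containment. Your third paragraph correctly flags the one genuine wrinkle in this reduction: Theorem \ref{planch-mn} integrates over $\gr^*_\Phi/\Ad^*(U_\Phi)$ while the target formula is over $\gt^*_{\Phi,n}$. The reconciliation you sketch -- the integrand is $\Ad^*(U_{\Phi,n})$-invariant and the compact group's Haar measure is normalized to total mass $1$, so the orbit fibration contributes no extra factor -- is the right justification, and it is exactly the step the paper performs implicitly in passing from $\int_{U_{\Phi,n}\backslash\gs^*_{\Phi,n}}$ to $\int_{\gs^*_{\Phi,n}}$ at the end of its computation.
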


\begin{proof}  We compute along the lines of \cite[Theorem 2.7]{LW1982}.
Let $h = r_xf$.  From \cite[Theorem 3.2]{KL1972},
$$
\begin{aligned}
\tr \pi_{\Phi,\gamma,\mu',n}(h) 
	&={\int}_{x \in U_{\Phi,n}/U'_{\Phi,n}}
		\tr \int_{yu \in N_{\Phi,n}U'_{\Phi,n}}h(x^{-1}yux)\cdot
		(\pi_{\Phi,\gamma,n}\otimes \mu'_n)
		   (yu)\,dy\, du\, dx\\
&= {\int}_{x \in U_{\Phi,n}/U'_{\Phi,n}}
	\tr \int_{N_{\Phi,n}U'_{\Phi,n}}h(yx^{-1}ux)\cdot
		(\pi_{\Phi,\gamma,n}\otimes \mu'_n)
		(xyx^{-1}u)\,
		dy\, du\, dx.
\end{aligned}
$$
Now
$$
\begin{aligned}
{\sum}_{_{\widehat{U_{\Phi,n}'}}} 
  &\tr \pi_{\Phi,\gamma,\mu',n}(h) 
	\deg\mu'_n \\
&= {\sum}_{_{_{\widehat{U'_{\Phi,n}}}}} \int_{x \in U_{\Phi,n}/U'_{\Phi,n}} 
        \tr \int_{N_{\Phi,n}U'_{\Phi,n}}h(yx^{-1}ux)
                (\pi_{\Phi,\gamma,\mu',n})(xyx^{-1}u)\,
                dy\, du\, dx\, \deg\mu'_n \\
&= \int_{x \in U_{\Phi,n}/U'_{\Phi,n}} {\sum}_{_{_{ \widehat{U'_{\Phi,n}}}}}
	\tr \int_{N_{\Phi,n}U'_{\Phi,n}}h(yx^{-1}ux)
                (\pi_{\Phi,\gamma,\mu',n})(xyx^{-1}u)\,
		dy\, du\, \deg\mu'_n\, dx\\
&= \int_{x \in U_{\Phi,n}/U'_{\Phi,n}} \tr \int_{N_{\Phi,n}}h(y) 
	\pi_{\Phi,\gamma,\mu',n}(xyx^{-1}) dy\, dx \\
&= \int_{x \in U_{\Phi,n}/U'_{\Phi,n}} \tr \int_{N_{\Phi,n}}h(y)
        (x^{-1}\cdot \pi_{\Phi,\gamma,\mu',n})(y) dy\, dx \\
&= \int_{x \in U_{\Phi,n}/U'_{\Phi,n}} \tr ((x^{-1}\cdot
	\pi_{\Phi,\gamma,\mu',n})(h))\, dx\\
&= \int_{\Ad^*(U_{\Phi,n})\gamma} 
	\tr \pi_{\Phi,\gamma,\mu'_n}(h) 
	|\Pf(\gamma_n)|d\gamma_n .
\end{aligned}
$$
Summing over the the space of $U_{\Phi,n}$-orbits on $\gs_{\Phi,n}^*$ 
we now have
$$
\begin{aligned}
\int_{U_{\Phi,n}\backslash \gs_{\Phi,n}^*}
	&{\sum}_{_{\widehat{U'_{\Phi,n}}}}
	\tr \pi_{\Phi,\gamma,\mu',n}(h) 
		\deg\mu'_n |\Pf(\gamma_n)|d\gamma_n \\
& = \int_{U_{\Phi,n}\backslash \gs_{\Phi,n}^*} 
	\tr \pi_{\Phi,\gamma,\mu',n}(h) 
		|\Pf(\gamma_n)|d\gamma_n \\
& = \int_{\gs_{\Phi,n}^*} \tr \pi_{\Phi,\gamma,n}(h) |\Pf(\gamma_n)|d\gamma_n 
	= h(1) = f(x)\,.
\end{aligned}
$$
That completes the proof.
\end{proof}

\section{Representations of the Limit Groups V:
        $U_{\Phi,\infty} A_{\Phi,\infty} N_{\Phi,\infty}$}
\label{sec13}
\setcounter{equation}{0}
We extend some of the results of Sections \ref{sec10} and \ref{sec12} 
to the maximal amenable subgroups $E_{\Phi,\infty}:= 
U_{\Phi,\infty} A_{\Phi,\infty} N_{\Phi,\infty}$ of $G$.  Here we
are using amenability of the $E_{\Phi,n} := U_{\Phi,n} A_{\Phi,n} N_{\Phi,n}$.
\medskip

As in Definition \ref{invariant} the decomposition
$N_{\Phi,\infty} = L_{\Phi,1}L_{\Phi,2}\dots$ is {\em invariant} if 
each $\ad(\gm_{\Phi,\infty})\gz_{\Phi,\infty,j} = \gz_{\Phi,\infty,j}$\,, 
in other words
if each $N_{\Phi,n} = L_{\Phi,1}L_{\Phi,2}\dots L_{\Phi,m_n}$ is
invariant.  Similarly $N_{\Phi,\infty} = L_{\Phi,1}L_{\Phi,2}\dots$ is 
{\em weakly invariant} if each $\ad(\gu_{\Phi,\infty})\gz_{\Phi,\infty,j} = 
\gz_{\Phi,\infty,j}$\,,
i.e. if each $N_{\Phi,n} = L_{\Phi,1}L_{\Phi,2}\dots L_{\Phi,m_n}$ is
weakly invariant.  
\medskip

Recall the principal orbit set $\gr^*_{\Phi,\infty}$ from (\ref{regset-inf})
and the measurable section 
$\sigma:\Ad^*(U_{\Phi,\infty})\backslash \gr^*_{\Phi,\infty} \to 
\gr^*_{\Phi,\infty}$ on whose image all the isotropy subgroups of 
$\Ad^*(U_{\Phi,\infty})$ are the same.  Note that $\sigma$ is
$\Ad^*(A_{\Phi,\infty})$-equivariant, so we may view it as a section
to $\gr^*_{\Phi,\infty} \to \Ad^*(U_{\Phi,\infty}A_{\Phi,\infty})
\backslash \gr^*_{\Phi,\infty}$ on whose image all the isotropy subgroups of
$\Ad^*(U_{\Phi,\infty}A_{\Phi,\infty})$ are the same.
Following (\ref{ma-iso-regset}), (\ref{a-stab-inf})
and (\ref{m-iso-regset-inf}), and as remarked just after
(\ref{m-iso-regset-inf}), that common isotropy subgroup is 
\begin{equation}\label{ua-iso-regset-inf}
U'_{\Phi,\infty}A'_{\Phi,\infty}: \text{ isotropy of } 
U_{\Phi,\infty}A_{\Phi,\infty} \text{ at } 
\sigma(\Ad^*(U_{\Phi,\infty}A_{\Phi,\infty}))(\gamma), \text{ independent of }
\gamma \in \gr^*_{\Phi,\infty}\,.
\end{equation}

Let $\gamma \in \gt_{\Phi,\infty}^*$ be in the image of $\sigma$.  Then
$\pi_{\Phi,\gamma\infty}$ extends to a representation 
$\pi^\dagger_{\Phi,\gamma,\infty}$ of 
$U'_{\Phi,\infty}A'_{\Phi,\infty}N_{\Phi,\infty}$
on the same representation space $\cH_{\pi_{\Phi,\gamma\infty}}$\,.
Given $\mu' \in \widehat{U'_{\Phi,\infty}}$ and $\xi' = (\xi'_n) \in 
(\ga'_{\Phi,\infty})^*$
the corresponding representation of $E_{\Phi,\infty}:= 
U_{\Phi,\infty} A_{\Phi,\infty} N_{\Phi,\infty}$ is induced from 
$E'_{\Phi,\infty}:= U'_{\Phi,\infty}A'_{\Phi,\infty}N_{\Phi,\infty}$
as follows.
\begin{equation}\label{repuan'} 
\begin{aligned}
\pi_{\Phi,\gamma,\xi',\mu',\infty}& = \varinjlim \pi_{\Phi,\gamma,\xi',\mu',n}
\text{ where } \pi_{\Phi,\gamma,\xi',\mu',n} = \Ind_{E'_{\Phi,n}}^{E_{\Phi,n}}
\left (\mu_n'\otimes \exp(2\pi i \xi'_n)\otimes \pi^\dagger_{\Phi,\gamma,n}\right ),\\
&\text{ in other words }\pi_{\Phi,\gamma,\xi',\mu',n} =
\Ind_{U'_{\Phi,n}A'_{\Phi,n}N_{\Phi,n}}^{U_{\Phi,n} 
A_{\Phi,n} N_{\Phi,n}}\,
\left (\mu_n'\otimes \exp(2\pi i \xi'_n)\otimes \pi^\dagger_{\Phi,\gamma,n}\right ).
\end{aligned}
\end{equation}

As in Section \ref{sec10} the {\em relative Schwartz space}
$\cC((U_{\Phi,n}A_{\Phi,n}N_{\Phi,n}/U'_{\Phi,n}A'_{\Phi,n}S_{\Phi,n}),
(\mu_n'\otimes \exp(2\pi i \xi'_n)\otimes\zeta_{\gamma_n}))$ consists
of all functions $f \in C^\infty(U_{\Phi,n}A_{\Phi,n}N_{\Phi,n})$ such that
\begin{equation}\label{rel-schwartz-uan-1}
f(xuas) = \mu'_n(u)^{-1}\exp(-2\pi i\xi'(\log a))\zeta_{\gamma_n}(s)^{-1}f(x)\,
        (x \in N_{\Phi,n}, u \in U'_{\Phi,n}, a \in A'_{\Phi,n}\,,
        s \in S_{\Phi,n}), 
\end{equation}
and, for all polynomials $p, q$ on $A_{\Phi,n}N_{\Phi,n}/A'_{\Phi,n}S_{\Phi,n}$
and all $D \in \cU(\gu_{\Phi,n}\ga_{\Phi,n}\gn_{\Phi,n})$,
\begin{equation}\label{rel-schwartz-uan-2}
|q(g)p(D)f| \text{ is bounded on } 
U_{\Phi,n}A_{\Phi,n}N_{\Phi,n}/U'_{\Phi,n}A'_{\Phi,n}S_{\Phi,n}\,. 
\end{equation}
The corresponding {\em limit relative Schwartz space} is
\begin{equation}\label{rel-schwartz-uan-3}
\begin{aligned}
\cC((U_{\Phi,\infty}&A_{\Phi,\infty}N_{\Phi,\infty}/U'_{\Phi,\infty}A'_{\Phi,\infty}
S_{\Phi,\infty}),
(\mu'\otimes \exp(2\pi i \xi')\otimes\zeta_{\gamma}))\\
&= \varprojlim \cC((U_{\Phi,n}A_{\Phi,n}N_{\Phi,n}/U'_{\Phi,n}A'_{\Phi,n}
S_{\Phi,n}), (\mu_n'\otimes \exp(2\pi i \xi'_n)\otimes\zeta_{\gamma_n})).
\end{aligned}
\end{equation}

Again, Theorem \ref{coef-sch} and Corollaries \ref{l1-coef1} and 
\ref{l1-coef2} hold {\em mutatis mutandis} for the groups $E_{\Phi,n}$ so
we won't repeat them.  We extend the definition (\ref{seminorm-un}) of
seminorms on $U_{\Phi,n}N_{\Phi,n}$ to $E_{\Phi,n} = 
U_{\Phi,n}A_{\Phi,n}N_{\Phi,n}$:
\begin{equation}\label{seminorm-uan}
\nu_{k,D,n}(f) = {\sup}_{x \in E_{\Phi,n}} ||x||^k|Df(x)| \text{ where }
	k > 0 \text{ and } D \in \ge_{\Phi,n} \text{ for }
	f \in C^\infty(E_{\Phi,n}).
\end{equation}
That defines the Schwartz space $\cC(E_{\Phi,n})$:
\begin{equation}\label{schwartz-uan-n}
\cC(E_{\Phi,n}) = 
	\left \{f \in C^\infty(E_{\Phi,n}) \left |
	\nu_{k,D,n}(f) < \infty \text{ for } k > 0 \text{ and } 
	D \in \cU(\ge_{\Phi,n})\right . \right \}.
\end{equation}
Finally
we define $\cC(E_{\Phi,\infty})$
to be the inverse limit in the category of locally convex topological
vector spaces and continuous linear maps, 
\begin{equation}\label{schwartz-uan-inf}
\cC(E_{\Phi,\infty}) = 
	\left \{f \in C^\infty(E_{\Phi,\infty})
	\left | f|_{E_{\Phi,n}} \in \cC(E_{\Phi,n})
	\right . \right \}
= \varprojlim \cC(E_{\Phi,n}).
\end{equation}
As before
\begin{lemma}\label{lim-schwartz-prop-e}
The Schwartz space $\cC(E_{\Phi,\infty})$ consists of all
functions $f \in C^\infty(E_{\Phi,\infty})$ such that
$\nu_{k,D,n}(f) < \infty$ for all integers $k > 0$ and all
$D \in \cU(\ge_{\Phi,\infty})$.
Here $\nu_{k,D,n}$ is given by {\rm (\ref{seminorm-uan})}.
The $\cC(E_{\Phi,n})$ are nuclear Fr\' echet spaces and
$\cC(E_{\Phi,\infty})$ is an LF space.
The left/right actions of $(E_{\Phi,n}) \times (E_{\Phi,n})$
on $\cC(E_{\Phi,n})$ and of $(E_{\Phi,\infty})\times
(E_{\Phi,\infty})$ on $\cC(E_{\Phi,\infty})$ are
continuous.
\end{lemma}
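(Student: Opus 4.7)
The plan is to mirror, step for step, the arguments already carried out for Lemmas \ref{lim-schwartz-prop-an} and \ref{lim-schwartz-prop-un}, with $E_{\Phi,n} = U_{\Phi,n}A_{\Phi,n}N_{\Phi,n}$ in place of $A_{\Phi,n}N_{\Phi,n}$ and $U_{\Phi,n}N_{\Phi,n}$. First I would observe that $\cU(\ge_{\Phi,\infty}) = \varinjlim \cU(\ge_{\Phi,n})$, so any $D \in \cU(\ge_{\Phi,\infty})$ lies in some $\cU(\ge_{\Phi,n})$. Combined with the definition (\ref{schwartz-uan-inf}) of $\cC(E_{\Phi,\infty})$ as the inverse limit of the $\cC(E_{\Phi,n})$, and the seminorm characterization (\ref{schwartz-uan-n}) at each finite level, this immediately gives the claimed description of $\cC(E_{\Phi,\infty})$: a smooth function on $E_{\Phi,\infty}$ lies in the Schwartz space precisely when $\nu_{k,D,n}(f) < \infty$ for all $k,D,n$.

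Next I would establish that each $\cC(E_{\Phi,n})$ is a nuclear Fr\'echet space. The norm $\|\cdot\|$ on $E_{\Phi,n}$ satisfies (\ref{norm}), so Casselman's framework \cite{C1989} for Schwartz spaces on real Lie groups of moderate growth applies to $E_{\Phi,n}$ exactly as it did to $A_{\Phi,n}N_{\Phi,n}$ in Lemma \ref{lim-schwartz-prop-an} and to $U_{\Phi,n}N_{\Phi,n}$ in Lemma \ref{lim-schwartz-prop-un}. The topology is defined by the countable family of seminorms $\nu_{k,D,n}$ indexed by $k > 0$ and $D$ in a countable basis of $\cU(\ge_{\Phi,n})$, giving the Fr\'echet property; nuclearity is inherited from nuclearity of $\cC(\ga_{\Phi,n}+\gn_{\Phi,n})$ on the exponential side through the polynomial diffeomorphism $\exp$, combined with compactness of $U_{\Phi,n}/F_n$ (hence nuclearity of $C^\infty(U_{\Phi,n})$). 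Then $\cC(E_{\Phi,\infty}) = \varprojlim \cC(E_{\Phi,n})$ is a projective limit of a countable chain of nuclear Fr\'echet spaces with continuous connecting maps, hence is an LF space in the paper's sense (as in Lemmas \ref{lim-schwartz-prop-an} and \ref{lim-schwartz-prop-un}).

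For continuity of the left/right regular representation of $E_{\Phi,n} \times E_{\Phi,n}$ on $\cC(E_{\Phi,n})$, I would use the standard argument: under left or right translation by $g$, the norm $\|x\|$ is distorted at most by a factor of $\|g\|$ thanks to submultiplicativity in (\ref{norm}), while the effect of translation on the differential operator $D$ is multiplication (in $\cU(\ge_{\Phi,n})$) by a factor depending continuously on $g$. Hence each $\nu_{k,D,n}$ is dominated by finitely many others after translation, with control continuous in $g$. To pass to the limit, any fixed $(g,h) \in E_{\Phi,\infty} \times E_{\Phi,\infty}$ lies in some $E_{\Phi,n_0} \times E_{\Phi,n_0}$, and the action on $\cC(E_{\Phi,\ell})$ for every $\ell \geqq n_0$ is continuous, giving a continuous action on the projective limit $\cC(E_{\Phi,\infty})$.

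The main obstacle I expect is the bookkeeping around compatibility of the seminorms under the restriction maps $\cC(E_{\Phi,\ell}) \to \cC(E_{\Phi,n})$ for $\ell \geqq n$, and ensuring that the norm $\|\cdot\|$ on $E_{\Phi,\ell}$ restricts sensibly to a norm on $E_{\Phi,n}$ so that the $\nu_{k,D,n}$ defined at the two levels are comparable on the image of the restriction. This is a routine but finicky check, handled by choosing the norms on each $E_{\Phi,n}$ coming from a fixed invariant norm on $\gg_n$ extended coherently through the propagation structure of Section \ref{sec8}; once that choice is in place, all the comparisons above go through uniformly and the proof reduces to inverse-limit formalities.
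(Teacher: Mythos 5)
Your proposal matches the paper's (implicit) approach: the paper gives no written proof for this lemma, preceding it only with ``As before'' and relying on the pattern established in Lemmas \ref{lim-schwartz-prop-an} and \ref{lim-schwartz-prop-un} via Casselman's framework, which is precisely what you execute. The only minor quibble is that one need not invoke $U_{\Phi,n}/F_n$ — $U_{\Phi,n}$ itself is compact, so $C^\infty(U_{\Phi,n})$ is nuclear directly — but the substance and structure of your argument are the same as the paper's.
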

\medskip

We construct $L^2(E_{\Phi,\infty}) := 
\varprojlim L^2(E_{\Phi,n})$ as before.
Let $\gamma = (\gamma_n) \in \gr^*_{\Phi,\infty}$
such that $\gamma$ is in the image of $\sigma$.
Consider $\mu' = (\mu'_n) \in \widehat{U'_{\Phi,\infty}}$ 
and $\xi = (\xi_n) \in \ga_{\Phi,n}$\,, For $\ell \geqq n$ we consider 
the maps on representation spaces given by
$\cH_{\pi_{\Phi,\gamma,\xi,\mu',n}} \subset \cH_{\pi_{\Phi,\gamma,\xi,\mu',\ell}}$ 
from maps $u_n \mapsto u_\ell$ of highest weight unit vectors.
\medskip

Every $f \in \cC(E_{\Phi,n})$ is a Schwartz wave packet along
$(\gu'_{\Phi,n})^* \ga'_{\Phi,n} + \gs_{\Phi,n}^*$ of coefficients of the various
$\pi_{\Phi,\gamma,\xi,\mu',n}$\,.  The corresponding inverse systems
fit together as in (\ref{schwartz-proj-sys}):
{\footnotesize
\begin{equation}\label{schwartz-proj-sys-uan}
\begin{CD}
\cC(E_{\Phi,1}) @< q_{2,1} << \cC(E_{\Phi,2})
        @< q_{3,2} <<  \dots @ <<<
        \cC(E_{\Phi,\infty}) = \varprojlim \cC(E_{\Phi,n}) \\
    @VV{r_1}V       @VV{r_2}V          @VVV  @VV{r_\infty}V \\
L^2(E_{\Phi,1}) @< p_{2,1} << L^2(E_{\Phi,2})
        @< p_{3,2} <<  \dots @ <<<
        L^2(E_{\Phi,\infty}) = \varprojlim L^2(E_{\Phi,n})
\end{CD}
\end{equation}
}
Again, the $r_n$ are continuous injections with dense image, so
$r_\infty: \cC(E_{\Phi,\infty}) \hookrightarrow
L^2(E_{\Phi,\infty})$ is a continuous injection with dense image.
As in Proposition \ref{dense-embedding} we conclude

\begin{proposition} \label{dense-embedding-uan}
Assume {\rm (\ref{lim-subgroups1})(d)}, so that 
$E_{\Phi,\infty} = \varinjlim E_{\Phi,n}$ is well 
defined.  Define $r_\infty: \cC(E_{\Phi,\infty}) 
\hookrightarrow L^2(E_{\Phi,\infty})$ as in {\rm (\ref{schwartz-proj-sys-uan})}.  
Then $L^2(E_{\Phi,\infty})$ is a Hilbert space completion of 
$\cC(E_{\Phi,\infty})$.  
In particular $r_\infty$ defines a pre-Hilbert
space structure on $\cC(E_{\Phi,\infty})$ with completion 
$L^2(E_{\Phi,\infty})$.
\end{proposition}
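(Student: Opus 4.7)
The plan is to follow the template established by Propositions \ref{dense-embedding}, \ref{dense-embedding-an}, and \ref{dense-embedding-un}, combining the analysis of $A_{\Phi,\infty}N_{\Phi,\infty}$ from Section \ref{sec10} with that of $U_{\Phi,\infty}N_{\Phi,\infty}$ from Section \ref{sec12}. The key is to exploit the commutative diagram (\ref{schwartz-proj-sys-uan}): if one can show each $r_n : \cC(E_{\Phi,n}) \hookrightarrow L^2(E_{\Phi,n})$ is a continuous injection with dense image, and that the vertical maps are compatible with the horizontal inverse systems, then passing to projective limits produces $r_\infty$ with the required properties.

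First, at each finite level $n$, I would establish that $r_n$ is a continuous injection with dense image. Continuity and injectivity are immediate from the seminorms (\ref{seminorm-uan}) and the fact that Schwartz functions are genuine functions on the locally compact group $E_{\Phi,n}$. For density I would decompose $f \in \cC(E_{\Phi,n})$ as a wave packet over $(\gu'_{\Phi,n})^* \oplus (\ga'_{\Phi,n})^* \oplus \gs_{\Phi,n}^*$ of matrix coefficients $f_{\pi_{\Phi,\gamma,\xi',\mu',n};\,u,v}$ of $C^\infty$ vectors for the induced representations $\pi_{\Phi,\gamma,\xi',\mu',n}$ of (\ref{repuan'}). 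These coefficients lie in the relative Schwartz spaces (\ref{rel-schwartz-uan-3}), which by the analog of Theorem \ref{coef-sch} are dense in the corresponding $L^2$ spaces of sections of the associated unitary line bundles over $E_{\Phi,n}/U'_{\Phi,n}A'_{\Phi,n}S_{\Phi,n}$. This combines the Frobenius--Schur argument for the nilpotent layer, Fourier analysis along $A_{\Phi,n}/A'_{\Phi,n}$, and the Peter--Weyl decomposition along $U_{\Phi,n}/U'_{\Phi,n}$.

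Second, I would verify that the projections $p_{\ell,n} : L^2(E_{\Phi,\ell}) \to L^2(E_{\Phi,n})$ are partial isometries compatible with restriction $q_{\ell,n} : \cC(E_{\Phi,\ell}) \to \cC(E_{\Phi,n})$, $f \mapsto f|_{E_{\Phi,n}}$. The map $p_{\ell,n}$ is built from three rescaling mechanisms acting on decomposable vectors: the Pfaffian-ratio rescaling (\ref{hilbert-proj0}) on the $N_\Phi$ factor, the Fourier projection in the $A_\Phi/A'_\Phi$ direction as in Section \ref{sec10}, and the Peter--Weyl rescaling from Section \ref{sec11} on the $U_\Phi$ factor. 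Continuity of $q_{\ell,n}$ is Lemma \ref{lim-schwartz-prop-e}, and the intertwining identity $p_{\ell,n}\circ r_\ell = r_n\circ q_{\ell,n}$ can be checked directly on matrix coefficients of $C^\infty$ vectors, where the three rescalings are built from the relevant orthogonality relations.

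Third, passing to the projective limits in the categories of Hilbert spaces with partial isometries and of LF spaces with continuous linear maps, respectively, yields $r_\infty : \cC(E_{\Phi,\infty}) \to L^2(E_{\Phi,\infty})$. Continuity is automatic from the universal property. Injectivity follows since any $f \in \cC(E_{\Phi,\infty})$ that restricts to the zero $L^2$ class on each $E_{\Phi,n}$ is a $C^\infty$ function vanishing on each $E_{\Phi,n}$ and hence identically on $E_{\Phi,\infty} = \varinjlim E_{\Phi,n}$. Density in $L^2(E_{\Phi,\infty})$ follows from the density at each finite stage together with the definition of the projective-limit topology. The pre-Hilbert structure on $\cC(E_{\Phi,\infty})$ is then pulled back through $r_\infty$ and its completion is precisely $L^2(E_{\Phi,\infty})$.

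The main obstacle will be verifying the coherence of the three rescaling mechanisms built into $p_{\ell,n}$, ensuring that they commute with the restriction of Schwartz functions on $E_{\Phi,\ell}$ to $E_{\Phi,n}$. This requires an inductive argument using the extensions $\pi^\dagger_{\Phi,\gamma,n}$ together with the induction-from-$U'_\Phi A'_\Phi N_\Phi$ formula, so that the decomposition of an element in $\cC(E_{\Phi,\ell})$ into matrix coefficients restricts correctly to the analogous decomposition in $\cC(E_{\Phi,n})$. Once this compatibility is in place, the proposition reduces to the same formal projective-limit argument that established Propositions \ref{dense-embedding-an} and \ref{dense-embedding-un}.
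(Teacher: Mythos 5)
Your proposal follows exactly the same route as the paper: decompose Schwartz functions as wave packets of matrix coefficients of the induced representations $\pi_{\Phi,\gamma,\xi,\mu',n}$, verify that the finite-level maps $r_n:\cC(E_{\Phi,n})\hookrightarrow L^2(E_{\Phi,n})$ are continuous injections with dense image, check commutativity of (\ref{schwartz-proj-sys-uan}), and pass to projective limits as in Propositions \ref{dense-embedding}, \ref{dense-embedding-an}, and \ref{dense-embedding-un}. The paper states this more tersely, deferring to the earlier cases, but the substance is identical to what you write.
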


As in \cite[Corollary 5.17]{W2015} $\cC(E_{\Phi,\infty})$ is 
independent of the choices we made in the construction of 
$L^2(E_{\Phi,\infty})$, so

\begin{corollary}\label{l2-well-def-uan}
The limit Hilbert space $L^2(E_{\Phi,\infty}) = 
\varprojlim \{L^2(E_{\Phi,n}), p_{\ell,n}\}$ of
{\rm (\ref{schwartz-proj-sys-uan})} , and the left/right regular representation
of $E_{\Phi,\infty}\times E_{\Phi,\infty}$ on $L^2(E_{\Phi,\infty})$, are 
independent of the choice of vectors $\{e\}$ in {\rm (\ref{unitfactor})}and 
highest weight unit vectors $\{u_n\}$.
\end{corollary}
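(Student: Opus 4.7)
The plan is to deduce this corollary from Proposition \ref{dense-embedding-uan} by observing that everything on the Schwartz space side is intrinsic, and that the ambiguities caused by choices of unit vectors collapse into unitary automorphisms that are compatible with the projections $p_{\ell,n}$. First I would emphasize that the Schwartz space $\cC(E_{\Phi,\infty})$ given by (\ref{seminorm-uan})--(\ref{schwartz-uan-inf}) is defined purely in terms of smooth functions on $E_{\Phi,\infty}$ and differential operators in $\cU(\ge_{\Phi,\infty})$; no choice of vectors $\{e\}$ from (\ref{unitfactor}) or highest weight unit vectors $\{u_n\}$ enters that definition. The left/right action of $E_{\Phi,\infty}\times E_{\Phi,\infty}$ on $\cC(E_{\Phi,\infty})$ is then automatically intrinsic.

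Next, by Proposition \ref{dense-embedding-uan}, $L^2(E_{\Phi,\infty})$ is a Hilbert space completion of $\cC(E_{\Phi,\infty})$ for the pre-Hilbert structure given by $r_\infty$. Thus it suffices to show that this pre-Hilbert structure on the intrinsic space $\cC(E_{\Phi,\infty})$ does not depend on the choices. For each finite $n$, the space $L^2(E_{\Phi,n})$ is the ordinary $L^2$ space of a locally compact group, which is intrinsic; what needs checking is that the inverse limit projections $p_{\ell,n}\colon L^2(E_{\Phi,\ell})\to L^2(E_{\Phi,n})$ from (\ref{schwartz-proj-sys-uan}) do not depend on the vectors $\{e\}$ and $\{u_n\}$. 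Following the pattern of (\ref{hilbert-proj0}), these projections are built from the Frobenius--Schur rescaling $|P_n(\gamma_n)|/|P_\ell(\gamma_\ell)|$ applied to matrix coefficients of the representations $\pi_{\Phi,\gamma,\xi,\mu',n}$ of (\ref{repuan'}); the rescaling factor is intrinsic.

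Changing the choices $\{e\}$ and $\{u_n\}$ produces, at each level $n$, a unitary automorphism of the representation space $\cH_{\pi_{\Phi,\gamma,\xi,\mu',n}}$: replacement of a unit vector $e$ in (\ref{unitfactor}) by another unit vector $e'$ corresponds to a unitary automorphism of the factor $\cH_{\pi_{\Phi,\gamma,n,n+1}}$, and replacement of the highest weight unit vectors $\{u_n\}$ by $\{u_n'\}$ gives a unitary automorphism of the factor $\cH_{\mu'}$ (up to phase, which is irrelevant to matrix coefficients). By the Frobenius--Schur orthogonality implicit in (\ref{hilbert-proj0}), the resulting rescaled projection $p_{\ell,n}$ is unchanged; equivalently, the new choices assemble into a unitary isomorphism of inverse systems $\{L^2(E_{\Phi,n}), p_{\ell,n}\}$ that is the identity on the dense intrinsic subspace $\cC(E_{\Phi,\infty})$ via $r_\infty$. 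Hence the pre-Hilbert structure on $\cC(E_{\Phi,\infty})$ is unchanged, and therefore so is the Hilbert space completion $L^2(E_{\Phi,\infty})$ together with its left/right regular representation of $E_{\Phi,\infty}\times E_{\Phi,\infty}$.

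The one point requiring care---and the main obstacle---is bookkeeping for the induction in (\ref{repuan'}) combined with the tensor factorizations in (\ref{unitfactor}), since both kinds of choices enter simultaneously: the vectors $e$ tell one how to embed $\cH_{\pi_{\Phi,\gamma,\xi,\mu',n}}$ into $\cH_{\pi_{\Phi,\gamma,\xi,\mu',\ell}}$ through the $N$-tower, while the highest weight vectors $u_n$ control the $U$-tower embedding. I would handle this by treating the two systems separately, invoking Corollaries \ref{l2-well-def} and \ref{l2-well-def-un} to dispose of each factor, and then combining them using that the induced representations (\ref{repuan'}) are defined by the intrinsic Mackey construction on $E_{\Phi,n} = U_{\Phi,n}A_{\Phi,n}N_{\Phi,n}$. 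Once these pieces are in place, the corollary follows verbatim as in \cite[Corollary 5.17]{W2015} and Corollaries \ref{l2-well-def-an}, \ref{l2-well-def-un} above.
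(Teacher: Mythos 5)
Your proposal is correct and follows essentially the same route as the paper: observe that $\cC(E_{\Phi,\infty})$ is defined entirely in intrinsic terms (seminorms (\ref{seminorm-uan}) and the functor $\varprojlim$), and then use Proposition \ref{dense-embedding-uan} to realize $L^2(E_{\Phi,\infty})$ as the Hilbert space completion of that intrinsic pre-Hilbert space, so the completion and the $E_{\Phi,\infty}\times E_{\Phi,\infty}$ action it carries are choice-independent. The extra bookkeeping you supply about unitary automorphisms collapsing choices of $\{e\}$ and $\{u_n\}$, and the appeal to Corollaries \ref{l2-well-def} and \ref{l2-well-def-un}, fills in detail that the paper leaves implicit but does not change the line of argument.
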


The limit Fourier inversion formula is
\begin{theorem}\label{limit-inversion-uan}
Given $\pi_{\Phi,\gamma,\xi,\mu_{\lambda,\varphi,n}} \in 
\widehat{E_{\Phi,n}}$ let 
$\Theta_{\pi_{\Phi,\gamma,\xi,\mu_{\lambda,\varphi,n}}}$
denote its distribution character.  Then
$\Theta_{\pi_{\Phi,\gamma,\xi,\mu_{\lambda,\varphi,n}}}$ is a tempered
distribution.  Let $f \in \cC(E_{\Phi,\infty})$ and
$x \in E_{\Phi,\infty}$\,.
Then $x \in E_{\Phi,n}$ for some $n$ and
\begin{equation}\label{lim-inv-formula-uan}
f(x) = c_n\int_{\gamma_n\in\mathfrak{t}_{\Phi,n}^*}
	\int_{\xi \in \ga'_{\Phi,n}}
        {\sum}_{\mu'_n \in \widehat{U'_{\Phi,n}}}\,
        \Theta_{\pi_{\Phi,\gamma,\xi,\mu',n}}(r_xf)
        \deg(\mu')|\Pf_{\mathfrak{n}_n}({\gamma_n})|d\xi\,d\gamma_n
\end{equation}
where $c_n = (\tfrac{1}{2\pi})^{\dim \ga'_\Phi/2}\,\,
2^{d_1 + \dots + d_{m_n}} d_1! d_2! \dots d_{m_n}!$ 
and $m_n$ is the number of factors $L_r$ in $N_{\Phi,n}$.
\end{theorem}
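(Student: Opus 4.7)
The argument follows the same two-step pattern that derives Theorem \ref{limit-inversion-un} from Theorem \ref{planch-mn}: first reduce the limit statement to a finite stratum, then invoke the finite-dimensional Plancherel formula and unwind the induced-representation character.

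For the reduction, I would use that $E_{\Phi,\infty} = \varinjlim E_{\Phi,n}$ is a strict direct limit, so every $x \in E_{\Phi,\infty}$ lies in some $E_{\Phi,n_0}$, and by the definition (\ref{schwartz-uan-inf}) of the limit Schwartz space $\cC(E_{\Phi,\infty}) = \varprojlim \cC(E_{\Phi,n})$ we may take $f_{n_0} := f|_{E_{\Phi,n_0}} \in \cC(E_{\Phi,n_0})$. Enlarging $n_0$ to any $n \geqq n_0$ does not change $f(x) = f_n(x)$, so it suffices to prove the formula with that single $n$ on both sides.

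For the finite-dimensional computation, I would apply Theorem \ref{planch-uan} (or Theorem \ref{w-o-weakly-invar} when weak invariance fails) to the amenable Lie group $E_{\Phi,n} = U_{\Phi,n}A_{\Phi,n}N_{\Phi,n}$. Writing $h = r_x f_n$ and expanding the induced character $\Theta_{\pi_{\Phi,\gamma,\xi,\mu',n}}(D_n h)$ via \cite[Theorem 3.2]{KL1972}, we obtain an integral over $U_{\Phi,n}/U'_{\Phi,n} \times A_{\Phi,n}/A'_{\Phi,n}$ of a conjugated trace on the small Hilbert space $\cH_{\pi_{\Phi,\gamma,n}}$. Summing over $\mu' \in \widehat{U'_{\Phi,n}}$ weighted by $\deg\mu'$ collapses the $U'_{\Phi,n}$-fiber by Peter--Weyl on $U'_{\Phi,n}$; integrating over $\xi \in (\ga'_{\Phi,n})^*$ collapses the $A'_{\Phi,n}$-fiber by Plancherel for $\R^{\dim\ga'_{\Phi,n}}$; and the resulting integrand, weighted by $|P(\gamma_n)|$, is then integrated over the full $\gt^*_{\Phi,n}$ (rather than the orbit space), exactly as in the double computation (\ref{one-orbit})--(\ref{all-orbits}) from the proof of Theorem \ref{planch-uan}, combined with the $U$-orbit collapse from the proof of Theorem \ref{limit-inversion-un}. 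The result is the Plancherel formula of Theorem \ref{plancherel-general} applied to $N_{\Phi,n}$, evaluated on $h$, giving $h(1) = f_n(x) = f(x)$ up to the constant $c_n$. Temperedness of $\Theta_{\pi_{\Phi,\gamma,\xi,\mu_{\lambda,\varphi,n}}}$ on $\cC(E_{\Phi,n})$ (hence on $\cC(E_{\Phi,\infty})$ via $r_\infty$) is a byproduct, since this is precisely what makes the wave-packet integrand absolutely convergent.

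The principal obstacle is bookkeeping of normalizations and of the Dixmier--Puk\'anszky operator. The formula (\ref{lim-inv-formula-uan}) as stated contains no $D_n$, whereas Theorem \ref{planch-uan} gives an inversion in terms of $D_n(r_x f_n)$; the disappearance of $D_n$ happens exactly in the step that converts the integral over the orbit space $\gr_\Phi^*/\Ad^*(U_\Phi A_\Phi)$ to an unreduced integral over $\gt^*_{\Phi,n}$, because the $\Ad(Q_\Phi)$-semi-invariance of $P\cdot\Det_{\gs_\Phi}$ recorded in Proposition \ref{pf-mod} matches the modular weight of $\delta_{Q_\Phi}$ (Lemma \ref{modfunction}), so the Jacobian of the $\Ad^*(A_{\Phi,n})$-action on an orbit absorbs $D_n$. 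Verifying the constant $c_n = (1/2\pi)^{\dim\ga'_\Phi/2}\,2^{d_1+\dots+d_{m_n}}d_1!\cdots d_{m_n}!$ then amounts to tracking the Plancherel normalization for $A'_{\Phi,n}$ together with the Moore--Wolf constant from (\ref{c-d}a); this is routine but must be done carefully.
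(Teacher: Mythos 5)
Your proposal recovers the essential structure of the paper's argument: reduce to a finite stratum $E_{\Phi,n}$ via the strict direct‐limit structure, expand the induced‐representation character using Kleppner--Lipsman, collapse the $U'_{\Phi,n}$ fiber by Peter--Weyl and the $A'_{\Phi,n}$ fiber by Euclidean Plancherel, and recognize the remaining integral as the $N_{\Phi,n}$ Plancherel formula evaluated at $h=r_xf$. The one structural inversion is in how you position Theorem \ref{planch-uan}: the paper does not cite that theorem here but instead carries out the full Kleppner--Lipsman computation directly inside the proof of Theorem \ref{limit-inversion-uan}, and then (as stated in the proof of Theorem \ref{planch-uan}) derives Theorem \ref{planch-uan} as the finite‐dimensional specialization of \emph{this} calculation. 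Citing \ref{planch-uan} as input would therefore be a logical circularity unless you first supply an independent proof of it; the paper avoids this by making \ref{limit-inversion-uan} the master computation.

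Your final paragraph is where care is required, and it touches a genuine soft spot. The paper's computation does not ``absorb'' $D_n$ by passing from the orbit space $\gr_\Phi^*/\Ad^*(U_\Phi A_\Phi)$ to $\gt_{\Phi,n}^*$; rather, $D_n$ is eliminated earlier, inside the $\widehat{A'_n}$ and $\widehat{U'_n}$ integration, via the identity
\[
(\pi^\dagger_{\gamma,n})_*\bigl(\Ad(x)\,D\bigr) \;=\; \delta(x)
\]
(semi-invariance from Proposition \ref{pf-mod} and Lemma \ref{modfunction}), followed by the change of variables $x\mapsto \Ad^*(x)^{-1}\gamma$ that turns $\delta(x)\,dx$ on $U''_nA''_n$ into $|\Pf(\gamma')|\,d\gamma'$ on the orbit $\Ad^*(U_nA_n)\gamma$. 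The subsequent integration over $\Ad^*(U_nA_n)\backslash\gs_n^*$ then reconstitutes $\int_{\gs_n^*}\tr\pi_{\gamma_n}(h)\,|\Pf(\gamma_n)|\,d\gamma_n = h(1)$, with no $D_n$ surviving.  Note that what this establishes is an inversion formula with $\Theta_{\pi_{\Phi,\gamma,\xi,\mu',n}}(D_n h)$ and integration over the orbit space --- i.e.\ the exact analogue of Theorem \ref{planch-uan} and Theorem \ref{limit-inversion-an} --- not quite the $D_n$-free, full-$\gt^*_{\Phi,n}$ form that equation (\ref{lim-inv-formula-uan}) displays. You should not paper over that gap with ``the Jacobian absorbs $D_n$'' without proving it; if you prove the orbit-space-with-$D_n$ version (which is what the paper's computation supports, and what Theorem \ref{limit-inversion-an} correctly records for the $A_\Phi N_\Phi$ case), you are on firm ground, and you have in fact located a place where the written statement of Theorem \ref{limit-inversion-uan} needs to be read against its proof with some care.
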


\begin{proof}  We combine the ideas in the proofs of Theorems \ref{planch-an}
and \ref{limit-inversion-un}. In an attempt to keep the notation
under control we write $U''_n$ for
$U_{\Phi,n}/U'_{\Phi,n}$ and $A''_n$ for $A_{\Phi,n}/A'_{\Phi,n}$\,,
and more generally we drop the subscript $\Phi$.  We write $\delta$ for the 
modular function of $Q_\Phi$\,.
Let $h = r_xf$.  Using \cite[Theorem 3.2]{KL1972},
$$
\begin{aligned}
\tr \pi_{\gamma,\xi,\mu',n}(Dh) 
	&={\int}_{x \in U''_nA''_n}
		\delta^{-1}(x)\,\, \tr \int_{yau \in N_nA'_nU'_n}(Dh)(x^{-1}yaux)
		\times \\
	&\phantom{XXXXXXXXXXXXXXXXXXXX}\times 
	(\pi^\dagger_{\gamma,n}\otimes \exp(2\pi i\xi)\mu'_n)
		   (yau)\,dy\, da\, du\, dx\, \\
&\hskip -0.7cm = {\int}_{x \in U''A''}
	\tr \int_{N_nA'_nU'_n}(Dh)(yx^{-1}aux)\cdot
		(\pi^\dagger_{\gamma,n}\otimes \exp(2\pi i\xi)\mu'_n)
		(xyx^{-1}u)\,
		dy\, da\, du\, dx.
\end{aligned}
$$
Now
$$
\begin{aligned}
{\sum}_{\widehat{U'_n}}\int_{\widehat{A'_n}}
  &\tr \pi_{\gamma,\xi,\mu',n}(Dh) d\xi\deg\mu'_n \\
&= {\sum}_{\widehat{U'_n}} \int_{\widehat{A'_n}}\int_{x \in U''_nA''_n} 
        \tr \int_{N_nU'_nA'_n}(Dh)(yx^{-1}aux)
                \times \\ 
        &\phantom{XXXXXXXXXXXXXXX}\times
                (\pi^\dagger_{\gamma,n}\otimes \exp(2\pi i\xi)\mu'_n)(xyx^{-1}au)\,
                dy\, da\, du\, dx\, d\xi\, \deg\mu'_n \\
&= \int_{x \in U''_nA''_n}{\sum}_{\widehat{U'_n}}\int_{\widehat{A'_n}}
	\tr \int_{N_nU'_n}(Dh)(yx^{-1}aux)
                \times \\ 
        &\phantom{XXXXXXXXXXXXXXX}\times
                (\pi^\dagger_{\gamma,n}\otimes \exp(2\pi i\xi)\mu'_n)(xyx^{-1}au)\,
		dy\, da\, du\, d\xi\, \deg\mu'_n\, dx\\
&= \int_{x \in U''_nA''_n} \tr \int_{N_n}(Dh)(y) 
	\pi^\dagger_{\gamma,n}(xyx^{-1}) dy\, dx \\
&= \int_{x \in U''_nA''_n} \tr \int_{N_n}(Dh)(y)
        (\Ad^*(x)^{-1}\cdot \pi^\dagger_{\gamma,n})(y) dy\, dx \\
&= \int_{x \in U''_nA''_n} \tr ((\Ad^*(x)^{-1}\cdot
	\pi^\dagger_{\gamma,n})(Dh))\, dx\\
&= \int_{x \in U''_nA''_n} (\Ad^*(x)^{-1}\cdot \pi_{\gamma,n})_*(D)\,\,
	\tr(\Ad(x)^{-1}\cdot \pi^\dagger_{\gamma,n})(h)\, dx\\
&= \int_{x \in U''_nA''_n} (\Ad^*(x)D)\,\,
        \tr(\Ad(x)^{-1}\cdot \pi^\dagger_{\gamma,n})(h)\, dx\\
&= \int_{x \in U''_nA''_n} \delta(x)
        \tr(\Ad(x)^{-1}\cdot \pi^\dagger_{\gamma,n})(h)\, dx
= \int_{\gamma'_n \in \Ad^*(U_nA_n)\gamma_n} 
	\tr \pi^\dagger_{\gamma',n}(h) 
	|\Pf(\gamma'_n)|d\gamma'_n .
\end{aligned}
$$
Summing over the the space of $U_nA_n$-orbits on $\gs_n^*$ 
we now have
$$
\begin{aligned}
\int_{\gamma_n \in\Ad^*(U_nA_n)\backslash \gs_n^*}
	& \left ( {\sum}_{\widehat{U'_n}}\int_{\widehat{A'_n}} 
	\tr \pi_{\gamma,\xi,\mu',n}(Dh)d\xi \deg\mu'_n\right ) d\gamma_n \\
& = \int_{\gamma_n \in\Ad^*(U_nA_n)\backslash \gs_n^*} \left (
	\int_{\gamma'_n \in \Ad^*(U_nA_n)\gamma_n}
	\tr \pi^\dagger_{\gamma'_n}(h)  
		|\Pf(\gamma'_n)|d\gamma'_n \right ) d\gamma_n \\
& = \int_{\gamma_n \in \gs_n^*} \tr \pi_{\gamma_n}(h) |\Pf(\gamma_n)|d\gamma_n 
	= h(1) = f(x)\,.
\end{aligned}
$$
That completes the proof.
\end{proof}
\newpage

\medskip
\noindent Department of Mathematics \hfill\newline
\noindent University of California\hfill\newline
\noindent Berkeley, California 94720-3840, USA\hfill\newline
\smallskip
\noindent {\tt jawolf@math.berkeley.edu}

\enddocument
\end
\begin{thebibliography}{XX}

\bibitem{C1989}
W. Casselman,
Introduction to the Schwartz space of $\Gamma\backslash G$,
Canad. J. Math. {\bf 40} (1989), 285-320.

\bibitem{D1952}
J. Dixmier, Alg\`ebres quasi-unitaires,
Comm. Math. Helv. {\bf 26} (1952), 275--322.

\bibitem{HH1970}
W.-C. Hsiang \& W.-Y. Hsiang,
Differentiable actions of compact connected classical groups II,
Annals of Math. {\bf 92} (1970), 189-223.

\bibitem{K1962} A. A. Kirillov,
Unitary representations of nilpotent Lie groups.
Uspekhi Math. Nauk. {\bf 17} (1962), 57-110
(English: Russian Math. Surveys {\bf 17}, (1962) 53-104.)

\bibitem{KL1972} A. Kleppner \& R. L. Lipsman,
The Plancherel formula for group extensions,
Ann. Sci. \' Ecole Norm. Sup. {\bf 5} (1972), 459-516.

\bibitem{LW1978} R. L. Lipsman \& J. A. Wolf,
The Plancherel formula for parabolic subgroups of the classical groups.
J. D'Analyse Math\' ematique {\bf 34} (1978), 120-161.

\bibitem{LW1982} R. L. Lipsman \& J. A. Wolf,
Canonical semi-invariants and the Plancherel formula for parabolic
subgroups, Trans. Amer. Math. Soc. {\bf 269} (1982), 111-131.

\bibitem{M1979} C. C. Moore,
Amenable subgroups of semi-simple Lie groups and proximal flows.
Israel J. Math. {\bf 34} (1979), 121-138.

\bibitem{MW1973} C. C. Moore \& J. A. Wolf,
Square integrable representations of nilpotent groups.
Transactions of the American Mathematical Society,
{\bf 185} (1973), 445-462.

\bibitem{OW2011} G.  \'Olafsson \& J. A. Wolf,
{\it Extension of symmetric spaces and restriction of Weyl groups and
invariant polynomials}.
In ``New Developments in Lie Theory and Its Applications'', Contemporary
Math. {\bf 544} (2011), 85-100.

\bibitem{OW2014} G.  \'Olafsson \& J. A. Wolf,
{\it The Paley-Wiener Theorem and Direct Limits of Symmetric Spaces},
J. Geometric Analysis {\bf 24} (2014), 1-31.

\bibitem{P1955} L. Puk\'anszky,
On the theory of quasi--unitary algebras,
Acta Sci. Math. {\bf 16} (1955), 103--121.

\bibitem{SV1975}
S. Str\u atil\u a \& D. Voiculescu,
``Representations of AF-algebras and of the Group $U(\infty)$,
Lecture Notes Math. {\bf 486}, Springer-Verlag, 1975.

\bibitem{SV1978}
S. Str\u atil\u a \& D. Voiculescu,
On a class of KMS states for the unitary group $U(\infty)$,
Math. Ann. {\bf 235} (1978), 87-110.

\bibitem{SV1982}
S. Str\u atil\u a \& D. Voiculescu,
A survey of the representations of the unitary group $U(\infty)$,
in ``Spectral Theory'', Banach Center Publ., {\bf 8}, Warsaw, 1982.

\bibitem{V1972}
D. Voiculescu,
Sur let repr\' esentations factorielles finies du $U(\infty)$
et autres groupes semblables, C. R. Acad. Sci. Paris {\bf 279}
(1972), 321-323.

\bibitem{W1966}
J. A. Wolf, 
Spaces of Constant Curvature, Sixth Edition.  American Mathematical Society,
2011.  The result cited is the same in all editions.

\bibitem{W2009} J. A. Wolf,
Infinite dimensional multiplicity free spaces II: Limits of commutative 
nilmanifolds, Contemporary Mathematics {\bf 491} (2009), 179-208.

\bibitem{W2012} J. A. Wolf, 
Plancherel Formulae associated to Filtrations of Nilpotent Lie Groups,
\{arXiv 1212.1908 (math.RT; math.DG, math.FA)\}.  This is a version
of \cite{W2013}; they differ in that this has an appendix that 
contains case by case calculations for nilradicals of minimal parabolics.

\bibitem{W2013} J. A. Wolf,
Stepwise Square Integrable Representations of Nilpotent Lie Groups,
Mathematische Annalen {\bf 357} (2013), 895-914. 
\{arXiv: 2402.3828\}

\bibitem{W2014a} J. A. Wolf,
Principal series representations of infinite dimensional Lie groups, I: 
Minimal parabolic subgroups, Progress in Mathematics, {\bf 257} (2014), 
519-538.

\bibitem{W2014} J. A. Wolf,
The Plancherel Formula for minimal parabolic subgroups, J. Lie Theory
{\bf 24} (2014), 791-808. \{arXiv: 1306.6392 (math RT)\}

\bibitem{W2015} J. A. Wolf,
Stepwise square integrable representations for locally nilpotent
Lie groups, Transformation Groups {\bf 20} (2015), 863-879.
\{arXiv:1402.3828\}

\end{thebibliography}
